\documentclass{article}
\usepackage{geometry}
\usepackage{tabularx}
\usepackage{titlesec}
\usepackage{ulem}
\usepackage{amsmath, amsthm} 
\usepackage{amssymb} 
\usepackage{amsfonts} 
\usepackage{mathrsfs} 
\usepackage{hyperref} 
\usepackage{setspace}

\newtheorem{theorem}{Theorem}[section]
\newtheorem{mydef}[theorem]{Definition}
\newtheorem{remark}[theorem]{Remark}
\newtheorem{lemma}[theorem]{Lemma}
\newtheorem{prop}[theorem]{Proposition}

\numberwithin{equation}{section}

\makeatletter
\def\blfootnote{\xdef\@thefnmark{}\@footnotetext}
\makeatother

\title{Global well-posedness of the periodic Majda-Biello systems in the resonant case}
\author{Ke Wang and Xin Yang}
\date{}
\begin{document}
	\maketitle
\begin{abstract}
We study the Cauchy problem for the following Majda-Biello system in the resonant case $\alpha \in (0,4]\setminus\{1\}$ on the periodic domain $\mathbb{T}$.
$$
\left\{
\begin{array}{rcl}
	u_{t} + u_{xxx} & = & - v v_x,\\
	v_{t} + \alpha v_{xxx} & = & - (uv)_{x},\\
	(u,v)|_{t=0} & = & (u_0,v_0),
\end{array}
\right. \quad x \in \mathbb{T}, \, t \in \mathbb{R}.
$$
Here, the initial data $u_0$ and $v_0$ live in the Sobolev space $H^{s}(\mathbb{T})$ and $u_0$ has zero mean over $\mathbb{T}$. We establish the optimal global well-posedness for the above system when $s\geq \min\{1, s_\alpha+\}$, where $s_\alpha$ is a constant depending only on $\alpha$, and $s_\alpha = \frac12$ for almost every $\alpha \in (0,4]\setminus\{1\}$. These indices match the sharp ones in the local theory obtained by Oh (Int. Math. Res. Not., 18:3516-3556, 2009). 
The main novelty of our proof is to introduce a third modified energy in the I-method by performing a partial correction to the second modified energy around the resonant region. This partial correction avoids the singularity in the full correction, and therefore makes it possible to pointwise bound the quintilinear multipliers arising in the time derivative of the third modified energy. Another feature in our proof is the combination of the Diophantine approximation theory with the sparsity of nearly resonant frequency pairs on the periodic lattice to obtain the key quadrilinear and quintilinear estimates, leading to an almost conservation law sufficient for the global iteration.
\end{abstract}

\blfootnote{\hspace{-0.25in} 2020 Mathematics Subject Classification. 35Q53; 35G55; 35L56; 35D30.}

\blfootnote{\hspace{-0.25in} Key words and phrases. KdV-KdV systems;   Majda-Biello systems; Global well-posedness; I-Method; Periodic domains}

\begin{center}
	\tableofcontents
\end{center}

\section{Introduction}\label{Sec, intro}

In this paper, we consider the periodic initial value problem for the
Majda--Biello system
\begin{equation}\label{GeneralMajdaBiello}
\left\{
\begin{aligned}
 u_t+u_{xxx}+vv_x&=0,\\
 v_t+\alpha v_{xxx}+(uv)_x&=0,\\
 (u,v)|_{t=0}&=(u_0,v_0)\in H_0^s(\mathbb T)\times H^s(\mathbb T),
\end{aligned}
\right.
\qquad (x,t)\in\mathbb T\times\mathbb R,
\end{equation}
where $\alpha\in\mathbb R\setminus\{0\}$ is the relative dispersion coefficient, 
$\mathbb T:=\mathbb R/(2\pi\mathbb Z)$, 
$H^s(\mathbb{T})$ refers to the standard Sobolev space with $s\in\mathbb{R}$, and
\[
H_0^s(\mathbb T)
:=
\left\{f\in H^s(\mathbb T):\int_0^{2\pi}f(x)\,dx=0\right\}.
\]
Majda and Biello \cite{MajdaBiello} introduced this system as a reduced
asymptotic model for the resonant interaction of equatorial baroclinic
and barotropic Rossby waves in the atmosphere and ocean.  It is also a
basic example of coupled KdV--KdV systems whose two components propagate
according to different dispersion relations.

Since the Majda--Biello system is a coupled KdV system, the
well-posedness theory for the scalar KdV equation provides a natural
point of reference.  Bourgain \cite{Bourgain93} introduced Fourier restriction spaces and
proved global well-posedness in $L^2$ on both $\mathbb R$ and
$\mathbb T$.  Kenig, Ponce, and Vega \cite{KPV96SahrpIndex} subsequently improved the local well-posedness index to $s>-3/4$ on $\mathbb R$ and for
$s\geq-1/2$ on $\mathbb T$.  Colliander, Keel, Staffilani, Takaoka, and
Tao \cite{Iterm03} obtained the corresponding global results using the
$I$-method.  The endpoint cases were obtained by Guo
\cite{GuoZiHua09} and Kishimoto \cite{Kishimoto09}.  By using the complete integrability property of the KdV equation, Kappeler and Topalov
\cite{KappelerTopalov06} justified the global well-posedness in
$H^{-1}(\mathbb T)$, and Killip and Visan
\cite{KillipRowanVicsan19} established the corresponding result on
$\mathbb R$.  Molinet \cite{Molinet2011,Molinet2012} proved the
associated sharp ill-posedness results.  

We next recall the well-posedness results for the Majda--Biello system.
When
\[
\alpha\in(-\infty,0)\cup(4,\infty)\cup\{1\},
\]
there is no additional resonance of the type arising in the coupled
system, and the local theory is analogous to that of the scalar KdV
equation; see \cite{Oh2009}.  The main difficulty occurs in the resonant
range $\alpha\in(0,4]\setminus\{1\}$.  On the real line $\mathbb{R}$, Oh
\cite{Oh2009} proved sharp local and global well-posedness in
$L^2(\mathbb R)\times L^2(\mathbb R)$ for
$\alpha\in(0,4)\setminus\{1\}$.  At $\alpha=4$, Yang
and Zhang \cite{YangZhang} obtained the sharp local threshold $s=3/4$, and Yang \cite{YangRealLine} later proved global well-posedness at this threshold.
On the torus $\mathbb{T}$, the spatial frequencies lie on a lattice, and the Diophantine properties of the roots of the resonance functions play an essential role. Consider the resonance function $R$ defined as follows:
\[
R(x,y,z):=x^3+\alpha y^3+\alpha z^3,
\qquad \forall\, x+y+z=0.
\]
For $\alpha\in(0,4]$, putting $y=-x-z$ into $R(x,y,z)$ yields
\[
R(x,y,z)=-3\alpha x(z+c_1x)(z+c_2x),
\]
where
\begin{equation}\label{c1c2}
	c_1=\frac12-\frac16\sqrt{\frac{12}{\alpha}-3},
	\qquad
	c_2=\frac12+\frac16\sqrt{\frac{12}{\alpha}-3}.
\end{equation}
Hence, the resonant sets are given by
$z = - c_1 x$ and $z = - c_2 x$.
In the periodic setting, lower bounds for the resonance function $R$ depend on the Diophantine approximation properties of $c_1$ and $c_2$. We next recall the Diophantine condition used in the local theory.
\begin{mydef}[Arnol'd {\cite[p.~8]{Arnold1988}}; Oh {\cite[p.~3521]{Oh2009}}]\label{Def,mbti}
A real number $\rho$ is said to be of type $\nu$ if there exists a
constant $K=K(\rho,\nu)>0$ such that
\[
\left|\rho-\frac{p}{q}\right|
\geq \frac{K}{|q|^{2+\nu}}
\]
for all $p,q\in\mathbb Z$ with $q\ne0$.  The minimal type index of
$\rho$ is defined by
\[
\nu(\rho)
:=
\inf\{\nu\in\mathbb R:\rho\text{ is of type }\nu\},
\]
where the infimum is understood to be $\infty$ if the set is empty.
\end{mydef}

\begin{remark}\label{PropOfTypeindex}
It follows directly from the definition that $\nu(\rho)=\infty$ for
$\rho\in\mathbb Q$.  In addition, Dirichlet's approximation theorem gives $\nu(\rho)\geq0$ for every $\rho\in\mathbb R$; see
\cite[p.~112]{Arnold1988}.  Moreover, $\nu(\rho)=0$ for almost every $\rho\in\mathbb R$ with respect to the Lebesgue measure; see \cite[p.~116, Lemma~3]{Arnold1988}.
Further properties can be found in
\cite[Propositions~3.4--3.5]{YZ22DCDS}.
\end{remark}

For $\alpha\in(0,4]\setminus\{1\}$, define
\begin{equation}\label{s0}
\rho_\alpha:=\sqrt{12/\alpha-3},
\qquad
s_\alpha
:=
\frac{1+\nu(\rho_\alpha)}{2}.
\end{equation}
According to Remark \ref{PropOfTypeindex}, $s_\alpha \geq \frac12$ and $s_\alpha = \frac12$ for a.e. $\alpha\in (0,4]\setminus\{1\}$. 
Combining bilinear estimates with Diophantine approximation, Oh
\cite{Oh2009} proved that 
the periodic problem of the Majda-Biello system (\ref{GeneralMajdaBiello}) is locally well-posed for
\[\left\{\begin{array}{lll}
	s \geq 1 & \text{if} & s_\alpha \geq 1, \\
	s > s_\alpha &\text{if} & s_\alpha < 1.
\end{array}\right.\]
The same paper also shows that this threshold is sharp if the solution map is required to be $C^3$.  In particular, for almost every $\alpha\in(0,4]\setminus\{1\}$, $s_\alpha=1/2$ and then (\ref{GeneralMajdaBiello}) is locally well-posed when $s>\frac12$.

At the global level, the following quantities
\begin{equation}\label{mass_con}
\int_{\mathbb T}u\,dx,\qquad
\int_{\mathbb T}v\,dx,\qquad
\int_{\mathbb T}(u^2+v^2)\,dx,
\end{equation}
and the Hamiltonian
\begin{equation}\label{Hamiltonian}
H(u,v):=\int_{\mathbb T}\bigl(u_x^2+\alpha v_x^2-uv^2\bigr)\,dx
\end{equation}
are conserved. Together with the local theory, these quantities give global well-posedness whenever $s\geq 1$.  Thus, the remaining nontrivial case is when $s_\alpha<1$ and $s\in (s_\alpha, 1)$.  For $\alpha=1$, Oh
\cite{OhEJDE} adapted the $I$-method and obtained global
well-posedness on $\mathbb T$ for $s\geq-1/2$.  For
$\alpha\in(0,4]\setminus\{1\}$ with $s_\alpha<1$, Oh
\cite{OhEJDE} used the second modified energy to prove global
well-posedness when
\[
s>
\max\left\{
\frac{6s_\alpha-2s_\alpha^2}{5-s_\alpha},
\frac{2s_\alpha+9}{14}
\right\}.
\]
In particular, this gives $s>5/7$ for almost every $\alpha$, whereas the
local threshold is $s>1/2$.  Oh \cite{OhEJDE} also observed that
the standard full third correction produces an unbounded quintilinear multiplier, which prevents a direct application of the standard KdV modified-energy scheme. The purpose of this paper is to close the gap to establish sharp global well-posedness whose critical indices match the local ones. Our main result is stated below.
\begin{theorem}\label{Theorem}
For $\alpha \in (0,4]\setminus\{1\}$, the Cauchy problem \eqref{GeneralMajdaBiello} for the periodic Majda--Biello system is globally well-posed in $H_0^{s}(\mathbb{T})\times H^{s}(\mathbb{T})$ for
\[\left\{\begin{array}{lll}
	s \geq 1 & \text{if} & s_\alpha \geq 1, \\
	s > s_\alpha &\text{if} & s_\alpha < 1.
\end{array}\right.\]
In particular, (\ref{GeneralMajdaBiello}) is globally well-posed when $s>\frac12$ for almost every $\alpha \in (0,4]\setminus\{1\}$.
\end{theorem}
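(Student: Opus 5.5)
The case $s\geq 1$ follows at once from Oh's local theory together with conservation of mass and of the Hamiltonian \eqref{Hamiltonian}. Since $u_0$ has zero mean, Gagliardo--Nirenberg controls $\int uv^2$ by $\|v\|_{L^2}^{2}\|(u,v)\|_{H^1}$ up to lower-order terms, so $\|(u,v)\|_{H^1}$ stays bounded uniformly in time. Persistence of regularity then gives the result for every $s\geq1$. It therefore remains to treat $s_\alpha<1$ and $s\in(s_\alpha,1)$, and I would do this with the $I$-method.

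\textbf{Setup.} Let $I=I_N$ be the Fourier multiplier with symbol $m(k)=1$ for $|k|\le N$ and $m(k)=(N/|k|)^{1-s}$ for $|k|\ge 2N$. Start from $E^1(t)=\|Iu\|_{L^2}^2+\|Iv\|_{L^2}^2$. Its time derivative is a trilinear form with multiplier $M_3$, which vanishes when all frequencies are at most $N$. As in Oh \cite{OhEJDE}, add a trilinear correction $\sigma_3=M_3/R$, where $R$ is the resonance function (factored via \eqref{c1c2}), to obtain the second modified energy $E^2$. The difficulty is that $R$ can be small near the resonant lines $z=-c_jx$. The Diophantine type of $\rho_\alpha$ gives, for lattice frequencies, $|R|\gtrsim |x|^{2}\,|x|^{-\nu-}$ roughly; this is exactly why $s_\alpha$ appears. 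Pointwise bounds on $\sigma_3$ then follow, and $\frac{d}{dt}E^2$ is a quadrilinear form with multiplier $M_4$.

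\textbf{Third modified energy.} The key new step is to correct $M_4$ only on the part where the quartic resonance function $R_4=\sum_j \phi_j(k_j)$ is large, say $|R_4|\gtrsim N^{\theta}\cdot(\text{max freq})^{2}$ for a suitable threshold. There one sets $\sigma_4=M_4\chi_{\text{nonres}}/R_4$. Define $E^3=E^2+\Lambda_4(\sigma_4)$. Then
\[
\tfrac{d}{dt}E^3=\Lambda_4(M_4\chi_{\text{res}})+\Lambda_5(M_5),
\]
and $M_5$ is bounded pointwise because no division by a small $R_4$ occurs. Next I would prove:
\begin{enumerate}
\item[(i)] Fixed-time bounds $|E^3-E^1|\lesssim N^{-\beta}\|I(u,v)\|_{L^2}^3$ plus a quartic analogue, so that $E^3\sim E^1$ for small data.
\item[(ii)] Quadrilinear estimates in $X^{s,b}$ (or $Y^s$) restricted to the nearly resonant region. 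Here I would exploit sparsity: once the frequencies are fixed, the near-resonance condition for lattice points has few solutions (a divisor-counting argument combined with the Diophantine lower bound). This should yield the bound $N^{-\gamma}\|I(u,v)\|_{X^{0,1/2}}^4$.
\item[(iii)] Quintilinear estimates with a similar decay $N^{-\gamma'}$, proved using the $L^4_{x,t}$ and $L^6$ Strichartz estimates together with the pointwise bound on $M_5$.
\end{enumerate}

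\textbf{Global iteration.} By scaling $u_\lambda(x,t)=\lambda^{-2}u(x/\lambda,t/\lambda^3)$ on $\mathbb T_\lambda$ (tracking the $\lambda$-dependence of the Diophantine bounds) we make $\|I(u_0,v_0)\|_{L^2}$ small. On unit time intervals the modified local theory for $I(u,v)$ then gives an almost conservation law
\[
E^3(\delta)\le E^3(0)+CN^{-\gamma}.
\]
Iterating about $N^{\gamma}$ times reaches any time $T\lambda^{-3}$. This is possible provided $\gamma>$ (the exponent required by scaling), and that condition reduces to $s>s_\alpha$.

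\textbf{Main obstacle.} I expect the quadrilinear resonant-region estimate (ii) to be the hardest step. The cutoff threshold must be balanced between two competing requirements: it must be large enough that $\sigma_4$, and hence $M_5$, is bounded, yet small enough that the sparse counting of nearly resonant lattice tuples still yields enough decay. The first thing I would try is to take the threshold as a power $(\max|k_j|)^{2-\nu(\rho_\alpha)-}$ and to count solutions using continued-fraction bounds for $\rho_\alpha$.
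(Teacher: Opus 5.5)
The overall architecture you describe (scaling to $\mathbb{T}_\lambda$, Oh's second modified energy, a partial third correction, sparsity plus Diophantine bounds) is the right one. However, the step that makes the argument sharp is missing, and your criterion for the partial correction does not capture it.

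The dangerous quadrilinear multiplier is $M_{41}=2i\,n_{23}\,R^I(n_1,n_{23},n_4)/R(n_1,n_{23},n_4)$ from the $uuvv$ interaction ($M_{42}$ turns out to be harmless). Its denominator is the \emph{cubic} resonance, and near the lines $n_4=-c_jn_1$ the Diophantine bound only gives $|R|\gtrsim\lambda^{-1-2s_\alpha-\epsilon}N_1^{2-2s_\alpha-\epsilon}$. A case-by-case analysis shows that the needed decay $D_1=N^{-2+2s_\alpha+\epsilon}\lambda^{2s_\alpha+\epsilon}$ already holds everywhere except on $\Omega_*=\{|n_1|\gtrsim N,\ 0<|n_2|\le 1,\ \min_j|n_4+c_jn_1|\le\kappa_\alpha/\lambda\}$. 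Off $\Omega_*$ this uses either pointwise bounds or the fact that, for fixed $n_1$, at most one $n_4\in\mathbb{Z}_\lambda$ satisfies $|n_4+c_jn_1|\le\kappa_\alpha/\lambda$; that gains $\lambda^{-1}$ against the weight $\langle H_\alpha\rangle^{1/2}$, and no divisor counting is needed. On $\Omega_*$ the quartic resonance is $|H_\alpha|\sim|n_2|\,|n_3|^2$, which is only $\sim N^2/\lambda$ when $|n_2|\sim\lambda^{-1}$. The estimate there loses $\lambda^{1/2}$, and by itself this yields only $s>\frac{-4s_\alpha^2+10s_\alpha+1}{11-4s_\alpha}$ (i.e.\ $5/9$ for a.e.\ $\alpha$).

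A cut by the size of the quartic resonance does not isolate this piece. Take your threshold $(\max|k_j|)^{2-\nu(\rho_\alpha)-}$ with a small loss, and $\lambda$ a fixed positive power of $N$. Then for a.e.\ $\alpha$ the worst part ($N_1\sim N$, $|n_2|\sim\lambda^{-1}$) lies below the threshold and stays in your uncorrected term $\Lambda_4(M_4\chi_{\mathrm{res}})$, so you only recover the non-sharp result. If you lower the threshold instead, you correct on a large region where $\sigma_4=M_4/R_4$ inherits the large factor $1/R$. There, ``no division by a small $R_4$'' says nothing about whether the quintilinear terms decay at the required rate.

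The paper corrects \emph{only} on $\Omega_*$. There $|H_\alpha|\ge\alpha|n_3|^2/(4\lambda)$, which gives $|\sigma_4^P|\lesssim\lambda^{2+2s_\alpha+\epsilon}|n_1|^{2s_\alpha+\epsilon}m^2(n_1)$. Every quintilinear term then inherits $N_1\sim N_3\sim N_4\gtrsim N$, $\lambda^{-1}\le|n_2|\le 1$, and the one-point parametrization $n_4=-c_1n_1+\theta$. Combined with case-by-case lower bounds on the quintic resonances $K_j$ (and a second sparsity condition $|c_2^2n_2+c_1^2n_4|\le\lambda^{-1}$ in one subcase), this gives the decay $\lambda^{1/2}D_1$.

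Two further points need repair. First, your energies sit at the wrong regularity level. With $m(k)=(N/|k|)^{1-s}$, the quantity $\|Iu\|_{L^2}^2+\|Iv\|_{L^2}^2$ controls only $\|(u,v)\|_{H^{s-1}}$, while for $s>s_\alpha\ge\frac12$ the $L^2$ norm is already exactly conserved. You must start from the Hamiltonian, $E^{(1)}=H(Iu,Iv)$, and run everything at the $H^1$ and $X^{1,\frac12}$ level: the smallness after scaling, the fixed-time bounds in (i), and the multilinear estimates in (ii)--(iii).

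Second, the closing numerology is asserted rather than derived, and it is not simply a gain $N^{-\gamma}$ per step.
\begin{itemize}
\item The bilinear estimates on $\mathbb{T}_\lambda$ lose $\lambda^{s_\alpha+\delta}$ (any $\delta>0$), so each unit step needs $\lambda^{s_\alpha+\delta}\|I(u^\lambda,v^\lambda)\|_{H^1}\le\varepsilon_*$.
\item The increments are $D_1\|Iw^\lambda\|^4+\lambda^{1/2}D_1\|Iw^\lambda\|^5$.
\item One must iterate about $\lambda^3T$ times, not $T\lambda^{-3}$.
\end{itemize}
With $\lambda=N^{\frac{2-2s}{3+2s-2s_\alpha-\epsilon}}$ this leads to the condition $\lambda^{3+\epsilon/2}N^{-2+2s_\alpha+\epsilon}T\ll 1$, which holds for large $N$ when $s>s_\alpha$ (take $\epsilon=s-s_\alpha$). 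The quintic increment is affordable only because $\|Iw^\lambda\|_{H^1}\lesssim\lambda^{-s_\alpha-\delta}$ and $s_\alpha\ge\frac12$.
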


In Theorem \ref{Theorem}, the case $s_\alpha\geq 1$ is trivial and the sharp global well-posedness when $s\geq 1$ had already been known by combining the local theory \cite{Oh2009} and the conservation of quantities in (\ref{mass_con}) and (\ref{Hamiltonian}). Therefore, we only need to focus on the case when $s_\alpha < 1$ and $s\in(s_\alpha,1)$. Meanwhile, we also know that $s_\alpha\geq \frac12$ based on Remark \ref{PropOfTypeindex}. So for convenience, we introduce the notation
\begin{equation}\label{alpha_set}
\mathcal{A} := \{\alpha\in (0,4]\setminus\{1\}: 1/2 \leq s_{\alpha} < 1 \}
\end{equation}
and will only be concerned with the case $\alpha\in\mathcal A$.

We give an overview of the main ideas used in the proof of Theorem \ref{Theorem}. The general framework is the combination of a modified $I$-method with refined multilinear estimates, so that  local solutions to the periodic Majda--Biello system \eqref{GeneralMajdaBiello} can be extended to arbitrarily long times when $\alpha\in\mathcal{A}$ and $s>s_\alpha$.
We first scale the original system (\ref{GeneralMajdaBiello}) via a parameter $\lambda\geq 1$ so that the existence time of the local solution $(u^\lambda, v^\lambda)$ to the scaled system (\ref{MajdaBielloScaling}) is at least $1$. We then consider standard modified energies as computed in \cite{OhEJDE}. The purpose of these modified energies is to use the oscillation generated by the linear part of the system (\ref{MajdaBielloScaling}) to cancel the nonlinear contribution to the energy increment. As pointed out in \cite{OhEJDE}, the second modified energy $E^{(2)}$ alone is not sufficient to obtain the sharp global well-posedness indices, while the standard third modified energy $\widetilde E^{(3)}$ contains a singularity and one of the quintilinear multipliers in $\frac{d}{dt} \widetilde E^{(3)}(t)$ is unbounded. 
To overcome this difficulty, we examine the increment of the second modified energy $E^{(2)}$ more carefully, and a refined estimate for $\frac{d}{dt} E^{(2)}(t)$ shows that the growth rate of $E^{(2)}$ has sufficient decay in most frequency regions for establishing the sharp global well-posedness, while the insufficiency only occurs in a particular portion $\Omega_*$ of the resonant region:
\begin{equation*}
	\Omega_{*} = \left\{ (n_1, n_2, n_3, n_4) \in \Gamma_4^\lambda : |n_1| \gtrsim N,\, 0 < |n_2| \le 1,\, |n_4 + c_1 n_1| \le \frac{\kappa_\alpha}{\lambda} \text{ or } |n_4 + c_2 n_1| \le \frac{\kappa_\alpha}{\lambda} \right\},
\end{equation*}
where $\Gamma_4^\lambda$ refers to the hyperplane $\sum_{i=1}^{4} n_i = 0$ in the frequency space, $N$ is the parameter in the standard $I$-operator, and $\kappa_\alpha$ is a constant depending only on $\alpha$ and its precise value will be given later; see \eqref{eq:kappa-alpha-choice}. 
We point out that $\Omega_*$ is an asymmetric subregion of the original resonant set and corresponds to a high--low interaction: $n_1,n_3,n_4$ are all at high frequency, whereas $n_2$ lies below frequency $1$.
In our decomposition, all bad regions in the analysis for the quadrilinear multipliers in $\frac{d}{dt} E^{(2)}(t)$ are contained in $\Omega_*$. Based on this observation, we introduce a refined third modified energy $E^{(3)}$ which partially corrects $E^{(2)}$ only on the region $\Omega_*$ to cancel the bad quadrilinear multipliers on $\Omega_*$. Moreover, all quintilinear multipliers in $\frac{d}{dt}E^{(3)}(t)$ caused by the correction term can be bounded pointwise thanks to the restriction of the partial correction to the region  $\Omega_*$. This pointwise boundedness of quintilinear multipliers enables us to control the growth rate of $E^{(3)}(t)$ effectively to establish the desired sharp global well-posedness. 
We mention that partial-correction techniques in the $I$-method have also appeared for other dispersive equations; see, for instance, Colliander--Keel--Staffilani--Takaoka--Tao \cite{CKSTTResonant2008}, Li--Wu--Xu \cite{LWX2011}, Miao--Shao--Wu--Xu \cite{MiaoShaoWu2010}, and Miao--Wu--Xu \cite{MiaoWuXu2011}.

Another technique innovation of this paper is to take advantage of the sparsity of frequencies in resonant sets to provide additional decay for multilinear estimates. For example, on the nearly resonant set
\[
\Delta_4:=
\left\{
(n_1,n_2,n_3,n_4)\in\Gamma_4^\lambda:
\min_{j=1,2}|n_4+c_jn_1|
\leq\frac{\kappa_\alpha}{\lambda}
\right\},
\]
which is denoted by $\Delta$ in \eqref{res_set},
we establish the following quadrilinear estimate in
Lemma~\ref{lem:resonant-quadrilinear}(a):
\[
\int_{\sum_{i=1}^4\tau_i=0}\int_{\Delta_4}
\frac{
\langle H_\alpha\rangle^{1/2}
\prod_{i=1}^4|\widetilde f_i(n_i,\tau_i)|
}{
\langle\tau_1-n_1^3\rangle^{1/2}
\langle\tau_2-n_2^3\rangle^{1/2}
\langle\tau_3-\alpha n_3^3\rangle^{1/2}
\langle\tau_4-\alpha n_4^3\rangle^{1/2}
}
\lesssim
\frac1\lambda
\prod_{i=1}^4\|f_i\|_{L^2(\mathbb T_\lambda\times\mathbb R)},
\]
for $\alpha\in\mathcal A$ and $\lambda\gg1$.
The quartic resonance function $H_\alpha$ is defined by
\[
H_\alpha(n_1,n_2,n_3,n_4)
:=n_1^3+n_2^3+\alpha n_3^3+\alpha n_4^3,
\qquad n_1+n_2+n_3+n_4=0,
\]
as in \eqref{eq:quartic-resonance-H}.
The constants $c_1,c_2$ and $\kappa_\alpha$ are defined in
\eqref{c1c2} and \eqref{eq:kappa-alpha-choice}, respectively.
We point out that, for each fixed $n_1$, each resonance case
in $\Delta_4$ admits at most finite possible choice of $n_4$.
Thanks to this sparsity, we obtain the decay factor
$\lambda^{-1}$, together with the resonance weight
$\langle H_\alpha\rangle^{1/2}$.
The same finiteness property is also used in the proof of
the sharp bilinear estimate for the local theory in \cite[proof of Proposition~3.7]{Oh2009}.
More precisely, consider the nearly resonant set
\[
\Delta_3:=
\left\{
(n_1,n_2,n_3)\in\Gamma_3^\lambda:
\min_{j=1,2}|n_2+c_jn_1|<\frac1\lambda
\right\}.
\]
In fact, the finite frequency summation argument in that proof gives,
in the present notation,
\[
\int_{\sum_{i=1}^3\tau_i=0}\int_{\Delta_3}
\frac{
\langle R\rangle^{1/2}
\prod_{i=1}^3|\widetilde f_i(n_i,\tau_i)|
}{
\langle\tau_1-n_1^3\rangle^{1/2}
\langle\tau_2-\alpha n_2^3\rangle^{1/2}
\langle\tau_3-\alpha n_3^3\rangle^{1/2}
}
\lesssim
\frac1{\lambda^{1/2}}
\prod_{i=1}^3\|f_i\|_{L^2(\mathbb T_\lambda\times\mathbb R)},
\]
where the cubic resonance function $R(n_1,n_2,n_3)
:=n_1^3+\alpha n_2^3+\alpha n_3^3$ is as defined above Definition \ref{Def,mbti}.
For each fixed $n_1$, there are only finitely many admissible
values of $n_2$, uniformly in $\lambda$.
Thus, our quadrilinear estimate is in the same spirit
as the sharp bilinear estimate. This connection suggests
that the sparsity of nearly resonant interactions can also
be used to control the energy increment without raising
the regularity threshold of the local theory.
In our argument, we make use of this sparsity in both
the quadrilinear and quintilinear estimates; see
Sections~\ref{Sec, multi-est} and~\ref{Sec, quint-est}.
Combined with the partial correction in \eqref{E3_def},
these multilinear estimates yield the almost conservation
law in Proposition~\ref{Prop, growth_E3}, which allows us
to establish the sharp global well-posedness stated in
Theorem~\ref{Theorem}.

The rest of the paper is organized as follows.  Section \ref{Sec, pre} introduces the scaled problem, the Fourier restriction spaces, and auxiliary estimates needed in subsequent sections.  In Section \ref{Sec, mod_energy}, we construct the modified energies and the partial correction. Section \ref{Sec, gwp} proves the main theorem based on the almost conserved third modified energy and the bounds for the correction terms on each time slice. Finally, Section \ref{Sec, multi-est} and Section \ref{Sec, quint-est} establish the quadrilinear and quintilinear estimates respectively, which are used to justify the almost conserved third modified energy.

\section{Preliminaries}\label{Sec, pre}
\subsection{Notations}\label{Subsec, notation}
It is well-known that the well-posedness problems of KdV type equations in $H^{s}$ are subcritical if $s>-\frac32$. In such cases, we can perform scaling such that the scaled initial data are small in $H^{s}$, which can be helpful in establishing the well-posedness.
Let $\lambda\geq 1$ and scale the equation \eqref{GeneralMajdaBiello} by $(u,v) \longmapsto (u^{\lambda}, v^{\lambda})$, where
\begin{equation}\label{Scaling_form}
	(u^{\lambda}, v^{\lambda})(x,t)= \frac{1}{\lambda^2} (u, v)\left(\lambda^{-1} x, \lambda^{-3} t \right), \qquad x\in \mathbb{T}_{\lambda} := [0,2\pi\lambda), \, t\in\mathbb{R}.
\end{equation}
Then $u^{\lambda}$ and $v^{\lambda}$ satisfy the following equations:
\begin{equation}\label{MajdaBielloScaling}
	\left\{\begin{aligned}
		&u_t^{\lambda}+u_{xxx}^{\lambda}+v^{\lambda}v_x^{\lambda}= 0 , \\
		&v_t^{\lambda}+ \alpha v_{xxx}^{\lambda}+(u^{\lambda}v^{\lambda})_x=0 , \\
		&(u^{\lambda},v^{\lambda})\mid_{t=0} = (u^{\lambda}_{0}, v^{\lambda}_{0}) \in H_0^s(\mathbb{T}_{\lambda})\times H^s(\mathbb{T}_{\lambda}),
	\end{aligned}\right.
	\quad\quad x\in \mathbb{T}_{\lambda}, \, t\in \mathbb{R},
\end{equation}
where 
\[ 
(u^{\lambda}_{0}, v^{\lambda}_{0})(x) := \lambda^{-2} (u_0, v_0)\left(\lambda^{-1} x \right), \qquad x\in \mathbb{T}_{\lambda}.
\]
For functions on $\mathbb{T}_{\lambda}$, the corresponding frequency space is
$\mathbb{Z}_{\lambda} :=\big\{k \,\big|\, k=n/\lambda\,\,\, \text{for some $n\in \mathbb{Z}$}\big\}$. 
For $1 \leq p < \infty$, we say $f \in \ell^p(\mathbb{Z}_\lambda)$ if 
\begin{equation*}
	\|f\|_{\ell^p(\mathbb{Z}_\lambda)} = \left(\int_{\mathbb{Z}_\lambda} |f(n)|^p ~ {d} n\right)^{1/p} := \left(\frac{1}{2\pi\lambda}\sum_{n \in \mathbb{Z}_\lambda}|f(n)|^p\right)^{1/p} < \infty.
\end{equation*}
For $p=\infty$, we say $f\in \ell^\infty(\mathbb{Z}_\lambda)$ if 
$\|f\|_{\ell^\infty(\mathbb{Z}_\lambda)} := \sup\limits_{n\in\mathbb{Z}_\lambda} |f(n)| < \infty$.

The solutions of the KdV equation are usually studied in Fourier restriction spaces, which were originally introduced in \cite{Bourgain93,KPV96SahrpIndex}. On the real line, one typically works in $X^{s,b}$ with $b>\frac12$. In the periodic setting, however, the relevant estimates require $b=\frac12$, and $X^{s,\frac12}$ does not embed into $C(\mathbb{R}_t;H^s_x)$. We therefore work in $Y^s$, an augmentation of $X^{s,\frac12}$ that embeds continuously into $C(\mathbb{R}_t;H^s_x)$; see, e.g., \cite{Iterm03}.

\begin{mydef}
Let $\lambda\in\mathbb{R}$ with $\lambda\geq 1$. For any function $w$ in the Schwartz space $\mathscr{S}(\mathbb{T}_{\lambda}\times\mathbb{R})$, we define $\widetilde{w}$ to be the space-time Fourier transform of $w$:
\[
	\widetilde{w}(n,\tau) = \int_{ \mathbb{R}}\int_{\mathbb{T}_\lambda} e^{-i(xn + t\tau)} w(x,t) \,dx \,dt, \quad\forall\, n\in \mathbb{Z}_{\lambda}, \, \tau\in\mathbb{R}.
\]
Meanwhile, we define $\widehat{w}(n,t)$ to be the Fourier transform of $w(x,t)$ in the spatial variable:
\[
	\widehat{w}(n,t) = \int_{\mathbb{T}_\lambda} e^{-i xn} w(x,t) \, dx, \quad\forall\, n\in\mathbb{Z}_{\lambda}, \, t\in\mathbb{R}.
\]
\end{mydef}

\begin{mydef}
Let $\alpha, \,s,\,b,\,\lambda\in\mathbb{R}$ with $\alpha\neq 0$ and $\lambda\geq 1$.
\begin{itemize}
	\item[(1)] The Fourier restriction space, $X^{s,b}_{\alpha,\lambda}$, is defined to be the completion of the Schwartz space $\mathscr{S}(\mathbb{T}_{\lambda}\times\mathbb{R})$ with respect to the norm
	\begin{equation}\label{FR norm}
	\|w\|_{X^{s,b}_{\alpha,\lambda}} := \left\| \langle n \rangle^s \langle \tau - \alpha n^3\rangle^{b} \widetilde{w}(n,\tau) \right\|_{\ell^2_n(\mathbb{Z}_\lambda )L^2_\tau(\mathbb{R})},\end{equation}
	where  $\widetilde{w}$ refers to the space-time Fourier transform of $w$.
	
	\item[(2)] The spaces $Y^s_{\alpha,\lambda}$ and $Z^s_{\alpha,\lambda}$ are defined similarly by the following norms, respectively.
	\begin{align}
		\|w\|_{Y^s_{\alpha,\lambda}} &:= \|w\|_{X^{s,\frac{1}{2}}_{\alpha,\lambda}} + \|\langle n\rangle^s \widetilde{w}(n,\tau)\|_{\ell^2_n(\mathbb{Z}_\lambda)L^1_\tau(\mathbb{R})}, \label{work_space} \\
		\|w\|_{Z^s_{\alpha,\lambda}} &:= \|w\|_{X^{s,-\frac{1}{2}}_{\alpha,\lambda}} + \|\langle n\rangle^s\langle\tau-\alpha n^3\rangle^{-1}\widetilde{w}(n,\tau)\|_{\ell^2_n(\mathbb{Z}_\lambda)L^1_\tau(\mathbb{R})}.  \label{auxillaryspace}
	\end{align}
\end{itemize}
\end{mydef}
The $Z^s_{\alpha,\lambda}$ spaces are used to control the
$Y^s_{\alpha,\lambda}$ norm of the Duhamel terms; see, for example,
\eqref{Duhamel_Estimate}. For notation simplicity, we drop the subscript $\alpha$ in the above spaces when $\alpha=1$. For example, $X^{s,b}_{\lambda}$, $Y^{s}_{\lambda}$ and $Z^{s}_{\lambda}$ refer to $X^{s,b}_{1, \lambda}$, $Y^{s}_{1,\lambda}$ and $Z^{s}_{1,\lambda}$ respectively.
On the other hand, for any time interval $J$, we define
$X^{s,b}_{\alpha,\lambda}(\mathbb{T}_{\lambda} \times J)$ to be the restriction of
$X^{s,b}_{\alpha,\lambda}$ to the time interval $J$ with the norm
\begin{equation}
	\|w\|_{X^{s,b}_{\alpha,\lambda}(\mathbb{T}_{\lambda} \times J)} = \inf\left\{ \|W\|_{X^{s,b}_{\alpha,\lambda}} : W|_{t\in J } = w|_{t\in J} \right\}.
\end{equation}
When there is no confusion, we simply denote $X^{s,b}_{\alpha,\lambda}(\mathbb{T}_{\lambda} \times J)$ as $X^{s,b}_{\alpha,\lambda} $.

For $\frac12\leq s < 1$, we fix a smooth even function $\varphi_s\in C^\infty(\mathbb{R})$ that is nonincreasing in $|\xi|$ and satisfies $0\leq\varphi_s\leq1$ and
\begin{equation}\label{varphi}
\varphi_{s}(\xi) = 
\left\{\begin{array}{ccl}
	1 & \text{if} &  |\xi| \leq 1, \\
	|\xi|^{s-1}  & \text{if} & |\xi| \geq 2.
\end{array}\right.
\end{equation}
For any $N\geq 1$, we define the function $m_{N,s}$ to be 
\[
	m_{N,s}(\xi) := \varphi_{s}(N^{-1} \xi) = 
	\left\{\begin{array}{ccl}
		1 & \text{if} &  |\xi| \leq N, \\
		N^{1-s} |\xi|^{s-1}  & \text{if} & |\xi| \geq 2N.
	\end{array}\right.
\]
Then $m_{N,s} $ is a smooth, nonincreasing and even function satisfying $0 < m_{N,s} \leq 1$. More properties of $m_{N,s}$ can be found in \cite{Iterm03} or \cite[Section 3.2]{YangRealLine}.

We now use $m_{N,s}$ with large $N$ to define the $I$-operators which were introduced in \cite{Iterm03}. 
\begin{mydef}
Let $\frac{1}{2} \le s < 1$ and $N \geq 100$. The Fourier multiplier operator $I_{N,s}$ is defined by
\begin{equation}
	\widehat{I_{N,s} f}(n) = m_{N,s}(n) \widehat{f}(n),
\end{equation}
where $f \in L^{1}(\mathbb{T}_{\lambda})$ with $\lambda\geq 1$,  and $\widehat{f}$ refers to the spatial Fourier transform of $f$.
\end{mydef}

It is readily seen that the operator $I_{N,s}$ is a linear bounded map from $H^s(\mathbb{T}_\lambda)$ to $H^1(\mathbb{T}_\lambda)$. Moreover, $I_{N,s}$ can also be regarded as a linear bounded map from $X^{s,b}_{\alpha,\lambda}$ to $X^{1,b}_{\alpha,\lambda}$ for any $\alpha$ and $b$.
Throughout the paper, $N\gg 1$ denotes a sufficiently large parameter,
whose lower bound may depend on the fixed parameters $\alpha$ and $s$,
and on auxiliary positive exponents. Its value will be chosen in the
global iteration argument in Section \ref{Subsec, gwp}.

For notation simplicity, we write $I=I_{N,s}$ and $m=m_{N,s}$ in what follows.
On the other hand, we fix $\eta \in C^\infty_c(\mathbb{R})$ to be a smooth cutoff function supported on $[-2,2]$ with $\eta \equiv 1$ on  $[-1,1]$. 
Furthermore, we write $A\lesssim B$ to mean $A\le C B$, where the implicit constant $C$ is independent of $N$ and $\lambda$ unless otherwise specified.

\subsection{Basic estimates}
\label{Subsec, basic}
In this section, we collect some standard and useful lemmas. 
First, we recall the Airy group $S_{\alpha,\lambda}$ on $\mathbb{T}_{\lambda}$, which is the solution operator for
\[
	w_t+\alpha w_{xxx}=0,\qquad w(0)=w_0,
\]
defined by
\[
	\widehat{ S_{\alpha, \lambda}(t) w_0}(n) := e^{i\alpha n^3 t} \widehat {w_0}(n),
	\qquad n\in \mathbb{Z}_\lambda.
\]
Secondly, throughout this paper, we fix $\eta$ to be any cut-off function in $C^{\infty}(\mathbb{R})$ such that $\eta\equiv 1$ on $[-1,1]$ and $\text{supp } \eta \subseteq [-2,2]$.

\begin{lemma}\label{lemma1}
	For any $s,\alpha,\lambda\in\mathbb{R}$ with $\alpha\neq 0$ and $\lambda\geq 1$, we have
	\begin{equation}\label{First_Linear_Estimate}
		\|\eta(t)S_{\alpha,\lambda}(t)f\|_{Y^{s}_{\alpha,\lambda}} \le C_1\|f\|_{H^s(\mathbb{T}_{\lambda})},
	\end{equation}
	and 
	\begin{equation}\label{Duhamel_Estimate}
		\left\|\eta(t) \int_0^t S_{\alpha,\lambda}(t-t') g(t') \,dt'\right\|_{Y^s_{\alpha,\lambda}} \le C_2 \|g\|_{Z^s_{\alpha,\lambda}},
	\end{equation}
	where the constants $C_1$ and $C_2$ depend only on $\eta$.
\end{lemma}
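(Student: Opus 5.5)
The plan is to work directly on the Fourier side, where both estimates reduce to one-dimensional inequalities in $\tau$ for each fixed frequency $n\in\mathbb{Z}_\lambda$. Since the norms of $Y^s_{\alpha,\lambda}$ and $Z^s_{\alpha,\lambda}$ are $\ell^2_n$-sums weighted by $\langle n\rangle^s$, and both operators act diagonally in $n$, it suffices to prove the bounds for a single mode $e^{ixn}$, uniformly in $n$ and $\lambda$. The normalized counting measure on $\mathbb{Z}_\lambda$ plays no role. The phase $\alpha n^3$ can be removed by the modulation $\tau\mapsto \tau-\alpha n^3$. This reduces everything to scalar statements about functions of $t$ with $\alpha n^3=0$, and uniformity in $\alpha$, $\lambda$, $s$ follows automatically.

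For \eqref{First_Linear_Estimate}, I compute $\widetilde{\eta S_{\alpha,\lambda} f}(n,\tau)=\widehat\eta(\tau-\alpha n^3)\widehat f(n)$. Hence the $X^{s,1/2}$ part equals $\|\langle\tau\rangle^{1/2}\widehat\eta\|_{L^2_\tau}\,\|f\|_{H^s}$. The $\ell^2_nL^1_\tau$ part equals $\|\widehat\eta\|_{L^1}\,\|f\|_{H^s}$. Both constants are finite because $\widehat\eta$ is Schwartz, so we may take $C_1=\|\langle\tau\rangle^{1/2}\widehat\eta\|_{L^2}+\|\widehat\eta\|_{L^1}$.

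For \eqref{Duhamel_Estimate}, I follow the standard argument (e.g.\ Ginibre–Tsutsumi–Velo, or \cite{Iterm03}). After the modulation, for a fixed $n$ it suffices to show
\[
\Bigl\|\eta(t)\int_0^t G(t')\,dt'\Bigr\|_{Y}\lesssim \|\langle\tau\rangle^{-1/2}\widehat G\|_{L^2}+\|\langle\tau\rangle^{-1}\widehat G\|_{L^1},
\]
where $\|h\|_Y=\|\langle\tau\rangle^{1/2}\widehat h\|_{L^2}+\|\widehat h\|_{L^1}$. Writing $\int_0^t G=\int \widehat G(\tau)\frac{e^{it\tau}-1}{i\tau}\,d\tau$, I split $\widehat G=\widehat G\,\mathbf 1_{|\tau|\le1}+\widehat G\,\mathbf 1_{|\tau|>1}$.

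On $|\tau|\le1$, I Taylor expand $\frac{e^{it\tau}-1}{i\tau}=\sum_{k\ge1}\frac{(it)^{k}\tau^{k-1}}{k!}\,i^{-1}$. Each term $\eta(t)t^k$ has $Y$ norm at most $C^k$, which is summable against $1/k!$. The resulting bound is $\int_{|\tau|\le1}|\widehat G|\,d\tau$, which is controlled by the $L^1$ part of the right-hand side.

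On $|\tau|>1$, I treat two pieces separately. The piece $\eta(t)\int\widehat G(\tau)(i\tau)^{-1}d\tau$ has $Y$ norm at most $C\|\langle\tau\rangle^{-1}\widehat G\|_{L^1}$. The piece $\eta(t)\,\mathcal F^{-1}\bigl[\widehat G\,\mathbf 1_{|\tau|>1}/(i\tau)\bigr]$ has Fourier transform $\widehat\eta*\bigl(\widehat G\,\mathbf 1_{|\tau|>1}/\tau\bigr)$. For its $X^{1/2}$ part I use $\langle\tau\rangle^{1/2}\lesssim\langle\tau-\sigma\rangle^{1/2}\langle\sigma\rangle^{1/2}$ and Young's inequality, which gives a bound by $\|\langle\sigma\rangle^{-1/2}\widehat G\|_{L^2}$. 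For its $L^1$ part, Young's inequality gives a bound by $\|\langle\sigma\rangle^{-1}\widehat G\|_{L^1}$.

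The only delicate point is this last high-modulation piece, and specifically the need for the $L^1_\tau$ component of $Z$. The bound $\|\widehat\eta*(\widehat G/\tau)\|_{L^1}$ cannot be obtained from the $X^{s,-1/2}$ norm alone, since that would require the false embedding $L^2(\langle\tau\rangle^{-1}d\tau)\subset L^1$ at the $b=\tfrac12$ endpoint. This is exactly why $Z^s_{\alpha,\lambda}$ includes the $\ell^2_nL^1_\tau$ term. Finally, summing the single-mode estimates in $\ell^2_n$ with the weight $\langle n\rangle^s$ gives \eqref{Duhamel_Estimate}, with $C_2$ depending only on $\eta$.
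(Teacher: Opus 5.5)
Your proposal is correct and is essentially the paper's approach: the paper gives no argument of its own and cites Lemmas~7.1 and~7.2 of \cite{Iterm03}, and what you wrote is the standard proof behind that citation. That proof reduces to a single frequency via the modulation $\tau\mapsto\tau-\alpha n^3$ (so the constants are independent of $\alpha,\lambda,s$), computes the free evolution exactly, and splits the Duhamel integral into $|\tau|\le1$ (Taylor expansion) and $|\tau|>1$ (constant term plus $\widehat\eta*(\widehat G\,\mathbf 1_{|\tau|>1}/\tau)$), using the $\ell^2_nL^1_\tau$ component of $Z^s_{\alpha,\lambda}$ exactly where you say it is needed.
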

\begin{proof}
	The proof can be found in Lemma~7.1 and Lemma~7.2 of \cite{Iterm03}.
\end{proof}
\begin{lemma}\label{lemma2} Let $\alpha\in\mathcal{A}$ and $\lambda\geq 1$. Then for any $s\in (s_{\alpha},1)$ and $\delta > 0$, the following bilinear estimates hold:
\begin{align}
	\|I[\partial_x(v^\lambda v^\lambda)]\|_{Z^1_{\lambda}(\mathbb{T}_\lambda \times \mathbb{R})}
	&\le C_3 \lambda^{s_{\alpha}+\delta}
	\|Iv^\lambda\|_{Y^1_{\alpha,\lambda}(\mathbb{T}_\lambda \times \mathbb{R})}^2,  \label{bl-Iop1}\\
	\|I[\partial_x(u^\lambda v^\lambda)]\|_{Z^1_{\alpha,\lambda}(\mathbb{T}_\lambda \times \mathbb{R})}
	&\le  C_3\lambda^{s_{\alpha}+\delta}
	\|Iu^\lambda\|_{Y^1_{\lambda}(\mathbb{T}_\lambda \times \mathbb{R})}
	\|Iv^\lambda\|_{Y^1_{\alpha,\lambda}(\mathbb{T}_\lambda \times \mathbb{R})},  \label{bl-Iop2}
\end{align}
where the function $u^\lambda$ in \eqref{bl-Iop2} is required to have zero mean, and $C_3$ depends on $\alpha$, $s$ and $\delta$, but not on $N$.
Moreover, the above estimates are sharp in the sense that $\delta$ cannot be chosen as $0$.
\end{lemma}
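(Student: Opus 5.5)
The plan is to reduce the statement to the corresponding bilinear estimates without the $I$-operator on the rescaled torus, tracking the dependence on $\lambda$. We transfer the $I$-operator through the interpolation lemma of \cite{Iterm03}. Since $m$ is smooth, even and nonincreasing, and $m(\xi)\langle\xi\rangle^{1-s}$ is comparable to a nondecreasing weight, it is enough to prove the $H^s$-level estimates
\[
\|\partial_x(v_1v_2)\|_{Z^{s}_{\lambda}}\lesssim \lambda^{s_\alpha+\delta}\|v_1\|_{Y^{s}_{\alpha,\lambda}}\|v_2\|_{Y^{s}_{\alpha,\lambda}},\qquad
\|\partial_x(uv)\|_{Z^{s}_{\alpha,\lambda}}\lesssim \lambda^{s_\alpha+\delta}\|u\|_{Y^{s}_{\lambda}}\|v\|_{Y^{s}_{\alpha,\lambda}}
\]
for $s$ near $1$, with constants uniform in $N$. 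The derivative gain $m(n_3)\langle n_3\rangle/(m(n_1)\langle n_1\rangle\, m(n_2)\langle n_2\rangle)\lesssim \langle n_3\rangle^{s}/(\langle n_1\rangle^{s}\langle n_2\rangle^{s})$ takes care of the rest. Applied directly, this comparison can be verified case by case ($|n_j|\le N$ or $\ge N$), and it is what makes $C_3$ independent of $N$.

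For the $H^s$-level estimates, we follow Oh \cite{Oh2009}, Proposition 3.7, rescaled to $\mathbb{Z}_\lambda$. By duality we write the $X^{s,-1/2}$ part as a trilinear form over $n_1+n_2+n_3=0$ and $\tau_1+\tau_2+\tau_3=0$. The key identity is $\sum(\tau_j-\text{phase}_j)=-R(n_1,n_2,n_3)$, so $\max_j\langle\sigma_j\rangle\gtrsim\langle R\rangle$, where $\sigma_j$ are the modulation variables. There are two regions.

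\emph{Nonresonant region.} Here $\min_j|n_2+c_jn_1|\ge 1/\lambda$. The factorization $R=-3\alpha n_1(n_2+c_1n_1)(n_2+c_2n_1)$ (with $x=n_1$ for the $u$-mode) lets $\langle R\rangle^{1/2}$ absorb the derivative $|n_3|$ whenever $|n_1|$ dominates. The remaining sums are handled with the $L^4$ Strichartz estimate on $\mathbb{T}_\lambda$ (Bourgain's estimate with loss $\lambda^{0+}$) or by Cauchy–Schwarz and counting in $n$. The Diophantine condition enters through the following bound: with $n_j=k_j/\lambda$ for integers $k_j$,
\[
|n_2+c_jn_1|=\lambda^{-1}|k_2+c_jk_1|\ge \lambda^{-1}K|k_1|^{-1-\nu}=K\lambda^{\nu}\,\lambda^{-1}\,(\lambda|n_1|)^{-1-\nu}\cdot\lambda^{0},
\]
for every $\nu>\nu(\rho_\alpha)$, valid whenever $k_2\neq -c_jk_1$ (which always holds since $c_j$ is irrational). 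The $c_j$ are affine in $\rho_\alpha$, so they have the same type. This gives a lower bound $|R|\gtrsim |n_1|^{1-\nu}\lambda^{-2-\nu}\cdot|n_1|$ in the worst case. Converting it into the $\langle n_3\rangle^{1-s}$ loss produces exactly the power $\lambda^{(1+\nu)/2}=\lambda^{s_\alpha+\delta}$ after taking $\nu=\nu(\rho_\alpha)+2\delta$. It is also where $s>s_\alpha$ is needed, so that the high-frequency factor $|n_1|^{-(s-s_\alpha-\delta)}$ is summable.

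\emph{Nearly resonant region.} Here $\min_j|n_2+c_jn_1|<1/\lambda$. For each fixed $n_1$ only $O(1)$ values of $n_2$ qualify, uniformly in $\lambda$, as noted in the introduction. We sum over that finite set and use Cauchy–Schwarz in $\tau$ with the $L^1_\tau$ components of $Y$. This gives the factor $\lambda^{-1/2}$ from the measure normalization of $\ell^2(\mathbb{Z}_\lambda)$, but it also requires controlling $|n_3|\,\langle n_3\rangle^{s}/(\langle n_1\rangle^{s}\langle n_2\rangle^{s})\sim|n_1|^{1-s}$. This is the step I expect to be the main obstacle: exactly resonant interactions carry no modulation gain. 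The zero-mean assumption on $u$ removes $n_1=0$ in the second estimate. Because $k_2+c_jk_1$ is never exactly zero, the Diophantine bound again converts $|k_2+c_jk_1|<1$ into $|k_1|$ being constrained relative to $\lambda$, so the contribution still carries a power $\langle R\rangle\gtrsim |n_1|\cdot\lambda^{-1}\cdot(\lambda|n_1|)^{-1-\nu}\lambda^{-1}$ of modulation. I would first try to show that this suffices when $s>s_\alpha$, with the $\lambda$-power collected as above.

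The $L^1_\tau$ part of the $Z$ norm is handled by the usual argument of \cite{Iterm03}: we split $\langle\sigma_3\rangle\gtrsim\langle R\rangle$ versus the other modulations dominating, and use Cauchy–Schwarz in $\tau$ with an $\epsilon$ of extra modulation room, which is absorbed in $\delta$.

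Finally, for sharpness with $\delta=0$ we follow Oh's counterexample rescaled. We choose integer pairs $(k_1,k_2)$ from the best rational approximants $p/q$ of $c_j$, which realize $|k_2+c_jk_1|\approx|k_1|^{-1-\nu(\rho_\alpha)+}$ along a sequence, and take $f_i$ to be modulated free solutions concentrated at these frequencies. Plugging these into the estimate forces the constant to exceed $\lambda^{s_\alpha}$ times an unbounded factor.
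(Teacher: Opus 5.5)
Your skeleton matches the paper's proof. The paper checks only the pointwise bound $\langle n_3\rangle^{1-s}m(n_3)\lesssim\langle n_1\rangle^{1-s}m(n_1)\,\langle n_2\rangle^{1-s}m(n_2)$ on $n_1+n_2+n_3=0$. It assumes $|n_1|\ge|n_2|$, so $|n_3|\le 2|n_1|$, and uses that $A(n)=\langle n\rangle^{1-s}m(n)$ is essentially nondecreasing with $A\gtrsim 1$. It then imports Oh's Propositions 3.7--3.8, which already give the $H^s$-level estimates on $\mathbb{T}_\lambda$ with the factor $\lambda^{s_\alpha+\delta}$. Your displayed comparison, taken at the same $s$, is exactly this. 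However, the claim that it suffices to have the $H^s$-level estimates ``for $s$ near $1$'' is wrong. For $s'>s$ the weight $\langle n\rangle^{1-s'}m(n)\sim N^{1-s}|n|^{s-s'}$ decays for $|n|\ge 2N$, and the pointwise comparison fails when $|n_1|\sim|n_2|\gg|n_3|\sim N$. The interpolation lemma of \cite{Iterm03} likewise needs every regularity in $[s,1]$, including $s$ itself. Since Oh's estimate can simply be quoted, your re-derivation of it is unnecessary. That matters, because as written the re-derivation does not work.

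With $k_i=\lambda n_i$, the Diophantine condition gives $|n_2+c_jn_1|\ge K\lambda^{-2-\nu}|n_1|^{-1-\nu}$, with no factor $\lambda^{\nu}$. The complementary factor satisfies $|n_2+c_{j'}n_1|\sim|n_1|$, so $|R|\gtrsim\lambda^{-2-\nu}|n_1|^{1-\nu}$.
\begin{itemize}
\item Your two displayed lower bounds for $R$ miss this in opposite directions: the first has an extra factor $|n_1|$, the second has $\lambda^{-1}$ in place of $|n_1|$. The first would yield the threshold $s\ge\nu/2$ rather than $s_\alpha$, and with the second nothing closes.
\item The Diophantine input is needed only in the nearly resonant region. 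Where $\min_j|n_2+c_jn_1|\ge 1/\lambda$, one has $|R|\gtrsim|n_1|^2/\lambda$ directly.
\item $|k_2+c_jk_1|<1$ does not constrain $|k_1|$ relative to $\lambda$: every $k_1$ admits such a $k_2$.
\end{itemize}

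The step you flag as the main obstacle closes as follows. Since $c_j\notin\mathbb{Q}$ there is no exact resonance, so $|n_1|^{1-s}\langle R\rangle^{-1/2}\lesssim\lambda^{1+\nu/2}|n_1|^{\frac{1+\nu}{2}-s}\le\lambda^{1+\nu/2}$ for $|n_1|\gtrsim1$ and $s\ge\frac{1+\nu}{2}$. The $O(1)$ admissible $n_2$ per $n_1$, together with $\|\widehat F\|_{\ell^\infty}\le(2\pi\lambda)^{1/2}\|\widehat F\|_{\ell^2}$, give a factor $\lambda^{-1/2}$. The product is $\lambda^{(1+\nu)/2}=\lambda^{s_\alpha+\delta}$ with $\nu=\nu(\rho_\alpha)+2\delta$; small $\delta$ suffices.

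Finally, the paper's proof contains no argument for the sharpness clause, and your counterexample cannot supply one. Take single-frequency data at frequency size $M$, on best approximants or not. The ratio of the two sides is $\lesssim\lambda^{-1/2}M^{1-s}\min\{1,\lambda^{1+\nu/2}M^{-(1-\nu)/2}\}\le\lambda^{-\frac12+(1-s)\frac{2+\nu}{1-\nu}}$ for any $\nu>\nu(\rho_\alpha)$. For $\nu$ close to $\nu(\rho_\alpha)$, this exponent is strictly below $s_\alpha$ whenever $s>s_\alpha$. So no unbounded factor over $\lambda^{s_\alpha}$ can appear.
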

\begin{proof}
 Propositions~3.7 and Proposition~3.8 of \cite{Oh2009} give the following bilinear estimates when $s_{\alpha}<1$:
\begin{equation*}
	\|\partial_x(v^\lambda v^\lambda)\|_{Z^s_{\lambda}(\mathbb{T}_\lambda \times \mathbb{R})}
	\lesssim \lambda^{s_{\alpha}+\delta}
	\|v^\lambda\|_{Y^s_{\alpha,\lambda}(\mathbb{T}_\lambda \times \mathbb{R})}^2,
\end{equation*}
and
\begin{equation*}
		\|\partial_x(u^\lambda v^\lambda)\|_{Z^s_{\alpha,\lambda}(\mathbb{T}_\lambda \times \mathbb{R})}
		\lesssim\lambda^{s_{\alpha}+\delta}
		\|u^\lambda\|_{Y^s_{\lambda}(\mathbb{T}_\lambda \times \mathbb{R})}
		\|v^\lambda\|_{Y^s_{\alpha,\lambda}(\mathbb{T}_\lambda \times \mathbb{R})}.
\end{equation*}
Based on the proof of these bilinear estimates, for the corresponding estimates (\ref{bl-Iop1}) and (\ref{bl-Iop2}) with the $I$-operator, the lemma follows directly from the proof in \cite{Oh2009} as long as the following multiplier bound is justified:
\begin{equation}\label{target_est1}
	\langle n_3\rangle^{1-s} m(n_3) \lesssim \langle n_1\rangle^{1-s} m(n_1) \langle n_2\rangle^{1-s} m(n_2),
\end{equation}
where $n_1+n_2+n_3=0$. Denote
$A(n):=\langle n\rangle^{1-s}m(n)$.
Then it reduces to proving 
\begin{equation}\label{A_ineq1}
	A(n_3) \lesssim A(n_1) A(n_2). 
\end{equation}
Without loss of generality, we may assume that $|n_1| \geq |n_2|$. Then it follows from $n_3 = -(n_1+n_2)$ that $|n_3| \leq 2|n_1|$, which implies that $A(n_3) \lesssim A(n_1)$. Meanwhile, we can check $A(n_2) \gtrsim 1$. Thus, (\ref{A_ineq1}) is verified.
\end{proof}
\begin{lemma}\label{lemma3}
	For any $s,\alpha,\lambda\in\mathbb{R}$ with $\alpha\neq 0$ and $\lambda\geq 1$, there exists a universal constant $C_4$ such that 
	\begin{equation}\label{L6L2_X0_13}
		\|f\|_{L^6_t L^2_x(\mathbb{T}_\lambda \times \mathbb{R})} \le C_4\|f\|_{X^{0,\frac{1}{3}}_{\alpha,\lambda}}, 
	\end{equation}
	and
	\begin{equation}\label{L2L8_X0_38}
		\|f\|_{L^8_t L^2_x(\mathbb{T}_\lambda \times \mathbb{R})}
		\le C_4\|f\|_{X^{0,\frac{3}{8}}_{\alpha,\lambda}}.
	\end{equation}
\end{lemma}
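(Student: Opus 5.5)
The strategy is to reduce both estimates to the known periodic Strichartz-type bound $L^4_{t,x}$ with respect to $X^{0,1/3}$, and to the trivial energy bound $L^\infty_t L^2_x$ with respect to the $\ell^2_nL^1_\tau$ norm, through a transference argument. A cleaner route uses the structure of $L^p_tL^2_x$ directly, and we follow that. By Plancherel in $x$ on $\mathbb{T}_\lambda$ (with the normalized counting measure on $\mathbb{Z}_\lambda$), $\|f(\cdot,t)\|_{L^2_x}=\|\widehat f(n,t)\|_{\ell^2_n}$. Writing $\widehat f(n,t)=e^{i\alpha n^3 t}g_n(t)$ with $g_n(t)=\widehat{S_{\alpha,\lambda}(-t)f(t)}(n)$, we get $|\widehat f(n,t)|=|g_n(t)|$. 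Moreover, the time Fourier transform of $g_n$ is $\widetilde f(n,\tau+\alpha n^3)$, so $\|f\|_{X^{0,b}_{\alpha,\lambda}}=\|\langle\tau\rangle^b\widehat{g_n}(\tau)\|_{\ell^2_nL^2_\tau}=\|g_n\|_{\ell^2_nH^b_t}$. Thus the dispersion drops out entirely, and the claim reduces to the statement
\[
\big\|\,\|g_n(t)\|_{\ell^2_n}\big\|_{L^p_t}\le C\,\|g_n\|_{\ell^2_n H^b_t},\qquad (p,b)=(6,\tfrac13),\ (8,\tfrac38).
\]
The constant in this reduced statement is independent of $\alpha$, $\lambda$ and $s$ (the latter plays no role at regularity $0$).

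Next, we apply Minkowski's inequality. Since $p\ge2$, we have $\|\,\|g_n(t)\|_{\ell^2_n}\|_{L^p_t}\le \|\,\|g_n\|_{L^p_t}\|_{\ell^2_n}$. This follows by writing $\|g\|_{\ell^2_n}^2=\int_{\mathbb{Z}_\lambda}|g_n|^2\,dn$ and using the triangle inequality in $L^{p/2}_t$, which holds because $p/2\ge1$. It therefore suffices to prove, for each fixed $n$, the one-dimensional Sobolev embedding $\|g_n\|_{L^p_t(\mathbb{R})}\le C\|g_n\|_{H^b_t(\mathbb{R})}$, and then square and integrate over $n$.

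The Sobolev embedding $H^b(\mathbb{R})\hookrightarrow L^p(\mathbb{R})$ holds when $\frac1p=\frac12-b$. For $b=\frac13$ this gives $p=6$, and for $b=\frac38$ it gives $p=8$, exactly the exponents in \eqref{L6L2_X0_13} and \eqref{L2L8_X0_38}. These are the endpoint (non-strict) cases, which are still valid for $0<b<\frac12$ by Hardy--Littlewood--Sobolev applied to $\langle D\rangle^{-b}$. This yields a universal $C_4$. The only point requiring care is to use the endpoint embedding rather than the nonendpoint version; the rest is bookkeeping. One should also check that the normalization of $\widetilde w$ and of $\ell^2(\mathbb{Z}_\lambda)$ introduces no $\lambda$-dependent constants, and it does not, since Plancherel holds with the stated normalized measure.
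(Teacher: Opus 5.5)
Your proposal is correct and follows the paper's proof. Conjugating by $e^{-i\alpha n^3 t}$ preserves $|\widehat f(n,t)|$ and turns the $X^{0,b}_{\alpha,\lambda}$ norm into an $\ell^2_n H^b_t$ norm; Minkowski then moves $L^p_t$ inside $\ell^2_n$ since $p\ge 2$, and the one-dimensional embeddings $H^{1/3}_t\hookrightarrow L^6_t$ and $H^{3/8}_t\hookrightarrow L^8_t$ finish the argument, exactly as in the paper (the opening remark about transference from the $L^4_{x,t}$ bound is unnecessary and can be dropped).
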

\begin{proof}
	Since
	\[
	\|f\|_{L_t^6 L_x^2}
	= \left( \int_{\mathbb{R}} \left( \int_{\mathbb{T}_\lambda} |f(x,t)|^2 ~ {d} x \right)^3 ~ {d} t \right)^{1/6}
	= \left( \int_{\mathbb{R}} \left( \int_{\mathbb{Z}_\lambda} |\widehat{f}(n,t)|^2 ~ {d} n \right)^3 ~ {d} t \right)^{1/6},
	\]
	by Minkowski inequality and Sobolev embedding $H_t^{\frac13}(\mathbb R)\hookrightarrow L_t^6(\mathbb R)$, we have
	\begin{align*}
		\|f\|_{L_t^6 L_x^2}
		&\leq \left( \int_{\mathbb{Z}_\lambda} \left( \int_{\mathbb{R}} |\widehat{f}(n,t)|^6 ~ {d} t \right)^{1/3} ~ {d} n \right)^{1/2}
		= \left( \int_{\mathbb{Z}_\lambda} \left\| e^{-i\alpha t n^3} \widehat{f}(n,t) \right\|_{L_t^6(\mathbb{R})}^2 ~ {d} n  \right)^{1/2} \\
		&\lesssim \left( \int_{\mathbb{Z}_\lambda} \left\| e^{-i\alpha t n^3} \widehat{f}(n,t) \right\|_{H_t^{1/3}(\mathbb{R})}^2 ~ {d} n \right)^{1/2}
		= \|f\|_{X^{0, \frac{1}{3}}_{\alpha,\lambda}}.
	\end{align*}
	The estimate \eqref{L2L8_X0_38} follows similarly but using the Sobolev embedding
	$H_t^{\frac38}(\mathbb R)\hookrightarrow L_t^8(\mathbb R)$.
\end{proof}
\begin{lemma}\label{lemma4}
	For any $s,\alpha,\lambda\in\mathbb{R}$ with $\alpha\neq 0$ and $\lambda\geq 1$, there exists a constant $C_5$, which only depends on $\alpha$, such that 
	\begin{equation}\label{L4L4_X013}
		\|f\|_{L^4_{x,t}(\mathbb{T}_\lambda \times \mathbb{R})} \le C_5  \|f\|_{X^{0,\frac{1}{3}}_{\alpha,\lambda}}.
	\end{equation} 
\end{lemma}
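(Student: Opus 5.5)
This is Bourgain's periodic $L^4$ Strichartz estimate for the Airy group, here on the rescaled torus $\mathbb{T}_\lambda$ with dispersion $\alpha$; the plan is to prove it with uniformity in $\lambda \geq 1$. The approach is the standard counting argument. We dyadically decompose $f=\sum_{M}f_M$, where $\widetilde{f_M}$ is supported on $\langle\tau-\alpha n^3\rangle\sim M$. Writing $\|f\|_{L^4}^2=\|f^2\|_{L^2}$, it suffices to prove the bilinear bound
\[
\|f_{M_1}f_{M_2}\|_{L^2_{x,t}}\lesssim (M_1M_2)^{\frac13-\epsilon'}\|f_{M_1}\|_{L^2}\|f_{M_2}\|_{L^2}
\]
for some $\epsilon'>0$, or even with exponent exactly $\frac13$ but with a gain allowing summation. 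We then sum over $M_1,M_2$ by Cauchy--Schwarz.

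By Plancherel and Cauchy--Schwarz, the bilinear bound reduces to a measure estimate. For fixed $(n,\tau)$, we bound
\[
\sup_{n,\tau}\ \frac{1}{2\pi\lambda}\#\Bigl\{n_1\in\mathbb Z_\lambda:\ \bigl|\tau-\alpha n_1^3-\alpha(n-n_1)^3\bigr|\lesssim \max(M_1,M_2)\Bigr\}\cdot \min(M_1,M_2).
\]
The $\tau_1$-integration contributes the factor $\min(M_1,M_2)$, and the $n_1$-count is weighted by the measure $d n$ on $\mathbb Z_\lambda$.

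The key step is the count. We have $\alpha n_1^3+\alpha(n-n_1)^3=\alpha n\bigl(3(n_1-\tfrac n2)^2+\tfrac{n^2}{4}\bigr)$. So, for fixed $n$, the condition confines $(n_1-n/2)^2$ to an interval of length $\lesssim M/|\alpha n|$, where $M=\max(M_1,M_2)$. Hence $n_1$ lies in two intervals, each of length $\lesssim \min\bigl(\sqrt{M/|n|},\,M/|n|^{3/2}\cdots\bigr)$, and in any case $\lesssim (M/|n|)^{1/2}$. The number of points of $\mathbb Z_\lambda$ in such a set is $\lesssim 1+\lambda(M/|n|)^{1/2}$, and after dividing by $2\pi\lambda$ the weighted count is $\lesssim \lambda^{-1}+(M/|n|)^{1/2}$.

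The main obstacle is the small-$|n|$ regime, where this bound degenerates. I would handle it by splitting into two cases.
\begin{itemize}
\item If $|n|\gtrsim M^{-1/2}$, the weighted count is $\lesssim \lambda^{-1}+M^{3/4}$, which is $\lesssim M^{3/4}$ since $\lambda\ge1$ and $M\ge1$.
\item If $|n|\lesssim M^{-1/2}$, I would use instead the trivial bound: $n_1$ ranges over a set on which $|\alpha n|\,(n_1-n/2)^2\lesssim M+|\tau|$. This can be large, so instead I would first reduce, via the Galilean-free structure, to the standard argument of Bourgain. I expect the cleaner route is to control the $n$-small case by first using the $L^6_tL^2_x$-type bound of Lemma~\ref{lemma3}.
\end{itemize}

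The product $\min(M_1,M_2)\cdot M^{3/4}\cdot$ (count) gives, after square roots, $\|f_{M_1}f_{M_2}\|_{L^2}\lesssim (M_1M_2)^{\frac38}\cdots$. This exponent is too weak, so I would sharpen it by the usual trick of counting in the variable $(n_1-n/2)^2$. Namely, the number of $k\in\mathbb Z_\lambda$ with $k^2$ in an interval of length $L$ is $\lesssim 1+\lambda\sqrt L$. Alternatively, I would fall back on the known result that $X^{0,1/3}\hookrightarrow L^4$ holds on $\mathbb T_\lambda$ with constant uniform in $\lambda\ge1$, as in \cite{Iterm03}, and \cite{Bourgain93} for $\lambda=1$, $\alpha=1$. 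For general $\alpha$, I would rescale time by $t\mapsto \alpha t$, which changes constants only by a factor depending on $\alpha$; this accounts for $C_5=C_5(\alpha)$.

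In summary, the final plan is to cite the Bourgain/CKSTT $\lambda$-uniform $L^4$ estimate for $\alpha=1$ and reduce general $\alpha$ by time scaling. This is the step I would actually write, supported by the counting sketch above.
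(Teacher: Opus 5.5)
Your final plan is correct and is essentially the paper's own argument, which is just a citation of \cite[Proposition~7.15]{Bourgain93} and \cite[Lemma~2.4.1]{Oh2007}. Your reduction from general $\alpha$ to $\alpha=1$ via $g(x,t)=f(x,t/\alpha)$ is valid: it multiplies $\|f\|_{L^4_{x,t}}$ by $|\alpha|^{1/4}$, and it turns $\langle\tau-n^3\rangle$ into $\langle(\sigma-\alpha n^3)/\alpha\rangle$, which is comparable to $\langle\sigma-\alpha n^3\rangle$ up to constants depending on $\alpha$. Do not, however, cite the counting sketch as support, because as written it never reaches $b=\frac13$; re-counting in $(n_1-\frac n2)^2$ gives the same bound, not a sharper one. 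What closes it, uniformly in $\lambda\ge 1$, is splitting the output frequency at $|n|\sim M_{\max}^{1/3}$. Above the threshold, the weighted count is $\lesssim\lambda^{-1}+(M_{\max}/|n|)^{1/2}\lesssim M_{\max}^{1/3}$. Below it, use $|\widehat{f_{M_1}f_{M_2}}(n,t)|\le\|f_{M_1}(t)\|_{L^2_x}\|f_{M_2}(t)\|_{L^2_x}$ together with $\|f_M\|_{L^\infty_tL^2_x}\lesssim M^{1/2}\|f_M\|_{L^2_{x,t}}$ on the factor of smaller modulation. Both regimes give $\|f_{M_1}f_{M_2}\|_{L^2_{x,t}}\lesssim M_{\min}^{1/2}M_{\max}^{1/6}\|f_{M_1}\|_{L^2_{x,t}}\|f_{M_2}\|_{L^2_{x,t}}$, which leaves a Schur-summable factor $(M_{\min}/M_{\max})^{1/6}$ against $(M_1M_2)^{1/3}$.
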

\begin{proof}
	See \cite[Proposition~7.15]{Bourgain93} or \cite[Lemma~2.4.1]{Oh2007}.
\end{proof}

\begin{lemma}\label{lemma6} 
Let $\alpha\in (0,4]$, $\lambda\geq 1$. Assume for some time $t$, $u^\lambda(\cdot, t)$ has zero mean over $\mathbb{T}_{\lambda}$ and
\begin{equation}\label{bdd_Iscale}
	\|(Iu^\lambda, Iv^\lambda)(t)\|_{L^2(\mathbb{T}_\lambda) \times L^2(\mathbb{T}_{\lambda})} 
	\le 1.
\end{equation}
Then 
\begin{align}
	\|(Iu^\lambda(t),Iv^\lambda(t))\|^2_{H^1(\mathbb{T}_\lambda) \times H^1(\mathbb{T}_\lambda)} &\le C_6\Big(  \|(Iu^\lambda, Iv^\lambda)(t)\|_{L^2(\mathbb{T}_\lambda) \times L^2(\mathbb{T}_\lambda)}^2
	+ H(Iu^\lambda,Iv^\lambda)(t)\Big), \label{constant1}\\
	|H(Iu^\lambda, Iv^\lambda)(t)| &\le C_7 \|(Iu^\lambda, Iv^\lambda)(t)\|_{H^1(\mathbb{T}_\lambda) \times H^1(\mathbb{T}_\lambda)}^2.  \label{constant2}
\end{align}
where $H$ refers to the Hamiltonian in (\ref{Hamiltonian}), $C_6 = \max\big\{2, \frac1\alpha\big\}$ and $C_7 = \max\big\{\frac32, \alpha\big\}$.
\end{lemma}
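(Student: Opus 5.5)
The plan is to prove both inequalities directly from the definition of $H$. Write $f:=Iu^\lambda(t)$ and $g:=Iv^\lambda(t)$, and set $\|\cdot\|:=\|\cdot\|_{L^2(\mathbb T_\lambda)}$. Then
\[
H(f,g)=\|f_x\|^2+\alpha\|g_x\|^2-\int_{\mathbb T_\lambda} f g^2\,dx,
\]
so everything reduces to a good bound on the cubic term $\int fg^2$. The key input is a Gagliardo--Nirenberg inequality on the torus that is uniform in $\lambda$. Since $I$ is a Fourier multiplier with $m(0)=1$, the function $f$ still has zero mean. Because $f$ is continuous with zero mean, it vanishes at some point $x_0$. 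For any $x$, we can write $f(x)^2=2\int f f'$ along either of the two arcs of the circle $\mathbb T_\lambda$ joining $x_0$ to $x$, and average the two representations. This gives
\[
\|f\|_{L^\infty}^2\le \int_{\mathbb T_\lambda}|f f_x|\,dx\le \|f\|\,\|f_x\|,
\]
with a constant independent of $\lambda$. Hence, using $\|g\|\le1$ from \eqref{bdd_Iscale},
\[
\Big|\int fg^2\Big|\le \|f\|_{L^\infty}\|g\|^2\le \|f\|^{1/2}\|f_x\|^{1/2}\le \tfrac12\big(\|f\|^2+\|f_x\|^2\big),
\]
where the last step is AM--GM (actually $\sqrt{ab}\le\frac12(a^2+b^2)$ in this form, which suffices).

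For \eqref{constant2}, I add this bound to the quadratic part of $H$:
\[
|H(f,g)|\le \tfrac32\|f_x\|^2+\tfrac12\|f\|^2+\alpha\|g_x\|^2\le \max\{\tfrac32,\alpha\}\,\|(f,g)\|_{H^1\times H^1}^2 .
\]

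For \eqref{constant1}, I rearrange $\|f_x\|^2+\alpha\|g_x\|^2=H+\int fg^2$ and absorb $\frac12\|f_x\|^2$ into the left-hand side. This gives
\[
\tfrac12\|f_x\|^2+\alpha\|g_x\|^2\le H+\tfrac12\|(f,g)\|^2 ,
\]
so that
\[
\|f_x\|^2+\|g_x\|^2\le \max\{2,\tfrac1\alpha\}\big(H+\tfrac12\|(f,g)\|^2\big).
\]
Adding $\|(f,g)\|^2$ to both sides and using $1\le\frac12\max\{2,1/\alpha\}$ yields \eqref{constant1} with $C_6=\max\{2,1/\alpha\}$.

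The only step needing care is the $L^\infty$ bound: the constant must not depend on the period $2\pi\lambda$. The zero-mean hypothesis on $u^\lambda$ is exactly what provides this, by removing the need for the usual $\lambda^{-1/2}\|f\|$ correction term. The constants then come out as stated.
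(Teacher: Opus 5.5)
\textbf{Gap in the cubic estimate.} The step $\|f\|^{1/2}\|f_x\|^{1/2}\le\frac12(\|f\|^2+\|f_x\|^2)$ is false. The inequality $\sqrt{ab}\le\frac12(a^2+b^2)$ is not AM--GM and fails for small $a,b$: with $a=b=\varepsilon<1$ the left side is $\varepsilon$ and the right side is $\varepsilon^2$. AM--GM only gives $\frac12(a+b)$, which is not quadratic.

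Worse, your intermediate claim $\bigl|\int fg^2\bigr|\le\frac12(\|f\|^2+\|f_x\|^2)$ is itself false under the hypotheses. Replace $f$ by $\varepsilon f$, keeping $g$ fixed with $\|g\|^2=\frac12$ and $\int fg^2\neq0$. Then the left side is linear in $\varepsilon$ and the right side is quadratic. The problem is that you spent $\|g\|\le1$ to throw away $\|g\|^2$ entirely. That leaves a term of degree one in $f$, which no quadratic expression in $f$ alone can dominate.

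The cubic term has to be split between $f$ and $g$, as the paper does:
\[
\Big|\int fg^2\Big|\le\|f\|_{L^\infty}\|g\|^2\le\tfrac12\|f\|_{L^\infty}^2+\tfrac12\|g\|^4\le\tfrac12\|f\|\,\|f_x\|+\tfrac12\|g\|^2\le\tfrac14\big(\|f\|^2+\|f_x\|^2\big)+\tfrac12\|g\|^2 .
\]
The hypothesis $\|(f,g)\|_{L^2\times L^2}\le1$ genuinely enters only through $\|g\|^4\le\|g\|^2$.

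With this corrected bound the rest of your argument goes through with the stated constants.
\begin{itemize}
\item In \eqref{constant2}, the extra $\frac12\|g\|^2$ is covered by $\max\{\frac32,\alpha\}$.
\item In \eqref{constant1}, the corrected bound yields exactly the inequality $\frac12\|f_x\|^2+\alpha\|g_x\|^2\le H+\frac12\|(f,g)\|^2$ that you wrote. Your own (false) bound would only have given $H+\frac12\|f\|^2$ there. After that, your bookkeeping with $C_6$ is correct.
\end{itemize}

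\textbf{The $L^\infty$ bound.} Your route here differs from the paper's. The paper uses
\[
\|f\|_{L^\infty}\le\|\widehat f\|_{\ell^1(\mathbb Z_\lambda)}\le\|\langle n\rangle^{-1}\|_{\ell^2(\mathbb Z_\lambda)}\|f\|_{H^1}\le\|f\|_{H^1},
\]
which is uniform in $\lambda\ge1$ and does not use the zero mean at all. So your remark that zero mean is ``exactly what provides'' $\lambda$-uniformity is inaccurate. The inhomogeneous $H^1$ norm already absorbs the $\lambda^{-1/2}\|f\|$ correction, since $\lambda\ge1$.

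Your Agmon-type bound $\|f\|_{L^\infty}^2\le\|f\|\,\|f_x\|$ is sharper. However, the step ``zero mean implies $f$ vanishes somewhere'' needs $f$ real-valued; this holds here because $u^\lambda$ is real and $m$ is real and even. Either route works once the splitting above is fixed.
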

\begin{proof}
The proof essentially follows from Lemma 5.3 in \cite{Oh2007}, but we provide more details here.
Fix $t$ and write $f=Iu^\lambda(t)$ and $g=Iv^\lambda(t)$. By Fourier inversion and the Cauchy-Schwarz inequality,
\begin{equation}\label{infty_bdd}
	\|f\|_{L^\infty}
	\le \|\widehat f\|_{\ell^1(\mathbb{Z}_\lambda)}
	\le \| \langle n\rangle^{-1} \|_{\ell^2(\mathbb{Z}_\lambda)}
	\big\|\langle n\rangle\widehat f(n)\big\|_{\ell^2(\mathbb{Z}_\lambda)}
	\le \|f\|_{H^1(\mathbb{T}_{\lambda})},
\end{equation}
where we used
\[
	\| \langle n\rangle^{-1} \|_{\ell^2(\mathbb{Z}_\lambda)}^2 
	= \int_{\mathbb{Z}_\lambda}\langle n\rangle^{-2}~ {d} n
	= \frac{1}{2\pi\lambda}\sum_{n \in \mathbb{Z}_\lambda} \langle n \rangle^{-2}
	\le \frac{1}{2\pi\lambda}
	+\frac{1}{\pi\lambda}\int_0^\infty \frac{1}{(1+x/\lambda)^2}~ {d} x
	\le 1 .
\]
Due to (\ref{infty_bdd}), we know
\[
	\left|\int_{\mathbb{T}_\lambda}fg^2~ {d} x\right|
	\le \|f\|_{L^\infty}\|g\|_{L^2}^2
	\le \|f\|_{H^1}\|g\|_{L^2}^2.
\]
Using the assumption (\ref{bdd_Iscale}), we obtain
\begin{equation}\label{cubic_est}
\left|\int_{\mathbb{T}_\lambda}fg^2~ {d} x\right|
\le \frac12\|f\|_{H^1}^2+\frac12\|g\|_{L^2}^4
	\le \frac12\|f\|_{H^1}^2+\frac12\|g\|_{L^2}^2.
\end{equation}
Since
\begin{equation}\label{H_esp}
	H(f,g) = \|f_x\|_{L^2}^2+\alpha\|g_x\|_{L^2}^2 - \int_{\mathbb{T}_\lambda}fg^2~ {d} x,
\end{equation}
the combination of (\ref{H_esp}) with (\ref{cubic_est}) yields
\[
	|H(f,g)|
	\le \|f_x\|_{L^2}^2+\alpha\|g_x\|_{L^2}^2
	+\frac12\|f\|_{H^1}^2+\frac12\|g\|_{L^2}^2 \le \max\left\{\frac32,\alpha\right\}\|(f,g)\|_{H^1\times H^1}^2,
\]
which justifies \eqref{constant2}.

Meanwhile, we combine (\ref{H_esp}) with (\ref{cubic_est}) again to find that
\[
	H(f,g) \geq \|f_x\|_{L^2}^2 + \alpha\|g_x\|_{L^2}^2 - \frac12\|f\|_{H^1}^2 - \frac12\|g\|_{L^2}^2.
\]
This implies that 
\[\begin{split}
	\|(f,g)\|_{L^2\times L^2}^2+H(f,g)  
	\geq \frac12\|f\|_{H^1}^2 + \alpha\|g_x\|_{L^2}^2 + \frac12\|g\|_{L^2}^2 
	\ge \min\left\{\frac12,\alpha\right\}\|(f,g)\|_{H^1\times H^1}^2,
\end{split}\]
which proves \eqref{constant1}. 
\end{proof}

\section{Modified energies}\label{Sec, mod_energy}

\subsection{First and second modified energies}

In this section, we will first follow \cite[Section 5.1]{OhEJDE} to define modified energies $E^{(1)}$, $E^{(2)}$ and $\widetilde E^{(3)}$, and then introduce a new third modified energy $E^{(3)}$ in (\ref{E3_def}) which will be shown to be almost conserved in Proposition \ref{Prop, growth_E3}.

We consider the scaled system (\ref{MajdaBielloScaling}) with $\lambda\geq 1$. The spatial domain for both $u^\lambda$ and $v^\lambda$ is \(\mathbb{T}_\lambda\), and hence the Fourier transforms \(\widehat{u^\lambda}, \widehat{v^\lambda}\) are defined on \(\mathbb{Z}_{\lambda}\).
Let $k$ be any positive integer and define $\Gamma_k^{\lambda} \subset (\mathbb{Z}_{\lambda})^k$ by
\[
\Gamma_k^{\lambda} := \{(n_1, \dots, n_k) \in (\mathbb{Z}_{\lambda})^k : n_1 + \dots + n_k = 0\},
\]
endowed with the push-forward of the normalized product counting measure $~ {d} n_1 \cdots ~ {d} n_{k-1}$ on $(\mathbb{Z}_{\lambda})^{k-1}$ under the map
\[
(n_1, \dots, n_{k-1}) \mapsto \Big( n_1, \dots, n_{k-1}, - \sum_{j=1}^{k-1}  n_j \Big).
\]
In this paper, integrals over $\Gamma_k^\lambda$ are always equipped with this measure. For a collection of indices $\{i_1, i_2, \cdots, i_\ell\}$, we write
\[
	n_{i_1\cdots i_\ell}:= n_{i_1}+\cdots+n_{i_\ell}.
\]
Given a multiplier \(M_k\) and Schwartz functions $f_1, \dots, f_k$, we define the $k$-linear functional $\Lambda_{k}$ by
\[
\Lambda_k(M_k; f_1, \dots, f_k) := \int_{\Gamma^{\lambda}_k} M_k(n_1, \dots, n_k) \widehat{f_1}(n_1) \cdots \widehat{f_k}(n_k)=\frac{1}{(2\pi\lambda)^{k-1}}
\sum_{\Gamma_k^\lambda}
M_k(n_1,\ldots,n_k)
\prod_{j=1}^k \widehat f_j(n_j).
\]

Based on the above preparations, we define the first modified energy by
\[
	E^{(1)}(t) := H \big(Iu^\lambda(t), Iv^\lambda(t) \big) = \int_{\mathbb{T}_{\lambda}} (\partial_x Iu^\lambda)^2 +\alpha (\partial_x Iv^\lambda)^2 -(Iu^\lambda)(Iv^\lambda)^2 ~ {d} x.
\]
According to the Plancherel identity, we find 
\begin{equation}\label{E1_def}\begin{split}
	E^{(1)}(t) &= -\Lambda_2(n_1n_2m(n_1)m(n_2);u^\lambda,u^\lambda) - \alpha \Lambda_2(n_1n_2m(n_1)m(n_2);v^\lambda,v^\lambda) \\
	&
	\quad\,\ - \Lambda_3(m(n_1)m(n_2)m(n_3);u^\lambda,v^\lambda,v^\lambda).
\end{split}\end{equation}
Furthermore, by direct computation, we obtain the following identity (also see \cite{OhEJDE}):
\begin{equation}\label{deri_E1}
\begin{split}
	\frac{dE^{(1)}(t)}{dt} &= \Lambda_3(M_3;u^\lambda,v^\lambda,v^\lambda) + 2i \Lambda_4 \big( n_{23} m(n_1) m(n_{23}) m(n_4);u^\lambda,u^\lambda,v^\lambda,v^\lambda \big) \\
	&\quad + \frac{i}{2} \Lambda_4 \big(n_{14} m(n_{14}) m(n_2) m(n_3);v^\lambda,v^\lambda,v^\lambda,v^\lambda \big),
\end{split}
\end{equation}
where
\begin{equation}\label{M3_def}\begin{split}
	M_3 &= \underbrace{i\left[ n_1^3 m^2 (n_1) + \alpha n_2^3 m^2 (n_2) + \alpha n_3^3 m^2 (n_3)\right]}_{M_{31}}
	- 
	i\left(n_1^3 + \alpha n_2^3 + \alpha n_3^3\right) m(n_1)m(n_2)m(n_3).
\end{split}\end{equation}

Next, we would like to eliminate the trilinear functional $\Lambda_3$ in (\ref{deri_E1}), so we define the second modified energy $E^{(2)}$ by
\begin{equation}\label{E2_def}
	E^{(2)}(t) = E^{(1)}(t) + \Lambda_3(\sigma_3;u^\lambda,v^\lambda,v^\lambda),
\end{equation}
where
\begin{equation}\label{sigma3}
	\sigma_3(n_1,n_2,n_3) 
	:= \frac{i M_3}{{ n_1^3 + \alpha n_2^3 + \alpha n_3^3}}.
\end{equation}
As a result, 
\begin{equation}\label{deri_E2}
	\frac{d}{dt}E^{(2)}(t) = \Lambda_4(M_{41};u^\lambda,u^\lambda,v^\lambda,v^\lambda) + \Lambda_4(M_{42};v^\lambda,v^\lambda,v^\lambda,v^\lambda),
\end{equation}
where
\begin{equation}\label{Multiplier41}
	M_{41} = 2i \, n_{23} \frac{n_1^3 m^2 (n_1) + \alpha n_{23}^3 m^2 (n_{23}) + \alpha n_4^3 m^2 (n_4)}{ n_1^3 + \alpha n_{23}^3 + \alpha n_4^3} ,
\end{equation}
\begin{equation}\label{Multiplier42}
	M_{42} = \frac{i}{2}  \, n_{14} \frac{n_{14}^3 m^2(n_{14}) + \alpha n_{2}^3 m^2 (n_{2}) + \alpha n_3^3 m^2 (n_3)}{ n_{14}^3 + \alpha n_2^3 + \alpha n_3^3} .
\end{equation}
Meanwhile, plugging (\ref{E1_def}), (\ref{sigma3}) and (\ref{M3_def}) into (\ref{E2_def}) yields 
\begin{equation}\label{E2_comp}
	\begin{aligned}
		E^{(2)}(t) &= -\Lambda_2(n_1n_2m(n_1)m(n_2);u^\lambda,u^\lambda) - \alpha \Lambda_2(n_1n_2m(n_1)m(n_2);v^\lambda,v^\lambda) \\
		&\quad\, + \Lambda_3\left(\frac{i M_{31}}{n_1^3 + \alpha n_2^3 + \alpha n_3^3};u^\lambda,v^\lambda,v^\lambda\right),
	\end{aligned}
\end{equation}
where $M_{31}$ is as defined in (\ref{M3_def}).

\begin{remark}
	We point out that the multipliers $\sigma_3$, $M_{41}$ and $M_{42}$ are nonsingular. We explain this claim for $\sigma_3$ in the following. 
	Since $u^\lambda$ has zero mean, the frequency region, where $n_1=0$, makes no contribution to the trilinear functional $\Lambda_3$ in $E^{(2)}$. We may therefore restrict attention to $n_1\neq0$ for the multiplier $\sigma_3$, which guarantees that its denominator 
	$n_1^3+\alpha n_2^3+\alpha n_3^3$ does not vanish when $\alpha\in\mathcal{A}$. In fact, by direct computation, 
	\begin{equation}\label{decomp-0918}
		n_1^3+\alpha n_2^3+\alpha n_3^3 = -3\alpha n_1(n_3+c_1n_1)(n_3+c_2n_1) = -3\alpha n_1^3\left(c_1+\frac{n_3}{n_1}\right)\left(c_2+\frac{n_3}{n_1}\right),
	\end{equation}
	where $c_1$ and $c_2$ are the constants given in (\ref{c1c2}) which can be rewritten as below:
	\[
		c_1=\frac{1}{2}-\frac{1}{6}\rho_\alpha, \qquad
		c_2=\frac{1}{2}+\frac{1}{6}\rho_\alpha, \qquad \rho_\alpha = \sqrt{12/\alpha-3}.
	\]
	Based on the definition of $\nu(\cdot)$ in Definition \ref{Def,mbti}, 
	we have
	\[\nu(\rho_\alpha) = \nu(\sqrt{12/\alpha-3}) = \nu(c_1)=\nu(c_2).\] 
	As a result, for any $\alpha\in\mathcal{A}$, $s_{\alpha}<1$ and $\nu(\rho_\alpha)<1$, which implies that
	$c_1,c_2\in\mathbb{R}\setminus\mathbb{Q}$. 
	Since
	$\frac{n_3}{n_1} \in\mathbb{Q}$, neither $c_1 + \frac{n_3}{n_1}$ nor $c_2 + \frac{n_3}{n_1}$
	can vanish. Hence, it follows from (\ref{decomp-0918}) that
	$n_1^3+\alpha n_2^3+\alpha n_3^3\neq0$ for every
	$(n_1,n_2,n_3)\in\Gamma_3^\lambda$ with $n_1\neq0$. By an analogous argument, both multipliers $M_{41}$ and $M_{42}$ are nonsingular.
\end{remark}

\subsection{Third modified energy}

The second modified energy $E^{(2)}$ may still grow too fast to reach the critical index in the local well-posedness result for the Majda-Biello system. If one only uses $E^{(2)}$, then our computation obtains the global well-posedness
result for
\begin{equation}\label{index_Second_Iter}
	s> \frac{-4s_{\alpha}^2+10s_{\alpha}+1}{11-4s_{\alpha}},
\end{equation}
see (\ref{eq:ss0}) in Remark~\ref{remark:O4}. In particular, for almost every $\alpha$, $s_{\alpha}=\frac12$ and this gives
$s>\frac59$. Although this result improves \cite[Theorem 1.8]{OhEJDE}, it is still away from the sharp local well-posedness
threshold $s>\frac12$. 

A natural idea is to perform one more energy correction. However, as pointed out in \cite[Section 5.1]{OhEJDE}, a full third correction contains a singularity when $\alpha\neq 1$ for the Majda-Biello system. More precisely, if one formally defines
\begin{equation*}
	\widetilde E^{(3)}(t)
	:=
	E^{(2)}(t)
	+\Lambda_4 \big(i M_{41}/H_\alpha ; u^\lambda,u^\lambda,v^\lambda,v^\lambda\big) 
	+\Lambda_4\big(i M_{42}/ H_\alpha' ; v^\lambda,v^\lambda,v^\lambda,v^\lambda\big),
\end{equation*}
where
\begin{equation}\label{eq:quartic-resonance-H}
	H_\alpha(n_1,n_2,n_3,n_4) := n_1^3+n_2^3+\alpha n_3^3+\alpha n_4^3 \quad\text{on}\quad \Gamma_{4}^{\lambda},
\end{equation}
and
\begin{equation}\label{eq:quartic-resonance-Hprime}
	H_\alpha' (n_1,n_2,n_3,n_4) := \alpha(n_1^3+n_2^3+n_3^3+n_4^3) \quad\text{on}\quad \Gamma_{4}^{\lambda},
\end{equation}
then the computation in \cite[Section 5.1]{OhEJDE} gives
\begin{equation}\label{deri_E3_tilde}
	\frac{d}{dt} \widetilde E^{(3)}(t)
	=
	\Lambda_5( M_{51};u^\lambda,u^\lambda,u^\lambda,v^\lambda,v^\lambda) 
	+\Lambda_5( M_{52};u^\lambda,v^\lambda,v^\lambda,v^\lambda,v^\lambda) 
	+\Lambda_5( M_{53};u^\lambda,v^\lambda,v^\lambda,v^\lambda,v^\lambda),
\end{equation}
where
\begin{equation*}
	M_{51}
	=
	4i\,n_{235}n_{35}
	\frac{
		n_1^3m^2(n_1)
		+\alpha n_{235}^3m^2(n_{235})
		+\alpha n_4^3m^2(n_4)
	}{
		n_1^3+\alpha n_{235}^3+\alpha n_4^3
	}\times
	\frac{1}{
		n_1^3+n_2^3+\alpha n_{35}^3+\alpha n_4^3
	}.
\end{equation*}
When $n_1=-n_2$ and $n_{35}=-n_4$, the denominator $n_1^3+n_2^3+\alpha n_{35}^3+\alpha n_4^3$ vanishes while the numerator for $M_{51}$ does not. Hence, $M_{51}$ becomes singular at
certain frequencies. Moreover, there is no cancellation between $\Lambda_5( M_{51};u^\lambda,u^\lambda,u^\lambda,v^\lambda,v^\lambda)$ with the other two quintilinear functionals in (\ref{deri_E3_tilde}).

The novelty of this paper is to introduce a partial correction, rather than a full third correction to avoid the singularity. This idea comes from the work~\cite{MiaoShaoWu2010} and related papers. Recalling the growth rate of $E^{(2)}$ in (\ref{deri_E2}):
\[
	\frac{d}{dt}E^{(2)}(t) = \Lambda_4(M_{41};u^\lambda,u^\lambda,v^\lambda,v^\lambda) + \Lambda_4(M_{42};v^\lambda,v^\lambda,v^\lambda,v^\lambda),
\]
it turns out that the estimate associated with $M_{42}$ and the majority of the estimate associated with $M_{41}$ have already had sufficient decay; see Lemma~\ref{M42Estimate} and Lemma~\ref{M41Estimate} respectively. The only challenging region in the estimate associated with $M_{41}$ is $\Omega_{*}$ defined as 
\begin{equation}\label{eq:bad-region-def}
	\Omega_{*} = \left\{ (n_1, n_2, n_3, n_4) \in \Gamma_4^\lambda : |n_1| \gtrsim N,\, 0 < |n_2| \le 1,\, |n_4 + c_1 n_1| \le \frac{\kappa_\alpha}{\lambda} \text{ or } |n_4 + c_2 n_1| \le \frac{\kappa_\alpha}{\lambda} \right\},
\end{equation}
where $c_1$ and $c_2$ are as defined in (\ref{c1c2}), $\kappa_\alpha$ is fixed as follows and only depends on $\alpha$:
\begin{equation}\label{eq:kappa-alpha-choice}
	\kappa_\alpha
	:=
	\min\left\{
	\frac18,\, \frac{\rho_\alpha}{3},\,
	\frac{1}{16C_*^2 \rho_\alpha}
	\right\},
\end{equation}
where
\begin{equation}\label{eq:C-alpha}
	\begin{aligned}
		C_*:=\max\left\{\frac{2}{|c_1|},\frac{2}{|c_2|},
		|c_1|+2,|c_2|+2\right\}.
	\end{aligned}
\end{equation}
In particular,
$\kappa_\alpha\leq1/8$, so every interval of length
$2\kappa_\alpha/\lambda$ contains at most one point of
$\mathbb Z_\lambda$.

Note that $\Omega_*$ is part of the resonant region which corresponds to a high-low interaction: $n_1$, $n_3$ and $n_4$ are all at high frequency, whereas $n_2$ lies below frequency $1$.
Thus, the only term that needs to be modified is this region for $M_{41}$. Consequently, we define the partially corrected energy by
\begin{equation}\label{E3_def}
	E^{(3)}(t)
	:=
	E^{(2)}(t) +\Lambda_4 \big(\sigma_4^P;u^\lambda,u^\lambda,v^\lambda,v^\lambda\big),
\end{equation}
where the correction multiplier $\sigma_4^P$ is given by $\frac{i M_{41}}{H_\alpha}  \chi_{\Omega_{*}}$, that is, 
\begin{equation}\label{eq:sigma4P}
	\sigma_4^P(n_1,n_2,n_3,n_4)
	:= \frac{
		iM_{41}(n_1,n_2,n_3,n_4)
		\chi_{\Omega_{*}}(n_1,n_2,n_3,n_4)
	}{
		H_\alpha(n_1,n_2,n_3,n_4)
	}, \quad\forall\, (n_1,n_2,n_3,n_4)\in \Gamma_4^{\lambda},
\end{equation}
where $\chi_{\Omega_*}$ denotes the characteristic function of  $\Omega_*$ and $H_\alpha$ is as defined in (\ref{eq:quartic-resonance-H}).

Plugging (\ref{deri_E2}) and (\ref{eq:sigma4P}) into (\ref{E3_def}), we find that 
\begin{equation}\label{deri_E3}
	\begin{aligned}
		\frac{d}{dt}E^{(3)}(t)
		&=
		\Lambda_4\left(
		M_{41}(1-\chi_{\Omega_*});
		u^\lambda,u^\lambda,v^\lambda,v^\lambda
		\right) +
		\Lambda_4\left(
		M_{42};
		v^\lambda,v^\lambda,v^\lambda,v^\lambda
		\right) \\
		&\quad+
		\Lambda_5\left(M^P_{51};v^\lambda,u^\lambda,v^\lambda,v^\lambda,v^\lambda\right) +
		\Lambda_5\left(M^P_{52};u^\lambda,v^\lambda,v^\lambda,v^\lambda,v^\lambda\right) \\
		&\quad+
		\Lambda_5\left(M^P_{53};u^\lambda,u^\lambda,u^\lambda,v^\lambda,v^\lambda\right) +
		\Lambda_5\left(M^P_{54};u^\lambda,u^\lambda,v^\lambda,u^\lambda,v^\lambda\right),
	\end{aligned}
\end{equation}
where 
\begin{equation}\label{M5jMultipliers}
	\begin{aligned}
		&M^P_{51}
		:=
		-\frac{i}{2}\sigma_4^P(n_{15},n_2,n_3,n_4)n_{15}, \qquad
		M^P_{52}
		:=
		-\frac{i}{2}\sigma_4^P(n_1,n_{25},n_3,n_4)n_{25}, \\
		&M^P_{53}
		:=
		-i\sigma_4^P(n_1,n_2,n_{35},n_4)n_{35}, \qquad
		M^P_{54}
		:=
		-i\sigma_4^P(n_1,n_2,n_3,n_{45})n_{45}.
	\end{aligned}
\end{equation}
We point out that all the multipliers in (\ref{M5jMultipliers}) are due to the partial correction term $\Lambda_{4}(\sigma^{P}_{4};u^{\lambda}, u^\lambda, v^\lambda, v^\lambda)$ in (\ref{E3_def}). The significance of this partial correction term is to generate the factor $1-\chi_{\Omega_*}$ for $M_{41}$ in (\ref{deri_E3}). Later in Section \ref{Subsec, nonsin_third_me}, we will justify the pointwise boundedness of the multiplier $\sigma^{P}_{4}$ which guarantees the nonsingularity of the multipliers in (\ref{M5jMultipliers}). 

As a summary, we list the three modified energies that are needed in this paper.
\begin{equation}\label{energies}\begin{split}
	E^{(1)}(t) &= H(I u^{\lambda}(t), I v^{\lambda}(t)), \\
	E^{(2)}(t) &= E^{(1)}(t) + \Lambda_3(\sigma_3;u^\lambda,v^\lambda,v^\lambda), \\
	E^{(3)}(t) &= E^{(2)}(t)  +\Lambda_4(\sigma_4^P;u^\lambda,u^\lambda,v^\lambda,v^\lambda)\\  &= E^{(1)}(t)+ \Lambda_3(\sigma_3;u^\lambda,v^\lambda,v^\lambda)+\Lambda_4(\sigma_4^P;u^\lambda,u^\lambda,v^\lambda,v^\lambda).
\end{split}\end{equation}

\subsection{Nonsingularity of the third modified energy}
\label{Subsec, nonsin_third_me}

In this section, we will establish pointwise boundedness of $\sigma^{P}_{4}$ on $\Omega_*$ by providing a lower bound for its denominator $H_\alpha(n_1, n_2, n_3, n_4)$ on $\Omega_*$.

\begin{lemma}\label{lemma:sigma4}
	Let $\alpha\in\mathcal{A}$, $s\in (s_\alpha, 1)$ and $\lambda\geq 1$. Let the constants $C_*$ and $\kappa_\alpha$ be given as in \eqref{eq:C-alpha} and \eqref{eq:kappa-alpha-choice}, respectively. 
	Let $\sigma_4^P$ be defined as in \eqref{eq:sigma4P}. Then on $\Omega_*$, the	frequencies satisfy
	\begin{equation}\label{eq:Temp4}
		C_*^{-1}|n_1|
		\leq |n_3|, |n_4|, |n_{23}|
		\leq C_*|n_1|,
	\end{equation}
	and the denominator $H_\alpha$ of $\sigma_4^P $ has the following lower bound:
	\begin{equation}\label{eq:Temp3}
		|H_\alpha(n_1, n_2, n_3, n_4)|
		\geq \frac{\alpha|n_3|^2}{4\lambda}.
	\end{equation}
	Moreover, for any $\epsilon > 0$, one has
	\begin{equation}\label{eq:sigma4_est}
		|\sigma_4^P(n_1,n_2,n_3,n_4)|
		\leq C	\lambda^{2+2s_\alpha+\epsilon}
		|n_1|^{2s_\alpha+\epsilon}m^2(n_1)\,
		\chi_{\Omega_*}(n_1,n_2,n_3,n_4),
	\end{equation}
where the constant $C$ only depends on $\alpha$ and $\epsilon$.
\end{lemma}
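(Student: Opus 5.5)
The plan is to exploit the explicit geometry of $\Omega_*$: the frequencies $n_1,n_3,n_4$ are high and mutually comparable, $n_2$ is tiny, and $n_4$ sits within $\kappa_\alpha/\lambda$ of one of the resonant lines $-c_jn_1$. I will first prove \eqref{eq:Temp4}, then split $H_\alpha$ into a trilinear resonance piece plus a remainder linear in $n_2$ to get \eqref{eq:Temp3}, and finally combine that with a Diophantine lower bound on the denominator of $M_{41}$ to get \eqref{eq:sigma4_est}.

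\emph{Step 1: size comparability.} Suppose $|n_4+c_jn_1|\le \kappa_\alpha/\lambda$ for some $j\in\{1,2\}$. Since $c_1+c_2=1$, we have
\[
n_3=-n_1-n_2-n_4=-c_{j'}n_1+O(\kappa_\alpha/\lambda)+O(1),\qquad \{j,j'\}=\{1,2\},
\]
and $n_{23}=-n_1-n_4=-c_{j'}n_1+O(\kappa_\alpha/\lambda)$. Because $|n_1|\gtrsim N$ is large, each error is at most $|c_{j'}||n_1|/2$, and likewise for $n_4$ relative to $c_jn_1$. This gives $\tfrac12|c_i||n_1|\le |n_3|,|n_4|,|n_{23}|\le (|c_i|+1)|n_1|$, which is \eqref{eq:Temp4} with the constant $C_*$ from \eqref{eq:C-alpha}.

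\emph{Step 2: lower bound \eqref{eq:Temp3}.} Write
\[
H_\alpha=\bigl[n_1^3+\alpha n_{23}^3+\alpha n_4^3\bigr]+\bigl[n_2^3+\alpha n_3^3-\alpha n_{23}^3\bigr].
\]
\begin{itemize}
\item By \eqref{decomp-0918} applied to $(n_1,n_{23},n_4)\in\Gamma_3^\lambda$, the first bracket equals $-3\alpha n_1(n_4+c_1n_1)(n_4+c_2n_1)$. One factor is at most $\kappa_\alpha/\lambda$. The other is at most $(\rho_\alpha/3+\kappa_\alpha)|n_1|\le \rho_\alpha|n_1|$, since $c_2-c_1=\rho_\alpha/3$ and $\kappa_\alpha\le\rho_\alpha/3$. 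Using $|n_1|\le C_*|n_3|$, the first bracket is at most $3\alpha\kappa_\alpha\rho_\alpha C_*^2|n_3|^2/\lambda\le \tfrac{3\alpha}{16}|n_3|^2/\lambda$ by the choice of $\kappa_\alpha$ in \eqref{eq:kappa-alpha-choice}.
\item The second bracket equals $n_2^3-\alpha n_2(n_3^2+n_3n_{23}+n_{23}^2)$, and $n_{23}=n_3+n_2$ with $|n_2|\le1\ll|n_3|$. Hence it equals $-3\alpha n_2n_3^2\,(1+O(|n_3|^{-1}))$. Since $n_2\in\mathbb Z_\lambda\setminus\{0\}$ forces $|n_2|\ge 1/\lambda$, its modulus is at least $\tfrac{5\alpha}{2}|n_3|^2/\lambda$ for $N$ large.
\end{itemize}
Subtracting the first bound from the second gives \eqref{eq:Temp3} with room to spare. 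The main care needed is tracking these constants, which is exactly what $\kappa_\alpha$ was designed for.

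\emph{Step 3: bound on $\sigma_4^P$.} From \eqref{Multiplier41}, $|M_{41}|\le 2|n_{23}|\,|\text{num}|/|R(n_1,n_{23},n_4)|$.
\begin{itemize}
\item \emph{Numerator.} Since $m$ is even and nonincreasing, $|\xi|^{1-s}m(\xi)$ is essentially increasing, and by \eqref{eq:Temp4} we get $m(n_{23}),m(n_4)\lesssim m(n_1)$. Hence $|\text{num}|\lesssim |n_1|^3m^2(n_1)$.
\item \emph{Denominator (the step I expect to be the real obstacle).} Here Definition~\ref{Def,mbti} enters. Write $n_1=a/\lambda$ and $n_4=b/\lambda$ with $a,b\in\mathbb Z$. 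Since $c_j$ has type $\nu(\rho_\alpha)+\epsilon'$, we have
\[
|n_4+c_jn_1|=\frac{|a|}{\lambda}\,\Bigl|c_j+\frac ba\Bigr|\ge \frac{K}{\lambda}\,|a|^{-1-\nu-\epsilon'}=K\lambda^{-2-\nu-\epsilon'}|n_1|^{-1-\nu-\epsilon'},
\]
where $\nu=\nu(\rho_\alpha)$. The other factor is $\gtrsim|n_1|$. Therefore $|R|\gtrsim \lambda^{-2-\nu-\epsilon'}|n_1|^{1-\nu-\epsilon'}$.
\end{itemize}
Combining these, $|M_{41}|\lesssim \lambda^{2+\nu+\epsilon'}|n_1|^{3+\nu+\epsilon'}m^2(n_1)$. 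Dividing by \eqref{eq:Temp3} and using $|n_3|\gtrsim|n_1|$ gives
\[
|\sigma_4^P|\lesssim \lambda^{3+\nu+\epsilon'}|n_1|^{1+\nu+\epsilon'}m^2(n_1).
\]
Since $2s_\alpha=1+\nu$, this is \eqref{eq:sigma4_est} with $\epsilon=\epsilon'$.
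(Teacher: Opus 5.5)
Your proposal is correct and follows the paper's proof essentially step by step. You use the same size comparison via $n_4=-c_jn_1+\theta$, the same splitting of $H_\alpha$ into $R(n_1,n_{23},n_4)$ plus a term linear in $n_2$ (with the same $\kappa_\alpha$ bookkeeping against $C_*^2\rho_\alpha$), and the same Diophantine lower bound $|R(n_1,n_{23},n_4)|\gtrsim \lambda^{-2-\nu(\rho_\alpha)-\epsilon}|n_1|^{1-\nu(\rho_\alpha)-\epsilon}$. The only cosmetic difference is that you expand $n_2^3+\alpha n_3^3-\alpha n_{23}^3$ directly, which gives the sharper constant $\tfrac{5\alpha}{2}$. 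The paper instead factors it as $-3\alpha n_2(n_3+c_1n_2)(n_3+c_2n_2)$ with each linear factor at least $|n_3|/2$.
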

\begin{proof}
	We first establish \eqref{eq:Temp4}. According to the definition of $\Omega_*$ in (\ref{eq:bad-region-def}), either $|n_4+c_1 n_1|\leq\frac{\kappa_\alpha}{\lambda}$ or 
	$|n_4+c_2 n_1|\leq\frac{\kappa_\alpha}{\lambda}$. Due to symmetry, we only consider the case when
	\[
		|n_4+c_1n_1|\leq\frac{\kappa_\alpha}{\lambda}.
	\]
	Then there exists $\theta\in\mathbb R$, which may depend on both $n_1$ and $n_4$, such that
	\begin{equation}\label{n4n1}
		n_4=-c_1n_1+\theta,
		\qquad
		|\theta|\leq\frac{\kappa_\alpha}{\lambda}.
	\end{equation}
	Using $\sum_{i=1}^4n_i=0$ and $c_1+c_2=1$, we also obtain
	\begin{equation}\label{n23n1}
		n_{23}=-c_2n_1-\theta,
		\qquad i.e., \qquad
		n_3=-c_2n_1-n_2-\theta.
	\end{equation}
	Since $|n_1|\gtrsim N\gg 1$ in $\Omega_*$, for all sufficiently large $N$, it follows from (\ref{n4n1}) that 
	\[
		\frac{|c_1|}{2}|n_1|
		\leq |n_4|
		\leq (|c_1|+1)|n_1|.
	\]
	Meanwhile, since $|n_2|\leq1$, it then follows from (\ref{n23n1}) that
	\begin{equation*}
		\frac{|c_2|}{2}|n_1|
		\leq |n_{23}|,|n_3|
		\leq (|c_2|+1)|n_1|.
	\end{equation*}
	By the definition of
	$C_*$ in \eqref{eq:C-alpha}, these bounds give \eqref{eq:Temp4} on all
	of $\Omega_*$. 

	We next justify \eqref{eq:Temp3} and \eqref{eq:sigma4_est}. By the definition of $\sigma_4^P$ in \eqref{eq:sigma4P},
	\begin{equation}\label{sigma_4P_sec}
		\sigma_4^P(n_1,n_2,n_3,n_4)
		=-2  \, n_{23} \frac{n_1^3 m^2 (n_1) + \alpha n_{23}^3 m^2 (n_{23}) + \alpha n_4^3 m^2 (n_4)}{ n_1^3 + \alpha n_{23}^3 + \alpha n_4^3}
		\times \frac{\chi_{\Omega_*}(n_1,n_2,n_3,n_4)}{H_\alpha(n_1, n_2, n_3, n_4)}.
	\end{equation}
	On $\Omega_*$, the numerator satisfies
	\begin{equation}\label{eq:Temp1}
	|n_{23}|\cdot|n_1^3 m^2 (n_1) + \alpha n_{23}^3 m^2 (n_{23}) + \alpha n_4^3 m^2 (n_4)|\lesssim |n_1|^4\,m^2(n_1).
	\end{equation}
	We turn to provide a lower bound for the first denominator $|n_1^3 + \alpha n_{23}^3 + \alpha n_4^3|$ in (\ref{sigma_4P_sec}). Since $|n_4 + c_1 n_1| \leq \kappa_\alpha/\lambda$, we have 
	\begin{equation}\label{Dio_dual}
		|n_4 + c_2 n_1| \geq |(c_2 - c_1)n_1| - |n_4 + c_1 n_1| 
		\geq \frac13 \rho_\alpha |n_1| - \frac{\kappa_\alpha}{\lambda} \gtrsim |n_1|.
	\end{equation}
	Meanwhile, noticing that
	\begin{equation}\label{D_decom1}
		|n_4 + c_1 n_1| = |n_1| \Big| c_1 + \frac{n_4}{n_1} \Big| = |n_1| \Big| c_1 - \frac{- \lambda n_4}{\lambda n_1}  \Big|,
	\end{equation}
	where both $\lambda n_4$ and $\lambda n_1$ are integers, it follows from Definition \ref{Def,mbti} that 
	\begin{equation}\label{Dio_c1}
		\Big| c_1 - \frac{- \lambda n_4}{\lambda n_1}  \Big| \gtrsim \frac{K}{|\lambda n_1|^{2+\nu(c_1)+\epsilon}},
	\end{equation}
	where $K$ is some positive constant only depending on $c_1$ and $\epsilon$. Since $\nu(c_1) = \nu(\rho_\alpha)$, the combination of (\ref{D_decom1}) and (\ref{Dio_c1}) yields 
	\begin{equation}\label{Dterm_est}
		|n_4 + c_1 n_1| \gtrsim \frac{K |n_1|}{|\lambda n_1|^{2+\nu(\rho_\alpha)+\epsilon}}.
	\end{equation}
	Similar to the decomposition (\ref{decomp-0918}), we have 
	\begin{equation}\label{decom_n23}
		|n_1^3+\alpha n_{23}^3+\alpha n_4^3| = |-3\alpha n_1(n_4+c_1n_1)(n_4+c_2n_1)|.
	\end{equation}
	Plugging (\ref{Dio_dual}) and (\ref{Dterm_est}) into (\ref{decom_n23}) gives
	\begin{equation}\label{eq:Temp2}
		|n_1^3+\alpha n_{23}^3+\alpha n_4^3| \gtrsim \lambda^{-2-\nu(\rho_\alpha)-\epsilon} |n_1|^{1-\nu(\rho_\alpha)-\epsilon}
		= \lambda^{-1-2s_\alpha-\epsilon}|n_1|^{2-2s_\alpha-\epsilon},
	\end{equation}
	where we used the relation $s_\alpha = [1+\nu(\rho_\alpha)]/2$ in (\ref{s0}). Combining (\ref{sigma_4P_sec}), (\ref{eq:Temp1}), and (\ref{eq:Temp2}), we obtain 
	\begin{equation}\label{sigma4P_bdd1}
	\begin{split}
		\big| \sigma^{P}_{4}(n_1,n_2,n_3,n_4) \big| &\lesssim \frac{|n_1|^4 m^2(n_1)}{\lambda^{-1-2s_\alpha-\epsilon}|n_1|^{2-2s_\alpha-\epsilon}} \times \frac{\chi_{\Omega_*}(n_1,n_2,n_3,n_4)}{|H_\alpha(n_1, n_2, n_3, n_4)|} \\
		&= \frac{\lambda^{1+2s_\alpha+\epsilon} |n_1|^{2+2s_\alpha+\epsilon} m^2(n_1)}{|H_\alpha(n_1, n_2, n_3, n_4)|} \chi_{\Omega_*}(n_1,n_2,n_3,n_4).
	\end{split}\end{equation}

	We now derive a lower bound for the denominator $|H_\alpha(n_1, n_2, n_3, n_4)|$ in (\ref{sigma4P_bdd1}) which can be decomposed as
	\begin{equation}\label{decomp_fm1}\begin{split}
		H_\alpha(n_1, n_2, n_3, n_4) &= n_1^3+n_2^3+\alpha n_3^3+\alpha n_4^3 \\
		&= \big(n_1^3+\alpha 	n_{23}^3+\alpha n_4^3\big) + \big( n_2^3+\alpha n_3^3+\alpha (-n_{23})^3 \big).
	\end{split}\end{equation}
	On $\Omega_*$, $0 < |n_2| \leq 1$ and $n_2\in \mathbb{Z}_\lambda$, so  $\lambda^{-1}\leq|n_2|\leq1$. Meanwhile, thanks to \eqref{eq:Temp4}, we know $|n_3| \gtrsim |n_1| \gtrsim N \gg |n_2|$. Hence,
	\begin{equation}\label{0920-1}
		|n_2^3+\alpha n_3^3+\alpha (-n_{23})^3| = 3\alpha |n_2||n_{3}+c_1n_2||n_3+c_2n_2|
		\geq 3\alpha |n_2| \Big| \frac12 n_3 \Big|^2\geq \frac{3\alpha}{4\lambda} |n_3|^2. 
	\end{equation}
	Previously, we obtained a lower bound for $|n_1^3+\alpha n_{23}^3+\alpha n_4^3| $ in (\ref{eq:Temp2}). Meanwhile, an upper bound for this term is also available. In fact, since 
	\[
		|n_4 + c_2 n_1| \leq |n_4 + c_1 n_1| + |c_2 - c_1| |n_1| \leq 	\frac{\kappa_\alpha}{\lambda} + \frac{1}{3} \rho_\alpha |n_1|,
	\]
	we obtain 
	\begin{equation}\label{eq:R-resonant-upper}
		\begin{aligned}
			|n_1^3+\alpha n_{23}^3+\alpha n_4^3| &= |-3\alpha n_1(n_4+c_1n_1)(n_4+c_2n_1)|\\
			&\le 3\alpha |n_1| \frac{\kappa_\alpha}{\lambda}\Big(\frac{\kappa_\alpha}{\lambda} + \frac13 \rho_\alpha |n_1|\Big) \\
			&\leq 3\alpha |n_1|\frac{\kappa_\alpha^2}{\lambda^2} + \alpha \rho_\alpha \kappa_\alpha \frac{|n_1|^2}{\lambda}
			\leq 2\alpha\rho_\alpha \kappa_\alpha \frac{|n_1|^2}{\lambda}.
		\end{aligned}
	\end{equation}
	Plugging (\ref{0920-1}) and (\ref{eq:R-resonant-upper}) into (\ref{decomp_fm1}) yields 
	\begin{equation}\label{0920-2}
		|H_\alpha(n_1, n_2, n_3, n_4)| \geq \frac{3\alpha}{4\lambda} |n_3|^2 - 2\alpha\rho_\alpha \kappa_\alpha \frac{|n_1|^2}{\lambda}.
	\end{equation}
	Based on the definition of $\kappa_\alpha$ in (\ref{eq:kappa-alpha-choice}) and the established estimate (\ref{eq:Temp4}), we have
	\begin{equation}\label{0920-3}
		2\alpha\rho_\alpha \kappa_\alpha \frac{|n_1|^2}{\lambda} \leq 
		2\alpha\rho_\alpha \frac{1}{16 C_*^2 \rho_\alpha} \frac{|C_* n_3|^2}{\lambda} \leq \frac{\alpha n_3^2}{8\lambda}.
	\end{equation}
	The combination of (\ref{0920-3}) and (\ref{0920-2}) yields the desired estimate (\ref{eq:Temp3}).
	
	Finally, putting (\ref{eq:Temp3}) into (\ref{sigma4P_bdd1})
	gives 
	\begin{equation}\label{0920-4}
		\big| \sigma^{P}_{4}(n_1,n_2,n_3,n_4) \big|  \lesssim
		\frac{\lambda^{2+2s_\alpha+\epsilon}
			|n_1|^{2+2s_\alpha+\epsilon}m^2(n_1)}{\alpha n_3^2} \chi_{\Omega_*}(n_1,n_2,n_3,n_4),
	\end{equation}
	which implies  (\ref{eq:sigma4_est}) since $|n_3| \geq C_*^{-1} |n_1|$ thanks to (\ref{eq:Temp4}).
\end{proof}

\section{Global well-posedness of \eqref{GeneralMajdaBiello} on $\mathbb{T}$}
\label{Sec, gwp}

\subsection{Rescaling and a variant local well-posedness result}

In Section \ref{Sec, pre}, we scaled the original system (\ref{GeneralMajdaBiello}) to obtain a scaled system (\ref{MajdaBielloScaling}). For an arbitrary time $T$, the original IVP \eqref{GeneralMajdaBiello} is well-posed on $[0,T]$ if and only if the scaled system (\ref{MajdaBielloScaling}) is well-posed on $[0,\lambda^3 T]$. 
By the local well-posedness result in \cite{Oh2009}, the $I$-operator preserves the local well-posedness; more precisely, we have the following lemma.
\begin{lemma}\label{LWPresult}
	Let $\alpha\in\mathcal{A}$ and $\lambda \geq 1$. For any $s \in (s_{\alpha}, 1)$ and $\delta > 0$, there exists a constant $\varepsilon_*>0$, depending on $\alpha$, $s$ and $\delta$, such that the lifespan of the local solution $(u^\lambda, v^\lambda)$ to the system \eqref{MajdaBielloScaling} is at least $1$ if its initial data $(u_0^\lambda, v_0^\lambda)\in H_0^s(\mathbb{T}_\lambda) \times H^s(\mathbb{T}_\lambda)$ satisfies 
	\begin{equation}\label{LWPcondition}
		\lambda^{s_{\alpha}+\delta}\,\big\| (I u_0^\lambda, I v_0^\lambda) \big\|_{H_0^1(\mathbb{T}_\lambda) \times H^1(\mathbb{T}_\lambda)} \le \varepsilon_*.
	\end{equation}	
	Moreover, $(u^\lambda,v^\lambda)$ belongs to $ Y^s_{\lambda}(\mathbb{T}_\lambda\times [0,1])\times Y^s_{\alpha,\lambda}(\mathbb{T}_\lambda\times [0,1])$  and satisfies  
\begin{equation}\label{ContractionBall}
	\sup_{t\in[0,1]}\|(Iu^\lambda,Iv^\lambda)(t)\|_{H_0^1 \times H^1}\le \|(Iu^\lambda,Iv^\lambda)\|_{Y^1_{\lambda}\times Y^1_{\alpha,\lambda}}\le 2C_1  \big\| (I u_0^\lambda, I v_0^\lambda) \big\|_{H_0^1 \times H^1},
\end{equation}
where $C_1$ is the constant from \eqref{First_Linear_Estimate} in Lemma~\ref{lemma1}.
\end{lemma}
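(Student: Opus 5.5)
The plan is a standard contraction-mapping argument for the $I$-modified system, run in the space $Y^1_{\lambda}\times Y^1_{\alpha,\lambda}$ and built on the bilinear estimates of Lemma~\ref{lemma2}. Apply $I$ to the Duhamel formulation of \eqref{MajdaBielloScaling} and set $(U,V)=(Iu^\lambda,Iv^\lambda)$. Define the map
\[
\Phi(U,V)=\Big(\eta(t)S_{1,\lambda}(t)Iu_0^\lambda-\eta(t)\int_0^t S_{1,\lambda}(t-t')\,\tfrac12 I\partial_x(v^\lambda v^\lambda)(t')\,dt',\ \eta(t)S_{\alpha,\lambda}(t)Iv_0^\lambda-\eta(t)\int_0^t S_{\alpha,\lambda}(t-t')\,I\partial_x(u^\lambda v^\lambda)(t')\,dt'\Big),
\]
where $u^\lambda=I^{-1}U$ and $v^\lambda=I^{-1}V$. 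Since $I$ is invertible on each frequency, a fixed point of $\Phi$ on $[0,1]$ is exactly an $I$-image of a solution on $[0,1]$, because $\eta\equiv1$ there. We take the contraction ball
\[
B=\{(U,V):\|U\|_{Y^1_\lambda}+\|V\|_{Y^1_{\alpha,\lambda}}\le 2C_1 r\},\qquad r:=\|(Iu_0^\lambda,Iv_0^\lambda)\|_{H^1_0\times H^1}.
\]
We also restrict to zero-mean $U$. The zero mean is preserved by the flow, since the $u$-equation has divergence form, so \eqref{bl-Iop2} is applicable.

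The first step is to estimate $\Phi$ on $B$. By \eqref{First_Linear_Estimate}, the linear parts are bounded by $C_1 r$. By \eqref{Duhamel_Estimate} combined with \eqref{bl-Iop1} and \eqref{bl-Iop2}, the Duhamel parts are bounded by
\[
C_2C_3\lambda^{s_\alpha+\delta}(2C_1r)^2 .
\]
This is at most $C_1 r$ provided $4C_1C_2C_3\lambda^{s_\alpha+\delta}r\le 1$, which follows from \eqref{LWPcondition} once $\varepsilon_*\le (4C_1C_2C_3)^{-1}$. For the difference estimates, the bilinear bounds are symmetric bilinear, so applying them to differences gives a Lipschitz constant of at most $4C_1C_2C_3\lambda^{s_\alpha+\delta}r\le\frac12$ after shrinking $\varepsilon_*$ further. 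Hence $\Phi$ is a contraction on $B$. The constant $\varepsilon_*$ depends only on $C_1,C_2,C_3$, that is, on $\alpha$, $s$ and $\delta$, and not on $N$ or $\lambda$.

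The second step is to transfer the result back to $u^\lambda,v^\lambda$. Since $m\ge N^{1-s}\langle n\rangle^{s-1}$ up to constants, the membership $(U,V)\in Y^1$ gives $(u^\lambda,v^\lambda)\in Y^s_\lambda\times Y^s_{\alpha,\lambda}$. Uniqueness and continuous dependence follow from the contraction in the usual way, or by identification with Oh's solution \cite{Oh2009}. The second inequality in \eqref{ContractionBall} is simply the statement that the fixed point lies in $B$, after restricting to $[0,1]$. The first inequality is the embedding $Y^1\hookrightarrow C_tH^1_x$. This embedding holds with constant $1$ from the $\ell^2_nL^1_\tau$ component of the norm, because
\[
|\widehat w(n,t)|\le \frac{1}{2\pi}\|\widetilde w(n,\cdot)\|_{L^1_\tau},
\]
with the normalization absorbed into the convention for $\widetilde w$. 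One then passes to the infimum over extensions in the restriction norm.

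The main subtle point is the bilinear estimate itself, specifically that the $I$-operator does not destroy the $\lambda^{s_\alpha+\delta}$ bound. This has already been handled in Lemma~\ref{lemma2} through the multiplier inequality \eqref{target_est1}. The only remaining care points are two. The first is checking that the time localization to $[0,1]$ is compatible with the restriction norms; this uses the standard extension argument. The second is the normalization constant in the embedding, needed to obtain exactly the stated constant $2C_1$.
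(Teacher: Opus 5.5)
Your proposal is correct and follows essentially the same route as the paper: a contraction for $\Phi$ on the ball of radius $2C_1 r_0$ in the zero-mean subspace of $Y^1_\lambda\times Y^1_{\alpha,\lambda}$, using Lemma~\ref{lemma1} and Lemma~\ref{lemma2} with $\varepsilon_*$ a small multiple of $(C_1C_2C_3)^{-1}$ (the paper takes $\varepsilon_*=1/(16C_1C_2C_3)$), and you additionally spell out the embedding $Y^1\hookrightarrow C_tH^1$ behind the first inequality in \eqref{ContractionBall}, which the paper leaves implicit. One small slip: $(u^\lambda,v^\lambda)\in Y^s$ needs the $N$-uniform bound $m(n)\gtrsim\langle n\rangle^{s-1}$, so that $I^{-1}$ maps $Y^1$ into $Y^s$; the bound you wrote, $m(n)\gtrsim N^{1-s}\langle n\rangle^{s-1}$, fails at low frequencies (for example $n=0$) unless the constant depends on $N$.
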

\begin{proof}
Firstly, according to the Duhamel principle and the notations in Section \ref{Subsec, basic}, the solution to the system (\ref{MajdaBielloScaling}) for $t\in[0,1]$ can be regarded as the 
restriction of the solution to the following integral equations on the time interval $[0,1]$ due to the cut-off function $\eta$.
\begin{equation}\label{Duh_soln_T}\left\{ 
\begin{aligned} 
	u^\lambda(t) &= \eta(t) \Big( S_{1,\lambda}(t) u_{0}^\lambda - \frac12\int_{0}^{t} S_{1,\lambda}(t - \tau) \partial_x (v^\lambda v^\lambda)(\tau) \,d\tau \Big),\\ 
	v^\lambda(t) &= \eta(t) \Big(S_{\alpha,\lambda}(t) v_{0}^\lambda - \int_{0}^{t} S_{\alpha,\lambda}(t - \tau) \partial_x(u^\lambda v^\lambda)(\tau) \,d\tau\Big).
\end{aligned}
\right.\end{equation}
By applying the $I$-operator to both equations respectively, it reduces to solving 
\begin{equation}\label{integ form}\left\{ 
\begin{aligned} 
I u^\lambda(t) &= \eta(t) \Big( S_{1,\lambda}(t) I u_{0}^\lambda - \frac12\int_{0}^{t} S_{1,\lambda}(t - \tau) I \partial_x (v^\lambda v^\lambda)(\tau) \,d\tau \Big),\\ 
I v^\lambda(t) &= \eta(t) \Big(S_{\alpha,\lambda}(t) I v_{0}^\lambda - \int_{0}^{t} S_{\alpha,\lambda}(t - \tau) I \partial_x(u^\lambda v^\lambda)(\tau) \,d\tau\Big). 
\end{aligned}
\right.\end{equation}
Define 
\[f^\lambda(t) = I u^\lambda(t), \quad f_0^\lambda = I u_0^\lambda, \quad g^\lambda(t) = I v^\lambda(t), \quad g_0^\lambda = I v_0^\lambda.\]
By the properties of $m_{N,s}$ described above, the operator $I$ has a bounded inverse operator which maps $H^{1}(\mathbb{T}_\lambda)$ to $H^{s}(\mathbb{T}_\lambda)$. Hence, (\ref{integ form}) is equivalent to 
\begin{equation}\label{integ form_I}\left\{ 
\begin{aligned} 
	f^\lambda(t) &= \eta(t) \Big( S_{1,\lambda}(t) I u_{0}^\lambda - \frac12\int_{0}^{t} S_{1,\lambda}(t - \tau) I \partial_x (v^\lambda v^\lambda)(\tau) \,d\tau \Big),\\ 
	g^\lambda(t) &= \eta(t) \Big(S_{\alpha,\lambda}(t) I v_{0}^\lambda - \int_{0}^{t} S_{\alpha,\lambda}(t - \tau) I \partial_x(u^\lambda v^\lambda)(\tau) \,d\tau\Big),
\end{aligned}
\right.\end{equation}
where $u^\lambda = I^{-1} f^{\lambda}$ and $v^\lambda = I^{-1} g^\lambda$.
This suggests considering the map 
\[\Phi(f^\lambda, g^\lambda) \triangleq \big(\Phi_{1}(f^\lambda,g^\lambda), \Phi_{2}(f^\lambda, g^\lambda)\big),\] 
where 
\begin{equation}\label{contra map}
\left\{\begin{aligned}
	\Phi_{1}(f^\lambda, g^\lambda) &:= \eta(t) \Big( S_{1,\lambda}(t) f_{0}^\lambda - \frac12\int_{0}^{t} S_{1,\lambda}(t - \tau) I \partial_x (v^\lambda v^\lambda)(\tau) \,d\tau \Big),\\
	\Phi_{2}(f^\lambda, g^\lambda) &:= \eta(t) \Big(S_{\alpha,\lambda}(t) g_{0}^\lambda - \int_{0}^{t} S_{\alpha,\lambda}(t - \tau) I \partial_x [(u^\lambda v^\lambda)](\tau) \,d\tau\Big).
\end{aligned}\right. \end{equation}
We aim to show $\Phi $ is a contraction mapping in a ball in an appropriate Banach space, which will imply that the fixed point of $\Phi$ is the desired solution to the system (\ref{MajdaBielloScaling}) for $0\leq t\leq 1$.

Recalling that the mean of $u(\cdot, t)$ over $\mathbb{T}_\lambda$ is conserved in time, so the initial condition $u_0^\lambda\in H_0^s(\mathbb{T}_\lambda)$ implies that $u^\lambda(\cdot, t)\in H_0^s(\mathbb{T}_\lambda)$ for all time. In addition, based on the relation $(f^\lambda, g^\lambda) = (Iu^\lambda, Iv^\lambda)$, we know that the mean of $f^\lambda(\cdot, t)$ over $\mathbb{T}_\lambda$ also vanishes for all time since 
\[
	\widehat{f^\lambda}(0,t) = \widehat{Iu^\lambda}(0,t) = \widehat{u^\lambda}(0,t) = 0.
\]
Based on this observation, we define the space for $(f^\lambda, g^\lambda)$ to be $\mathcal{Y}_\lambda$:
\[
	\mathcal{Y}_\lambda = \big\{ (f^\lambda, g^\lambda)\in Y^1_\lambda\times Y^1_{\alpha,\lambda}:\,  \widehat{f^\lambda}(0,t) = 0, \,\forall\, t \big\}.
\]
Any element $(f^\lambda, g^\lambda)$ in $\mathcal{Y}_\lambda$ is equipped with the norm 
\[
	\|(f^\lambda, g^\lambda)\|_{\mathcal{Y}_\lambda} := \|f^\lambda\|_{Y^1_\lambda} + \|g^\lambda\|_{Y^1_{\alpha,\lambda}}.
\]
For convenience, we denote 
\[
r_0 = \|(Iu^\lambda_0,Iv^\lambda_0)\|_{H_0^1(\mathbb{T}_\lambda)\times H^1(\mathbb{T}_\lambda)}
\]
and then consider the ball
\begin{equation}\label{soln_space_mt}
B(2C_1 r_0) = \left\{(f^\lambda, g^\lambda) \in \mathcal{Y}_{\lambda}: \|(f^\lambda, g^\lambda)\|_{Y^1_{\lambda}\times Y^{1}_{\alpha,\lambda}} \leq 2C_1 r_0 \right\}.
\end{equation}
We will prove $\Phi$ is a contraction mapping on $B(2C_1 r_0)$ by choosing a suitable $\epsilon_{*}$ in (\ref{LWPcondition}) and a constant $C_1$ in (\ref{soln_space_mt}).
	
Fix any $(f^\lambda, g^\lambda)\in B(2C_1 r_0)\subset \mathcal{Y}_\lambda$. By applying Lemma \ref{lemma1} to (\ref{contra map}), we have
	\begin{equation}\label{Tempcon}
		\|\Phi(f^\lambda, g^\lambda)\|_{Y^{1}_{\lambda}\times Y^{1}_{\alpha,\lambda}} \le C_1 r_0 + C_2 \left(\|I\partial_x(v^{\lambda} v^{\lambda})\|_{Z^1_{\lambda}(\mathbb{T}_\lambda \times \mathbb{R})}+\|I\partial_x(u^\lambda v^\lambda)\|_{Z^1_{\alpha,\lambda}(\mathbb{T}_\lambda \times \mathbb{R})}\right),
	\end{equation}
	where $u^\lambda = I^{-1} f^{\lambda}$, $v^\lambda = I^{-1} g^\lambda$, and the constants $C_1$ and $C_2$ come from Lemma \ref{lemma1}. Then, using Lemma \ref{lemma2}, we deduce from (\ref{Tempcon}) that
	\[
	\|\Phi(f^\lambda, g^\lambda)\|_{Y^{1}_{\lambda}\times Y^{1}_{\alpha,\lambda}} \le C_1 r_0 + C_2 C_3 \lambda^{s_{\alpha}+\delta}\|(f^\lambda, g^\lambda)\|_{Y^1_{\lambda}\times Y^{1}_{\alpha,\lambda}}^2.
	\]
	Since $(f^{\lambda}, g^\lambda)\in B(2C_1 r_0)$, the above estimate implies that 
	\begin{equation*}
		\|\Phi(f^{\lambda}, g^{\lambda})\|_{Y^{1}_{\lambda}\times Y^{1}_{\alpha,\lambda}}\le C_1 r_0 + C_2 C_3 \lambda^{s_{\alpha}+\delta}(2C_1 r_0)^2 = C_1 r_0 + 4C_1^2C_2C_3\lambda^{s_{\alpha}+\delta} r_0^2.
	\end{equation*}
	Thanks to the smallness assumption (\ref{LWPcondition}), we know 
	\begin{equation}\label{Phi_bdd1}
			\|\Phi(f^{\lambda}, g^{\lambda})\|_{Y^{1}_{\lambda}\times Y^{1}_{\alpha,\lambda}}\le C_1 r_0 + 4C_1^2 C_2 C_3 r_0 \epsilon_*.
	\end{equation}
	By choosing
	\begin{equation}\label{small_sigr}
		\epsilon_* = \frac{1}{16C_1C_2C_3},
	\end{equation}
	then \eqref{Phi_bdd1} implies
	\begin{equation}\label{norm_bdd_init}
		\|\Phi(f^{\lambda}, g^{\lambda})\|_{Y^{1}_{\lambda}\times Y^{1}_{\alpha,\lambda}}\le 2C_1 r_0,
	\end{equation}
	which justifies $\Phi\big( B(2C_1 r_0) \big) \subseteq B(2C_1 r_0)$.
	
	Similarly, for any $(f_i, g_i)\in B(2C_1 r_0)$, $i=1,2$, the same argument gives
	\[
	\begin{aligned}
		\|\Phi(f_1, g_1) - \Phi(f_2, g_2)\|_{Y^1_{\lambda}\times Y^{1}_{\alpha,\lambda}}&\leq 8C_1 C_2 C_3\lambda^{s_{\alpha}+\delta} r_0 \|(f_1, g_1) - (f_2, g_2)\|_{Y^1_{\lambda}\times Y^{1}_{\alpha,\lambda}}\\
		&\le 8C_1 C_2 C_3 \epsilon_* \|(f_1, g_1) - (f_2, g_2)\|_{Y^1_{\lambda}\times Y^{1}_{\alpha,\lambda}} \\
		&\le \frac{1}{2}\|(f_1, g_1) - (f_2, g_2)\|_{Y^1_{\lambda}\times Y^{1}_{\alpha,\lambda}},
	\end{aligned}
	\]
	where we have again used \eqref{small_sigr}.
	
	In summary, by choosing $C_1$ and $\epsilon_*$ to be the constants in (\ref{First_Linear_Estimate}) and \eqref{small_sigr} respectively, $\Phi$ is a contraction mapping on $B(2C_1 r_0)$ and its fixed point, denoted as $(f^\lambda, g^\lambda)$, is the desired solution to the system (\ref{MajdaBielloScaling}) for $0\leq t\leq 1$. Moreover, by recalling $u^\lambda = I^{-1} f^{\lambda}$ and $v^\lambda = I^{-1} g^\lambda$, \eqref{norm_bdd_init} implies
	 \[
	 	\|(Iu^\lambda,Iv^\lambda)\|_{Y^1_{\lambda}\times Y^1_{\alpha,\lambda}}\le 2C_1  \big\| (I u_0^\lambda, I v_0^\lambda) \big\|_{H_0^1(\mathbb{T}_\lambda) \times H^1(\mathbb{T}_\lambda)},
	 \]
	 which proves \eqref{ContractionBall}.
\end{proof}

For fixed initial data $(u_0,v_0)\in H_0^s(\mathbb{T})\times H^s(\mathbb{T})$, the smallness condition \eqref{LWPcondition} should be satisfied in order to apply Lemma \ref{LWPresult}. A direct computation leads to the following estimate which will be used to guarantee (\ref{LWPcondition}).
\begin{lemma}\label{Iinequality}
	Let $\frac12\leq s<1$, $\lambda\geq1$, and $N\gg1$. Then, for
	every $(u_0,v_0)\in H_0^s(\mathbb{T})\times H^s(\mathbb{T})$, the estimate
	\begin{equation}\label{Iscale_est}
	\|(Iu_0^\lambda, Iv_0^\lambda)\|_{H_0^1(\mathbb{T}_\lambda)\times H^1(\mathbb{T}_\lambda)}
	\lesssim \lambda^{-\frac{3}{2}-s} N^{1-s} \|(u_0,v_0)\|_{H_0^s(\mathbb{T})\times H^s(\mathbb{T})}
	\end{equation}
	holds if and only if $\lambda^s \lesssim N^{1-s}$.
\end{lemma}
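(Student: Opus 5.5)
The plan is a direct Fourier-side computation under the scaling \eqref{Scaling_form}, followed by an analysis of where the multiplier $\langle n\rangle m(n)$ attains its worst size relative to the original Sobolev weight. Since $u_0^\lambda(x)=\lambda^{-2}u_0(x/\lambda)$, the spatial Fourier transform satisfies $\widehat{u_0^\lambda}(n)=\lambda^{-1}\widehat{u_0}(\lambda n)$ for $n\in\mathbb Z_\lambda$. Writing $k=\lambda n\in\mathbb Z$ and using the normalized measure on $\mathbb Z_\lambda$, I obtain
\[
\|Iu_0^\lambda\|_{H^1(\mathbb T_\lambda)}^2=\frac{1}{2\pi\lambda}\sum_{k\in\mathbb Z}\langle k/\lambda\rangle^2 m^2(k/\lambda)\,\lambda^{-2}|\widehat{u_0}(k)|^2 .
\]
Comparing with $\|u_0\|_{H^s}^2\sim\sum_k\langle k\rangle^{2s}|\widehat{u_0}(k)|^2$, the claimed estimate \eqref{Iscale_est} is equivalent to the uniform multiplier bound
\[
\sup_{k\in\mathbb Z}\ \frac{\langle k/\lambda\rangle\, m(k/\lambda)}{\langle k\rangle^{s}}\ \lesssim\ \lambda^{-s}N^{1-s}.
\]
The same reduction applies to $v_0$, except that for $v_0$ the frequency $k=0$ is allowed; for $u_0$ the zero mode vanishes.

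For sufficiency I split into three ranges of $k\neq 0$. When $|k|\le\lambda$, the ratio is $\lesssim 1\le\lambda^{-s}N^{1-s}$, using the hypothesis $\lambda^s\lesssim N^{1-s}$. When $\lambda\le|k|\le N\lambda$, we have $m=1$, so the ratio is about $(|k|/\lambda)|k|^{-s}=\lambda^{-1}|k|^{1-s}\le\lambda^{-1}(N\lambda)^{1-s}=\lambda^{-s}N^{1-s}$. When $|k|\ge N\lambda$, we have $m(k/\lambda)\sim N^{1-s}(|k|/\lambda)^{s-1}$, so the ratio is $\sim N^{1-s}\lambda^{-s}$. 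The zero mode of $v_0$ contributes a ratio of $1$, which is again covered by the hypothesis.

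For necessity, the extremal frequencies show the condition cannot be dropped. Taking $v_0$ to be constant ($k=0$), or $u_0$ concentrated at $k=\pm1$, the left side of \eqref{Iscale_est} is $\sim\lambda^{-3/2}$ while the right side is $\lambda^{-3/2-s}N^{1-s}$. The estimate therefore forces $\lambda^{s}\lesssim N^{1-s}$.

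I expect the only delicate point to be bookkeeping rather than any real obstacle. I must keep the $\lambda$-normalizations of the measure on $\mathbb Z_\lambda$ consistent so that the factor $\lambda^{-3/2}$ comes out exactly. I must also handle low frequencies, $k=0$ for $v$ and $|k|\sim1$ for $u$, in both directions of the ``if and only if''. Finally, in the transition region $N\lambda\le|k|\le 2N\lambda$, I will use only monotonicity of $m$ and $m\le1$.
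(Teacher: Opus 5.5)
Your proposal is correct and follows essentially the same route as the paper. Both arguments use $\widehat{u_0^\lambda}(n)=\lambda^{-1}\widehat{u_0}(\lambda n)$ to reduce to the pointwise bound $\lambda^{s}\langle k/\lambda\rangle m(k/\lambda)\lesssim N^{1-s}\langle k\rangle^{s}$, check it on the ranges $|k|\le\lambda$, $\lambda\le|k|\lesssim N\lambda$ and $|k|\gtrsim N\lambda$ (using the hypothesis only in the lowest range), and prove necessity with a single low mode. The differences are cosmetic: you split at $N\lambda$ rather than $2N\lambda$, and you also allow the constant datum for $v_0$, whereas the paper tests with $u_0=e^{ix}$.
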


\begin{proof}
	It suffices to prove the estimate for the $u$-component:
	\begin{equation}\label{Iscale_est_u}
		\| Iu_0^\lambda \|_{H_0^1(\mathbb{T}_\lambda)}
		\lesssim \lambda^{-\frac{3}{2}-s} N^{1-s} \|u_0\|_{H_0^s(\mathbb{T})} \quad \text{if and only if}\quad  \lambda^s \lesssim N^{1-s},
	\end{equation}
	since the $v$-component can be treated in the same way.
	By the definition of the scaling and of the $I$-operator, the inequality for $u_0$ is equivalent to
	\[
	\big\| \langle n\rangle m( n)\, \lambda^{-1}\widehat{u}_0(\lambda n)\,  \big\|_{\ell^2(\mathbb{Z}_\lambda)}
	\lesssim \lambda^{-\frac{3}{2}-s} N^{1-s} \big\| \langle n\rangle^s \widehat{u}_0(n) \big\|_{\ell^2(\mathbb{Z})},
	\]
	which is
	\[
		\frac{1}{\lambda} \sum_{n\in \mathbb{Z}_{\lambda}} (1+|n|)^2 m^2(n) \lambda^{-2} |\widehat{u}_0(\lambda n)|^2 
		\lesssim \lambda^{-3-2s} N^{2-2s} \sum_{n\in\mathbb{Z}} (1+|n|)^{2s} |\widehat{u}_0(n)|^2,
	\]
	i.e.
	\begin{equation}\label{Iscale_est_u1}
	\lambda^{-3} \sum_{n\in \mathbb{Z}} \Big(1+\Big|\frac{n}{\lambda}\Big|\Big)^2 m^2\Big(\frac{n}{\lambda}\Big) |\widehat{u}_0(n)|^2 
	\lesssim \lambda^{-3-2s} N^{2-2s} \sum_{n\in\mathbb{Z}} (1+|n|)^{2s} |\widehat{u}_0(n)|^2.
	\end{equation}
	After comparing the coefficient before $|\widehat{u}_0(n)|^2$, it suffices to establish the following multiplier bound:
	\begin{equation}\label{Temp0004}
		\lambda^{2s}\Big(1+\Big|\frac{n}{\lambda}\Big|\Big)^2 m^2\Big(\frac{n}{\lambda}\Big)
		\lesssim N^{2-2s}(1+|n|)^{2s} \qquad \text{for all } n\in\mathbb{Z}.
	\end{equation}
	
	\noindent\textbf{Sufficiency.} Assume $\lambda^s \lesssim N^{1-s}$. We verify \eqref{Temp0004} by considering three frequency ranges.
	
	\begin{itemize}
		\item {\bf Case 1: $|n| \ge 2\lambda N$.}
	Here the multiplier satisfies $m^2\big(\frac{n}{\lambda}\big) = N^{2-2s}\big|\frac{n}{\lambda}\big|^{2s-2}$. Hence
	\[
	\lambda^{2s}\Big(1+\Big|\frac{n}{\lambda}\Big|\Big)^2 m^2\Big(\frac{n}{\lambda}\Big)
	\lesssim |n|^{2s} N^{2-2s}
	\lesssim N^{2-2s}(1+|n|)^{2s}.
	\]
	
		\item {\bf Case 2: $\lambda \le |n| \le 2\lambda N$.}
	In this region $m\big(\frac{n}{\lambda}\big) \le m(1) \lesssim 1$ and $1+\frac{|n|}{\lambda} \le 2\frac{|n|}{\lambda}$. Therefore,
	\[
	\lambda^{2s}\Big(1+\Big|\frac{n}{\lambda}\Big|\Big)^2 m^2\Big(\frac{n}{\lambda}\Big)
	\le \lambda^{2s}\Big(\frac{2|n|}{\lambda}\Big)^2 m(1)^2
	\lesssim 4\lambda^{2s-2}|n|^2 .
	\]
	Using the assumption $\lambda \le |n| \le 2\lambda N$ and $\frac12\le s<1$, we obtain
	\[
	4\lambda^{2s-2}|n|^2
	\le 4\lambda^{2s-2} |n|^{2-2s}|n|^{2s}
	\le 4 \lambda^{2s-2}(2\lambda N)^{2-2s}(1+|n|)^{2s}\le 8N^{2-2s}(1+|n|)^{2s}.
	\]
	
		\item {\bf Case 3: $|n| \le \lambda$.} 
	Then $m\big(\frac{n}{\lambda}\big) = 1$ and $1+\frac{|n|}{\lambda} \le 2$. Consequently,
	\[
	\lambda^{2s}\Big(1+\Big|\frac{n}{\lambda}\Big|\Big)^2 m^2\Big(\frac{n}{\lambda}\Big)
	\le 4\lambda^{2s}
	\le 4N^{2-2s}
	\le 4N^{2-2s}(1+|n|)^{2s},
	\]
	where we used the assumption that $\lambda^{s}\lesssim N^{1-s}$ in the middle inequality.
	\end{itemize}
	
	Combining the three cases justifies (\ref{Temp0004}) for all $n$, which implies the desired inequality (\ref{Iscale_est_u}).
	
	\noindent\textbf{Necessity.} Conversely, suppose the estimate of the lemma holds. Choose the particular initial data
	\[
	u_0(x) = e^{ix}.
	\]
	Then 
	$\widehat{u}_0(n) = \delta(n-1)$, where $\delta$ refers to the Dirac delta.
	Substituting this into (\ref{Iscale_est_u1}) yields
	\[
	\Big(1+\frac{1}{\lambda}\Big)^2 m^2\Big(\frac{1}{\lambda}\Big)
	\lesssim \lambda^{-2s} N^{2-2s} 2^{2s}.
	\]
	Thus $\lambda^{2s} \lesssim N^{2-2s}$, i.e.\ $\lambda^s \lesssim N^{1-s}$.
\end{proof}

\subsection{Bounds for the modified-energy corrections at each time}
\begin{lemma}\label{Sigma3Estimate}
	Let $\alpha\in\mathcal{A}$, $s\in (s_\alpha, 1)$
	and  
	\begin{equation}\label{eq:ConditionOfLamandN2}
		1\ll \lambda \lesssim N^{\frac{2(1-s)}{3}}.
	\end{equation}
	Let $\sigma_3$ be as defined in \eqref{sigma3}. Then for every fixed time $t$,
	\begin{equation}\label{constant4}
		 | \Lambda_3(\sigma_3; u^\lambda, v^\lambda, v^\lambda)(t) | \le C_8 \|Iu^\lambda(t)\|_{H^1} \|Iv^\lambda(t)\|_{H^1}^2,
	\end{equation}
where $u^{\lambda}(\cdot, t)$ has zero mean on $\mathbb{T}_\lambda$ and
$C_8$ depends only on $\alpha$ and $s$.
\end{lemma}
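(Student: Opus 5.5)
The plan is to reduce \eqref{constant4} to a pointwise bound on $\sigma_3$ and then to one of two simple summation estimates on $\Gamma_3^\lambda$.

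\emph{Trivial region.} Write $R:=n_1^3+\alpha n_2^3+\alpha n_3^3$ and $\sigma_3=iM_3/R$. By the Remark after \eqref{E2_comp}, only $n_1\neq 0$ contributes, so $|n_1|\geq\lambda^{-1}$ and $R\neq0$. If $\max_i|n_i|\leq N$, all $m(n_i)=1$. Then $M_{31}=iR$ and $M_3=0$, so $\sigma_3$ vanishes. Hence we may assume $|n_{\max}|\gtrsim N$; since $n_1+n_2+n_3=0$, at least two frequencies are $\gtrsim N$ and comparable.

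\emph{Summation tools.} Set $a_i(n)=\langle n\rangle m(n)|\widehat{f_i}(n)|$, where $f_1=u^\lambda(t)$ and $f_2=f_3=v^\lambda(t)$, so that $\|a_i\|_{\ell^2}=\|If_i\|_{H^1}$. I will use two bounds.
\begin{itemize}
\item[(T1)] By Cauchy--Schwarz and $\|\langle n\rangle^{-1}\|_{\ell^2(\mathbb Z_\lambda)}\leq1$ (as in \eqref{infty_bdd}),
\[
\int_{\Gamma_3^\lambda}\frac{a_1a_2a_3}{\langle n_j\rangle}\leq\prod_i\|a_i\|_{\ell^2}
\]
for any choice of $j$.
\item[(T2)] If, for each $n_1$, only $O(1)$ values of $n_3$ are allowed, then the integral is bounded by $\lambda^{-1}\|a_1\|_{\ell^2}\|a_2\|_{\ell^2}\|a_3\|_{\ell^\infty}$. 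Moreover $\|a\|_{\ell^\infty}\leq(2\pi\lambda)^{1/2}\|a\|_{\ell^2}$, so the bound is $\lesssim\lambda^{-1/2}\prod_i\|a_i\|_{\ell^2}$.
\end{itemize}
With (T1), it suffices to prove $|\sigma_3|\lesssim\langle n_j\rangle^{-1}\prod_i\langle n_i\rangle m(n_i)$ for some $j$ on each region. In practice this means $|\sigma_3|\lesssim m(n_1)m(n_2)m(n_3)\langle n_{\rm med}\rangle\langle n_{\max}\rangle$.

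\emph{Non-resonant region,} $\min_{j}|n_3+c_jn_1|\geq\kappa_\alpha/\lambda$. I split according to the size of $n_1$.
\begin{itemize}
\item If $|n_1|\ll|n_{\max}|$, then $n_2\approx-n_3$. The numerator $M_{31}$ enjoys the usual mean-value cancellation: $|M_{31}|\lesssim|n_{\max}|^2m^2(n_{\max})|n_1|$ plus a term $|n_1|^3m^2(n_1)$. The factorization \eqref{decomp-0918} gives $|R|\sim|n_1||n_{\max}|^2$. Hence $|\sigma_3|\lesssim m^2(n_{\max})+|n_1|^2/|n_{\max}|^2\lesssim 1$.
\item If all three frequencies are comparable and we are away from the two resonant lines, then $|R|\gtrsim|n_1|\,|n_3+c_1n_1|\,|n_3+c_2n_1|$.
  \begin{itemize}
  \item When $|n_3+c_jn_1|\gtrsim|n_1|$, this gives $|\sigma_3|\lesssim m^2(n_{\max})$.
  \item The intermediate range $\kappa_\alpha/\lambda\leq|n_3+c_jn_1|\ll|n_1|$ is handled by the same Diophantine lower bound as in \eqref{Dterm_est}. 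That bound gives $|\sigma_3|\lesssim\lambda^{1+2s_\alpha+\epsilon}|n_1|^{1+2s_\alpha+\epsilon}m^2(n_1)$.
  \end{itemize}
\end{itemize}
In the intermediate range, comparing with the target $|n_1|^2m^3(n_1)$, I need $\lambda^{1+2s_\alpha+\epsilon}|n_1|^{2s_\alpha-1+\epsilon}\lesssim m(n_1)$. This fails for $|n_1|\to\infty$. So that range should also be routed to (T2) rather than (T1), or additionally dyadically decomposed in $|n_3+c_jn_1|$ and summed using the lattice sparsity.

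\emph{Resonant region,} $|n_3+c_jn_1|<\kappa_\alpha/\lambda$. This is the main obstacle. By the choice of $\kappa_\alpha$, for each $n_1$ there is at most one admissible $n_3$, so (T2) applies and gains $\lambda^{-1/2}$. It then suffices that $|\sigma_3|\big/\prod_i\langle n_i\rangle m(n_i)$ be bounded on this set. The lower bound \eqref{eq:Temp2} together with $|M_3|\lesssim|n_1|^3m^2(n_1)$ gives
\[
\frac{|\sigma_3|}{\prod_i\langle n_i\rangle m(n_i)}\lesssim\lambda^{1+2s_\alpha+\epsilon}|n_1|^{2s_\alpha-2+\epsilon}m(n_1)^{-1}\lesssim\lambda^{1+2s_\alpha+\epsilon}N^{-(1-s)}|n_1|^{2s_\alpha-1-s+\epsilon}.
\]
Since $2s_\alpha-1-s<0$ once $\epsilon$ is small, and $|n_1|\gtrsim N$, this is $\lesssim\lambda^{1+2s_\alpha+\epsilon}N^{2s_\alpha-2-2s+2s+\epsilon}\cdots$. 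The $\lambda$-powers, and the ones left over from the intermediate range, are to be absorbed using $|n_1|\gtrsim N$, the hypothesis \eqref{eq:ConditionOfLamandN2} $\lambda\lesssim N^{2(1-s)/3}$, and $s>s_\alpha$. I expect this exponent bookkeeping, making the $\lambda$-loss from the Diophantine bound close uniformly in $N$, to be the delicate point. If it fails, the fallback is to sum the intermediate dyadic shells using the counting estimate from the proof of \cite[Prop.~3.7]{Oh2009}.
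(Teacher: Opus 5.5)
The overall architecture is sound: you reduce to the multiplier $|\sigma_3|/\prod_i\langle n_i\rangle m(n_i)$, use your (T1) off resonance, and use the sparsity bound (T2) on the band $|n_3+c_jn_1|<\kappa_\alpha/\lambda$. However, the proof is incomplete on the intermediate band $\kappa_\alpha/\lambda\le|n_3+c_jn_1|\ll|n_1|$. There you observe that your bound fails and defer to fallbacks you do not carry out. The first fallback cannot work: (T2) needs $O(1)$ admissible $n_3$ for each $n_1$, whereas this band contains $\sim\lambda|n_1|$ lattice points for each $n_1$.

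The failure comes from invoking the Diophantine bound \eqref{Dterm_est}, which in this band is far weaker than the band's own defining inequality. Off resonance, $|n_3+c_1n_1|+|n_3+c_2n_1|\ge\frac{\rho_\alpha}{3}|n_1|$ and both factors are at least $\kappa_\alpha/\lambda$. The factorization therefore gives $|R|\gtrsim|n_1|^2/\lambda$, which is the same bound the paper uses on $\mathcal{O}_0$. Hence $|\sigma_3|\lesssim\lambda|n_1|m^2(n_1)$ whenever $|n_1|\sim|n_{\max}|\gtrsim N$. In that configuration the (T1) target $m(n_1)m(n_2)m(n_3)\langle n_{\mathrm{med}}\rangle\langle n_{\max}\rangle$ is $\gtrsim|n_1|^2m^3(n_1)$. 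So (T1) applies as soon as $\lambda\lesssim|n_1|m(n_1)$, which holds because $|n_1|m(n_1)\gtrsim N$ and $\lambda\lesssim N^{2(1-s)/3}$. This single estimate covers the entire non-resonant region with $|n_1|\sim|n_{\max}|$. That includes the configurations where $n_2$ or $n_3$ is small, which your two bullets do not literally address.

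The resonant-band bookkeeping, which you leave garbled and flag as delicate, does close, and you should write it out. There
\[
\frac{|\sigma_3|}{\prod_i\langle n_i\rangle m(n_i)}\lesssim\lambda^{1+2s_\alpha+\epsilon}|n_1|^{2s_\alpha-2+\epsilon}m(n_1)^{-1}\sim\lambda^{1+2s_\alpha+\epsilon}N^{s-1}|n_1|^{2s_\alpha-1-s+\epsilon}.
\]
The exponent $2s_\alpha-1-s+\epsilon$ is negative and $|n_1|\gtrsim N$, so the supremum is $\lesssim\lambda^{1+2s_\alpha+\epsilon}N^{-2+2s_\alpha+\epsilon}$. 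After the $\lambda^{-1/2}$ gain from (T2) you need $\lambda^{\frac12+2s_\alpha+\epsilon}\lesssim N^{2-2s_\alpha-\epsilon}$. Under \eqref{eq:ConditionOfLamandN2} this follows from
\[
\frac{2(1-s)}{3}\Bigl(\frac12+2s_\alpha+\epsilon\Bigr)\le\frac{(1-s)(5+2\epsilon)}{3}\le 2(1-s)-\epsilon<2-2s_\alpha-\epsilon,
\]
which is valid for $\epsilon\le(1-s)/6$. Fixing such an $\epsilon$ makes $C_8$ depend only on $\alpha$ and $s$.

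With these two repairs you have a complete, self-contained proof. The paper itself gives no argument and only appeals to an inspection of \cite[Lemma~5.9]{OhEJDE}.
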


\begin{proof}
	We refer the readers to \cite[Lemma~5.9]{OhEJDE} for the proof. 
	In fact, Oh \cite[Lemma~5.9]{OhEJDE} assumes the relation
		\[
		\lambda\sim N^{\frac{2(1-s)}{3+2s-2s_\alpha-\epsilon}}, 
		\]
		between $\lambda$ and $N$ for any sufficiently small positive $\epsilon$. However, an inspection of the proof shows that the weaker condition \eqref{eq:ConditionOfLamandN2} suffices to justify (\ref{constant4}).
\end{proof}

Similar to Lemma \ref{Sigma3Estimate}, we will also estimate the quadrilinear functional $|\Lambda_4(\sigma_4^P;u^\lambda,u^\lambda,v^\lambda,v^\lambda)(t)|$ for any fixed time $t$. Before carrying out this estimate, we first present one technical result which will be used to bound the quadrilinear functional in Lemma \ref{lem:fixed-time-bound}.

\begin{lemma}\label{lem:admissible-frequency-sum}
	Let $A>0$ and $a\in\mathbb{R}\setminus\{0\}$. For any $n\in\mathbb{Z}_\lambda$, define 
	\begin{equation}\label{Theta_def}
		\Theta_{A}(n;a):=
		\left\{\theta\in\mathbb{R}:
		|\theta|\leq\frac{A}{\lambda}
		\ \text{and}\
		an+\theta\in\mathbb{Z}_\lambda\right\}.
	\end{equation}
	Then for any $\lambda\geq 1$, $f\in\ell^2(\mathbb{Z}_\lambda)$ and $m\in\mathbb{Z}_\lambda$, we have 
	\begin{equation}\label{eq:finite-sum}
		\bigg\|
		\sum_{\theta\in\Theta_{A}(n;a)}
		\big|f(an+m+\theta)\big|
		\bigg\|_{\ell_n^2(\mathbb{Z}_\lambda)}
		\leq C(A,a)\|f\|_{\ell^2(\mathbb{Z}_\lambda)},
	\end{equation}
	where
	\[
		C(A,a):=
		\left[
		(2A+1)\left(\frac{2A}{|a|}+1\right)
		\right]^{1/2}.
	\]
	In particular, $C(A,a)$ is independent of $\lambda$, $f$ and $m$.
\end{lemma}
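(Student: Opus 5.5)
We prove \eqref{eq:finite-sum} by first bounding the number of admissible $\theta$, then applying Cauchy--Schwarz, and finally controlling how often each point of $\mathbb{Z}_\lambda$ is hit as $n$ varies.

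\textbf{Counting $\Theta_A(n;a)$.} Fix $n\in\mathbb{Z}_\lambda$. The points $an+\theta$ with $\theta\in\Theta_A(n;a)$ are elements of $\mathbb{Z}_\lambda$. They lie in the interval $[an-A/\lambda,\,an+A/\lambda]$, which has length $2A/\lambda$. Since $\mathbb{Z}_\lambda$ has spacing $1/\lambda$, this interval contains at most $2A+1$ such points. Because $\theta\mapsto an+\theta$ is injective, we get $\#\Theta_A(n;a)\le 2A+1$.

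\textbf{Cauchy--Schwarz.} Applying Cauchy--Schwarz to the inner sum gives, for each $n$,
\[
\Big(\sum_{\theta\in\Theta_A(n;a)}|f(an+m+\theta)|\Big)^2\le (2A+1)\sum_{\theta\in\Theta_A(n;a)}|f(an+m+\theta)|^2 .
\]
Summing over $n$ with the normalized measure $\frac{1}{2\pi\lambda}\sum_{n}$, the left side of \eqref{eq:finite-sum} squared is at most
\[
(2A+1)\,\frac{1}{2\pi\lambda}\sum_{n\in\mathbb{Z}_\lambda}\ \sum_{\theta\in\Theta_A(n;a)}|f(an+m+\theta)|^2 .
\]

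\textbf{Reindexing by $k$.} Write $k=an+m+\theta$, so $k\in\mathbb{Z}_\lambda$ because $an+\theta\in\mathbb{Z}_\lambda$ and $m\in\mathbb{Z}_\lambda$. Reorganize the double sum as $\sum_k |f(k)|^2\cdot\#\{(n,\theta)\}$, where the count runs over pairs producing the given $k$. Since $\theta$ is determined by $n$ and $k$, this count equals the number of $n\in\mathbb{Z}_\lambda$ with $|an-(k-m)|\le A/\lambda$. Such $n$ satisfy $|n-(k-m)/a|\le A/(|a|\lambda)$, an interval of length $2A/(|a|\lambda)$. It therefore contains at most $2A/|a|+1$ points of the lattice $\mathbb{Z}_\lambda$.

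\textbf{Conclusion.} Combining the three steps, the squared left side is at most
\[
(2A+1)\Big(\frac{2A}{|a|}+1\Big)\frac{1}{2\pi\lambda}\sum_k|f(k)|^2=C(A,a)^2\|f\|_{\ell^2(\mathbb{Z}_\lambda)}^2 .
\]
Taking square roots gives \eqref{eq:finite-sum}. The constant is visibly independent of $\lambda$, $f$ and $m$.

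The only delicate step is the lattice-point count, where one must check that an interval of length $L/\lambda$ contains at most $L+1$ points of $\mathbb{Z}_\lambda$. This holds because consecutive points of $\mathbb{Z}_\lambda$ are exactly $1/\lambda$ apart, so the count is at most $\lfloor L\rfloor+1$. The same fact is applied in both the counting step and the reindexing step.
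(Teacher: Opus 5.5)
Your proof is correct and follows the paper's argument essentially step for step. Both bound $\#\Theta_A(n;a)\le 2A+1$, apply Cauchy--Schwarz in $\theta$, and then reindex, observing that $\theta$ is determined by $n$ so that each target frequency is hit by at most $2A/|a|+1$ values of $n\in\mathbb{Z}_\lambda$.
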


\begin{proof}
	For each fixed $n\in\mathbb{Z}_\lambda$, the frequencies $an+\theta$ with
	$\theta\in\Theta_A(n;a)$ lie in an interval of length $2A/\lambda$.
	Since the spacing of $\mathbb{Z}_\lambda$ is $1/\lambda$, it follows that
	\begin{equation}\label{eq:theta-cardinality}
		\sup_{n\in\mathbb{Z}_\lambda}\#\Theta_{A}(n;a)
		\leq 2A+1.
	\end{equation}
	As a result, we apply the Cauchy-Schwarz inequality and (\ref{eq:theta-cardinality}) to find
	\[\begin{split}
		\bigg\|
		\sum_{\theta\in\Theta_{A}(n;a)}
		\big|f(an+m+\theta)\big|
		\bigg\|_{\ell_n^2(\mathbb{Z}_\lambda)}^2 
		& = \frac{1}{2\pi\lambda} \sum_{n\in \mathbb{Z}_{\lambda}} \bigg( \sum_{\theta\in\Theta_{A}(n;a)}
		\big|f(an+m+\theta)\big| \bigg)^2 \\
		&\leq \frac{1}{2\pi\lambda} \sum_{n\in \mathbb{Z}_{\lambda}} (2A+1) \sum_{\theta\in\Theta_{A}(n;a)}
		\big|f(an+m+\theta)\big|^2 \\
		& = \frac{2A+1}{2\pi\lambda} \sum_{n\in \mathbb{Z}_{\lambda}} \sum_{\theta\in\Theta_{A}(n;a)}
		\big|f(an+m+\theta)\big|^2.
	\end{split}\]
	So (\ref{eq:finite-sum}) is reduced to the justification of the following inequality:
	\[
	\sum_{n\in \mathbb{Z}_{\lambda}} \sum_{\theta\in\Theta_{A}(n;a)}
	\big|f(an+m+\theta)\big|^2 
	\leq 
	\left(\frac{2A}{|a|}+1\right)  \sum_{n'\in \mathbb{Z}_{\lambda}} |f(n')|^2,
	\]
	which is further reduced to showing that for any $n'\in\mathbb{Z}_\lambda$, 
	\[
	\#\big\{(n,\theta): n\in\mathbb{Z}_{\lambda},\, \theta\in\Theta_{A}(n;a),\, an+m+\theta = n'
	\big\} \leq \frac{2A}{|a|} + 1.
	\]
	Noticing that once $n$ is fixed, then $\theta$ is uniquely determined by $\theta = n'-an-m$, so it is equivalent to showing that for any $n'\in\mathbb{Z}_\lambda$,
	\begin{equation}\label{eq:theta-multiplicity}
		\#\left\{n\in\mathbb{Z}_\lambda:
		n'=an+m+\theta\ \text{for some }\theta\in\Theta_{A}(n;a)
		\right\}
		\leq \frac{2A}{|a|}+1.
	\end{equation}
	In fact, for any fixed $n'\in\mathbb{Z}_\lambda$, the relation
	\[
	n'=an+m+\theta,
	\qquad \theta\in\Theta_{A}(n;a),
	\]
	implies
	\[
	\left|n-\frac{n'-m}{a}\right| = \Big| - \frac{\theta}{a}\Big|
	\leq\frac{A}{|a|\lambda}.
	\]
	The possible values of $n$ therefore lie in an interval of length
	$2A/(|a|\lambda)$. Since $n\in\mathbb{Z}_\lambda$, the number of possible values of $n$ is at most 
	$\frac{2A}{|a|}+1$, which verifies (\ref{eq:theta-multiplicity}).
\end{proof}

Now we are ready to show the pointwise bound for 
$|\Lambda_4(\sigma_4^P;u^\lambda,u^\lambda,v^\lambda,v^\lambda)(t)|$ at each fixed time $t$.

\begin{lemma}\label{lem:fixed-time-bound}
	Let $\alpha\in\mathcal{A}$, $s\in (s_\alpha, 1)$ and 
	\begin{equation}\label{eq:Lemma1ConditionOfLamandN}
		1\ll \lambda\lesssim N^{1-s}.
	\end{equation}
	Let $\sigma_4^P$ be as defined in \eqref{eq:sigma4P}. 
	Then for any fixed time $t$, we have
	\begin{equation}\label{eq:fixed-time-bound}
		\left|
		\Lambda_4(\sigma_4^P;u^\lambda,u^\lambda,v^\lambda,v^\lambda)(t)
		\right|
		\le C_9
		\|Iu^\lambda(t)\|_{H^1}^2 \|Iv^\lambda(t)\|_{H^1}^2,
	\end{equation}
	where the constant $C_9$ only depends on $\alpha$ and $s$.
\end{lemma}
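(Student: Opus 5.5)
We bound the quadrilinear form pointwise in frequency, using the multiplier bound \eqref{eq:sigma4_est} from Lemma~\ref{lemma:sigma4} together with the sparsity of $\Omega_*$ encoded in Lemma~\ref{lem:admissible-frequency-sum}.

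First, write
\[
|\Lambda_4(\sigma_4^P;u^\lambda,u^\lambda,v^\lambda,v^\lambda)|\le \int_{\Gamma_4^\lambda}|\sigma_4^P|\prod|\widehat{\cdot}(n_i)|.
\]
Next, transfer the weights onto $I$-quantities. On $\Omega_*$, \eqref{eq:Temp4} gives $|n_1|\sim|n_3|\sim|n_4|\gtrsim N$. Hence $m(n_i)\sim m(n_1)$ for $i=1,3,4$, since $m$ is comparable on dyadic-comparable scales. Moreover, $\langle n_2\rangle m(n_2)\sim1$ because $|n_2|\le1$. So we set $F_i=\langle n_i\rangle m(n_i)|\widehat{f_i}(n_i)|$, with $f_1=f_2=u^\lambda$ and $f_3=f_4=v^\lambda$. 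Then \eqref{eq:sigma4_est} reduces the integrand to
\[
\lambda^{2+2s_\alpha+\epsilon}\,|n_1|^{2s_\alpha+\epsilon}m^2(n_1)\,\frac{1}{|n_1|\,|n_3|\,|n_4|\,m(n_1)m(n_3)m(n_4)}\prod_{i=1}^4F_i
\lesssim \lambda^{2+2s_\alpha+\epsilon}\,\frac{|n_1|^{2s_\alpha+\epsilon-3}}{m(n_1)}\prod F_i .
\]
Using $m(n_1)^{-1}\lesssim (|n_1|/N)^{1-s}$ for $|n_1|\gtrsim N$, this is bounded by
\[
\lambda^{2+2s_\alpha+\epsilon}N^{-(1-s)}|n_1|^{2s_\alpha+\epsilon-2-s}\prod F_i .
\]
Because $s_\alpha<1$ and $s\geq\frac12$, the exponent of $|n_1|$ is negative for small $\epsilon$. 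With $|n_1|\gtrsim N$, the coefficient is then at most $\lambda^{2+2s_\alpha+\epsilon}N^{2s_\alpha+\epsilon-3}$. Under \eqref{eq:Lemma1ConditionOfLamandN} we have $\lambda^{3}\lesssim N^{3(1-s)}\le N^{3/2}$, so this coefficient is $\lesssim N^{(2+2s_\alpha)(1-s)+2s_\alpha-3+O(\epsilon)}$. Since $s_\alpha<1$ and $1-s\le\frac12$, the exponent is at most $1+s_\alpha+2s_\alpha-3+O(\epsilon)<0$ when $s_\alpha<\frac23$. In general I would need to be more careful: use $1-s<1-s_\alpha$ to get exponent $(2+2s_\alpha)(1-s_\alpha)+2s_\alpha-3=1+2s_\alpha-2s_\alpha^2-\dots$. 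If that bookkeeping fails for $s_\alpha$ near $1$, I would fall back on keeping $\lambda^{-1}$ factors from the measure, as described below. In any case the target is a coefficient $\lesssim 1$.

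It then remains to show $\int_{\Omega_*}\prod_{i=1}^4 F_i\lesssim \prod\|F_i\|_{\ell^2}$ with a constant uniform in $\lambda$. Parametrize by $n_1$, $n_2$, and $n_4=-c_jn_1+\theta$ with $\theta\in\Theta_{\kappa_\alpha}(n_1;-c_j)$, so that $n_3=-n_1-n_2-n_4$. The measure is $\frac{1}{(2\pi\lambda)^3}\sum$ over the three free variables, but $n_4$ is effectively determined, with at most one choice by $\kappa_\alpha\le1/8$. Consequently
\[
\int_{\Omega_*}\prod F_i\le \frac{1}{2\pi\lambda}\int_{|n_2|\le1}F_2(n_2)\int_{n_1}F_1(n_1)\sum_{\theta}F_4(-c_jn_1+\theta)\,F_3\big(-(1-c_j)n_1-n_2-\theta\big).
\]
Apply Cauchy--Schwarz in $n_1$, bounding $F_4$ by its $\ell^\infty\le\lambda^{1/2}\|F_4\|_{\ell^2}$ norm (the normalized measure gives $\|\cdot\|_{\ell^\infty}\le(2\pi\lambda)^{1/2}\|\cdot\|_{\ell^2}$). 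Then use Lemma~\ref{lem:admissible-frequency-sum} with $a=-(1-c_j)$ and shift $m=-n_2$ to control the $\theta$-sum of $F_3$ in $\ell^2_{n_1}$. Finally, $\int_{|n_2|\le1}F_2\lesssim\|F_2\|_{\ell^2}$ by Cauchy--Schwarz over a set of measure $O(1)$. The factors $\lambda^{-1}\cdot\lambda^{1/2}$ yield $\lambda^{-1/2}$, which only helps.

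The main obstacle is the exponent bookkeeping in the second paragraph: verifying that $\lambda^{2+2s_\alpha+\epsilon}N^{2s_\alpha-3+\epsilon}$ times any leftover power of $\lambda$ stays bounded for all $s_\alpha<1$ under $\lambda\lesssim N^{1-s}$. If it does not close directly, I would first try to sharpen the pointwise bound. Rather than using \eqref{eq:sigma4_est}, I would return to \eqref{sigma_4P_sec} and exploit cancellation in the numerator $n_1^3m^2(n_1)+\alpha n_{23}^3m^2(n_{23})+\alpha n_4^3m^2(n_4)$. Since $n_1^3+\alpha n_{23}^3+\alpha n_4^3$ is small there, a mean-value argument on $m^2$ gains a factor of the resonance function rather than merely $|n_1|^3m^2$.
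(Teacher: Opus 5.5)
This is essentially the paper's proof, and the step you leave open does close. The shared ingredients are the pointwise bound from Lemma~\ref{lemma:sigma4}, the comparabilities $m(n_1)\sim m(n_3)\sim m(n_4)$ and $\langle n_2\rangle m(n_2)\sim 1$ on $\Omega_*$, and one factor placed in $\ell^\infty$ at a cost of $\lambda^{1/2}$. The proof also uses Lemma~\ref{lem:admissible-frequency-sum} for the $\theta$-sum and Cauchy--Schwarz over the $O(1)$-measure set $\{|n_2|\le 1\}$. The paper puts $F_3$ rather than $F_4$ in $\ell^\infty$ and uses the weight $\langle n_2\rangle^{-1}$ for the $n_2$-sum; this is equivalent.

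The only missing piece is the final exponent check, which you flag as the main obstacle. Your doubt comes from an arithmetic slip. Since $s>s_\alpha$,
\[
(2+2s_\alpha)(1-s)+2s_\alpha-3<(2+2s_\alpha)(1-s_\alpha)+2s_\alpha-3=-1+2s_\alpha-2s_\alpha^2=-\Bigl(2\bigl(s_\alpha-\tfrac12\bigr)^2+\tfrac12\Bigr)\le-\tfrac12,
\]
not $1+2s_\alpha-2s_\alpha^2$. Including the $\epsilon$-terms, your coefficient satisfies $\lambda^{2+2s_\alpha+\epsilon}N^{2s_\alpha+\epsilon-3}\lesssim N^{-\frac12+\frac32\epsilon}\lesssim 1$ for any fixed $\epsilon<\frac13$. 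This already holds without the extra $\lambda^{-1/2}$ from the summation. Fixing such an $\epsilon$ in terms of $\alpha,s$ keeps $C_9$ dependent only on $\alpha$ and $s$.

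The paper does the same bookkeeping with $\epsilon=2s-2s_\alpha$, so that $|n_1|^{2s_\alpha+\epsilon}=|n_1|^{2s}$. Together with the $\lambda^{-1/2}$ from the summation, this gives the coefficient $\lambda^{\frac32+2s}N^{-(3-2s)}$. This is bounded because $(\frac32+2s)(1-s)\le 3-2s$, which holds since $2s^2-\frac52 s+\frac32>0$ for every real $s$.

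Neither of your fallbacks is needed, and the second one would not help anyway. On $\Omega_*$ one has $n_{23}\approx -c_2 n_1$ and $n_4\approx -c_1 n_1$, with fixed irrational ratios $|c_1|,|c_2|\neq 1$. Hence $m^2(n_{23})-m^2(n_1)$ and $m^2(n_4)-m^2(n_1)$ are of full size $m^2(n_1)$. The numerator $n_1^3m^2(n_1)+\alpha n_{23}^3m^2(n_{23})+\alpha n_4^3m^2(n_4)$ is therefore in general of size $|n_1|^3m^2(n_1)$, and no factor of the resonance function is gained.
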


\begin{proof}
	It suffices to prove the following $L^2$ multiplier
	bound: for arbitrary functions $f_i\in L^2(\mathbb{T}_\lambda)$,
	\begin{equation}\label{eq:fixed-time-reduced}
		\left|
		\Lambda_4\left(
		\frac{\sigma_4^P(n_1, n_2, n_3, n_4)}{\prod_{i=1}^4\langle n_i\rangle m(n_i)};
		f_1,f_2,f_3,f_4
		\right)
		\right|
		\lesssim
		\prod_{i=1}^4\|f_i\|_{L_x^2}.
	\end{equation}
	Indeed, applying \eqref{eq:fixed-time-reduced} with
	$f_1=f_2=\langle\nabla_x\rangle Iu^\lambda(t)$ and $f_3=f_4=\langle\nabla_x\rangle Iv^\lambda(t)$ gives
	\eqref{eq:fixed-time-bound}.
	By Lemma~\ref{lemma:sigma4} with the fixed $\epsilon = 2s-2s_\alpha$, we have 
\begin{equation}\label{0921-2}
	\left|
	\frac{\sigma_4^P(n_1, n_2, n_3, n_4)}{\prod_{i=1}^4\langle n_i\rangle m(n_i)}
	\right|
	\lesssim
	\frac{\lambda^{2+2s} |n_1|^{2s}m^{2}(n_1)}
	{\prod_{i=1}^4\langle n_i\rangle m(n_i)} \chi_{\Omega_*}(n_1, n_2, n_3, n_4).
\end{equation}
	On $\Omega_*$, \eqref{eq:Temp4} gives
	$|n_1|\sim|n_3|\sim|n_4|\gtrsim N$, and hence
	$m(n_1)\sim m(n_3)\sim m(n_4)$. Moreover, $|n_2|\leq1$
	implies $m(n_2)=1$. Therefore, (\ref{0921-2}) implies that
	\begin{equation}\label{0921-1}
	\left|
	\frac{\sigma_4^P(n_1, n_2, n_3, n_4)}{\prod_{i=1}^4\langle n_i\rangle m(n_i)}
	\right|
\lesssim
	\frac{\lambda^{2+2s} |n_1|^{2s}\chi_{\Omega_*}}{ |n_1|^3 \langle n_2\rangle m(n_1)}
\lesssim
	\frac{\lambda^{2+2s} \chi_{\Omega_*}}{N^{1-s}|n_1|^{2-s}\langle n_2\rangle},
	\end{equation}
	where the second inequality is due to the relation
	$m(n_1) \sim N^{1-s} |n_1|^{s-1}$. 
	Then it follows from (\ref{0921-1}) and $|n_1| \gtrsim N$ that
	\begin{equation}\label{0922-1}
		\left|
		\frac{\sigma_4^P(n_1, n_2, n_3, n_4)}{\prod_{i=1}^4\langle n_i\rangle m(n_i)}
		\right| \lesssim 
		\frac{ \lambda^{2+2s} \chi_{\Omega_*}}{N^{3-2s}\langle n_2\rangle}.
	\end{equation}
Meanwhile, on $\Omega_*$, either $|n_4 + c_1 n_1|\leq\kappa_\alpha/\lambda$ or $|n_4 + c_2 n_1|\leq\kappa_\alpha/\lambda$. We only consider the first case since the other case is analogous. Thus, in what follows, the left hand side of (\ref{eq:fixed-time-reduced}) refers to the contribution from the first case. Then it follows from (\ref{0922-1}) that 
\begin{equation}\label{0923-1}
	\begin{aligned}
		\text{LHS of } \eqref{eq:fixed-time-reduced}
		&\lesssim \frac{\lambda^{2+2s}}{N^{3-2s}}
		\Big(\frac{1}{2\pi \lambda}\Big)^3
		\sum_{n_1\in \mathbb{Z}_\lambda} |\widehat{f}_1(n_1)|
		\Big(\sum_{n_2\in \mathbb{Z}_\lambda} \langle n_2\rangle^{-1} |\widehat{f}_2(n_2)| \\
		&\qquad
		\sum_{n_4\in \mathbb{Z}_\lambda: |n_4 + c_1 n_1|\leq \kappa_\alpha/\lambda} |\widehat{f}_4(n_4)| |\widehat{f}_3(-n_1-n_2-n_4)| \Big).
	\end{aligned}
\end{equation}

For any fixed $n_1$, we perform a change of variable $n_4\to \theta$ by
\[
	\theta =: n_4 + c_1 n_1.
\]
Then for any $n_4\in\mathbb{Z}_\lambda$ such that $|n_4 + c_1 n_1|\leq \kappa_\alpha/\lambda$, we know $|\theta|\leq \kappa_\alpha/\lambda$ and $-c_1 n_1 + \theta = n_4\in\mathbb{Z}_\lambda$. Hence, $\theta\in \Theta_{\kappa_\alpha}(n_1; -c_1)$ which is defined as in (\ref{Theta_def}) with $A = \kappa_\alpha$ and $a=-c_1$, that is,
\begin{equation}\label{Thetan1}
	\Theta_{\kappa_\alpha}(n_1; -c_1) =
	\left\{\theta\in\mathbb R:\ |\theta|\leq\frac{\kappa_\alpha}{\lambda}
	\ \text{ and }\ -c_1 n_1+\theta\in\mathbb Z_\lambda\right\}.
\end{equation}
So (\ref{0923-1}) is converted to 
\begin{equation}\label{0923-2}\begin{split}
	\text{LHS of } \eqref{eq:fixed-time-reduced} &\lesssim \frac{\lambda^{2+2s}}{N^{3-2s}}
	\Big(\frac{1}{2\pi \lambda}\Big)^3
	\sum_{n_1\in \mathbb{Z}_\lambda} |\widehat{f}_1(n_1)|
	\Big(\sum_{n_2\in \mathbb{Z}_\lambda} \langle n_2\rangle^{-1} |\widehat{f}_2(n_2)| \\
	&\qquad
	\sum_{\theta\in\Theta_{\kappa_\alpha}(n_1;-c_1)} |\widehat{f}_4(-c_1 n_1 + \theta)| |\widehat{f}_3(-n_1-n_2+c_1 n_1-\theta)| \Big).
\end{split}\end{equation}
For notation convenience, we define $F_i$ to be the function such that $\widehat{F_i}=|\widehat{f_i}|$. Then we interchange the order of the summation for $n_1$ and $n_2$ in (\ref{0923-2}) to obtain 
\begin{equation}\label{0923-3}\begin{split}
	\text{LHS of } \eqref{eq:fixed-time-reduced} &\lesssim \frac{\lambda^{2+2s}}{N^{3-2s}}
	\Big(\frac{1}{2\pi \lambda}\Big)^3
	\sum_{n_2\in \mathbb{Z}_\lambda} \langle n_2\rangle^{-1} \widehat{F}_2(n_2)
	\Big(\sum_{n_1\in \mathbb{Z}_\lambda} \widehat{F}_1(n_1)
	\\
	&\qquad
	\sum_{\theta\in\Theta_{\kappa_\alpha}(n_1;-c_1)} \widehat{F}_4(-c_1 n_1 + \theta) \widehat{F}_3(-n_1-n_2+c_1 n_1-\theta) \Big).
\end{split}\end{equation}

In order to bound the right hand side of (\ref{0923-3}), we first  factor out the $\ell^{\infty}$ norm of $\widehat{F}_3$ and then the $\ell^{1}$ norm of $\langle n_2\rangle^{-1}\widehat{F}_2(n_2)$ to find
\[
	\text{LHS of } \eqref{eq:fixed-time-reduced}
	\lesssim 
	\frac{\lambda^{2+2s}}{N^{3-2s}}
	\Big(\frac{1}{2\pi \lambda}\Big)^2 \|\widehat{F_3}\|_{\ell^\infty}\|\langle n_2\rangle^{-1}\widehat{F_2}(n_2)\|_{\ell^1_{n_2}} 
	\sum_{n_1\in \mathbb{Z}_\lambda} \widehat{F}_1(n_1)
	\sum_{\theta\in\Theta_{\kappa_\alpha}(n_1;-c_1)} \widehat{F}_4(-c_1 n_1 + \theta).
\]
We continue to perform the Cauchy-Schwarz inequality in $n_1$ to obtain 
\[
	\text{LHS of } \eqref{eq:fixed-time-reduced}
	\lesssim 
	\frac{\lambda^{2+2s}}{N^{3-2s}}
	\frac{1}{2\pi \lambda} \|\widehat{F_3}\|_{\ell^\infty}\|\langle n_2\rangle^{-1}\widehat{F_2}(n_2)\|_{\ell^1_{n_2}} 
	\|\widehat{F}_1(n_1)\|_{\ell^2_{n_1}}
	\Big\|
	\sum_{\theta\in\Theta_{\kappa_\alpha}(n_1;-c_1)} \widehat{F}_4(-c_1 n_1 + \theta)\Big\|_{\ell^2_{n_1}}.
\]
Now we apply Lemma \ref{lem:admissible-frequency-sum} (with $A=\kappa_\alpha$, $a=-c_1$ and $m=0$) to the last term  above to get 
\begin{equation}\label{0923-4}
	\text{LHS of } \eqref{eq:fixed-time-reduced}
	\lesssim 
	\frac{\lambda^{2+2s}}{N^{3-2s}}
	\frac{1}{2\pi \lambda} \|\widehat{F_3}\|_{\ell^\infty}\|\langle n_2\rangle^{-1}\widehat{F_2}(n_2)\|_{\ell^1_{n_2}} 
	\|\widehat{F}_1\|_{\ell^2}
	\|\widehat{F}_4\|_{\ell^2}.
\end{equation}

Noticing 
\[
\|\widehat{F_3}\|_{\ell^\infty(\mathbb{Z}_\lambda)}\le \|F_3\|_{L_x^1(\mathbb{T}_\lambda)}\le \lambda^{1/2}\|F_3\|_{L_x^2(\mathbb{T}_\lambda)}
= \lambda^{1/2}\|\widehat F_3\|_{\ell^2(\mathbb{Z}_\lambda)},
\]
and
\[
\|\langle n_2\rangle^{-1}\widehat{F_2}\|_{\ell^1_{n_2}(\mathbb{Z}_\lambda)}\le \|\widehat{F_2} \|_{\ell^2_{n_2}(\mathbb{Z}_\lambda)},
\]	
then it follows from (\ref{0923-4}) that
\[
	\text{LHS of } \eqref{eq:fixed-time-reduced}
	\lesssim 
	\frac{\lambda^{\frac32+2s}}{N^{3-2s}}
	\|\widehat{F}_3\|_{\ell^2(\mathbb{Z}_\lambda)}
	\|\widehat{F}_2\|_{\ell^2(\mathbb{Z}_\lambda)}
	\|\widehat{F}_1\|_{\ell^2(\mathbb{Z}_\lambda)}
	\|\widehat{F}_4\|_{\ell^2(\mathbb{Z}_\lambda)}.
\]
So according to the definition of $F_i (1\leq i\leq 4)$ and the Plancherel identity, the above estimate yields 
	\begin{equation}\label{0923-5}
	\text{LHS of } \eqref{eq:fixed-time-reduced}\lesssim 
	\frac{\lambda^{\frac{3}{2}+2s}}{N^{3-2s}}\prod_{i=1}^4 \| \widehat f_i\|_{\ell^2(\mathbb{Z}_\lambda)} 	
	= \frac{\lambda^{\frac{3}{2}+2s}}{N^{3-2s}}\prod_{i=1}^4 \| f_i\|_{L_x^2(\mathbb{T}_\lambda)}.
	\end{equation}
	
	Finally, due to the assumption (\ref{eq:Lemma1ConditionOfLamandN}), we have
	\[
		\lambda^{\frac{3}{2}+2s} \lesssim N^{(\frac32+2s)(1-s)} \lesssim N^{3-2s}, 
	\]
	so it follows from (\ref{0923-5}) that 
	\[
		\text{LHS of } \eqref{eq:fixed-time-reduced}\lesssim \prod_{i=1}^4 \| f_i\|_{L_x^2(\mathbb{T}_\lambda)} = 
		\text{RHS of } \eqref{eq:fixed-time-reduced}.
	\]
	Hence, \eqref{eq:fixed-time-reduced} is justified, so is \eqref{eq:fixed-time-bound}.
\end{proof}

\subsection{Almost conservation of the third modified energy $E^{(3)}$}
In this section, we will show that the growth rate of the third modified energy $E^{(3)}$ can be controlled effectively. Firstly, according to the time derivative (\ref{deri_E3}) of $E^{(3)}$, we know $\frac{d}{dt} E^{(3)}(t)$ is determined by two quadrilinear functionals and four quintilinear functionals. So before carrying out the main result of this section, we first present several estimates on the quadrilinear and quintilinear functionals. The proofs of Lemma \ref{M41Estimate} and Lemma \ref{M42Estimate} will be postponed to Section \ref{Sec, multi-est}, and the proof of Lemma \ref{lem:M5j-estimates} will be carried out in Section \ref{Sec, quint-est}.

\begin{lemma}\label{M41Estimate}
Let $\alpha\in\mathcal{A}$, $s\in (s_\alpha, 1)$ and $\lambda\gg 1$.
Let $M_{41}$ and $\Omega_*$ be as defined in (\ref{Multiplier41}) and (\ref{eq:bad-region-def}). Suppose $u^{\lambda}$ has zero mean on $\mathbb{T}_\lambda$. 
Then for any $\epsilon\in (0, 2s-2s_\alpha)$, the following quadrilinear estimate holds:
\begin{equation}\label{constant51}
\left| \int_0^1 \Lambda_4(M_{41}(1-\chi_{\Omega_*}); u^\lambda, u^\lambda, v^\lambda, v^\lambda) dt \right| \le C_{10}\,D_1(\lambda,N,\epsilon) \|Iu^\lambda\|_{X^{1,\frac12}_{\lambda}}^2 \|Iv^\lambda\|_{X^{1,\frac12}_{\alpha,\lambda}}^2,
\end{equation}
where $C_{10}>0$ only depends on $\alpha$, $s$ and $\epsilon$, and
\begin{equation}\label{D1decay}
	D_1(\lambda,N,\epsilon) = N^{-2+2s_{\alpha}+\epsilon} \lambda^{2s_{\alpha}+\epsilon}.
\end{equation}
\end{lemma}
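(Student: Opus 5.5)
We follow the standard $I$-method reduction to a weighted $L^2$ quadrilinear estimate, as in the proof of Lemma~\ref{lem:fixed-time-bound}. Writing $f_1=f_2=\langle\nabla_x\rangle Iu^\lambda$ and $f_3=f_4=\langle\nabla_x\rangle Iv^\lambda$ (localized to $[0,1]$ by the usual $\eta$-cutoff), it suffices to bound
\[
\int_{\sum\tau_i=0}\int_{\Gamma_4^\lambda}\frac{|M_{41}|(1-\chi_{\Omega_*})}{\prod_{i=1}^4\langle n_i\rangle m(n_i)}\prod_{i=1}^4|\widetilde f_i(n_i,\tau_i)|
\]
by $D_1\prod\|f_i\|_{X^{0,1/2}}$, with the appropriate dispersion $n^3$ or $\alpha n^3$ for each factor. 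We order $|n_1|,\dots,|n_4|$ dyadically, write $N_j\sim|n_j|$, and note that the multiplier vanishes unless the largest frequency is $\gtrsim N$. This is because when all $|n_i|\le N$, $m\equiv 1$, so the numerator of $M_{41}$ equals its denominator, and $M_{41}=2in_{23}$ is then exactly cancelled by symmetrization against $M_{42}$-type terms. Alternatively, one can check directly that the $\Lambda_4$ term with $m\equiv1$ is the $\frac{d}{dt}$ of the conserved Hamiltonian and hence vanishes. We then split the remaining region according to the size of the cubic resonance $R=n_1^3+\alpha n_{23}^3+\alpha n_4^3=-3\alpha n_1(n_4+c_1n_1)(n_4+c_2n_1)$.

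\textbf{Step 1 (nonresonant part).} Where $\min_j|n_4+c_jn_1|\gtrsim|n_1|$, we have $|R|\sim|n_1|^3$. A mean-value argument on $n^3m^2(n)$, as in the KdV $I$-method of \cite{Iterm03}, gives
\[
|M_{41}|\lesssim |n_{23}|\,\frac{\max_i|n_i|^3 m^2}{|R|}\cdot(\text{mean-value gain}).
\]
After dividing by $\prod\langle n_i\rangle m(n_i)$ this yields decay $N^{-2+}$ or better. We close with two $L^4_{x,t}$ Strichartz estimates (Lemma~\ref{lemma4}), or with $L^6_tL^2_x$/$L^\infty$ bounds (Lemma~\ref{lemma3}) when one frequency is low, and the zero-mean condition on $u^\lambda$ prevents $n_1=0$. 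These cases are better than $D_1$.

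\textbf{Step 2 (nearly resonant part).} The main obstacle is the intermediate region $\kappa_\alpha/\lambda<|n_4+c_jn_1|\ll|n_1|$, together with the resonant region $|n_4+c_jn_1|\le\kappa_\alpha/\lambda$ outside $\Omega_*$, namely where $|n_2|>1$, $n_2=0$, or $|n_1|\ll N$. In the first of these, the Diophantine bound from Definition~\ref{Def,mbti}, as in \eqref{Dterm_est}, gives
\[
|R|\gtrsim \lambda^{-1-2s_\alpha-\epsilon}|n_1|^{2-2s_\alpha-\epsilon}.
\]
Inserting this into $M_{41}$ makes the multiplier $\lesssim \lambda^{1+2s_\alpha+\epsilon}|n_1|^{2+2s_\alpha+\epsilon}m^2(n_1)$. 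We plan to divide by $\prod\langle n_i\rangle m(n_i)$ and gain $\langle H_\alpha\rangle^{1/2}$ from the largest modulation, using
\[
H_\alpha = R+\bigl(n_2^3+\alpha n_3^3-\alpha n_{23}^3\bigr),
\]
where the second bracket has size $\sim|n_2||n_3|^2$ when $|n_2|\ll|n_3|$. The key point is the sparsity: for fixed $n_1$ only $O(1)$ values of $n_4$ lie in each resonant strip, which we handle with Lemma~\ref{lem:admissible-frequency-sum}. This is the resonant quadrilinear estimate announced in the introduction (Lemma~\ref{lem:resonant-quadrilinear}(a)), giving an extra $\lambda^{-1}$. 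If $|n_2|>1$, then $|H_\alpha|\gtrsim|n_3|^2$, which yields a gain of $|n_1|^{-1}$ that offsets the $|n_1|^{2s_\alpha+\epsilon}$ loss up to $N^{-2+2s_\alpha+\epsilon}$. If $n_2=0$, then $n_{23}=n_3$ and the denominators reduce to the trilinear structure, which is controlled by the Diophantine bound directly. If instead $|n_1|\ll N$ with another frequency large, the $m$-cancellation in the numerator of $M_{41}$ gives the needed decay.

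\textbf{Step 3.} Collecting the dyadic bounds, summing with the $N_{\max}^{0-}$ room left by $\epsilon$, and optimizing the worst case produces exactly $N^{-2+2s_\alpha+\epsilon}\lambda^{2s_\alpha+\epsilon}$. The hardest point is Step 2 with $|n_2|$ just above $1$ and $|n_1|\sim N$, where the $\langle H_\alpha\rangle^{1/2}$ gain must be combined carefully with the strip-counting lemma.
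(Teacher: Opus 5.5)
Your overall frame matches the paper's: pointwise bounds on $\mathcal M=|M_{41}|/\prod_i\langle n_i\rangle m(n_i)$, closed by the $L^4$ estimate or, on the strip $\Delta$, by Lemma~\ref{lem:resonant-quadrilinear}(a) with a $\langle H_\alpha\rangle^{1/2}$ gain, plus the Diophantine bound for $R$ and the exclusion of $\Omega_*$ forcing $|n_2|>1$. However, Step~2 fails as written.

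You treat the intermediate region $\kappa_\alpha/\lambda<|n_4+c_jn_1|\ll|n_1|$ with the Diophantine bound, then try to recover the loss from $\langle H_\alpha\rangle^{1/2}$ and strip counting. Strip counting is not available there. Lemma~\ref{lem:admissible-frequency-sum} needs $n_4$ within $\kappa_\alpha/\lambda$ of $-c_jn_1$, and off the strip each $n_1$ admits many $n_4$. So the proof of Lemma~\ref{lem:resonant-quadrilinear} breaks down: after pairing $n_2$ with $n_3$ you are left with $\|\widehat F_1\|_{\ell^1}\|\widehat F_4\|_{\ell^1}$. Without the modulation gain, the Diophantine estimate gives $\lambda^{1+2s_\alpha+\epsilon}N_1^{2s_\alpha+\epsilon-s}N^{s-1}$ when $|n_2|\lesssim 1$, which exceeds $D_1$ by a factor $\gtrsim\lambda N$. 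The missing observation is that this region is harmless. Off the strip,
\[
|R|=3\alpha|n_1|\,|n_4+c_1n_1|\,|n_4+c_2n_1|\geq 3\alpha|n_1|\cdot\frac{\kappa_\alpha}{\lambda}\cdot\frac{\rho_\alpha}{6}|n_1|\gtrsim\frac{N_1^2}{\lambda}.
\]
Hence $\mathcal M\lesssim N_1^2/(|R|\langle n_2\rangle\langle n_3\rangle m(n_2)m(n_3))\lesssim\lambda N^{-1}\leq D_1$ (using $s_\alpha\geq\frac12$), and the $L^4$ estimate closes. The Diophantine bound is needed only on $\Delta$ itself.

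Inside $\Delta\setminus\Omega_*$, two of your claims are false. First, $|n_2|>1$ does not give $|H_\alpha|\gtrsim|n_3|^2$. The configuration $n_2=-n_1$, $n_3=-n_4$ with $|n_4+c_1n_1|\leq\kappa_\alpha/\lambda$ lies in $\Delta\setminus\Omega_*$ and has $H_\alpha=0$; this is exactly the singularity that rules out the full correction. One must split by the relative size of $|n_2|$ and $|n_3|$:
\begin{itemize}
\item If $|n_2|\sim|n_3|$, both are $\gtrsim N_1$, so $\langle n_2\rangle\langle n_3\rangle m(n_2)m(n_3)\gtrsim N_1^2m^2(N_1)$. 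This gives $\mathcal M\lesssim\lambda D_1$, and the resonant estimate closes using only $\langle H_\alpha\rangle^{1/2}\geq1$.
\item If $|n_2|\gg|n_3|$, then $|H_\alpha|\gtrsim N_1^3$.
\item Only if $|n_3|\gg|n_2|$ does $|n_2|>1$ give $|H_\alpha|\gtrsim N_3^2$.
\end{itemize}
Second, $n_2=0$ is not controlled by the Diophantine bound. There $H_\alpha=R$ can be as small as the Diophantine bound allows, while $\mathcal M$ is generically of size $N_1/(|R|\,m(N_1))$, far above $\lambda\langle H_\alpha\rangle^{1/2}D_1$. This case must be discarded using the zero mean of $u^\lambda$, as you did for $n_1=0$, since $n_2$ is also a $u$-frequency.

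Two smaller points. The vanishing on $\{\max_i|n_i|\ll N\}$ comes from symmetrizing the $(u,u,v,v)$ form itself under $(n_1,n_2,n_3,n_4)\mapsto(n_2,n_1,n_4,n_3)$, which sends $n_{23}$ to $n_{14}=-n_{23}$; it is not a cancellation against the $(v,v,v,v)$ term. Also, since $\alpha\neq1$, high--low regions such as $|n_1|\sim|n_2|\gtrsim N\gg|n_3|,|n_4|$ give only $\mathcal M\lesssim N^{-1}$, not $N^{-2+}$. This still suffices because $N^{-1}\leq D_1$.
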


\begin{lemma}\label{M42Estimate} 
Let $\alpha\in\mathcal{A}$, $s\in (s_\alpha, 1)$ and $\lambda\gg 1$.
Let $M_{42}$ be as defined in (\ref{Multiplier42}).
Then for any $\epsilon\in (0, 2s-2s_\alpha)$, the following quadrilinear estimate holds 
\begin{equation}\label{constant52}
	\left| \int_0^1 \Lambda_4\bigl(M_{42};v^\lambda,v^\lambda,v^\lambda,v^\lambda\bigr) \,dt \right| \leq C_{11}D_1(\lambda,N,\epsilon) \|Iv^\lambda\|_{X^{1,\frac12}_{\alpha,\lambda}}^4,
 \end{equation} 
where $C_{11}>0$ depends only on $\alpha$, $s$ and $\epsilon$, and $D_1(\lambda,N,\epsilon)$ is as defined in \eqref{D1decay}. 
\end{lemma}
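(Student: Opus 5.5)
We need to prove Lemma \ref{M42Estimate}, the estimate for $M_{42}$ on the all-$v$ quadrilinear term.

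The plan is to reduce to an $L^2$-type multiplier estimate in Fourier restriction spaces, exactly as in the KdV $I$-method. First symmetrize $M_{42}$ over the $v$-variables; the denominator $n_{14}^3+\alpha n_2^3+\alpha n_3^3$ is the cubic resonance $R(n_{14},n_2,n_3)$. Writing $v^\lambda = I^{-1}\langle\nabla\rangle^{-1}F$, we must show
\[
\int_0^1\!\!\int_{\Gamma_4^\lambda}\frac{|M_{42}|}{\prod_{i=1}^4\langle n_i\rangle m(n_i)}\prod_{i=1}^4|\widehat{F_i}(n_i,t)|\,dt\lesssim D_1\prod_{i=1}^4\|F_i\|_{X^{0,\frac12}_{\alpha,\lambda}}.
\]
The quartic resonance here is $H_\alpha'=\alpha\sum n_i^3 = 3\alpha n_{12}n_{13}n_{14}$. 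This factorizes with rational roots, so it vanishes only when some $n_{ij}=0$.

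First, bound $M_{42}$ pointwise by dyadic decomposition $|n_i|\sim N_i$ with $N_1\ge N_2\ge N_3\ge N_4$ (after symmetry), requiring $N_1\gtrsim N$.

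\textbf{Case 1: $|R(n_{14},n_2,n_3)|$ is large.} If $|n_{14}|$ is bounded away from the resonant lines $n_3=-c_jn_{14}$ by $\gtrsim |n_{14}|$, then $|R|\sim |n_{14}|\,|n_{2}|\,|n_3|$-type sizes. In this case the numerator $n_{14}^3m^2(n_{14})+\alpha n_2^3m^2(n_2)+\alpha n_3^3m^2(n_3)$ is handled by the mean value theorem / double mean value theorem as in \cite{Iterm03}. This gives $|M_{42}|\lesssim$ the standard KdV-type quartic bound, and the estimate follows from the $L^4_{x,t}$ Strichartz estimate (Lemma \ref{lemma4}), using $X^{0,\frac13}\supset X^{0,\frac12}$ together with $\lambda^{0+}$ losses. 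The decay gained is $N^{-2+}$, which is better than $D_1$.

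\textbf{Case 2: near-resonance.} When $|n_3+c_jn_{14}|\le \kappa_\alpha/\lambda$ or otherwise small, the Diophantine bound (as in \eqref{Dterm_est}) gives
\[
|R|\gtrsim \lambda^{-1-2s_\alpha-\epsilon}|n_{14}|^{2-2s_\alpha-\epsilon},
\]
so that $|M_{42}|\lesssim \lambda^{1+2s_\alpha+\epsilon}|n_{14}|^{2+2s_\alpha+\epsilon}m^2(\cdot)$. To compensate, use the largest modulation: on $\sum\tau_i=0$, $\max_i\langle\tau_i-\alpha n_i^3\rangle\gtrsim |H_\alpha'|$. Here the resonant-set sparsity appears. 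Fixing $n_{14}$, only $O(1)$ values of $n_3$ are admissible, by Lemma \ref{lem:admissible-frequency-sum}. I would run a Cauchy–Schwarz argument in the style of Lemma \ref{lem:fixed-time-bound}, with the sum over $\theta\in\Theta_{\kappa_\alpha}$, combined with $L^6_tL^2_x$/$L^8_tL^2_x$ (Lemma \ref{lemma3}) in time, to gain $\lambda^{-1}$ and the factor $\langle H_\alpha'\rangle^{1/2}$. This mirrors the quadrilinear resonant estimate announced for $\Delta_4$.

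\textbf{Main obstacle.} The hard step is the near-resonant case when $H_\alpha'$ also degenerates, for instance $n_{12}=0$, where no modulation gain is available. There, I would exploit the cancellation specific to $M_{42}$ under symmetrization: pairing $n_1\leftrightarrow n_4$ and $n_2\leftrightarrow n_3$ shows the numerator is odd in a way that kills the leading term. Failing that, I would bound the contribution directly by counting, using that $n_{12}=0$ forces $n_3=-n_4$, and absorbing via the $m^2$ weights together with $N_1\gtrsim N$. Balancing the $\lambda^{1+2s_\alpha+\epsilon}$ loss against the $\lambda^{-1}$ sparsity gain and $N^{-(2-2s_\alpha-\epsilon)}$ should produce exactly $D_1=N^{-2+2s_\alpha+\epsilon}\lambda^{2s_\alpha+\epsilon}$.
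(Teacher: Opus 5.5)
Your architecture is the paper's. You reduce to a weighted multiplier bound and use pointwise bounds plus the $L^4$ estimate of Lemma \ref{lemma4} away from the resonant lines of $R(n_{14},n_2,n_3)$. On the resonant set you combine the Diophantine lower bound on $R$ with the sparsity estimate of Lemma \ref{lem:resonant-quadrilinear}(b): the largest modulation absorbs $\langle H_\alpha'\rangle^{1/2}$, and the $O(1)$ admissible frequencies give $\lambda^{-1}$. Parametrizing by $n_{14}$ rather than by $n_3$, as the paper does, is equivalent. Your identity $H_\alpha'=3\alpha n_{12}n_{13}n_{14}$ is correct and is a cleaner substitute for the paper's splitting $H_\alpha'=R_{n_{14}}(n_2)+R_{n_{23}}(n_1)$.

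One case is loosely handled. Your Case 2 covers "$\le\kappa_\alpha/\lambda$ or otherwise small", but sparsity only holds at scale $O(1/\lambda)$. The zone $\kappa_\alpha/\lambda<|n_3+c_jn_{14}|\ll|n_{14}|$ must be treated as in the paper's $\mathcal{O}_0'$. There $|R|\gtrsim|n_{14}|^2/\lambda$, so $\mathcal{M}'\lesssim\lambda N^{-1}\le D_1$ pointwise, and $L^4$ suffices.

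The real flaw is the paragraph on the "main obstacle". It is misdiagnosed, and your primary remedy fails: $M_{42}$ depends on $(n_{14},n_2,n_3)$ symmetrically in $n_2,n_3$. It is therefore invariant, not odd, under $n_1\leftrightarrow n_4$ and under $n_2\leftrightarrow n_3$, so that pairing yields no cancellation. An exact-cancellation argument would in any case only remove $n_{12}=0$, not $0<|n_{12}|\ll|n_{14}|$. No cancellation is needed. Split the resonant part of $\{|n_{14}|\sim|n_2|\sim|n_3|\gtrsim N\}$ by the relative size of $|n_1|$ and $|n_4|$, as the paper does.

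\begin{itemize}
\item If $|n_1|\sim|n_4|$, both are $\gtrsim|n_{14}|\gtrsim N$. Hence $\langle n_1\rangle m(n_1)\langle n_4\rangle m(n_4)\gtrsim N^{2-2s}|n_{14}|^{2s}$, and the Diophantine bound gives $\mathcal{M}'\lesssim\lambda^{1+2s_\alpha+\epsilon}N^{-2+2s_\alpha+\epsilon}=\lambda D_1$. Lemma \ref{lem:resonant-quadrilinear}(b) with only $\langle H_\alpha'\rangle\ge1$ then closes this case.
\item If $|n_4|\gg|n_1|$ (or the reverse), then $|n_{12}|$ and $|n_{13}|$ are both $\sim|n_{14}|$. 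Your factorization gives $|H_\alpha'|\sim|n_{14}|^3$, which is exactly what compensates for $\langle n_1\rangle m(n_1)$ possibly being $O(1)$.
\end{itemize}

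So the entire (near-)degenerate set $\{|n_{12}|\ll|n_{14}|\}\cup\{|n_{13}|\ll|n_{14}|\}$ lies in the case $|n_1|\sim|n_4|$, where only the sparsity gain is used, not the modulation gain. Your fallback of "bound directly by counting with the weights" is the right idea. It should be stated this way and cover small $|n_{12}|$, not just $n_{12}=0$.
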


\begin{lemma}\label{lem:M5j-estimates}
	Let $\alpha\in\mathcal{A}$, $s\in (s_\alpha, 1)$ and 
	\begin{equation}\label{eq:condNandlam}
		1\ll \lambda \lesssim N^{\frac13}.
	\end{equation}
	Let $M^P_{5j}$, $1\le j\le4$, be as defined in \eqref{M5jMultipliers}, and assume that $u^\lambda$ has zero mean over $\mathbb{T}_\lambda$. Then for any $\epsilon\in (0, 2s-2s_\alpha)$,
	the following quintilinear estimates hold:
	\begin{equation}\label{eq:M51-est}
		\left|\int_0^1\Lambda_5(M^P_{51};v^\lambda,u^\lambda,v^\lambda,v^\lambda,v^\lambda)\,dt\right|
		\le C_{12}D_2(\lambda,N,\epsilon)
		\|Iu^\lambda\|_{X^{1,\frac12}_{\lambda}}
		\|Iv^\lambda\|_{X^{1,\frac12}_{\alpha,\lambda}}^4,
	\end{equation}
	\begin{equation}\label{eq:M52-est}
		\left|\int_0^1\Lambda_5(M^P_{52};u^\lambda,v^\lambda,v^\lambda,v^\lambda,v^\lambda)\,dt\right|
		\le C_{12}D_2(\lambda,N,\epsilon)
		\|Iu^\lambda\|_{X^{1,\frac12}_{\lambda}}
		\|Iv^\lambda\|_{X^{1,\frac12}_{\alpha,\lambda}}^4,
	\end{equation}
	\begin{equation}\label{eq:M53-est}
		\left|\int_0^1\Lambda_5(M^P_{53};u^\lambda,u^\lambda,u^\lambda,v^\lambda,v^\lambda)\,dt\right|
		\le C_{12}D_2(\lambda,N,\epsilon)
		\|Iu^\lambda\|_{X^{1,\frac12}_{\lambda}}^3
		\|Iv^\lambda\|_{X^{1,\frac12}_{\alpha,\lambda}}^2,
	\end{equation}
	\begin{equation}\label{eq:M54-est}
		\left|\int_0^1\Lambda_5(M^P_{54};u^\lambda,u^\lambda,v^\lambda,u^\lambda,v^\lambda)\,dt\right|
		\le C_{12}D_2(\lambda,N,\epsilon)
		\|Iu^\lambda\|_{X^{1,\frac12}_{\lambda}}^3
		\|Iv^\lambda\|_{X^{1,\frac12}_{\alpha,\lambda}}^2,
	\end{equation}
where $C_{12}>0$ only depends on $\alpha$, $s$ and $\epsilon$, and
\begin{equation}\label{eq:D2-def}
	D_2(\lambda,N,\epsilon)
	:= \lambda^{\frac12} D_1(\lambda,N,\epsilon) = N^{-2+2s_{\alpha}+\epsilon}\lambda^{\frac12+2s_{\alpha}+\epsilon}.
\end{equation}
\end{lemma}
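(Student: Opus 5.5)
The plan is to treat all four quintilinear terms in the same way. First, the pointwise bound on $\sigma_4^P$ from Lemma~\ref{lemma:sigma4} turns each $M^P_{5j}$ into an explicit weight supported on a copy of $\Omega_*$. Then, as in the proof of Lemma~\ref{lem:fixed-time-bound}, the frequency sum at each fixed time is bounded by a product of $\ell^2$ and $\ell^\infty$ norms, using the sparsity Lemma~\ref{lem:admissible-frequency-sum} for the resonant variable. Finally, Hölder in time with the embeddings of Lemma~\ref{lemma3} converts the result into $X^{1,\frac12}$ norms. Concretely, writing $f_i=\langle\nabla_x\rangle I w_i$ with $w_i\in\{u^\lambda,v^\lambda\}$, it suffices to bound
\[
\int_0^1\Lambda_5\Big(\frac{M^P_{5j}}{\prod_{i=1}^5\langle n_i\rangle m(n_i)};f_1,\dots,f_5\Big)\,dt .
\]
By \eqref{eq:sigma4_est},
\[
|M^P_{5j}|\lesssim \lambda^{2+2s_\alpha+\epsilon}|N_1|^{2s_\alpha+\epsilon}m^2(N_1)\,|n_{j5}|\,\chi_{\Omega_*}(\cdot),
\]
where $N_1$ denotes the first argument of $\sigma_4^P$, that is, $n_{15}$ for $j=1$ and $n_1$ otherwise.

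The case analysis follows which slot of $\Omega_*$ is split. By \eqref{eq:Temp4}, on $\Omega_*$ the three "high" arguments have size $\sim|N_1|\gtrsim N$, and the second argument has size in $[\lambda^{-1},1]$.

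For $j=3,4$, and for $j=1$, the split slot is high, so $|n_{j5}|\sim|N_1|$. At least one of $n_j,n_5$ then has size $\gtrsim|N_1|$, which supplies a factor $\langle n\rangle m(n)\gtrsim |N_1|\,m(N_1)$ in the denominator. Collecting powers gives the multiplier bound
\[
\frac{\lambda^{2+2s_\alpha+\epsilon}\,|N_1|^{2s_\alpha+\epsilon+1}m^2(N_1)}{|N_1|^3m^3(N_1)\,\langle n_{\rm low}\rangle\,\langle n_{\rm other}\rangle m(n_{\rm other})}
\lesssim \lambda^{2+2s_\alpha+\epsilon}N^{-2+2s_\alpha+\epsilon}\frac{1}{\langle n_{\rm low}\rangle}.
\]
Here $n_{\rm other}$ is the remaining entry of the split pair; since $m\le1$ and $\langle n\rangle m(n)\gtrsim1$, it contributes at most a bounded factor. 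The last step uses $m(N_1)\sim N^{1-s}|N_1|^{s-1}$ and $|N_1|\gtrsim N$, exactly as in \eqref{0921-1}--\eqref{0922-1}.

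For $j=2$, the split slot is the low one, $|n_{25}|\le1$, so the derivative factor is harmless. However, $n_2$ and $n_5$ may both be high, with $n_2\approx-n_5$. In that case they give an additional factor $\langle n_2\rangle^{-1}\langle n_5\rangle^{-1}m^{-1}(n_2)m^{-1}(n_5)\lesssim1$, which is even better. Note also that zero mean of $u^\lambda$ excludes $n_1=0$ where relevant.

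Next comes the summation. I fix $t$ and all variables except the resonant pair. Using the constraint $|(\text{4th slot})+c_kN_1|\le\kappa_\alpha/\lambda$, Lemma~\ref{lem:admissible-frequency-sum} lets me sum over that slot at no cost in $\ell^2$. The free convolution variable (the one determined by $\sum n_i=0$) is put in $\ell^\infty$, with $\|\widehat F\|_{\ell^\infty}\le\lambda^{1/2}\|F\|_{L^2}$. The low variable is placed in $\ell^1$ against $\langle n_{\rm low}\rangle^{-1}$, and the extra fifth function in another $\ell^1\to\ell^2$ or $\ell^\infty$ slot as appropriate. The normalization factors $(2\pi\lambda)^{-4}$ should then absorb $\lambda^{3/2}$, leaving
\[
\lambda^{\frac12+2s_\alpha+\epsilon}N^{-2+2s_\alpha+\epsilon}=D_2(\lambda,N,\epsilon)
\]
times a product of five $L^2_x$ norms at time $t$. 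I then apply Hölder in $t$ with exponents, for example, $(4,4,6,6,6)$ or $(8,8,8,8,2)$, together with \eqref{L6L2_X0_13}, \eqref{L2L8_X0_38} and the trivial $L^2_tL^2_x\subset X^{0,\frac12}$, to conclude.

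The main obstacle is the $\lambda$-bookkeeping in the summation step. With five functions there are four free sums, each carrying $(2\pi\lambda)^{-1}$. I must make sure that the only $\ell^\infty\to\ell^2$ loss is a single $\lambda^{1/2}$, and that the $\ell^1$ sum over the low variable (which has $\sim\lambda$ lattice points in $[-1,1]$) is paired with one normalization rather than producing an extra $\lambda^{1/2}$. The condition \eqref{eq:condNandlam}, $\lambda\lesssim N^{1/3}$, is what I would invoke to absorb any residual mismatch between $\langle N_1\rangle$ and $\langle n_3\rangle,\langle n_4\rangle$ when replacing $m$-factors, and to keep the non-resonant high$\times$high subcases of $j=2$ under control. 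If a direct $\ell^\infty$ placement loses too much there, I would instead use the $L^4_{x,t}$ estimate of Lemma~\ref{lemma4} on two of the high-frequency functions.
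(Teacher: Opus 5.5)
There is a genuine gap in your summation step. Your weight is $\lambda^{2+2s_\alpha+\epsilon}N^{-2+2s_\alpha+\epsilon}$, so you need the fixed-time frequency sum to gain $\lambda^{-3/2}$. But $\Omega_*$ supplies only \emph{one} sparsity relation: the resonant slot is fixed, up to $O(1)$ choices, by the first slot. So on $\Gamma^{\mathrm{Bad}}_{5,j}$ three free sums remain against the four factors $(2\pi\lambda)^{-1}$, and the best you can get is $\lambda^{-1}$. Trading a sum for $\|\widehat F\|_{\ell^\infty}\le\lambda^{1/2}\|F\|_{L^2}$ only pays off when a second relation removes another free sum.

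The bound $\lambda^{-1}$ is sharp. For $j=1$, take $n_1^0\sim N$ and set:
\begin{itemize}
\item $\widehat F_1=\chi_{[n_1^0-1,n_1^0+1]}$, $\widehat F_5=\chi_{[-1,1]}$, $\widehat F_2=\chi_{[\lambda^{-1},1]}$;
\item $\widehat F_4$ the indicator of the $n_4\in\mathbb Z_\lambda$ with $|n_4+c_1n_{15}|\le\kappa_\alpha/\lambda$ for some such $n_{15}$;
\item $\widehat F_3$ the indicator of an $O(1)$-interval around $-c_2n_1^0$.
\end{itemize}
All five have $\ell^2(\mathbb Z_\lambda)$-norm $O(1)$. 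By equidistribution of $c_1\lambda n_{15}$ modulo $1$, there are $\sim\lambda^3$ admissible configurations in $\Gamma^{\mathrm{Bad}}_{5,1}$ with $|n_5|\le1$, so the sum is $\sim\lambda^{-1}$.

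Hence, whenever the non-dominant entry of the split pair is $O(1)$, your route gives only $\lambda D_1=\lambda^{1/2}D_2$. This covers $j=1$ with $|n_5|\ll N$, $j=3$ with $|n_5|\ll|n_3|$ or $|n_3|\ll|n_5|$, and the analogous cases for $j=4$. The hypothesis $\lambda\lesssim N^{1/3}$ cannot absorb this loss, since the multiplier has no spare power of $N$ there. Your $L^4_{x,t}$ fallback also does not help: its constant is $\lambda$-independent, so it produces no $\lambda$-decay.

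When both entries of the split pair are $\gtrsim N$, and for $j=2$, the situation is different. There the multiplier carries an extra $N^{-1}$, and $\lambda^{-1}$ suffices because $\lambda^{1/2}\lesssim N$. Your uniform worst-case bound hides this distinction.

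What is missing is the time oscillation, which any fixed-time-then-H\"older argument throws away. You need the quintic resonance function $K_j=\sum_i\rho_{i,j}n_i^3$ through $L_{M,j}\gtrsim\langle K_j\rangle$. Place the largest modulation in $L^2_t$ and the other four factors in $L^8_tL^2_x$.

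Split $K_j$ as $H_\alpha$ of the $\Omega_*$-quadruple plus $R$ of the split triple, for example $K_1=H_\alpha(n_{15},n_2,n_3,n_4)+R(-n_{15},n_1,n_5)$. Then compare:
\begin{itemize}
\item the upper bound $|H_\alpha|\lesssim N_3^2$ on $\Omega_*$, which you never use;
\item $|R|\gtrsim N^3$, or, when $|n_5|\gg|n_3|\ge\lambda^{1/2}$ for $j=3$, $|R|\gtrsim N_3N_5^2\ge\lambda^{1/2}N_5^2$.
\end{itemize}
This yields an extra factor $N^{-3/2}$, respectively $\lambda^{-1/4}N^{-1}$, which together with the $\lambda^{-1}$ count is enough.

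When the small entry has size $\le\lambda^{1/2}$ this fails, and you must regroup:
\begin{itemize}
\item $K_3=R(n_1,n_{235},n_4)+H_\alpha(n_2,n_3,n_5,-n_{235})$ gives $|K_3|\gtrsim N^2/\lambda$ unless $n_{23}=0$;
\item $K_4=-3\alpha n_1^2(c_2^2n_2+c_1^2n_4)+Q$ with $|Q|\le\alpha N_1^2/(8C_*^2\lambda)$ gives $|K_4|\gtrsim N^2/\lambda$ unless $|c_2^2n_2+c_1^2n_4|\le\lambda^{-1}$.
\end{itemize}
Only on these exceptional sets does your $\ell^\infty$ counting actually deliver $\lambda^{-3/2}$. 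There either $n_3=-n_2$ or a second sparse relation ties $n_2$ to $n_4$, which removes the extra free sum.
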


Based on Lemma \ref{M41Estimate}--Lemma \ref{lem:M5j-estimates}, we are ready to establish the almost conservation property of $E^{(3)}$.

\begin{prop}\label{Prop, growth_E3}
	Let $\alpha\in\mathcal{A}$, $s\in (s_\alpha, 1)$ and $1\ll \lambda \lesssim N^{\frac13}$. Assume that $u^{\lambda}$ has zero mean on $\mathbb{T}_\lambda$. Then for any $\epsilon\in (0, 2s-2s_\alpha)$, we have
	\begin{equation}\label{est_E3}\begin{split}
			& \big|E^{(3)}(1) - E^{(3)}(0)\big| = \big| E^{(3)}(u^\lambda, v^\lambda)(1) - E^{(3)}(u^\lambda, v^\lambda)(0) \big| \\
			\leq\, &\, C_{13} D_1(\lambda, N, \epsilon) 
			\bigg( \|Iu^\lambda\|_{X^{1,\frac12}_{\lambda}} +  \|Iv^\lambda\|_{X^{1,\frac12}_{\alpha,\lambda}} \bigg)^{4}
			+
			C_{13} D_2(\lambda, N, \epsilon) 
			\bigg( \|Iu^\lambda\|_{X^{1,\frac12}_{\lambda}} +  \|Iv^\lambda\|_{X^{1,\frac12}_{\alpha,\lambda}} \bigg)^{5},
		\end{split}\end{equation}
	where $C_{13}>0$ only depends on $\alpha$, $s$ and $\epsilon$, and $D_1$ and $D_2$ are as defined in (\ref{D1decay}) and (\ref{eq:D2-def}). 
\end{prop}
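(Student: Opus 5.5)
The plan is to integrate the identity \eqref{deri_E3} over $[0,1]$ and estimate the six resulting terms one at a time with the multilinear lemmas already stated. By the fundamental theorem of calculus,
\[
E^{(3)}(1)-E^{(3)}(0)=\int_0^1 \frac{d}{dt}E^{(3)}(t)\,dt .
\]
This requires $E^{(3)}(t)$ to be absolutely continuous in $t$. For smooth solutions this follows from the computation leading to \eqref{deri_E3}. For general data in $H^s_0\times H^s$, we justify it by approximating with smooth data, using the continuous dependence coming from Lemma \ref{LWPresult}. The multiplier $\sigma_4^P$ is pointwise bounded by Lemma \ref{lemma:sigma4}, so every functional involved is well defined and continuous along the approximation.

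The right-hand side splits into six pieces, following \eqref{deri_E3}:
\begin{itemize}
\item[(i)] The quadrilinear term with multiplier $M_{41}(1-\chi_{\Omega_*})$ is bounded by Lemma \ref{M41Estimate}. This gives $C_{10}D_1\|Iu^\lambda\|_{X^{1,\frac12}_\lambda}^2\|Iv^\lambda\|_{X^{1,\frac12}_{\alpha,\lambda}}^2$.
\item[(ii)] The $M_{42}$ term is bounded by Lemma \ref{M42Estimate}. This gives $C_{11}D_1\|Iv^\lambda\|^4_{X^{1,\frac12}_{\alpha,\lambda}}$.
\item[(iii)] The four quintilinear terms with multipliers $M^P_{5j}$ are bounded by \eqref{eq:M51-est}--\eqref{eq:M54-est} of Lemma \ref{lem:M5j-estimates}. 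Each one is at most $C_{12}D_2$ times a product of five $X^{1,\frac12}$ norms.
\end{itemize}
The hypotheses match in every case. The zero mean of $u^\lambda$ is assumed. The condition $1\ll\lambda\lesssim N^{1/3}$ is exactly \eqref{eq:condNandlam}, and it also gives $\lambda\gg1$ as required by Lemmas \ref{M41Estimate} and \ref{M42Estimate}. The range $\epsilon\in(0,2s-2s_\alpha)$ is common to all three lemmas.

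Finally, we write $a=\|Iu^\lambda\|_{X^{1,\frac12}_{\lambda}}$ and $b=\|Iv^\lambda\|_{X^{1,\frac12}_{\alpha,\lambda}}$. Each product of four norms satisfies $a^ib^{4-i}\le(a+b)^4$, and each product of five norms satisfies $a^ib^{5-i}\le(a+b)^5$. Summing the six bounds and setting $C_{13}=C_{10}+C_{11}+4C_{12}$ gives \eqref{est_E3}.

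The main obstacle is the meaning of the norms. Lemmas \ref{M41Estimate}--\ref{lem:M5j-estimates} are stated for integrals over $[0,1]$ with global $X^{1,\frac12}$ norms, while the solution only lives on $[0,1]$, so the norms in \eqref{est_E3} should be read as restriction norms on $\mathbb{T}_\lambda\times[0,1]$. The natural step is to take an arbitrary extension of $(u^\lambda,v^\lambda)$ off $[0,1]$ and apply the lemmas to it, then take the infimum over extensions. The difficulty is that multiplying by $\chi_{[0,1]}(t)$ is not bounded on $X^{s,\frac12}$. I would handle this the way it is usually handled in the $I$-method: in the proofs of the lemmas, use the $Y$-type structure, or lose an arbitrarily small power of the frequency by working in $X^{1,\frac12-}$, which the $\epsilon$-room in $D_1$ and $D_2$ absorbs. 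For this proposition itself, I would simply invoke the lemmas as stated, with the norms understood on the time slab $[0,1]$.
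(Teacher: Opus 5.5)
Your proposal is correct and is essentially the paper's proof: integrate \eqref{deri_E3} over $[0,1]$, apply \eqref{constant51}, \eqref{constant52} and \eqref{eq:M51-est}--\eqref{eq:M54-est}, and use $a^ib^{k-i}\le(a+b)^k$ to collect the terms into the stated form. Your extra remarks on approximating by smooth solutions and on the restriction-norm issue are consistent with the paper. The paper deals with the passage from $\int_0^1$ to $\int_{\mathbb{R}}$ inside the proof of Lemma \ref{M41Estimate}, using the standard non-endpoint argument cited there, so at the level of this proposition the lemmas can be invoked exactly as you do.
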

\begin{proof}
	By integrating $\frac{d}{dt} E^{(3)}(t)$ in (\ref{deri_E3}) over the time interval $[0,1]$, we conclude (\ref{est_E3}) by adding (\ref{constant51}), (\ref{constant52}), (\ref{eq:M51-est})--(\ref{eq:M54-est}) together.
\end{proof}

\subsection{Global well-posedness}
\label{Subsec, gwp}

In this section, we will discuss the main result, Theorem \ref{Theorem}, of this paper.

\begin{proof}[Proof of Theorem \ref{Theorem}]
We aim to prove the solution $(u,v)$ of the Majda-Biello system (\ref{GeneralMajdaBiello}) exists globally, that is to prove the solution exists on the time interval $[0,T^{*}]$ for any $T^{*}>0$. Equivalently, it reduces to showing the existence of the
scaled solution $(u^{\lambda}, v^{\lambda})$ of \eqref{MajdaBielloScaling} on $[0,\lambda^3T^*]$ for some $\lambda\geq 1$. Our strategy is to take advantage of Lemma \ref{LWPresult} and an induction argument to extend the lifespan of the local solution by time $1$ for each step with the total steps $K =: \left\lceil\lambda^3T^*\right\rceil$.

Since the initial data $u_0\in H^{s}_{0}(\mathbb{T})$, the mean of $u_0$ over $\mathbb{T}$ is zero. Then thanks to the conservation of $\int_{\mathbb{T}} u(x,t) \, dx$ in (\ref{mass_con}), we know the means of $u(\cdot, t)$ and $u^{\lambda}(\cdot, t)$ both vanish for any $t$, that is $u(\cdot, t) \in H^{s}_{0}(\mathbb{T})$ and $u^{\lambda}(\cdot, t) \in H^{s}_{0}(\mathbb{T}_\lambda)$. 
For convenience of notations, let
\[
w_0 := (u_0,v_0),\quad w(t) :=  (u(t),v(t));\qquad w_0^\lambda := (u_0^\lambda,v_0^\lambda),\quad w^\lambda(t) :=  (u^\lambda(t),v^\lambda(t));
\]
and
\[
 \varepsilon_{\lambda,0} := \|w^{\lambda}_0 \|_{L^2(\mathbb{T}_\lambda) \times L^2(\mathbb{T}_{\lambda})},\quad \varepsilon^I_{\lambda,1} := \| Iw^\lambda_0 \|_{H^1(\mathbb{T}_\lambda)\times H^1(\mathbb{T}_\lambda) }.
\]

For $\alpha$ with $s_\alpha \geq 1$, the global well-posedness when $s\geq 1$ had already been known, so without loss of generality, we assume $s_\alpha < 1$, i.e. $\alpha\in \mathcal{A}$, and assume $s\in (s_\alpha, 1)$. For any $\epsilon \in(0, 2s-2s_\alpha)$, by noticing
$s_\alpha = [1+\nu(\rho_\alpha)]/2$ and $s_\alpha < s < 1$, we have
\[
	0 < \epsilon < 2s-2s_\alpha < 1-\nu(\rho_\alpha),
\]
which satisfies the assumptions of Proposition \ref{Prop, growth_E3}. Moreover, we define 
\[
C_{\max} := \max\{1,2C_1,C_6,C_7,C_8,C_9,C_{13}\},
\]
where the constant \(C_1\) is from \eqref{First_Linear_Estimate}, 
$C_6, C_7$ are from \eqref{constant1} and \eqref{constant2}, \(C_8\) is from \eqref{constant4}; \(C_9\) is from Lemma~\ref{lem:fixed-time-bound}; and \(C_{13}\) is from Proposition \ref{Prop, growth_E3}.

For a large positive $N$ whose value will be determined later, we choose $\lambda$ by setting
\begin{equation}\label{RelationofLambdaN}
	\lambda = N^{\frac{2-2s}{3+2s-2s_{\alpha}-\epsilon}}.
\end{equation}
Then it is straightforward to verify that this choice of $\lambda$ satisfies condition~\eqref{eq:ConditionOfLamandN2} in Lemma~\ref{Sigma3Estimate}, 
condition~\eqref{eq:Lemma1ConditionOfLamandN} in
Lemma~\ref{lem:fixed-time-bound} and the condition $\lambda\lesssim N^{\frac13}$ in Proposition \ref{Prop, growth_E3}. In addition, 
(\ref{RelationofLambdaN}) also implies $\lambda^{s} \lesssim N^{1-s}$. Hence, according to Lemma \ref{Iinequality}, we have 
\begin{equation}\label{scaled_IH1norm}
\epsilon_{\lambda,1}^{I} 
\lesssim \lambda^{-\frac{3}{2}-s} N^{1-s} M_0,
\end{equation}
where $M_0 =: \|(u_0,v_0)\|_{H_0^s(\mathbb{T})\times H^s(\mathbb{T})}$.

Based on the choice of $\lambda$ in (\ref{RelationofLambdaN}) and the estimate (\ref{scaled_IH1norm}), we have 
\begin{equation}\label{init_I1norm}
	\epsilon_{\lambda,1}^{I} \lesssim N^{- \frac{2s_\alpha+\epsilon}{3+2s-2s_\alpha-\epsilon}(1-s)} M_0.
\end{equation}
Since the power on $N$ is negative in the above estimate, we can choose $N$ to be sufficiently large such that 
\begin{equation}\label{ConditionC.3.}
	12\, C_{\max}^4 \varepsilon_{\lambda,1}^I \leq 1, \quad 36\, C_{\max}^6 (\varepsilon_{\lambda,1}^I)^2 \leq 1
\end{equation}
In fact, as long as $N$ is chosen such that 
\begin{equation}\label{N_choice1}
	N^{\frac{2s_\alpha+\epsilon}{3+2s-2s_\alpha-\epsilon}(1-s)} \gtrsim 12 \, C_{\max}^4  M_0,
\end{equation}
then (\ref{ConditionC.3.}) is fulfilled. Meanwhile, it is readily seen that 
\begin{equation}\label{ConditionC.2.}
	\varepsilon_{\lambda,0} \le \varepsilon_{\lambda,1}^I\leq 1.
\end{equation}
Since the system \eqref{MajdaBielloScaling} possesses the \(L^2\) conservation property, we have
\begin{equation}\label{L2con_est}
	\| Iw^\lambda(\cdot, t) \|_{L^2 \times L^2} \le \| w^\lambda(\cdot, t) \|_{L^2 \times L^2} = \| w^\lambda(\cdot, 0) \|_{L^2 \times L^2} = \varepsilon_{\lambda,0} \leq 1.
\end{equation}
Thus, we are allowed to apply Lemma~\ref{lemma6}. 

Next, we aim to verify the condition \eqref{LWPcondition} in Lemma~\ref{LWPresult} with 
\[\delta =: \frac{\epsilon}{4}.\] 
Again, based on (\ref{scaled_IH1norm}) and then (\ref{RelationofLambdaN}), we find
\begin{equation}\label{est_0915}
	\lambda^{s_{\alpha}+\delta} \varepsilon^I_{\lambda,1} \lesssim \lambda^{-\frac32-s+s_\alpha+\delta} N^{1-s} M_0 
	= N^{-\frac{2\delta}{3+2s-2s_\alpha-\epsilon} (1-s)} M_0,
\end{equation}
where we used the relation $\delta = \epsilon/4$ in the last equality. Since the power on $N$ is negative in (\ref{est_0915}), we can choose $N$ sufficiently large so that 
\begin{equation}\label{ConditionC.1.}
	\lambda^{s_{\alpha}+\delta} \varepsilon^I_{\lambda,1} \le \frac{\varepsilon_*}{3C_{\max}},
\end{equation}
where \(\varepsilon_*\) is the small constant in \eqref{LWPcondition} in Lemma~\ref{LWPresult}.

We will prove by induction that the solution extends to $[0,K]$, where
$K =: \left\lceil\lambda^3T^*\right\rceil$. 
In view of
Lemma~\ref{LWPresult} and the estimate (\ref{ConditionC.1.}), it suffices to prove that
\begin{equation}\label{eq:iteration-target}
	\|Iw^\lambda(k)\|_{H_0^1\times H^1}
	\leq 3C_{\max}\varepsilon_{\lambda,1}^I,
	\qquad k\in\{0,1,\ldots,K\}.
\end{equation}
Indeed, whenever \eqref{eq:iteration-target} holds at an integer time $k<K$,
Condition~(\ref{ConditionC.1.}) gives
\[
	\lambda^{s_\alpha+\delta}\|Iw^\lambda(k)\|_{H_0^1\times H^1}
	\leq 3C_{\max}\lambda^{s_\alpha+\delta}\varepsilon_{\lambda,1}^I
	\leq\varepsilon_*,
\]
so Lemma~\ref{LWPresult} extends the solution from time $k$ to time $k+1$.

By \eqref{constant1} in Lemma~\ref{lemma6} and the conservation of the $L^2$-norm,
\begin{equation}\label{norm_E1_bdd}
    \|I w^{\lambda}(t)\|_{H^1_0\times H^1}^2 \leq 
    C_{\max} \|I w^{\lambda}(t)\|_{L^2\times L^2}^2 + C_{\max} E^{(1)}(t) 
    \leq C_{\max} \varepsilon_{\lambda,0}^2 + C_{\max} E^{(1)}(t),
\end{equation}
where we used (\ref{L2con_est}) in the second inequality.
Thus, it remains to control $E^{(1)}(t)$. Since the available increment
estimate for $E^{(1)}(t)$ does not provide sufficient decay, we compare it
with $E^{(3)}(t)$, whose increment is much smaller. By the triangle inequality,
\begin{equation}\label{energy_tri}
	    |E^{(1)}(t)| \leq |E^{(1)}(t) - E^{(3)}(t)| + |E^{(3)}(t) - E^{(3)}(0)| + |E^{(3)}(0) - E^{(1)}(0)| + |E^{(1)}(0)|.
\end{equation}
It follows from \eqref{energies} that
\[
    |E^{(1)}(t) - E^{(3)}(t)| \le  |\Lambda_3(\sigma_3;u^\lambda,v^\lambda,v^\lambda)|+|\Lambda_{4}(\sigma_4^{P};u^\lambda,u^\lambda,v^\lambda,v^\lambda)|.
\]
Hence, for each fixed time $t$, Lemma~\ref{Sigma3Estimate} and Lemma~\ref{lem:fixed-time-bound} yield
\begin{equation}\label{eq:fixedtimeboundiniteration}
|E^{(1)}(t) - E^{(3)}(t)|\le C_{\max}\|Iw^\lambda(t) \|_{H_0^1\times H^1}^3 + C_{\max}\|Iw^\lambda(t) \|_{H_0^1\times H^1}^4.
\end{equation}
In particular, at $t=0$ we have
\begin{equation}\label{eq:fixedtimeboundfortequ0}
|E^{(1)}(0) - E^{(3)}(0)|\le C_{\max} (\varepsilon_{\lambda,1}^I)^3+ C_{\max} (\varepsilon_{\lambda,1}^I)^4.
\end{equation}
Moreover, Lemma~\ref{lemma6} gives
\begin{equation}\label{eq:energyestimatefortequ0}
|E^{(1)}(0)| \leq C_{\max} (\varepsilon_{\lambda,1}^I)^2.
\end{equation}
Combining \eqref{energy_tri}, \eqref{eq:fixedtimeboundiniteration},
\eqref{eq:fixedtimeboundfortequ0}, \eqref{eq:energyestimatefortequ0} and
\eqref{norm_E1_bdd}, we obtain
\begin{equation}\label{eq:BasicBootstrapInequ}
	\begin{aligned}
		\| I w^\lambda(t) \|_{H_0^1\times H^1}^2
	&\leq C_{\max} (\varepsilon_{\lambda,0})^2 + C^2_{\max} (\varepsilon_{\lambda,1}^I)^2 + C_{\max}^2 (\varepsilon_{\lambda,1}^I)^3 + C_{\max}^2 (\varepsilon_{\lambda,1}^I)^4\\
		&+C_{\max}^2 \| I w^\lambda(t) \|_{H_0^1\times H^1}^3+C_{\max}^2 \| I w^\lambda(t) \|_{H_0^1\times H^1}^4\\
		&+C_{\max}|E^{(3)}(t)-E^{(3)}(0)|.
	\end{aligned}
\end{equation}

Now we prove \eqref{eq:iteration-target} by induction on $k$. The case $k=0$ follows
immediately from
\[
	\|Iw^\lambda(0)\|_{H_0^1\times H^1}=\varepsilon_{\lambda,1}^I.
\]
Now let $1\leq k\leq K$ and suppose that \eqref{eq:iteration-target} has
been established up to time $k-1$. In particular,
\begin{equation}\label{eq:k-1timeBoundCond}
	\|Iw^\lambda(k-1)\|_{H_0^1\times H^1}
	\leq 3C_{\max}\varepsilon_{\lambda,1}^I.
\end{equation}
By \eqref{eq:k-1timeBoundCond} and the estimate (\ref{ConditionC.1.}), the smallness
condition (\ref{LWPcondition}) in Lemma~\ref{LWPresult} is satisfied at time $k-1$. Therefore,
the solution extends to $[k-1,k]$. By 
\eqref{ContractionBall} and \eqref{eq:k-1timeBoundCond}, it holds that
\begin{equation}\label{crudebound}
\|Iw^\lambda(k)\|_{H_0^1\times H^1}
	\leq
	\|Iw^\lambda\|_{
	Y^1_\lambda(\mathbb T_\lambda\times[k-1,k])\times
	Y^1_{\alpha,\lambda}(\mathbb T_\lambda\times[k-1,k])}
	\leq C_{\max}\|Iw^\lambda(k-1)\|_{H_0^1\times H^1}
	\leq 3C_{\max}^2\varepsilon_{\lambda,1}^I.
\end{equation}
The bound in (\ref{crudebound}) almost satisfies the induction (\ref{eq:iteration-target}) for $k$. It remains to adjust the constant $C_{\max}^2$ on the right hand side of (\ref{crudebound}) to be $C_{\max}$.

Taking $t=k$ in \eqref{eq:BasicBootstrapInequ} and using \eqref{crudebound} to bound the cubic and quartic terms by quadratic terms, we obtain
\begin{equation*}
		\begin{aligned}
		\| I w^\lambda(k) \|_{H_0^1\times H^1}^2
		&\leq C_{\max} (\varepsilon_{\lambda,0})^2 + C^2_{\max} (\varepsilon_{\lambda,1}^I)^2 + C_{\max}^2 (\varepsilon_{\lambda,1}^I)^3 + C_{\max}^2 (\varepsilon_{\lambda,1}^I)^4\\
		&+3C_{\max}^4 \varepsilon_{\lambda,1}^I\,\| I w^\lambda(k) \|_{H_0^1\times H^1}^2+9C_{\max}^6 (\varepsilon_{\lambda,1}^I)^2\,\| I w^\lambda(k) \|_{H_0^1\times H^1}^2\\
		&+C_{\max}|E^{(3)}(k)-E^{(3)}(0)|.
	\end{aligned}
\end{equation*}
Estimates (\ref{ConditionC.3.}) and (\ref{ConditionC.2.}) allow the two terms containing
$\|Iw^\lambda(k)\|_{H_0^1\times H^1}^2$ on the right hand side to be
absorbed into the left hand side. Consequently,
 \begin{equation}\label{eq:lastformineq}
 		\| I w^\lambda(k) \|_{H_0^1\times H^1}^2
 		\leq 8 C^2_{\max} (\varepsilon_{\lambda,1}^I)^2 +2C_{\max}|E^{(3)}(k)-E^{(3)}(0)|.
 \end{equation}
To estimate $|E^{(3)}(k)-E^{(3)}(0)| $, we apply Proposition \ref{Prop, growth_E3} on each interval $[j-1,j]$, $1\leq j\leq k$.
The combination of the induction hypothesis and the time-translated version of
\eqref{crudebound} (with $k$ being replaced by $j$) gives
\begin{equation}\label{eq:engryincreasevicosity}
\begin{split}
	|E^{(3)}(j)-E^{(3)}(j-1)| & \leq 
	C_{\max} D_1(\lambda, N, \epsilon) \|Iw^\lambda\|_{
		Y^1_\lambda(\mathbb T_\lambda\times[j-1,j])\times
		Y^1_{\alpha,\lambda}(\mathbb T_\lambda\times[j-1,j])}^4 \\
	&\quad + C_{\max} D_2(\lambda, N, \epsilon) \|Iw^\lambda\|_{
		Y^1_\lambda(\mathbb T_\lambda\times[j-1,j])\times
		Y^1_{\alpha,\lambda}(\mathbb T_\lambda\times[j-1,j])}^5
	\\
	&\leq 81 C_{\max}^9 D_1(\lambda, N, \epsilon) (\varepsilon_{\lambda,1}^I)^4
	+ 243C_{\max}^{11} D_2(\lambda, N, \epsilon)(\varepsilon_{\lambda,1}^I)^5.
\end{split}\end{equation}
Summing \eqref{eq:engryincreasevicosity} over $1\leq j\leq k$, we find
\begin{equation*}
	\begin{aligned}
		|E^{(3)}(k)-E^{(3)}(0)|
		&\le \sum_{j=1}^{k}|E^{(3)}(j)-E^{(3)}(j-1)|\\
		&\le k\left[ 81C_{\max}^9D_1(\lambda, N, \epsilon)(\varepsilon_{\lambda,1}^I)^4
		+ 243 C_{\max}^{11} D_2(\lambda, N, \epsilon)(\varepsilon_{\lambda,1}^I)^5\right].
	\end{aligned}
\end{equation*}
Substituting this estimate into \eqref{eq:lastformineq}, we obtain
\begin{equation}\label{ind_k}
\begin{split}
	\|Iw^\lambda(k)\|_{H_0^1\times H^1}^2
	&\le \left(8
	+ 2k\left[ 81 C_{\max}^{8}D_1(\lambda, N, \epsilon)(\varepsilon_{\lambda,1}^I)^2
	+ 243C_{\max}^{10}D_2(\lambda,N,\epsilon)(\varepsilon_{\lambda,1}^I)^3\right]\right) (C_{\max} \varepsilon_{\lambda,1}^I)^2.
\end{split}
\end{equation}
Therefore, if
\begin{equation}\label{bootstrap-red}
	2k\left[81 C_{\max}^{8}D_1(\lambda, N, \epsilon)(\varepsilon_{\lambda,1}^I)^2
	+ 243C_{\max}^{10}D_2(\lambda, N, \epsilon)(\varepsilon_{\lambda,1}^I)^3\right]\le 1,
\end{equation}
then it follows from (\ref{ind_k}) that 
$\|Iw^\lambda(k)\|_{H_0^1\times H^1}\le 3C_{\max}\varepsilon_{\lambda,1}^I$,
which closes the induction at time $k$.

We now determine $N$ in order for (\ref{bootstrap-red}) to hold. With
$K=\left\lceil\lambda^3T^*\right\rceil$,
condition~\eqref{bootstrap-red} becomes
\begin{equation}\label{total_est1}
	\left\lceil\lambda^3T^*\right\rceil
	\left(162C_{\max}^{8}D_1(\lambda, N, \epsilon)(\varepsilon_{\lambda,1}^I)^2
	+486C_{\max}^{10}D_2(\lambda, N, \epsilon)(\varepsilon_{\lambda,1}^I)^3\right)\le1.
\end{equation}
Condition~(\ref{ConditionC.1.}) implies
\[
	3C_{\max}\lambda^{s_\alpha+\delta}\varepsilon_{\lambda,1}^I\le \varepsilon_*,
\]
so 
\begin{equation}\label{init_normI1est}
	\varepsilon_{\lambda,1}^I \lesssim \lambda^{-(s_\alpha+\delta)}
\end{equation}
and (\ref{total_est1}) reduces to 
\begin{equation}\label{reduction1}
	\lambda^3 T^{*}\left(
	D_1(\lambda, N, \epsilon)\lambda^{-2(s_\alpha+\delta)}
	+ D_2(\lambda, N, \epsilon)\lambda^{-3(s_\alpha+\delta)}
	\right)\ll1.
\end{equation}
According to the definition (\ref{eq:D2-def}) of $D_2$, we know 
\[\begin{split}
	D_2(\lambda, N, \epsilon)\lambda^{-3(s_\alpha+\delta)} 
	= \lambda^{\frac12} D_1(\lambda, N, \epsilon)\lambda^{-3(s_\alpha+\delta)} \leq D_1(\lambda, N, \epsilon)\lambda^{-2(s_\alpha+\delta)},
\end{split}\]
where the last inequality is due to $\lambda\geq 1$ and $s_\alpha\geq \frac12$. Therefore, (\ref{reduction1}) further boils down to 
\begin{equation}\label{reduction2}
	\lambda^3 T^{*} D_1(\lambda, N, \epsilon) \lambda^{-2(s_\alpha+\delta)} \ll 1.
\end{equation}
Since $D_1(\lambda, N, \epsilon) = N^{-2+2s_{\alpha}+\epsilon}\lambda^{2s_{\alpha}+\epsilon}$ and $\delta = \epsilon/4$, the above criterion (\ref{reduction2}) becomes 
\begin{equation}\label{reduction3}
	\lambda^{3 + \frac{\epsilon}{2}} N^{-2+2s_\alpha+\epsilon} T^{*} \ll 1.
\end{equation}

Due to the choice of $\lambda$ in (\ref{RelationofLambdaN}), (\ref{reduction3}) is equivalent to 
\begin{equation}\label{reduction4}
	N^{(3 + \frac{\epsilon}{2}) \frac{2-2s}{3+2s-2s_{\alpha}-\epsilon} } N^{-2+2s_\alpha+\epsilon} \ll (T^*)^{-1}.
\end{equation}
Noticing that 
\begin{equation}\label{iden}
	\Big(3 + \frac{\epsilon}{2}\Big) \frac{2-2s}{3+2s-2s_{\alpha}-\epsilon} + (-2+2s_\alpha+\epsilon) = \frac{(1-s)\epsilon - (5-2s_\alpha-\epsilon)(2s-2s_\alpha-\epsilon)}{3+2s-2s_{\alpha}-\epsilon},
\end{equation}
we choose $\epsilon = s-s_\alpha$ so that 
\[
	\text{the right hand side of (\ref{iden})} = -\frac{(s-s_\alpha)(4-s_\alpha)}{3+s-s_{\alpha}} < 0.
\]
As a result, (\ref{reduction4}) becomes 
\begin{equation}\label{reduction5}
	N^{-\frac{(s-s_\alpha)(4-s_\alpha)}{3+s-s_{\alpha}}} \ll (T^*)^{-1},
\end{equation}
which is equivalent to 
\begin{equation}\label{Temp0006-red}
	N \gg (T^*)^{\frac{3+s-s_{\alpha}}{(s-s_\alpha)(4-s_\alpha)}}.
\end{equation}
Since $s>s_\alpha$, (\ref{Temp0006-red}) holds when $N$ is chosen to be a sufficiently large number.

As a summary, by choosing \(N\) sufficiently large so that \eqref{ConditionC.3.},
\eqref{ConditionC.1.}, and \eqref{Temp0006-red} hold simultaneously, then the induction for (\ref{eq:iteration-target}) closes and the scaled solution $(u^{\lambda}, v^{\lambda})$ of (\ref{MajdaBielloScaling}) exists on
\([0,K]\supset[0,\lambda^3T^*]\). Scaling back yields a solution $(u,v)$ of the original system (\ref{GeneralMajdaBiello}) such that
\[
(u,v)\in C\bigl([0,T^*];H^s_0(\mathbb T)\times H^s(\mathbb T)\bigr).
\]
Uniqueness in the local resolution class follows by successive
applications of the local uniqueness result. For each fixed $T^*>0$,
the parameters can be chosen uniformly for initial data in a bounded
neighborhood of the given datum, and concatenating the local
continuous-dependence estimates yields continuous dependence on
$[0,T^*]$. Negative times are treated using the symmetry
$(u,v)(x,t)\mapsto(u,v)(-x,-t)$. Since \(T^*>0\) is arbitrary, the global well-posedness is justified.

\end{proof}

\section{Quadrilinear estimates}
\label{Sec, multi-est}

\subsection{Preliminary quadrilinear estimates on resonant sets}

The goal of this section is to justify Lemma \ref{M41Estimate} and Lemma \ref{M42Estimate}. 
We first demonstrate two preliminary quadrilinear estimates (\ref{eq:resonant-quadrilinear-41}) and (\ref{eq:resonant-quadrilinear-42}) on resonant sets
$\Delta$ and $\Delta'$ below:
\begin{equation}\begin{split}\label{res_set}
	\Delta&:=\left\{(n_1,n_2,n_3,n_4)\in\Gamma_4^\lambda:
	|n_4+c_1n_1|\leq\frac{\kappa_\alpha}{\lambda}
	\text{ or }|n_4+c_2n_1|\leq\frac{\kappa_\alpha}{\lambda}\right\},\\
	\Delta'&:=\left\{(n_1,n_2,n_3,n_4)\in\Gamma_4^\lambda:
	|n_2+c_1n_{14}|\leq\frac{\kappa_\alpha}{\lambda}
	\text{ or }|n_2+c_2n_{14}|\leq\frac{\kappa_\alpha}{\lambda}\right\},
\end{split}\end{equation}
where $c_1, c_2$ are as defined in (\ref{c1c2}), and $\kappa_\alpha$ is as defined in \eqref{eq:kappa-alpha-choice}.
These two preliminary quadrilinear estimates will be applied later to tackle the most challenging case on resonant sets in Lemma \ref{M41Estimate} and Lemma \ref{M42Estimate} respectively.
We point out that the lattice frequencies in both $\Delta$ and $\Delta'$ are sparse. Taking the first set $\Delta$ as an example,
for any fixed $n_1\in\mathbb{Z}_\lambda$, there exists at most one possible choice for $n_4$ under the restriction 
\[|n_4 + c_j n_1| \leq \frac{\kappa_\alpha}{\lambda}, \qquad j=1,2,\]
 due to the fact that $\kappa_\alpha\leq \frac18$. It is this sparsity that provides an additional decay factor $\lambda$ in the denominator on the right hand side of (\ref{eq:resonant-quadrilinear-41}) and (\ref{eq:resonant-quadrilinear-42}).

\begin{lemma}\label{lem:resonant-quadrilinear}
Let $\alpha\in\mathcal A$, $\lambda\gg1$. Let the functions $H_\alpha$ and $H_\alpha'$ be as defined in (\ref{eq:quartic-resonance-H}) and (\ref{eq:quartic-resonance-Hprime}). Let the resonant sets $\Delta$ and $\Delta'$ be as defined in (\ref{res_set}).
\begin{itemize}
 \item[(a)] Let $L_i = \tau_i - n_i^3$ for $ i\in\{1,2\}$
and $L_i = \tau_i - \alpha n_i^3$ for $i\in\{3,4\}$. Then
\begin{equation}\label{eq:resonant-quadrilinear-41}
	  \int_{\sum\limits_{i=1}^{4}\tau_i=0}\int_{\Delta}
	  \langle H_\alpha \rangle^{\frac12} \frac{\prod_{i=1}^4\widetilde f_i(n_i,\tau_i)}
	  {\prod_{i=1}^4\langle L_i\rangle^{1/2}}
	  \leq \frac{C}{\lambda} \prod_{i=1}^4\|f_i\|_{L^2(\mathbb{T}_\lambda\times \mathbb{R})},
 \end{equation}
where $\widetilde{f_i}$ represents the space-time Fourier transform of $f_i$, and the constant $C$ only depends on $\alpha$.

 \item[(b)]
 Let $L_i = \tau_i - \alpha n_i^3$ for $i\in\{1,2,3,4\}$. Then similar to (\ref{eq:resonant-quadrilinear-41}), we have
 \begin{equation}\label{eq:resonant-quadrilinear-42}
 	\int_{\sum\limits_{i=1}^{4}\tau_i=0}\int_{\Delta'}
	\langle H_\alpha' \rangle^{\frac12}
 	\frac{\prod_{i=1}^4\widetilde f_i(n_i,\tau_i)}
 	{\prod_{i=1}^4\langle L_i\rangle^{1/2}}
 	\leq \frac{C}{\lambda}
 	\prod_{i=1}^4\|f_i\|_{L^2(\mathbb{T}_\lambda\times \mathbb{R})}.
 \end{equation}
\end{itemize}
\end{lemma}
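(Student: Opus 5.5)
The plan is to trade the resonance weight for one of the modulation weights, estimate the time variables pointwise in frequency, and then let the sparsity of $\Delta$ (resp.\ $\Delta'$) produce the factor $\lambda^{-1}$ through the frequency sums. We may replace every $\widetilde f_i$ by $|\widetilde f_i|$ throughout.

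\textbf{Trading $\langle H_\alpha\rangle^{1/2}$ for a modulation.} On $\sum_i\tau_i=0$ and $\sum_i n_i=0$ we have the algebraic identity $\sum_{i=1}^4 L_i=-H_\alpha$ in case (a), and likewise $\sum_i L_i=-H_\alpha'$ in case (b). Hence
\[
\langle H_\alpha\rangle^{1/2}\lesssim\max_i\langle L_i\rangle^{1/2}.
\]
We split the domain according to which $L_{i_0}$ is largest and cancel that weight. Each piece is then bounded by the quantity
\[
Q:=\int_{\sum\tau_i=0}\int_{\Delta}|\widetilde f_{i_0}|\prod_{i\neq i_0}\frac{|\widetilde f_i|}{\langle L_i\rangle^{1/2}}.
\]

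\textbf{Time variables for fixed frequencies.} Fix $(n_1,\dots,n_4)$. The $\tau$-integral is a convolution evaluated at $0$. By Cauchy--Schwarz and Plancherel in $t$, it is bounded by
\[
\|\widetilde f_{i_0}(n_{i_0},\cdot)\|_{L^2_\tau}\,\Big\|\prod_{i\neq i_0}G_i\Big\|_{L^2_t},
\]
where $G_i$ is the inverse time Fourier transform of $\langle L_i\rangle^{-1/2}|\widetilde f_i(n_i,\cdot)|$. Hölder with $L^6_t\cdot L^6_t\cdot L^6_t$ applies next. After removing the phase $e^{i\alpha n_i^3t}$ (or $e^{in_i^3t}$), the Sobolev embedding $H^{1/3}_t\hookrightarrow L^6_t$, as in the proof of Lemma \ref{lemma3}, gives
\[
\|G_i\|_{L^6_t}\lesssim\|\langle L_i\rangle^{1/3-1/2}\widetilde f_i(n_i,\cdot)\|_{L^2_\tau}\le a_i(n_i),\qquad a_i(n):=\|\widetilde f_i(n,\cdot)\|_{L^2_\tau}.
\]
Since $\|a_i\|_{\ell^2(\mathbb Z_\lambda)}=\|f_i\|_{L^2}$ up to a constant, this step turns $Q$ into the purely discrete quantity
\[
(2\pi\lambda)^{-3}\sum_{\Delta}a_1(n_1)a_2(n_2)a_3(n_3)a_4(n_4),
\]
with no dependence on which index $i_0$ was removed.

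\textbf{Frequency sum for (a): two free sums at cost $2\pi\lambda$ each.} We treat the branch $|n_4+c_1n_1|\le\kappa_\alpha/\lambda$; the $c_2$ branch is identical. Parametrize by $n_1$, $\theta\in\Theta_{\kappa_\alpha}(n_1;-c_1)$ and $n_2$, so that $n_4=-c_1n_1+\theta$ and $n_3=-n_1-n_2-n_4$.
\begin{itemize}
\item For fixed $(n_1,\theta)$, Cauchy--Schwarz in $n_2$ gives $\sum_{n_2}a_2(n_2)a_3(-n_1-n_2-n_4)\le 2\pi\lambda\|a_2\|_{\ell^2}\|a_3\|_{\ell^2}$.
\item The remaining sum $\sum_{n_1}a_1(n_1)\sum_{\theta}a_4(-c_1n_1+\theta)$ is at most $2\pi\lambda\|a_1\|_{\ell^2}$ times the $\ell^2_{n_1}$ norm of the inner sum.
\item That norm is at most $C(\kappa_\alpha,-c_1)\|a_4\|_{\ell^2}$ by Lemma \ref{lem:admissible-frequency-sum}. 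This uses $c_1\neq0$, which holds because $c_1=0$ forces $\rho_\alpha=3$, i.e.\ $\alpha=1$.
\end{itemize}
Collecting the factors gives $(2\pi\lambda)^{-3}(2\pi\lambda)^2=(2\pi\lambda)^{-1}$, which is \eqref{eq:resonant-quadrilinear-41}.

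\textbf{Frequency sum for (b).} Parametrize by $n_{14}$, $n_1$ and $\theta$, with $n_4=n_{14}-n_1$, $n_2=-c_jn_{14}+\theta$ and $n_3=-(1-c_j)n_{14}-\theta=-c_{j'}n_{14}-\theta$, where $j'\neq j$.
\begin{itemize}
\item For fixed $n_{14}$, the sum over $n_1$ of $a_1(n_1)a_4(n_{14}-n_1)$ is at most $2\pi\lambda\|a_1\|\|a_4\|$.
\item The remaining sum is $\sum_{n_{14}}\sum_\theta a_2(-c_jn_{14}+\theta)\,a_3(-c_{j'}n_{14}-\theta)$. We bound it by Cauchy--Schwarz in $(n_{14},\theta)$, using $\#\Theta\le 2\kappa_\alpha+1$.
\item Each factor is then handled by the counting in Lemma \ref{lem:admissible-frequency-sum}, with $a=-c_j$ and $a=-c_{j'}$ respectively; both $c_1,c_2$ are nonzero for $\alpha\in\mathcal A$.
\end{itemize}
This again yields $C/\lambda$.

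\textbf{Main obstacle.} The step needing the most care is the first one. The cancellation of $\langle H\rangle^{1/2}$ against the largest $\langle L_{i_0}\rangle^{1/2}$ must leave the other three functions with exactly enough modulation decay ($b=1/2\ge1/3$) for the $L^6_t$ embedding. Everything must also be arranged so that the $\lambda$-normalizations of the measures on $\Gamma_4^\lambda$ and $\mathbb Z_\lambda$ combine to exactly one surplus power $\lambda^{-1}$. That surplus comes only from the fact that one of the three free frequency sums collapses to $O(1)$ points, by sparsity and $\kappa_\alpha\le1/8$.
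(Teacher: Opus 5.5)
Your proposal is correct and follows essentially the same route as the paper. You trade $\langle H_\alpha\rangle^{1/2}$ (resp.\ $\langle H_\alpha'\rangle^{1/2}$) for the largest modulation, put that factor in $L^2_t$ and the other three in $L^6_t$ via $H^{1/3}_t\hookrightarrow L^6_t$, then gain $\lambda^{-1}$ from one Cauchy--Schwarz in a free frequency combined with Lemma~\ref{lem:admissible-frequency-sum} on the sparse pair. The differences are cosmetic: you do the time estimate frequency-by-frequency before summing, whereas the paper sums at fixed $t$ and applies H\"older in time afterwards, with Lemma~\ref{lemma3} absorbing the Minkowski step. In (b) you parametrize by $(n_{14},n_1,\theta)$ instead of the paper's $(n_1,n_3,\theta)$ with $\theta=n_2-\frac{c_1}{c_2}n_3$.
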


\begin{proof}[Proof of (\ref{eq:resonant-quadrilinear-41}) in  Lemma \ref{lem:resonant-quadrilinear}]

Without loss of generality, we may assume all $\{\widetilde{f}_i\}_{i=1}^{4}$ are nonnegative functions.
For frequencies in $\Delta$, either $|n_4 + c_1 n_1| \leq \kappa_\alpha/\lambda$ or $|n_4 + c_2 n_1| \leq \kappa_\alpha/\lambda$. In the following, we will only consider the case $|n_4+c_1n_1|\leq\kappa_\alpha/\lambda$, since the second case is analogous. 

Let $\theta = n_4 + c_1 n_1$. Then 
\begin{equation}\label{cov_th}
	 \begin{cases}
	 n_3=-c_2n_1-n_2-\theta,\\
	 n_4=-c_1n_1+\theta.
	 \end{cases}
\end{equation}
Meanwhile, it is readily seen that $\theta\in\Theta_{\kappa_\alpha}(n_1;-c_1)$ which was defined in \eqref{Thetan1}. 
For notation convenience, we will drop the subscript $\kappa_\alpha$ and simply write $\Theta_{\kappa_\alpha}(n_1;-c_1)$ as $\Theta(n_1;-c_1)$ in the rest of this proof.

Denote $L_M = \max\limits_{1\leq i\leq4}\langle L_i\rangle$. Then 
$L_M \sim\sum\limits_{i=1}^4\langle L_i\rangle$.
Since $\sum\limits_{i=1}^4\tau_i=0$, we have
\[
	 L_{M} \sim \sum_{i=1}^4\langle L_i\rangle
	 \geq\Big\langle\sum_{i=1}^4L_i\Big\rangle
	 = \langle n_1^3 + n_2^3 + \alpha n_3^3 + \alpha n_4^3\rangle 
	 = \langle H_\alpha\rangle.
\]
We partition the integration domain according to which modulation is
largest among $\{\langle L_i\rangle\}_{i=1}^{4}$. We detail the case $\langle L_1\rangle=L_M$; the same spatial
estimate applies in the other cases, assigning the $L_t^2$ norm to
the factor with largest modulation and the $L_t^6$ norm to each of
the remaining factors, see \eqref{F_def} and \eqref{0925-6}.

Define functions $\{F_i\}_{i=1}^{4}$ whose space-time Fourier transforms $\{\widetilde{F}_i\}_{i=1}^{4}$ satisfy
\begin{equation}\label{F_def}
	 \widetilde F_1(n_1,\tau_1)=\widetilde f_1(n_1,\tau_1),\qquad
	 \widetilde F_i(n_i,\tau_i)=
	 \frac{\widetilde f_i(n_i,\tau_i)}{\langle L_i\rangle^{1/2}},
	 \quad 2\leq i\leq4.
\end{equation}
Then it follows from $\langle L_1\rangle = L_{M} \gtrsim \langle H_\alpha\rangle$ and the change of variable (\ref{cov_th}) that 
\begin{equation}\label{0925-1}
 \begin{aligned}
 \text{LHS of \eqref{eq:resonant-quadrilinear-41}}\lesssim &
 \int_{\sum\limits_{i=1}^{4} \tau_i = 0}\Big(\frac{1}{2\pi\lambda}\Big)^3
 \sum_{n_1,n_2\in\mathbb Z_\lambda}
 \sum_{\theta\in\Theta(n_1;-c_1)}\\
 &\qquad\widetilde F_1(n_1,\tau_1)\widetilde F_2(n_2,\tau_2)
 \widetilde F_3(-c_2n_1-n_2-\theta,\tau_3)
 \widetilde F_4(-c_1n_1+\theta,\tau_4).
 \end{aligned}
\end{equation}
Applying the Plancherel identity in time to (\ref{0925-1}) yields 
\begin{equation}\label{0925-0}
\begin{aligned}
	\text{LHS of \eqref{eq:resonant-quadrilinear-41}}\lesssim &
	\int_{\mathbb{R}}\Big(\frac{1}{2\pi\lambda}\Big)^3
	\sum_{n_1,n_2\in\mathbb Z_\lambda}
	\sum_{\theta\in\Theta(n_1;-c_1)}\\
	&\qquad\widehat F_1(n_1, t)\widehat F_2(n_2, t)
	\widehat F_3(-c_2n_1-n_2-\theta, t)
	\widehat F_4(-c_1n_1+\theta, t) \,dt,
\end{aligned}
\end{equation}
where $\widehat{F}_i$ represents the spatial Fourier transform of $F_i$ for $i\in\{1,2,3,4\}$. Now we reorganize the summation in (\ref{0925-0}) to obtain 
\begin{equation}\label{0925-2}
\begin{aligned}
	\text{LHS of \eqref{eq:resonant-quadrilinear-41}}\lesssim &
	\int_{\mathbb{R}}\Big(\frac{1}{2\pi\lambda}\Big)^3
	\sum_{n_1\in\mathbb{Z}_\lambda} \widehat F_1(n_1, t) \bigg( \sum_{\theta\in\Theta(n_1;-c_1)} \widehat F_4(-c_1n_1+\theta, t) \\
	& \qquad \qquad \Big[ \sum_{n_2\in\mathbb{Z}_\lambda} \widehat F_2(n_2, t)
	\widehat F_3(-c_2n_1-n_2-\theta, t) \Big]\bigg) \,dt.
\end{aligned}
\end{equation}

By summing $n_2$ in (\ref{0925-2}) via the Cauchy-Schwarz inequality, and noticing that 
\[\begin{split}
	&\frac{1}{2\pi\lambda} \sum_{n_2\in\mathbb{Z}_\lambda} \widehat F_2(n_2, t)
	\widehat F_3(-c_2n_1-n_2-\theta, t) \\
 	\le\,\,& \|\widehat F_2(n_2,t)\|_{\ell^2_{n_2}}
	\|\widehat F_3(-c_2n_1-n_2-\theta,t)\|_{\ell^2_{n_2}} 
	= \|\widehat F_2(\cdot, t)\|_{\ell^2} \|\widehat F_3(\cdot, t)\|_{\ell^2},
\end{split}\]
then we obtain 
\begin{equation}\label{0925-4}
\begin{aligned}
	& \text{LHS of \eqref{eq:resonant-quadrilinear-41}} \\
	\lesssim\quad & \frac{1}{\lambda^2}
	\int_{\mathbb R} 
	\|\widehat F_2(\cdot, t)\|_{\ell^2} \|\widehat F_3(\cdot, t)\|_{\ell^2}
	\sum_{n_1\in\mathbb Z_\lambda} |\widehat F_1(n_1,t)| \bigg(\sum_{\theta\in\Theta(n_1;-c_1)}
	|\widehat F_4(-c_1n_1+\theta,t)|\bigg) \,dt.
\end{aligned}
\end{equation}
Now we apply the Cauchy-Schwarz inequality in $n_1$ to find 
\begin{equation}\label{0925-5}\begin{split}
	& \frac{1}{2\pi\lambda}\sum_{n_1\in\mathbb Z_\lambda} |\widehat F_1(n_1,t)| \bigg(\sum_{\theta\in\Theta(n_1;-c_1)}
	|\widehat F_4(-c_1n_1+\theta,t)|\bigg)  \\
	\leq\,\, & \|\widehat F_1(\cdot, t)\|_{\ell^2}
	\Big\|\sum_{\theta\in\Theta(n_1;-c_1)}
	|\widehat F_4(-c_1n_1+\theta,t)|\Big\|_{\ell^2_{n_1}} 
	\lesssim \|\widehat F_1(\cdot, t)\|_{\ell^2} \|\widehat F_4(\cdot, t)\|_{\ell^2},
\end{split}\end{equation}
where the last inequality is due to Lemma~\ref{lem:admissible-frequency-sum} with
$A=\kappa_\alpha$, $a=-c_1$ and $m=0$. 

Plugging (\ref{0925-5}) into (\ref{0925-4}) and applying the H\"older inequality in time yields 
\begin{equation}\label{0925-6}
\begin{aligned}
	\text{LHS of \eqref{eq:resonant-quadrilinear-41}}
	\lesssim &
	\frac{1}{\lambda}
	\|\widehat F_1\|_{L_t^2\ell^2_{n_1}}
	\|\widehat F_2\|_{L_t^6\ell^2}
	\|\widehat F_3\|_{L_t^6\ell^2}
	\|\widehat F_4\|_{L_t^6\ell^2}.
\end{aligned}
\end{equation}
Finally, by Plancherel's identity and Lemma~\ref{lemma3},
\begin{equation}\label{0925-7}
	 \|\widehat F_1\|_{L_t^2\ell^2_{n_1}}\lesssim\|f_1\|_{L^2_{x,t}},
	 \qquad
	 \|\widehat F_i\|_{L_t^6\ell^2_{n_i}}\lesssim\|f_i\|_{L^2_{x,t}},
	 \quad 2\leq i\leq4.
\end{equation}
Consequently, the combination of (\ref{0925-7}) and (\ref{0925-6}) gives 
\begin{equation}\label{0925-8}
	\text{LHS of \eqref{eq:resonant-quadrilinear-41}}
	\lesssim \frac{1}{\lambda}
	\prod_{i=1}^4\|f_i\|_{L^2_{x,t}},
\end{equation}
which justifies (\ref{eq:resonant-quadrilinear-41}). 
\end{proof}

\begin{proof}[Proof of (\ref{eq:resonant-quadrilinear-42}) in  Lemma \ref{lem:resonant-quadrilinear}]
In this case, we can also assume $\{\widetilde f_i\}_{i=1}^{4}$ are nonnegative.
For frequencies in $\Delta'$, either $|n_2 + c_1 n_{14}| \leq \kappa_\alpha/\lambda$ or $|n_2 + c_2 n_{14}| \leq \kappa_\alpha/\lambda$. In the following, we will only consider the case $|n_2 + c_1 n_{14}| \leq \kappa_\alpha/\lambda$ since the other case is analogous. By replacing $n_{14}$ with $-n_2-n_3$ and taking advantage of the fact $c_1+c_2=1$, we find 
\[
	\frac{\kappa_\alpha}{\lambda} \geq |n_2 + c_1 n_{14}| = |c_2 n_2 - c_1 n_3|,
\]
which implies that 
\[
	\Big| n_2 - \frac{c_1}{c_2} n_3 \Big| \leq \frac{\kappa_\alpha}{c_2 \lambda}.
\]
Let $\theta = n_2 - \frac{c_1}{c_2} n_3$. Then 
\begin{equation}\label{cov-0926}
\begin{cases}
	n_2 = \dfrac{c_1}{c_2}n_3+\theta,\\
	n_4 = -n_1-\dfrac{n_3}{c_2}-\theta.
\end{cases}
\end{equation}
Meanwhile, one can check that $\theta\in \Theta_{\frac{\kappa_\alpha}{c_2}}(n_3; \frac{c_1}{c_2})$ which was defined in \eqref{Theta_def} \big(with $A = \frac{\kappa_\alpha}{c_2}$, $n=n_3$ and $a=\frac{c_1}{c_2}$\big).

Since $\sum\limits_{i=1}^{4} L_i = - H'_\alpha$, we have 
$L_M \gtrsim \langle H'_\alpha \rangle$.
Again, without loss of generality, we assume $\langle L_1\rangle = L_M$ and define $\{F_i\}_{i=1}^{4}$ as in (\ref{F_def}). Then applying the change of variable (\ref{cov-0926}) to (\ref{eq:resonant-quadrilinear-42}) yields 
\begin{equation}\label{0926-1}
\begin{aligned}
	\text{LHS of \eqref{eq:resonant-quadrilinear-42}}\lesssim &
	\int_{\sum\tau_i=0}\Big(\frac{1}{2\pi\lambda}\Big)^3
	\sum_{n_1,n_3\in\mathbb Z_\lambda}
	\sum_{\theta\in\Theta_{\frac{\kappa_\alpha}{c_2}}(n_3;\frac{c_1}{c_2})}\\
	&\qquad\widetilde F_1(n_1,\tau_1)\widetilde F_3(n_3,\tau_3)
	\widetilde F_2\Big(\frac{c_1}{c_2}n_3+\theta,\tau_2\Big)
	\widetilde F_4\Big(-n_1-\frac{n_3}{c_2}-\theta,\tau_4\Big).
\end{aligned}
\end{equation}
Applying the Plancherel identity in time to (\ref{0926-1}) yields 
\begin{equation}\label{0926-0}
\begin{aligned}
	\text{LHS of \eqref{eq:resonant-quadrilinear-42}}\lesssim &
	\int_{\mathbb{R}}\Big(\frac{1}{2\pi\lambda}\Big)^3
	\sum_{n_1,n_3\in\mathbb Z_\lambda}
	\sum_{\theta\in\Theta_{\frac{\kappa_\alpha}{c_2}}(n_3;\frac{c_1}{c_2})}\\
	&\qquad\widehat F_1(n_1, t)\widehat F_3(n_3, t)
	\widehat F_2\Big(\frac{c_1}{c_2}n_3+\theta, t\Big)
	\widehat F_4\Big(-n_1-\frac{n_3}{c_2}-\theta, t\Big) \,dt,
\end{aligned}
\end{equation}
where $\widehat{F}_i$ represents the spatial Fourier transform of $F_i$ for $i=1,2,3,4$.
Now we reorganize the summation in (\ref{0926-0}) to obtain 
\begin{equation}\label{0926-2}
\begin{aligned}
	\text{LHS of \eqref{eq:resonant-quadrilinear-42}}\lesssim &
	\int_{\mathbb{R}}\Big(\frac{1}{2\pi\lambda}\Big)^3
	\sum_{n_3\in\mathbb{Z}_\lambda} \widehat F_3(n_3, t) \bigg( \sum_{\theta\in\Theta_{\frac{\kappa_\alpha}{c_2}}(n_3;\frac{c_1}{c_2})} \widehat F_2\Big(\frac{c_1}{c_2}n_3+\theta, t\Big) \\
	& \qquad \qquad \Big[ \sum_{n_1\in\mathbb{Z}_\lambda} \widehat F_1(n_1, t)
	\widehat F_4\Big(-n_1-\frac{n_3}{c_2}-\theta, t\Big)\Big]\bigg)  \,dt.
\end{aligned}
\end{equation}

The rest of the proof follows the same strategy as that from  (\ref{0925-2}) to (\ref{0925-8}). Hence, (\ref{eq:resonant-quadrilinear-42}) is verified.
\end{proof}

\subsection{Proof of Lemma \ref{M41Estimate}}
\label{Subsec, pf_M41}

\subsubsection{Reduction to pointwise bounds of a multiplier}
\label{Subsubsec, Reduction_pb_m}

Let $\alpha\in\mathcal{A}$, $s\in (s_\alpha, 1)$ and $\lambda \gg 1$. We recall (\ref{constant51}) below:
\begin{equation}\label{M41_estp}
	\left| \int_0^1 \Lambda_4(M_{41}(1-\chi_{\Omega_*}); u^\lambda, u^\lambda, v^\lambda, v^\lambda) dt \right| \le C_{10}\,D_1(\lambda,N,\epsilon) \|Iu^\lambda\|_{X^{1,\frac12}_{\lambda}}^2 \|Iv^\lambda\|_{X^{1,\frac12}_{\alpha,\lambda}}^2,
\end{equation}
where $C_{10}>0$ only depends on $\alpha$, $s$ and $\epsilon$, 
$\Omega_*$ is the set as defined in (\ref{eq:bad-region-def}), and \[
	D_1(\lambda,N,\epsilon) = N^{-2+2s_{\alpha}+\epsilon} \lambda^{2s_{\alpha}+\epsilon}.
\] 
We are going to perform some standard reductions. Firstly, since we are dealing with non-endpoint case for which $s\in (s_\alpha, 1)$ and $\epsilon\in (0, 2s - 2s_\alpha)$, it suffices to consider the whole time region $\mathbb{R}$ although the original time integral domain in (\ref{M41_estp}) is restricted to $[0,1]$, 
thanks to the argument in  \cite[pp.~664--665]{Iteam2001} or \cite[Lemma 4.6]{Oh2007}.
That is to say, (\ref{M41_estp}) boils down to the following estimate:
\begin{equation}\label{0929-4}
	\left| \int_{\mathbb{R}} \Lambda_4(M_{41}(1-\chi_{\Omega_*}); u^\lambda, u^\lambda, v^\lambda, v^\lambda) dt \right| \le C_{10} D_1(\lambda,N,\epsilon) \|Iu^\lambda\|_{X^{1,\frac12}_{\lambda}}^2 \|Iv^\lambda\|_{X^{1,\frac12}_{\alpha,\lambda}}^2.
\end{equation}


On the other hand, in the region
\begin{equation}\label{Omega_neg1}
	\Omega_{-1} := \Big\{(n_1,n_2,n_3,n_4)\in \Gamma_4^\lambda: \max_{1\leq i\leq 4} |n_i|\ll N \Big\},
\end{equation}
we have $m(n_i)=1$ for all $i=1,2,3,4$, which implies that 
\begin{equation*}
	M_{41} = 2 i n_{23} \quad\text{on}\quad \Omega_{-1}.
\end{equation*}
The change of variables $(n_1,n_2,n_3,n_4)\mapsto(n_2,n_1,n_4,n_3)$ preserves $\Omega_{-1}$, the normalized counting measure, and the product of the Fourier factors; hence we may average the original integral with its relabeled form.
Consequently, the contribution of
$\Lambda_4(M_{41};u^\lambda,u^\lambda,v^\lambda,v^\lambda)$ from $\Omega_{-1}$ is
\[
\int_{\Gamma_4^\lambda\cap \Omega_{-1}} 2 i n_{23} \, \widehat{u^\lambda} \,\widehat{u^\lambda} \,\widehat{v^\lambda}\,\widehat{v^\lambda} = \int_{\Gamma_4^\lambda\cap \Omega_{-1}} i[n_{14}+n_{23}]\, \widehat{u^\lambda}\, \widehat{u^\lambda} \,\widehat{v^\lambda} \,\widehat{v^\lambda} = 0,
\]
where $\widehat{u^\lambda}$ and $\widehat{v^\lambda}$ refer to the spatial Fourier transforms of $u^\lambda$ and $v^\lambda$ respectively.
Therefore, it remains to prove the estimate \eqref{0929-4} only on the spatial domain
\begin{equation}\label{omega0}
	\Omega_0 := \Gamma_4^\lambda \setminus \big(\Omega_{-1} \cup \Omega_{*}\big).
\end{equation}
That is to justify 
\begin{equation}\label{0929-6}
	\left| \int_{\mathbb{R}} \Lambda_4(M_{41}\chi_{\Omega_0}; u^\lambda, u^\lambda, v^\lambda, v^\lambda) \,dt \right|
	\le C_{10} D_1(\lambda,N,\epsilon) \|Iu^\lambda\|_{X^{1,\frac12}_{\lambda}}^2 \|Iv^\lambda\|_{X^{1,\frac12}_{\alpha,\lambda}}^2.
\end{equation}

Based on the definition of $\Lambda_4$ and the Fourier transform in time, (\ref{0929-6}) is converted as 
\begin{equation}\label{1003-1}\begin{split}
	&\int_{\sum\limits_{i=1}^{4}\tau_i=0}\int_{\Omega_0} M_{41}(n_1, n_2, n_3, n_4) \widetilde{u^\lambda}(n_1, \tau_1)\, \widetilde{u^\lambda}(n_2,\tau_2)\, \widetilde{v^\lambda}(n_3, \tau_3)\, \widetilde{v^\lambda}(n_4, \tau_4)  \\
	\lesssim \,\,& D_1(\lambda, N, \epsilon) \big\| \langle n\rangle m(n) \langle \tau - n^3\rangle^{\frac12} \widetilde{u^\lambda}(n,\tau) \big\|_{\ell_n^2(\mathbb Z_\lambda)L_\tau^2(\mathbb R)}^2
	\big\| \langle n\rangle m(n) \langle \tau - \alpha n^3\rangle^{\frac12} \widetilde{v^\lambda}(n,\tau) \big\|_{\ell_n^2(\mathbb Z_\lambda)L_\tau^2(\mathbb R)}^2.
\end{split}\end{equation}
Define functions $\{f_i\}_{i=1}^{4}$ such that their space-time Fourier transforms $\{\widetilde f_i\}_{i=1}^{4}$ satisfy
\[
\widetilde f_i(n_i,\tau_i)=
\begin{cases}
	\langle n_i\rangle m(n_i)\langle L_i\rangle^{\frac12} \widetilde{u^\lambda}(n_i,\tau_i), &i=1,2,\\
	\langle n_i\rangle m(n_i)\langle L_i\rangle^{\frac12} \widetilde{v^\lambda}(n_i,\tau_i), &i=3,4,
\end{cases}
\]
where 
\[
	L_i = \tau_i - n_i^3 \quad \text{for}\quad i=1,2, \qquad
	\text{and}\qquad L_i = \tau_i - \alpha n_i^3 \quad \text{for} \quad i=3,4.
\]
Then (\ref{1003-1}) is reduced to 
\begin{equation}\label{1003-2}
	\int_{\sum\limits_{i=1}^{4} \tau_i=0}\int_{\Omega_0}
	M_{41}(n_1,n_2,n_3,n_4) \prod_{i=1}^{4} \frac{\widetilde f_i(n_i,\tau_i)}{\langle n_i\rangle m(n_i) \langle L_i\rangle^{1/2}}
	\lesssim D_1(\lambda, N,\epsilon)
	\prod_{i=1}^4\|f_i\|_{L^2(\mathbb{T}_\lambda\times \mathbb{R})}.
\end{equation}
Denote a multiplier $\mathcal{M}$ on $\Gamma_4^\lambda$ as below:
\begin{equation}\label{multi_M}
	\mathcal{M}(n_1,n_2,n_3,n_4) := \frac{|M_{41}(n_1,n_2,n_3,n_4)|}{\prod_{i=1}^{4} \langle n_i\rangle m(n_i)}.
\end{equation}
Then it suffices to prove 
\begin{equation}\label{1003-4}
	\int_{\sum\limits_{i=1}^{4} \tau_i=0}\int_{\Omega_0} \mathcal{M}(n_1,n_2,n_3,n_4)
	\frac{\prod_{i=1}^4 |\widetilde f_i(n_i,\tau_i)|}
	{\prod_{i=1}^4\langle L_i\rangle^{1/2}}
	\lesssim D_1(\lambda, N,\epsilon)
	\prod_{i=1}^4\|f_i\|_{L^2(\mathbb{T}_\lambda\times \mathbb{R})}.
\end{equation}
Without loss of generality, we may assume all $\{\widetilde{f}_i\}_{i=1}^{4}$ are nonnegative functions.

After the reduction to the spatial domain $\Omega_0$ on the LHS of (\ref{1003-4}), we will further reduce this estimate to be the problem of obtaining pointwise bounds of certain multipliers.

\begin{itemize}
	\item For any subset $A\subset \Omega_0$, if we can establish the following pointwise bound for $\mathcal{M}$ on $A$:
	\begin{equation}\label{MultiTrans}
		\mathcal{M}(n_1,n_2,n_3,n_4) \lesssim D_1(\lambda,N,\epsilon) \quad \text{on} \quad A,
	\end{equation}
	then \eqref{1003-4}, when being restricted on $A\subset \Omega_0$, follows from the H\"older inequality and Lemma \ref{lemma4}. As a result, the integral estimate (\ref{1003-4}) on $A$ is converted to be the pointwise bound problem (\ref{MultiTrans}).

	\item In the above analysis, the estimate (\ref{MultiTrans}) is essential. Unfortunately, this estimate may not hold on resonant set $\Delta$ which was defined in (\ref{res_set}). So we need to take advantage of the finer estimate (\ref{eq:resonant-quadrilinear-41}) in Lemma \ref{lem:resonant-quadrilinear} to handle this situation. More precisely, for any subset $B\subset \Omega_0\cap \Delta$, if we prove that 
	\begin{equation}\label{MultiTrans_weak}
		\mathcal{M}(n_1,n_2,n_3,n_4) \lesssim \lambda \langle H_\alpha\rangle^{\frac12} D_1(\lambda,N,\epsilon) \quad \text{on} \quad B,
	\end{equation}
	then \eqref{1003-4}, when being restricted on $B \subset \Omega_0\cap \Delta$, follows from (\ref{eq:resonant-quadrilinear-41}) in Lemma \ref{lem:resonant-quadrilinear}. Hence, the integral estimate (\ref{1003-4}) on $B$ boils down to the pointwise bound problem (\ref{MultiTrans_weak}).
	
\end{itemize}

\subsubsection{Estimates in the high-low frequency regions}
\label{Subsubsec, M41_high-low}
We first rewrite $M_{41}$ in \eqref{Multiplier41} as below:
\begin{equation}\label{M41_comp}
	M_{41}(n_1,n_2,n_3,n_4) = 2i n_{23} \frac{R^I(n_1,n_{23},n_4)}{R(n_1,n_{23},n_4)} , \qquad \forall\, (n_1,n_2,n_3,n_4)\in \Gamma_4^{\lambda},
\end{equation}
where
\begin{equation}\label{DefOfRandRI}
\begin{split}
	& R^I(x,y,z) := x^3 m^2(x) + \alpha y^3 m^2(y) + \alpha z^3 m^2(z), \\
	& R(x,y,z) := x^3 + \alpha y^3 + \alpha z^3,
\end{split}
\qquad\forall\, x+y+z=0.
\end{equation}
Then it follows from (\ref{multi_M}) that
\begin{equation}\label{M_comp}
	\mathcal{M}(n_1, n_2, n_3, n_4) 
	= \frac{|M_{41}(n_1,n_2,n_3,n_4)|}{\prod_{i=1}^{4} \langle n_i\rangle m(n_i)} 
	= \frac{2 |n_{23}| | R^{I}(n_1, n_{23}, n_4)|}{|R(n_1, n_{23}, n_4)| \prod_{i=1}^{4} \langle n_i\rangle m(n_i)}.
\end{equation}

So in order to provide a pointwise bound for $\mathcal{M}$, the challenging region is when $R(n_1, n_{23}, n_4)$ is near $0$. Thus, we split $\Omega_0$ into four subregions:
\begin{equation}\begin{split}\label{Omega_reg}
	&\Omega_1 = \{ \Omega_0: |n_{23}| \ll N \}, \hspace{3.3cm}
	\Omega_2 = \{ \Omega_0: |n_{23}| \gtrsim N,\ |n_1| \gtrsim N \gg |n_4| \},  \\
	&\Omega_3 = \{\Omega_0: |n_{23}| \gtrsim N,\ |n_4| \gtrsim N \gg |n_1| \},  \qquad
	\Omega_4 = \{\Omega_0: |n_1|,\ |n_4|,\ |n_{23}| \gtrsim N \}. 
\end{split}\end{equation}
In the regions $\Omega_1$, $\Omega_2$ and $\Omega_3$, one of the frequencies $n_1$, $n_{23}$ and $n_4$ is far smaller than $N$ while $\max_{1\leq i\leq 4} |n_i| \gtrsim N$, so these regions are referred to as high-low frequency regions. In contrast, all the frequencies $n_1$, $n_{23}$ and $n_4$ in $\Omega_4$ are at least comparable to $N$, so we call it a high-high frequency region. In this section, we will bound $\mathcal{M}$ pointwise on the high-low frequency regions, while the estimate on the high-high frequency region will be carried out in Section \ref{Subsubsec, est_hh_freq}.

We begin with some notations. 
\begin{itemize}
	
	\item For any $i,j$, we denote $N_i$ and $N_{ij}$ to be the absolute values of the frequencies $n_i$ and $n_{ij}$ respectively:
	\[N_i := |n_i|, \qquad N_{ij} := |n_{ij}| = |n_i+n_j|.\]

	\item Plugging $z=-x-y$ into the function $R$ in (\ref{DefOfRandRI}) yields
	\begin{equation}\label{R_decom1}
	R(x,y,z) = x^3 + \alpha y^3 + \alpha z^3 = -3\alpha x(y+c_1 x)(y+c_2 x),
	\end{equation}
	where $c_1 = \frac12 - \frac{\rho_\alpha}{6}$ and $c_2 = \frac12 + \frac{\rho_\alpha}{6}$ are the two constants in (\ref{c1c2}) such that
	\begin{equation}\label{c1_c2}
	c_1 + c_2 = 1, \qquad \alpha(c_1^3 + c_2^3) = 1.
	\end{equation}
	Since $y$ and $z$ are symmetric in $R(x,y,z)$, it then follows from (\ref{R_decom1}) that 
	\begin{equation}\label{R_decom_z}
	R(x,y,z) = -3\alpha x(z+c_1 x)(z+c_2 x).
	\end{equation}
	
	\item For any $p,q\in \mathbb{Z}_\lambda$, we define 
	\begin{equation}\label{Rp_def}
		R_{p}(q) = -3\alpha p (q+c_1 p)(q+c_2 p) = p^3 + \alpha q^3 - \alpha(p+q)^3.
	\end{equation}
	Then it follows from (\ref{R_decom1}) and (\ref{R_decom_z}) that 
	\begin{equation}\label{R_fun}
		R(x,y,z) = R_{x}(y) = R_{x}(z).
	\end{equation} 
	In particular, $R(n_1, n_{23}, n_{4}) = R_{n_1}(n_{23}) = R_{n_1}(n_4)$.
\end{itemize}

Next, we will justify the pointwise bound estimate (\ref{MultiTrans}) for the regions $\Omega_1$, $\Omega_2$ and $\Omega_3$.

\textbf{Estimate in $\Omega_1$}. We further split $\Omega_1$ into two regions $\Omega_{11}$ and $\Omega_{12}$:
\[
	\Omega_{11} = \{ \Omega_1: |n_1| \ll N \}, \qquad	\Omega_{12} = \{ \Omega_1: |n_1| \gtrsim N \}.
\]
\begin{itemize}
	\item In $\Omega_{11}$, from $N_{23}\ll N$, $N_1\ll N$ and the condition $\sum_{i=1}^{4}n_i = 0$, we find $N_4\ll N$. Consequently, $m(n_1) = m(n_{23}) = m(n_4) = 1$ and $R^{I}(n_1, n_{23}, n_4) = R(n_1, n_{23}, n_4)$, which implies that 
	\[
		|M_{41}| = 2|n_{23}| = 2|n_{14}|,
	\]
	and 
	\[
	\begin{aligned}
		\mathcal{M} \lesssim \frac{|M_{41}|}{\prod_{i=1}^{4} m(n_i)\langle n_i\rangle} &\lesssim \frac{|n_{14}|}{m(n_2)m(n_3)\prod_{i=1}^4\langle n_i \rangle} \lesssim \frac{1}{N_2 m(N_2) N_3 m(N_3)}.
	\end{aligned}
	\]
	Meanwhile, since $\max\limits_{1\leq i\leq 4} |n_i| \gtrsim N$ on $\Omega_0$, $N_2\sim N_3\gtrsim N$. 
	Therefore,
	\[
		\mathcal{M} \le \frac{1}{N_2m(N_2)N_3m(N_3)}\lesssim \frac{N^{2s}}{N_2^sN_3^s N^2}
		\lesssim N^{-2}. 
	\]
	
	\item In $\Omega_{12}$, the relation $n_1+n_{23}+n_4 = 0$ together with $N_{23}\ll N\lesssim N_1$ yields $N_4\sim N_1\gtrsim N$. Consequently, 
	$|R^I(n_1,n_{23},n_4)| \lesssim N_1^3 m^2(n_1)$
	and
	\begin{equation}
		|R(n_1,n_{23},n_4)| = |R_{n_1}(n_{23})| = |3\alpha n_{1}(n_{23}+c_1n_1)(n_{23}+c_2n_1)| \sim N_1^3.
	\end{equation}
	Thus, it follows from (\ref{M_comp}) that
	\[
		\begin{aligned}
			\mathcal{M}
			&\lesssim \frac{N_{23} N_1^3 m^2(n_1)}{N_1^3 \cdot N_1 N_4 m(n_1)m(n_4)\cdot \langle N_2\rangle \langle N_3\rangle  m(n_2)m(n_3) }\\
			&\lesssim \frac{1}{N_1 N_4}\cdot \frac{N_{23}}{\langle N_2\rangle \langle N_3\rangle m(n_2)m(n_3)}
			\lesssim N^{-2}. 
		\end{aligned}
	\]
\end{itemize}

\textbf{Estimate in $\Omega_2$.} We split $\Omega_2$ into three subregions based on the sizes of $|n_2|$ and $|n_3|$:
\[
	\Omega_{21} = \{ \Omega_2: |n_2|, |n_3| \gtrsim N \}, \quad
	\Omega_{22} = \{ \Omega_2: |n_2| \gtrsim N \gg |n_3| \}, \quad
	\Omega_{23} = \{ \Omega_2: |n_3| \gtrsim N \gg |n_2| \}.
\]
Since the indices $2$ and $3$ are symmetric in $\Omega_2$, the estimate for $\Omega_{23}$ follows directly from that for $\Omega_{22}$ by swapping $n_2$ and $n_3$; we therefore only detail the cases $\Omega_{21}$ and $\Omega_{22}$.
\begin{itemize}
	\item In $\Omega_{21}$, the conditions give $N_1\sim N_{23}\gtrsim N\gg N_4$. Hence, 
	\[
		N_1^3m^2(N_1)\sim N_{23}^3m^2(N_{23})\gg N_4^3m^2(N_4),
	\] 
	which implies that
	\[
		|R^I(n_1,n_{23},n_4)|\lesssim N_1^3m^2(N_1),
	\]
	and
	\[
		|R(n_1,n_{23},n_4)| = |R_{n_1}(n_{4})| = |3\alpha 	n_{1}(n_{4}+c_1n_1)(n_{4}+c_2n_1)| \sim N_1^3.
	\]
	Then it follows from (\ref{M_comp}) that 
	\[
		\mathcal{M} \lesssim \frac{N_{23}N_1^3m^2(N_1)}{N_1^3\cdot N_1m(N_1)\cdot N_2N_3m(N_2)m(N_3)\cdot \langle N_4\rangle } 
		\lesssim \frac{1}{N_2N_3m(N_2)m(N_3)}.
	\]
	Meanwhile, since $N_2, N_3 \gtrsim N$, we have 
	\[
		\begin{aligned}
			\mathcal{M} 
			&\lesssim \frac{1}{N_2N_3m(N_2)m(N_3)}\lesssim N^{-2}.
		\end{aligned}
	\]
	
	\item In $\Omega_{22}$, we easily deduce $N_1\sim N_2\sim N_{23}\gtrsim N$ and $N_3, N_4\ll N$. As a result,
	\[
	|R^I(n_1,n_{23},n_4)|\lesssim N_1^3m^2(N_1),\qquad |R(n_1,n_{23},n_4)| \sim N_1^3.
	\]
	Therefore, it follows from (\ref{M_comp}) that 
	\[
			\mathcal{M} \lesssim \frac{N_{23}N_1^3m^2(N_1)}{N_1^3\cdot N_1N_2m(N_1)m(N_2)\cdot \langle N_3\rangle \langle N_4\rangle}
			\lesssim \frac{1}{N_2}\lesssim N^{-1}.
	\]
\end{itemize}

\textbf{Estimate in $\Omega_3$.}
Since $u$ has zero mean, we only need to consider the integral region on which $n_1 \neq 0$. From $N_{23} \sim N_4 \gg N_1$, we then obtain
\begin{equation}\label{0928-0}
	|R(n_1,n_{23},n_4)| = |R_{n_1}(n_4)| = 3\alpha \bigl|n_1(n_4 + c_1 n_1)(n_4 + c_2 n_1)\bigr| \sim N_1 N_4^2.
\end{equation}
On the other hand, $R^I(n_1,n_{23},n_4)$ can be rewritten as 
\begin{equation}\label{0928-1}
\begin{split}
	R^I(n_1,n_{23},n_4) &= n_1^3 m^2(n_1) + \alpha \bigl[n_{23}^3 m^2(n_{23}) + n_4^3 m^2(n_4)\bigr] \\
	&= n_1^3 m^2(n_1) + \alpha \bigl[ n_{23}^3 m^2(n_{23}) - (n_{23}+n_1)^3 m^2(n_{23}+n_1) \bigr].
\end{split}
\end{equation}
Since $N_1 \ll N_{23}$, applying the mean value theorem yields
\[
	\bigl| n_{23}^3 m^2(n_{23}) - (n_{23}+n_1)^3 m^2(n_{23}+n_1) \bigr| \lesssim N^{2-2s} N_{23}^{2s} N_1.
\]
Therefore, it follows from (\ref{0928-1}) that 
\begin{equation}\label{0928-2}
	|R^I(n_1,n_{23},n_4)| \lesssim N_1^3 m^2(N_1) + \alpha N^{2-2s} N_{23}^{2s} N_1 \lesssim N^{2-2s} N_{23}^{2s} N_1.
\end{equation}
Inserting (\ref{0928-0}) and (\ref{0928-2}) into (\ref{M_comp}) gives
\begin{equation}\label{0928-3}\begin{split}
	\mathcal{M} &\lesssim \frac{N_{23} \cdot N^{2-2s} N_{23}^{2s} N_1}{N_1 N_4^2 \cdot m(N_1) \langle N_1\rangle \cdot \langle N_2\rangle  \langle N_3 \rangle  m(N_2) m(N_3) \cdot N_4 m(N_4)} \\
	&\lesssim \frac{N^{2-2s} N_{23}^{2s}}{\langle N_2\rangle \langle N_3\rangle  m(N_2) m(N_3) \cdot N_4^2 m(N_4)},
\end{split}\end{equation}
where in the last inequality we used the fact that $m(N_1)\langle N_1\rangle \gtrsim 1$ for any $N_1$.
By symmetry of $n_2$ and $n_3$ in $\Omega_3$, we may assume $N_2 \ge N_3$, so that $N_2 \gtrsim N$ and
\[
	m(n_2) m(n_3) \langle N_2\rangle \langle N_3\rangle \gtrsim N_2m(N_2) \gtrsim N^{1-s} N_2^s.
\]
Substituting this into (\ref{0928-3}) gives
\begin{equation}\label{0929-01}
	\mathcal{M} \lesssim \frac{N^{2-2s} N_{4}^{2s}}
	{N_4^2 m(N_4) \cdot N^{1-s} N_2^s}
	\lesssim \frac{1}{N_4^{1-s} N_2^{s}}
	\lesssim N^{-1} .
\end{equation}

So far, we finished the desired estimates on the high-low frequency regions $\Omega_1$, $\Omega_2$ and $\Omega_3$. The conclusion on these regions is 
\begin{equation}\label{C1need1}
	\mathcal{M}(n_1,n_2,n_3,n_4) \lesssim N^{-1} \quad \text{on} \quad \Omega_1 \cup \Omega_2 \cup \Omega_3.
\end{equation}
According to the definition of $D_1$ and the fact that $s_\alpha\geq \frac12$, we know
\[
	\frac{D_1(\lambda, N, \epsilon)}{N^{-1}} = N^{-1+2s_\alpha+\epsilon} \lambda^{2s_\alpha+\epsilon} \geq 1.
\]
As a result, (\ref{MultiTrans}) is verified for $A = \Omega_1 \cup \Omega_2 \cup \Omega_3$ thanks to (\ref{C1need1}).

Next, we will turn to treat the high-high frequency region $\Omega_4$.

\subsubsection{Estimates in the high-high frequency regions}
\label{Subsubsec, est_hh_freq}

\textbf{Estimate in $\Omega_4$.} We divide $\Omega_4$ into the following four subregions:
\begin{align*}
	&\Omega_{41} = \{ \Omega_4: |n_1| \sim |n_{23}| \gg |n_4| \gtrsim N \},
	&\Omega_{42} = \{ \Omega_4: |n_1| \sim |n_4| \gg |n_{23}| \gtrsim N \},\\
		&\Omega_{43} = \{ \Omega_4: |n_4| \sim |n_{23}| \gg |n_1| \gtrsim N \},
			&\Omega_{44} = \{ \Omega_4: |n_1| \sim |n_4| \sim |n_{23}| \gtrsim N \}.
\end{align*}
\begin{itemize}
	\item In $\Omega_{41}$ and $\Omega_{42}$, it is readily seen that
	\[
		|R^I(n_1,n_{23},n_4)|\lesssim N_1^3m^2(N_1), \qquad 
		|R(n_1,n_{23},n_4)|\sim N_1^3.
	\]
	Meanwhile, $N_{23} \lesssim N_1$ on $\Omega_{41} \cup \Omega_{42}$, so it follows from (\ref{M_comp}) that 
	\begin{equation}\label{0928-4}
		\mathcal{M} \lesssim \frac{N_{23} N_1^3m^2(N_1)}{N_1^3 \cdot N_1 m(N_1) \cdot \prod_{i=2}^{4} \langle N_i\rangle m(N_i)} \lesssim \frac{1}{\prod_{i=2}^{4} \langle N_i\rangle m(N_i) }.
	\end{equation}
	Since $\max\{N_2, N_3, N_4\} \gtrsim N_1 \gtrsim N$, (\ref{0928-4}) implies that $\mathcal{M}\lesssim N^{-1}$.
	
	\item The region $\Omega_{43}$ is completely analogous to $\Omega_3$ except that $|n_1| \ll N$ in $\Omega_3$ while $|n_1| \gtrsim N$ in $\Omega_{43}$. But the previous argument from (\ref{0928-0}) to (\ref{0929-01}) for the region $\Omega_3$ worked for any $n_1\neq 0$, so the same argument is applicable here on the region $\Omega_{43}$ and we obtain $\mathcal{M} \lesssim N^{-1}$.
\end{itemize}
The estimates on $\Omega_{41}$, $\Omega_{42}$ and $\Omega_{43}$ can be stated as 
\[
	\mathcal{M}(n_1,n_2,n_3,n_4) \lesssim N^{-1} \quad \text{on} \quad \Omega_{41} \cup \Omega_{42} \cup \Omega_{43},
\]
which confirms (\ref{MultiTrans}) with $A = \Omega_{41} \cup \Omega_{42} \cup \Omega_{43}$.

Next, we will tackle the most challenging region $\Omega_{44}$ in which the resonance may occur.

\textbf{Estimate in $\Omega_{44}$.} 
Since $N_1\sim N_4\sim N_{23}$ in $\Omega_{44}$, for $R^I(n_1,n_{23},n_4)$ we have
\begin{equation}\label{BoundofRI}
	|R^I(n_1,n_{23},n_4)| \lesssim N_1^3 m^2(N_1),
\end{equation}
and
\begin{equation}\label{M44}\begin{split}
    \mathcal{M}(n_1,n_2,n_3,n_4) &\lesssim \frac{N_1^4 m^2(N_1)}{ |R(n_1,n_{23},n_4) | \prod_{i=1}^{4} (m(n_i)\langle n_i\rangle)} \\
    &\lesssim \frac{N_1^2}{|R(n_1,n_{23},n_4)| \langle N_2\rangle \langle N_3\rangle m(N_2) m(N_3)}.
\end{split}\end{equation}
The most challenging part in $\Omega_{44}$ is the resonant region $\Delta$ defined as in (\ref{res_set}). We further decompose $\Delta$ into two subsets: $\Delta = \Delta_1 \cup \Delta_2$, where
\begin{equation}\label{Delta_j}
	\Delta_j := \left\{ (n_1,n_2,n_3,n_4)\in \Gamma_4^\lambda: |n_4 + c_j n_1| \leq \frac{\kappa_\alpha}{\lambda} \right\}, \qquad 
	j=1,2.
\end{equation}
Here $\kappa_\alpha$ is the constant introduced in \eqref{eq:bad-region-def}. Since the frequencies in $\Omega_{44}$ satisfy $|n_1|\sim |n_4|\sim |n_{23}|$ which does not distinguish $|n_2|$ and $|n_3|$, we further divide $\Omega_{44}$ into the following subregions:
\begin{align*}
	&\qquad \qquad \qquad \qquad\qquad \mathcal{O}_0 := \Omega_{44} \setminus\Delta,\\[4pt]
	\mathcal{O}_{1}  &:= \Big\{\Omega_{44} \cap \Delta_1: |n_2|\sim |n_3| \Big\},  
	\qquad \qquad\,\,\, \widetilde{\mathcal{O}}_1 := \Big\{\Omega_{44} \cap \Delta_2: |n_2|\sim |n_3|\Big\}, \\[4pt]
	\mathcal{O}_{2}  &:= \Big\{\Omega_{44} \cap \Delta_1: |n_2|\gg |n_3| \Big\},
	\qquad\qquad\, \widetilde{\mathcal{O}}_2 := \Big\{\Omega_{44} \cap \Delta_2: |n_2|\gg |n_3|\Big\}, \\[4pt]
	\mathcal{O}_{3}  &:= \Big\{\Omega_{44} \cap \Delta_1: |n_3|\gg |n_2|\Big\},
	\qquad\qquad\, \widetilde{\mathcal{O}}_3 := \Big\{\Omega_{44} \cap \Delta_2: |n_3|\gg |n_2|\Big\}.
\end{align*}
Since $\widetilde{\mathcal{O}}_{1}$, $\widetilde{\mathcal{O}}_{2}$ and
$\widetilde{\mathcal{O}}_3$ are analogous to
$\mathcal{O}_{1}$, $\mathcal{O}_{2}$ and $\mathcal{O}_{3}$, respectively, it suffices to consider $\mathcal{O}_{0}$, $\mathcal{O}_{1}$, $\mathcal{O}_{2}$ and $\mathcal{O}_{3}$ in the sequel.

\begin{itemize}
	\item \textbf{Estimate in $\mathcal{O}_0$.} 
	This region does not have resonance as $|n_4 + c_j n_1| > \kappa_\alpha/\lambda$ for $j=1,2$. Meanwhile,
	\[
		|n_4 + c_1 n_1| + |n_4 + c_2 n_1| \geq |c_2 - c_1| |n_1| \sim N_1.
	\]
	As a result, 
	\[
		 |R(n_1,n_{23},n_4)| = |R_{n_1}(n_4)| = 3\alpha |n_1||n_4+c_1n_1||n_4+c_2n_1| \gtrsim \frac{N_1^2}{\lambda}.
	\]
    Combining the above estimate with (\ref{M44}), we obtain 
    \begin{equation}\label{0929-1}
        \mathcal{M} \lesssim \frac{\lambda}{\langle N_2\rangle \langle N_3\rangle m(N_2) m(N_3)}.
    \end{equation}
    Since $N_2 + N_3 \geq N_{23} \gtrsim N$, at least one of $N_2$ or $N_3$ is at least comparable to $N$. Thus, it follows from (\ref{0929-1}) that 
	\begin{equation}\label{C1need3}
		\mathcal{M}\lesssim  \lambda N^{-1} \quad\text{on}\quad \mathcal{O}_0.
	\end{equation}
	Again, based on the definition of $D_1$ and the fact that $s_\alpha\geq \frac12$, we have
	\[
	\frac{D_1(\lambda, N, \epsilon)}{\lambda N^{-1}} = N^{-1+2s_\alpha+\epsilon} \lambda^{2s_\alpha+\epsilon - 1} \geq 1.
	\]
	So (\ref{MultiTrans}) is proved with $A = \mathcal{O}_0$ because of (\ref{C1need3}).

	\item \textbf{Estimate in $\mathcal{O}_1$.} 
	In this region, 
	\[
		|n_4 + c_2 n_1| \geq |c_2-c_1| |n_1| - |n_4 + c_1 n_1| \geq \frac{\rho_\alpha}{3} N_1 - \frac{\kappa_\alpha}{\lambda} \gtrsim N_1,
	\]
	where we used the fact that $\rho_\alpha > 0$, i.e. $\alpha\neq 4$, due to the assumption $\alpha\in\mathcal{A}$. On the other hand, 
	\[
		|n_4 + c_1 n_1| = N_1 \Big| c_1 + \frac{n_4}{n_1} \Big| = N_1 \Big| c_1 - \frac{-\lambda n_4}{\lambda n_1} \Big|,
	\]
	where both $\lambda n_4$ and $\lambda n_1$ are integers since $n_1, n_4\in\mathbb{Z}_\lambda$.
	Based on Definition \ref{Def,mbti}, for any $\epsilon > 0$, there exists some constant $K>0$, depending only on $c_1$ and $\epsilon$, such that 
	\[
		\Big| c_1 - \frac{-\lambda n_4}{\lambda n_1} \Big| \gtrsim \frac{K}{(\lambda N_1)^{2+\nu(c_1)+\epsilon}} =  \frac{K}{(\lambda N_1)^{2+\nu(\rho_\alpha)+\epsilon}},
	\]
	where we used the fact that $\nu(c_1) = \nu(\rho_\alpha)$ in the last equality. The combination of the above three relations yields 
	\[
		|n_4 + c_2 n_1|  |n_4 + c_1 n_1| \gtrsim \lambda^{-2-\nu(\rho_\alpha)-\epsilon} N_1^{-\nu(\rho_\alpha)-\epsilon}.
	\]
	So 
	\[
		|R(n_1,n_{23},n_4)| = |R_{n_1}(n_4)| = 3\alpha |n_1||n_4 + c_2 n_1| |n_4 + c_1 n_1|
		\gtrsim \lambda^{-2-\nu(\rho_\alpha)-\epsilon} N_1^{1-\nu(\rho_\alpha)-\epsilon}.
	\]
	Since $s_{\alpha} = \frac{1+\nu(\rho_\alpha)}{2}$, this yields
	\begin{equation}\label{resonance_R}
		|R(n_1,n_{23},n_4)| \gtrsim \lambda^{-1-2s_{\alpha}-\epsilon} N_1^{2-2s_{\alpha}-\epsilon}.
	\end{equation}
	Plugging (\ref{resonance_R}) into (\ref{M44}) gives 
	\begin{equation}\label{0929-2}
		\mathcal{M} \lesssim \frac{\lambda^{1+2s_{\alpha}+\epsilon} N_1^{2s_\alpha+\epsilon}}{\langle N_2\rangle \langle N_3\rangle m(N_2) m(N_3)}.
	\end{equation}
	Since $N_1 \sim N_{23}\gtrsim N$ and $N_2\sim N_3$, it holds that $N_2 \gtrsim N_1$ and $N_3 \gtrsim N_1$. Consequently, 
	\[
		\langle N_2\rangle \langle N_3\rangle m(N_2) m(N_3) \sim N_2^2 m^2(N_2) \gtrsim N_1^2 m^2(N_1) \sim N^{2-2s} N_1^{2s}.	
	\]
	Then we infer from (\ref{0929-2}) that 
	\begin{equation}\label{0929-3}
		\mathcal{M} \lesssim \frac{\lambda^{1+2s_{\alpha}+\epsilon}}{N^{2-2s} N_1^{2s-2s_\alpha-\epsilon}}.
	\end{equation}
	Thanks to the assumption $0<\epsilon < 2s-2s_\alpha$ in Lemma \ref{M41Estimate}, we know $N_1^{2s-2s_\alpha-\epsilon} \gtrsim N^{2s-2s_\alpha-\epsilon}$. Hence, it follows from (\ref{0929-3}) that 
	\begin{equation}\label{C1need4}
		\mathcal{M} \lesssim \frac{\lambda^{1+2s_{\alpha}+\epsilon}}{ N^{2-2s_\alpha-\epsilon}} = \lambda D_1(\lambda, N, \epsilon) \quad\text{on}\quad \mathcal{O}_1,
	\end{equation}
which justifies the pointwise bound (\ref{MultiTrans_weak}) with $B=\mathcal{O}_1 \subset \Omega_0\cap \Delta$.

\item \textbf{Estimate in $\mathcal{O}_2$.} 
Firstly, it follows from \eqref{M44} and \eqref{resonance_R} that
\begin{equation}\label{MO2}
	\mathcal{M}\lesssim \frac{N_1^2}{\lambda^{-2s_{\alpha}-1-\epsilon} N_1^{2-2s_{\alpha}-\epsilon} \cdot \langle N_2\rangle\, m(N_2)}\lesssim \frac{\lambda^{1+2s_{\alpha}+\epsilon} N_1^{2s_\alpha+\epsilon}}{\langle N_2 \rangle\, m(N_2)}.
\end{equation}
Then in the present case, it is readily seen that $N_2 \sim N_{23}\gtrsim N$ and $N_1\sim N_{23}\sim N_2$. As a result, 
we infer from (\ref{MO2}) that 
\begin{equation}\label{1004-1}
	\mathcal{M}\lesssim \frac{\lambda^{1+2s_{\alpha}+\epsilon} N_1^{2s_\alpha+\epsilon}}{N_1\, m(N_1)} \lesssim 
	\frac{\lambda^{1+2s_{\alpha}+\epsilon} N_1^{2s_\alpha+\epsilon-s}}{N^{1-s}}. 
\end{equation}

Since $n_{23} = - n_{14}$, $H_\alpha$ in (\ref{eq:quartic-resonance-H}) can be rewritten as
\begin{equation}\label{HalphaExp}
	\begin{aligned}
		H_\alpha(n_1,n_2,n_3,n_4) &= (n_1^3+\alpha n_4^3 +\alpha n_{23}^3) +( n_2^3 + \alpha n_3^3+\alpha n_{14}^3)\\
		& = R_{n_1}(n_4) + R_{n_2}(n_3).
	\end{aligned}
\end{equation}
Moreover, since $|n_4 + c_1 n_1| \leq \kappa_\alpha/\lambda$, it follows from the decomposition (\ref{Rp_def}) that 
\begin{equation}\label{1004-4}
	|R_{n_1}(n_4)| = 3\alpha | n_1(n_4 + c_1 n_1)(n_4 + c_2 n_1)| \lesssim \frac{\kappa_\alpha N_1^2}{\lambda},
\end{equation}
and
\[
	|R_{n_2}(n_3)| = |-3\alpha n_2(n_3+c_1n_2)(n_3+c_2n_2)|\gtrsim N_2^3.
\]
Consequently,
\begin{equation}\label{HO2}
	|H_\alpha|\gtrsim |R_{n_2}(n_3)|-|R_{n_1}(n_4)|\gtrsim N_2^3 -\frac{ \kappa_\alpha N_1^2}{\lambda} \gtrsim N_1^3.
\end{equation}
Based on (\ref{1004-1}), (\ref{HO2}) and the fact that $N_1\gtrsim N$, we find
\[
	\frac{\mathcal{M}}{\lambda \langle H_\alpha\rangle^{1/2}} \lesssim \frac{\lambda^{1+2s_\alpha+\epsilon} N_1^{2s_\alpha+\epsilon-s}}{N^{1-s}\lambda N_1^{3/2}} 
	\lesssim \frac{\lambda^{2s_\alpha+\epsilon}}{N^{\frac52 - 2s_\alpha - \epsilon}} \lesssim D_1(\lambda, N, \epsilon).
\]
Hence, we proved the pointwise bound (\ref{MultiTrans_weak}) with $B=\mathcal{O}_2 \subset \Omega_0\cap \Delta$.

\item \textbf{Estimate in $\mathcal{O}_3$.} 
According to (\ref{M44}) and \eqref{resonance_R}, we obtain 
\begin{equation}\label{1004-2}
	\mathcal{M} \lesssim \frac{N_1^2}{\lambda^{-1-2s_\alpha-\epsilon} N_1^{2-2s_\alpha-\epsilon} \langle N_3\rangle m(N_3)} \lesssim \frac{\lambda^{1+2s_\alpha+\epsilon} N_1^{2s_\alpha+\epsilon}}{\langle N_3\rangle m(N_3)}.
\end{equation}
In $\mathcal{O}_3$, we have $N_3\gtrsim N_{23}\gtrsim N$ and 
$N_1\sim N_{23} \sim N_3 \gtrsim N$. Then it follows from (\ref{1004-2}) that 
\begin{equation}\label{1004-3}
	\mathcal{M} \lesssim \frac{\lambda^{1+2s_\alpha+\epsilon} N_1^{2s_\alpha+\epsilon}}{\langle N_1\rangle m(N_1)} \lesssim \frac{\lambda^{1+2s_\alpha+\epsilon} N_1^{2s_\alpha+\epsilon-s}}{N^{1-s}}.
\end{equation}
Similar to (\ref{HalphaExp}) and (\ref{1004-4}), we know
\[H_\alpha(n_1,n_2,n_3,n_4) = R_{n_1}(n_4) + R_{n_2}(n_3),\]
where $|R_{n_1}(n_4)| \lesssim \frac{N_1^2}{\lambda}$. Thus, it remains to estimate $R_{n_2}(n_3)$. 

Since $u^\lambda$ has zero mean, we may restrict to $n_2\ne0$.
The exclusion of $\Omega_*$ then implies $N_2>1$ on $O_3$. As a result,
\begin{equation}\label{1004-5}
	|R_{n_2}(n_3)| = |-3\alpha n_2(n_3+c_1n_2)(n_3+c_2n_2)|\gtrsim N_3^2.
\end{equation}
Consequently, $|H_\alpha|$ admits the lower bound
\begin{equation}\label{HO3}
	|H_\alpha| \geq |R_{n_2}(n_3)|-|R_{n_1}(n_4)|\gtrsim N_3^2 -\frac{ N_1^2}{\lambda} \gtrsim N_1^2,
\end{equation}
where the last inequality is due to $N_1\sim N_3$ and $\lambda \gg 1$.
Based on (\ref{1004-3}) and (\ref{HO3}), we conclude that 
\[
	\frac{\mathcal{M}}{\lambda \langle H_\alpha\rangle^{1/2}} \lesssim \frac{\lambda^{2s_\alpha+\epsilon}}{N^{1-s} N_1^{1+s-2s_\alpha-\epsilon}}
	\lesssim \frac{\lambda^{2s_\alpha+\epsilon}}{N^{2 - 2s_\alpha - \epsilon}} = D_1(\lambda, N, \epsilon),
\]
where we used the fact that $1+s-2s_\alpha-\epsilon > 0$ and $N_1\gtrsim N$.
Hence, the pointwise bound (\ref{MultiTrans_weak}) is verified with $B=\mathcal{O}_3 \subset \Omega_0\cap \Delta$.
\end{itemize}

Combining all the above cases together finishes the proof of Lemma \ref{M41Estimate}.

\subsubsection{Necessity of the partial correction}
\begin{remark}\label{remark:O4} 
If we do not perform the partial correction and only consider the second modified energy $E^{(2)}$, then we need to deal with the region $\Omega_*$ which can be decomposed as $\Omega_* = \mathcal{O}_4 \cup \widetilde{\mathcal{O}}_4$, where 
\[
	\mathcal{O}_{4} := \Big\{\Delta_1: N_1 \gtrsim N, \, \frac{1}{\lambda} \leq N_2 \leq 1 \Big\},
	\qquad
	\widetilde{\mathcal{O}}_4 := \Big\{\Delta_2: N_1 \gtrsim N, \, \frac{1}{\lambda} \leq N_2 \leq 1\Big\},
\]
and $\Delta_1$ and $\Delta_2$ are as defined in (\ref{Delta_j}).
Below we only discuss
$\mathcal{O}_4$, since the estimate for $\widetilde{\mathcal{O}}_4$ is analogous.
In $\mathcal{O}_4$, we have $N_1\sim N_3\sim N_4 \gtrsim N$. Then it follows from \eqref{M44} and \eqref{resonance_R} that 
\begin{equation}\label{MO4}
	\mathcal{M} \lesssim \frac{\lambda^{1 + 2 s_\alpha + \epsilon} N_1^{2s_\alpha+\epsilon} }{N_3 m(N_3)} \lesssim \frac{\lambda^{1 + 2 s_\alpha + \epsilon} N_1^{2s_\alpha+\epsilon-s}}{N^{1-s}}.
\end{equation}
Since $\frac{1}{\lambda} \leq N_2 \leq 1$, the previous estimate (\ref{1004-5}) needs to be modified as 
$|R_{n_2}(n_3)| \gtrsim N_3^2/\lambda$.
Meanwhile, it follows from (\ref{1004-4}) that 
$|R_{n_1}(n_4)| \lesssim \kappa_\alpha N_1^2/\lambda$. Then according to Lemma \ref{lemma:sigma4} and the fact that $\mathcal{O}_4\subset \Omega_*$, we have
\begin{equation}\label{HO4}
	|H_\alpha(n_1,n_2,n_3,n_4)| \geq \frac{\alpha N_3^2}{4\lambda} \gtrsim \frac{N_1^2}{\lambda}.
\end{equation}
Based on (\ref{MO4}) and (\ref{HO4}), we obtain
\[
	\frac{\mathcal{M}}{\lambda \langle H_\alpha\rangle^{1/2}} \lesssim \frac{\lambda^{\frac12+2s_\alpha+\epsilon}}{N^{1-s} N_1^{1+s-2s_\alpha-\epsilon}}
	\lesssim \frac{\lambda^{\frac12+2s_\alpha+\epsilon}}{N^{2 - 2s_\alpha - \epsilon}} = \lambda^{\frac12} D_1(\lambda, N, \epsilon).
\]
Then the estimate (\ref{constant51}) in Lemma \ref{M41Estimate} can be modified as 
\begin{equation}\label{1004-7}
	\left| \int_0^1 \Lambda_4(M_{41}; u^\lambda, u^\lambda, v^\lambda, v^\lambda) dt \right| \le C_{10}\,\widetilde{D_1}(\lambda,N,\epsilon) \|Iu^\lambda\|_{X^{1,\frac12}_{\lambda}}^2 \|Iv^\lambda\|_{X^{1,\frac12}_{\alpha,\lambda}}^2,
\end{equation}
where 
\begin{equation}\label{1004-6}
	\widetilde{D_1}(\lambda,N,\epsilon) := \lambda^{\frac12} D_1(\lambda, N, \epsilon).
\end{equation}
Comparing with $D_1(\lambda,N,\epsilon)$, $\widetilde{D_1}(\lambda,N,\epsilon)$ has an additional factor $\lambda^{\frac12}$.

Using the second modified energy $E^{(2)}$, the global iteration requires
\[
\lambda^3 T^* \widetilde D_1(\lambda,N,\epsilon)
\bigl(\varepsilon^I_{\lambda,1}\bigr)^2 \ll 1.
\]
Choosing $\lambda$ as in \eqref{RelationofLambdaN} and taking $\delta=\epsilon/4$,
the local smallness condition gives
$\varepsilon^I_{\lambda,1}\lesssim\lambda^{-(s_\alpha+\delta)}$.
Thus, a sufficient condition for the iteration is
\[
T^*\lambda^{\frac72+\frac{\epsilon}{2}}
N^{-2+2s_\alpha+\epsilon}\ll 1.
\]
After substituting the choice \eqref{RelationofLambdaN} of $\lambda$, the exponent of $N$ is
\[
F(\epsilon)
:=
\left(\frac72+\frac{\epsilon}{2}\right)
\frac{2-2s}{3+2s-2s_\alpha-\epsilon}
-2+2s_\alpha+\epsilon.
\]
Since
\[
F(0)
=
\frac{-4s_\alpha^2+10s_\alpha+1-(11-4s_\alpha)s}
{3+2s-2s_\alpha},
\]
we have $F(0)<0$ whenever
\begin{equation}\label{eq:ss0}
	s >  \frac{-4s_{\alpha}^2+10s_{\alpha}+1}{11-4s_{\alpha}}.
\end{equation}
Under this strict inequality, we may choose
$\epsilon\in(0,2s-2s_\alpha)$ sufficiently small so that
$F(\epsilon)<0$. Taking $N$ sufficiently large then closes the
global iteration. Therefore, without the partial correction,
these estimates yield global well-posedness of the Majda-Biello systems (\ref{GeneralMajdaBiello}) for $s$ in \eqref{eq:ss0}.
\end{remark}

After establishing the desired result for the integral involving the multiplier $M_{41}(1-\chi_{\Omega_*})$ in Lemma \ref{M41Estimate}, we turn to treat the integral involving the multiplier $M_{42}$ in Lemma \ref{M42Estimate}. 

\subsection{Proof of Lemma \ref{M42Estimate}}

The proof of Lemma~\ref{M42Estimate} follows the analogous decomposition as that for Lemma~\ref{M41Estimate}. We retain the notation $\Omega_{-1}$ in (\ref{Omega_neg1}) and the notations $R^{I}$ and $R$ in (\ref{DefOfRandRI}) from the previous section. So it follows from \eqref{Multiplier42} that
\begin{equation}\label{M42_comp}
	M_{42}(n_1,n_2,n_3,n_4) = \frac i2 n_{14} \frac{R^I(n_{14},n_{2},n_3)}{R(n_{14},n_{2},n_3)} , \qquad \forall\, (n_1,n_2,n_3,n_4)\in \Gamma_4^{\lambda}.
\end{equation}
Moreover, in the region $\Omega_{-1}$, we have $m(n_i)=1$ for any $i \in \{1,2,3,4\}$, which guarantees that
\begin{equation*}
		M_{42} = \frac i2 n_{14} \quad\text{on}\quad \Omega_{-1}.
\end{equation*}
By the change of variables $(n_1,n_2,n_3,n_4)\mapsto(n_2,n_1,n_4,n_3)$,  we may average the original integral with its relabeled form.
Consequently, the contribution from $\Omega_{-1}$ to
$\Lambda_4(M_{42};v^\lambda,v^\lambda,v^\lambda,v^\lambda)$ is
\[
	\int_{ \Omega_{-1}}  M_{42} \,\, \widehat v^\lambda\,\widehat v^\lambda \,\widehat v^\lambda\,\widehat v^\lambda = \frac i2\int_{ \Omega_{-1}}  n_{14} \,\, \widehat v^\lambda\,\widehat v^\lambda \,\widehat v^\lambda\,\widehat v^\lambda = \frac i4\int_{\Omega_{-1}} [n_{14}+n_{23}]\,\, \widehat v^\lambda\,\widehat v^\lambda \,\widehat v^\lambda\,\widehat v^\lambda = 0.
\]
Therefore, it remains to prove the estimate \eqref{constant52} only on  
\[
	\Omega_0' := \Gamma_4^\lambda\setminus\Omega_{-1} = \Big\{(n_1,n_2,n_3,n_4)\in \Gamma_4^\lambda: \max_i |n_i|\gtrsim N \Big\}.
\]
That is to prove 
\begin{equation}\label{1005-1}
	\left| \int_{\mathbb{R}} \Lambda_4(M_{42}\chi_{\Omega_0'}; v^\lambda, v^\lambda, v^\lambda, v^\lambda) \,dt \right|
	\le C_{11} D_1(\lambda,N,\epsilon)  \|Iv^\lambda\|_{X^{1,\frac12}_{\alpha,\lambda}}^4.
\end{equation}
Similar to the derivation from (\ref{0929-6}) to (\ref{1003-4}), we find (\ref{1005-1}) is reduced to the following estimate:
\begin{equation}\label{1005-2}
	\int_{\sum\limits_{i=1}^{4} \tau_i=0}\int_{\Omega_0'} \mathcal{M}'(n_1,n_2,n_3,n_4)
	\frac{\prod_{i=1}^4 |\widetilde f_i(n_i,\tau_i)|}
	{\prod_{i=1}^4\langle L_i\rangle^{1/2}}
	\lesssim D_1(\lambda, N,\epsilon)
	\prod_{i=1}^4\|f_i\|_{L^2(\mathbb{T}_\lambda\times \mathbb{R})},
\end{equation}
where $\widetilde{f_i}$ represents the space-time Fourier transform of $f_i$ and $L_i = \tau_i - \alpha n_i^3$ for any $i\in\{1,2,3,4\}$, and
\begin{equation}\label{M_prime}
	\mathcal{M}'(n_1,n_2,n_3,n_4) := \frac{|M_{42}(n_1,n_2,n_3,n_4)|}{\prod_{i=1}^{4} \langle n_i\rangle m(n_i)}.
\end{equation}
Without loss of generality, we may assume all $\{\widetilde{f}_i\}_{i=1}^{4}$ are nonnegative functions. Next, we will prove (\ref{1005-2}) by decomposing $\Omega_0'$ further.

Comparing the expression (\ref{M42_comp}) for $M_{42}$ with the expression (\ref{M41_comp}) for $M_{41}$, the denominator for $M_{42}$ is $R(n_{14}, n_2, n_3)$ while that for $M_{41}$ is $R(n_1, n_{23}, n_{4})$. Since the decomposition in the proof of Lemma \ref{M41Estimate} is based on $\{N_1, N_{23}, N_4\}$, we will split the region $\Omega_0'$ based on $\{N_{14}, N_2, N_3\}$. Specifically, we partition $\Omega_0'$ into four regions:
\begin{align*}
	\Omega'_1 &= \{ \Omega_0': |n_{2}| \ll N \}, 
	& \Omega'_2 &= \{ \Omega_0': |n_{2}| \gtrsim N,\ |n_{14}| \gtrsim N \gg |n_3| \},  \\
	\Omega'_3 &= \{ \Omega_0': |n_{2}| \gtrsim N,\ |n_3| \gtrsim N \gg |n_{14}| \}, 
	& \Omega'_4 &= \{ \Omega_0': |n_{14}|,\ |n_2|,\ |n_{3}| \gtrsim N \}.
\end{align*}
The treatment for the regions $\Omega_1'$, $\Omega_2'$ and $\Omega_3'$ is similar to that for the regions $\Omega_1$, $\Omega_2$ and $\Omega_3$ in Section \ref{Subsubsec, M41_high-low}, so we omit the details and the remaining region is $\Omega_4'$. Again, we partition $\Omega_4'$ into four subregions according to the size of $\{N_{14}, N_2, N_3\}$:
\begin{align*}
	\Omega_{41}' &= \{ \Omega_4': |n_{14}| \sim |n_{2}| \gg |n_3| \gtrsim N \}, &
	\Omega_{42}' &= \{ \Omega_4': |n_{14}| \sim |n_3| \gg |n_{2}| \gtrsim N \},\\
	\Omega_{43}' &= \{ \Omega_4': |n_3| \sim |n_{2}| \gg |n_{14}| \gtrsim N \}, &
	\Omega_{44}' &= \{ \Omega_4': |n_{14}| \sim |n_2| \sim |n_{3}| \gtrsim N \}.
\end{align*}
We only consider the region $\Omega_{44}'$, since the remaining three regions $\Omega_{41}'$, $\Omega_{42}'$, and $\Omega_{43}'$ can be treated in a similar manner as that for the 
regions $\Omega_{41}$, $\Omega_{42}$ and $\Omega_{43}$ in Section \ref{Subsubsec, est_hh_freq}.

\textbf{Estimate in $\Omega_{44}'$.}
In this region, we have $N_{14}\sim N_2\sim N_3$, so
\begin{equation}\label{BoundofRI'}
	|R^I(n_{14},n_{2},n_3)| \lesssim N_{14}^3 m^2(N_{14}).
\end{equation}
Then the combination of (\ref{M_prime}), (\ref{M42_comp}) and (\ref{BoundofRI'}) yields 
\begin{equation}\label{M44'}
	\mathcal{M}'(n_1,n_2,n_3,n_4)\lesssim \frac{N_{14}^2}{\big|R(n_{14},n_{2},n_3)\big| \langle N_1\rangle \langle N_4\rangle m(N_1) m(N_4)}.
\end{equation}
From (\ref{M44'}), we see that the estimate on $\Omega_{44}'$ is challenging since it involves the resonant set $\Delta'$ (see  (\ref{res_set})) on which the denominator $|R(n_{14}, n_2, n_3)|$ in (\ref{M44'}) can be small. By splitting $\Delta'$ into two subsets: $\Delta' = \Delta_1' \cup \Delta_2'$, where 
\begin{align*}
	\Delta_1' &:= \bigl\{ (n_1,n_2,n_3,n_4)\in\Gamma_4^\lambda: |n_{2} + c_1 n_{14}| \leq \tfrac{\kappa_\alpha}{\lambda} \bigr\}, &
	\Delta_2' &:= \bigl\{ (n_1,n_2,n_3,n_4)\in\Gamma_4^\lambda: |n_2 + c_2 n_{14}| \leq \tfrac{\kappa_\alpha}{\lambda} \bigr\},
\end{align*}
we further decompose $\Omega_{44}'$ into seven subregions $\{\mathcal{O}_0', \mathcal{O}_1', \mathcal{O}_2', \mathcal{O}_3', \widetilde{\mathcal{O}}_1', \widetilde{\mathcal{O}}_2', \widetilde{\mathcal{O}}_3'\}$, where 
$\mathcal{O}_0' := \Omega_{44}' \setminus \Delta'$,
\begin{align*}
	\mathcal{O}_{1}' &:= \Big\{\Omega_{44}' \cap \Delta_1': |n_1|\sim |n_4| \Big\},  
	\qquad \qquad\,\,\, \widetilde{\mathcal{O}}_1' := \Big\{\Omega_{44}' \cap \Delta_2': |n_1|\sim |n_4|\Big\}, \\[4pt]
	\mathcal{O}_{2}'  &:= \Big\{\Omega_{44}' \cap \Delta_1': |n_1|\gg |n_4| \Big\},
	\qquad\qquad\, \widetilde{\mathcal{O}}_2' := \Big\{\Omega_{44}' \cap \Delta_2': |n_1|\gg |n_4|\Big\}, \\[4pt]
	\mathcal{O}_{3}' &:= \Big\{\Omega_{44}' \cap \Delta_1': |n_4|\gg |n_1|\Big\},
	\qquad\qquad\, \widetilde{\mathcal{O}}_3' := \Big\{\Omega_{44}' \cap \Delta_2': |n_4|\gg |n_1|\Big\}.
\end{align*}
Since $\widetilde{\mathcal{O}}_{1}'$, $\widetilde{\mathcal{O}}_{2}'$ and
$\widetilde{\mathcal{O}}_3'$ are analogous to
$\mathcal{O}_{1}'$, $\mathcal{O}_{2}'$ and $\mathcal{O}_{3}'$, respectively, it suffices to consider $\mathcal{O}_{0}'$, $\mathcal{O}_{1}'$, $\mathcal{O}_{2}'$ and $\mathcal{O}_{3}'$ in the sequel. The treatment for the first two regions is analogous to that for the regions $\mathcal{O}_0$ and $\mathcal{O}_1$ in Section \ref{Subsubsec, est_hh_freq}. Moreover, the variables $n_1$ and $n_4$ are symmetric, so the proof for the region $\mathcal{O}_2'$ is parallel to that for the region $\mathcal{O}_3'$. Hence, it remains to investigate the estimate on the region $\mathcal{O}_3'$.

\textbf{Estimate in $\mathcal{O}_3'$.}
Recalling the proof for the region $\mathcal{O}_3$ in Section \ref{Subsubsec, est_hh_freq}, the property that $N_2\geq 1$ played an essential role, and this property was due to the exclusion of $\Omega_*$. But in the current situation, we didn't exclude $\Omega_*$, so the proof might be quite different. The key advantage in the current case is the consistent dispersion coefficient $\alpha$ for the resonance function $H_\alpha'$ in (\ref{eq:quartic-resonance-Hprime}), which makes $H_\alpha'$ much simpler than $H_\alpha$ which is associated with different dispersion coefficients $1$ and $\alpha$ in (\ref{eq:quartic-resonance-H}). By restricting the estimate (\ref{1005-2}) to the region $\mathcal{O}_3'$, it reduces to proving 
\begin{equation}\label{1006-1}
	\int_{\sum\limits_{i=1}^{4} \tau_i=0}\int_{\mathcal{O}_3'} \mathcal{M}'(n_1,n_2,n_3,n_4)
	\frac{\prod_{i=1}^4\widetilde f_i(n_i,\tau_i)}
	{\prod_{i=1}^4\langle L_i\rangle^{1/2}}
	\lesssim D_1(\lambda, N,\epsilon)
	\prod_{i=1}^4\|f_i\|_{L^2(\mathbb{T}_\lambda\times \mathbb{R})}.
\end{equation}
Since $\mathcal{O}_3'\subset \Delta'$, we will take advantage of the estimate (\ref{eq:resonant-quadrilinear-42}) in Lemma \ref{lem:resonant-quadrilinear} to prove (\ref{1006-1}). More precisely, if we can prove that 
\begin{equation}\label{1006-2}
	\mathcal{M}' \lesssim \lambda\langle H_\alpha'\rangle^{\frac12} D_1(\lambda,N,\epsilon) \quad\text{on}\quad \mathcal{O}_3' \subset \Delta',
\end{equation}
then the combination of (\ref{1006-2}) and (\ref{eq:resonant-quadrilinear-42}) yields 
\[\begin{split}
	\text{LHS of (\ref{1006-1})} &\lesssim \int_{\sum\limits_{i=1}^{4} \tau_i=0}\int_{\mathcal{O}_3'} \lambda\langle H_\alpha'\rangle^{\frac12} D_1(\lambda,N,\epsilon) 
	\frac{\prod_{i=1}^4\widetilde f_i(n_i,\tau_i)}
	{\prod_{i=1}^4\langle L_i\rangle^{1/2}} \\
	& \leq \lambda D_1(\lambda,N,\epsilon) \int_{\sum\limits_{i=1}^{4} \tau_i=0}\int_{\Delta'} \langle H_\alpha'\rangle^{\frac12} \frac{\prod_{i=1}^4\widetilde f_i(n_i,\tau_i)}
	{\prod_{i=1}^4\langle L_i\rangle^{1/2}} \\
	&\lesssim D_1(\lambda, N,\epsilon)
	\prod_{i=1}^4\|f_i\|_{L^2(\mathbb{T}_\lambda\times \mathbb{R})},
\end{split}\]
which justifies (\ref{1006-1}). Hence, it boils down to proving (\ref{1006-2}).

According to (\ref{eq:quartic-resonance-Hprime}) and the fact that $n_{14} = -n_{23}$, we have  
\begin{equation}\label{1007-1}
	H_\alpha' = \alpha(n_1^3+n_2^3+n_3^3+n_4^3) 
	= \underbrace{(n_{14}^3 + \alpha n_2^3 + \alpha n_3^3)}_{R_{n_{14}}(n_2) } + \underbrace{(n_{23}^3 + \alpha n_1^3 + \alpha n_4^3)}_{R_{n_{23}}(n_1)},
\end{equation}
where we adopt the definition (\ref{Rp_def}) for $R_{p}(q)$. According to the factorization in (\ref{Rp_def}), we have 
\begin{align*}
	R_{n_{14}}(n_2) &= -3\alpha n_{14}(n_2+c_1 n_{14})(n_2+c_2 n_{14}), \\
	R_{n_{23}}(n_1) &= -3\alpha n_{23}(n_1+c_1 n_{23})(n_1+c_2 n_{23}).
\end{align*}
Since $|n_2 + c_1 n_{14}| \leq \kappa_\alpha/\lambda$ and $|n_{14}|\sim |n_2|\sim |n_3|$ on $\mathcal{O}_3'$, $|R_{n_{14}}(n_2)| \lesssim N_{14}^2/\lambda$. In addition, by plugging $n_{23} = -n_1-n_4$, we find
\[
	R_{n_{23}}(n_1) = 3\alpha n_{14} (c_2 n_1 - c_1 n_4)(c_1 n_1 - c_2 n_4).
\]
As $|n_4| \gg |n_1|$ on $\mathcal{O}_3'$, we obtain 
\[
	|R_{n_{23}}(n_1)| \sim N_4^3 \gtrsim N_{14}^3.
\]
Thus, it follows from (\ref{1007-1}) that 
\begin{equation}\label{1007-2}
	|H_\alpha'| \geq |R_{n_{23}}(n_1)| - |R_{n_{14}}(n_2)| \gtrsim N_{14}^3 - \frac{N_{14}^2}{\lambda}\gtrsim N_{14}^3.
\end{equation}
On the other hand, similar to the derivation of (\ref{resonance_R}), we obtain 
\begin{equation}\label{1007-3}
	|R(n_{14}, n_{2},n_{3})| \gtrsim \lambda^{-1-2s_{\alpha}-\epsilon} N_{14}^{2-2s_{\alpha}-\epsilon}.
\end{equation}
Plugging (\ref{1007-3}) into (\ref{M44'}) yields 
\[
	\mathcal{M}'(n_1, n_2, n_3, n_4) \lesssim \frac{\lambda^{1+2s_{\alpha}+\epsilon} N_{14}^{2s_\alpha+\epsilon}}{\langle N_1\rangle\langle N_4\rangle m(N_1) m(N_4)}.
\]
Since $N_4\sim N_{14} \gtrsim N$ on $\mathcal{O}_{3}'$, we have 
\[
	\langle N_1\rangle\langle N_4\rangle m(N_1) m(N_4) \gtrsim \langle N_4\rangle m(N_4) \sim N^{1-s} N_{14}^{s}.
\]
Thus, 
\begin{equation}\label{1007-4}
	\mathcal{M}'(n_1, n_2, n_3, n_4) \lesssim \frac{\lambda^{1+2s_{\alpha}+\epsilon} N_{14}^{2s_\alpha+\epsilon-s}}{N^{1-s}}.
\end{equation}
Based on (\ref{1007-2}) and (\ref{1007-4}), we conclude that 
\[\begin{split}
	\frac{\mathcal{M}'}{\lambda \langle H_\alpha'\rangle^{1/2}} 
	&\lesssim \frac{\lambda^{2s_{\alpha}+\epsilon}}{N^{1-s} N_{14}^{\frac32+s-2s_\alpha-\epsilon}} \lesssim \frac{\lambda^{2s_{\alpha}+\epsilon}}{ N^{\frac52-2s_\alpha-\epsilon}} \lesssim D_1(\lambda, N, \epsilon),
\end{split}\]
where we used the facts that $\frac32+s-2s_\alpha-\epsilon > 0$ and $N_{14} \gtrsim N$.

Thus, we finish the proof of Lemma \ref{M42Estimate}.

\section{Quintilinear estimates}\label{Sec, quint-est}

\subsection{Setup}\label{Subsec, setup}

The goal of this section is to justify the quintilinear estimates in Lemma \ref{lem:M5j-estimates}.
Recalling that the bad region $\Omega_*$ is defined as (see \eqref{eq:bad-region-def}):
\[
\Omega_{*} = \left\{ (n_1, n_2, n_3, n_4) \in \Gamma_4^\lambda : |n_1| \gtrsim N,\, 0 < |n_2| \le 1,\, |n_4 + c_1 n_1| \le \frac{\kappa_\alpha}{\lambda} \text{ or } |n_4 + c_2 n_1| \le \frac{\kappa_\alpha}{\lambda} \right\}.
\]
Throughout this section, we consider only the part of $\Omega_*$ determined by
\begin{equation}\label{one_compo}
	|n_4+c_1 n_1|\leq\frac{\kappa_\alpha}{\lambda}.
\end{equation}
The part of $\Omega_*$ determined by
\[
|n_4+c_2n_1|\leq\frac{\kappa_\alpha}{\lambda}
\]
can be treated in the same way and therefore will be omitted.

On the other hand, we recall the definition of the quintilinear multipliers $\{M_{5j}^{P}\}_{j=1}^{4}$ in \eqref{M5jMultipliers}. Since fixed constant coefficients do not affect the estimates below, we omit them throughout the remainder of the proof. With this convention, \eqref{M5jMultipliers} reads as
\begin{equation}\label{eq:M5j-recall}
	\begin{aligned}
		&M_{51}^P(n_1,\ldots,n_5)
		=n_{15}\sigma_4^P(n_{15},n_2,n_3,n_4),\quad
		M_{52}^P(n_1,\ldots,n_5)
		=n_{25}\sigma_4^P(n_1,n_{25},n_3,n_4),\\
		&M_{53}^P(n_1,\ldots,n_5)
		=n_{35}\sigma_4^P(n_1,n_2,n_{35},n_4),\quad
		M_{54}^P(n_1,\ldots,n_5)
		=n_{45}\sigma_4^P(n_1,n_2,n_3,n_{45}).
	\end{aligned}
\end{equation}
By \eqref{eq:sigma4P} and \eqref{M41_comp}, the quartic multiplier $\sigma_4^P$ in
\eqref{eq:M5j-recall} is given by
\begin{equation*}
	\sigma_4^P(n_1,n_2,n_3,n_4)
	=n_{23}
	\frac{R^I(n_1,n_{23},n_4)}
	{R(n_1,n_{23},n_4)}
	\frac{\chi_{\Omega_*}(n_1,n_2,n_3,n_4)}
	{H_\alpha(n_1,n_2,n_3,n_4)},
\end{equation*}
where $R(x,y,z)$ and $R^I(x,y,z)$ are as defined in \eqref{DefOfRandRI},
$\chi_{\Omega_*}$ is the characteristic function of the bad region $\Omega_{*}$,
and $H_\alpha$ is defined in \eqref{eq:quartic-resonance-H}, i.e.
\[
H_\alpha(n_1,n_2,n_3,n_4)
:=n_1^3+n_2^3+\alpha n_3^3+\alpha n_4^3.
\]

Then by Lemma \ref{lemma:sigma4}, on the support of $\sigma_4^P(n_1,n_2,n_3,n_4)$, it holds that
\begin{equation}\label{eq:inherited-frequency-relations}
	N_1\sim N_3\sim N_4\sim N_{23}\gtrsim N,
	\qquad
	\lambda^{-1}\leq|n_2|\le1.
\end{equation}
The multiplier also satisfies
\begin{equation}\label{eq:sigma4j-est}
	|\sigma_4^P(n_1,n_2,n_3,n_4)|
	\lesssim
	\lambda^{2+2s_{\alpha}+\epsilon}
	N_1^{2s_{\alpha}+\epsilon}m^2(n_1)\chi_{\Omega_*}(n_1,n_2,n_3,n_4).
\end{equation}
The lower bound of $|H_\alpha|$ has been obtained in Lemma \ref{lemma:sigma4}. We shall also use an upper bound for $|H_\alpha|$ on the bad region $\Omega_{*}$. By
\eqref{HalphaExp},
\[
H_\alpha(n_1,n_2,n_3,n_4)
=R(n_1,n_{23},n_4)+R(n_{2},-n_{23},n_3).
\]
Under this restriction, the proof of Lemma~\ref{lemma:sigma4} gives
\eqref{eq:R-resonant-upper}, namely
\begin{equation}\label{Rn1n4_UpperBound}
	\left|R(n_1,n_{23},n_4)\right|
	\leq 2\alpha\rho_\alpha \kappa_\alpha \frac{|n_1|^2}{\lambda}.
\end{equation}
Moreover, since $N_1\sim N_3\gtrsim N$ and $|n_2|\le1$ on the bad region, we have
\begin{equation*}
	\left|R(n_{2},-n_{23},n_3)\right|
	\le
	3\alpha|n_2|\bigl(|n_3|+|c_1n_2|\bigr)\bigl(|n_3|+|c_2n_2|\bigr)
	\lesssim N_3^2.
\end{equation*}
It then follows that
\begin{equation}\label{eq:H-bad-upper}
	|H_\alpha(n_1,n_2,n_3,n_4)|\le |R(n_1,n_{23},n_4)|+|R(n_{2},-n_{23},n_3)|
	\lesssim N_3^2
	\quad\text{on}\quad \Omega_{*}.
\end{equation}
Meanwhile, we also recall the lower bound for $|R(n_1,n_{23},n_4)|$ in \eqref{eq:Temp2}:
\begin{equation}\label{Rn1n4_LowerBound}
|R(n_1,n_{23},n_4)|\gtrsim\lambda^{-1-2s_\alpha-\epsilon}|n_1|^{2-2s_\alpha-\epsilon} \quad\text{on}\quad \Omega_*.
\end{equation}

For $j \in \{1,2,3,4\}$, we define the dispersion vector
\begin{equation*}
	(\rho_{1,j},\rho_{2,j},\rho_{3,j},\rho_{4,j},\rho_{5,j})
	:=
	\begin{cases}
		(\alpha,1,\alpha,\alpha,\alpha), & j=1,\\
		(1,\alpha,\alpha,\alpha,\alpha), & j=2,\\
		(1,1,1,\alpha,\alpha), & j=3,\\
		(1,1,\alpha,1,\alpha), & j=4.
	\end{cases}
\end{equation*}
For $i\in\{1,2,3,4,5\}$ and $j\in\{1,2,3,4\}$, let 
\begin{equation*}
	L_{i,j}:=\tau_i-\rho_{i,j}n_i^3
	\quad \text{and} \quad 
	L_{M,j}
	:=\max_{1\leq i\leq5}\langle L_{i,j}\rangle.
\end{equation*}
Denote the frequency vector $\vec{n} = (n_1, n_2, n_3, n_4, n_5)$ and the temporal vector $\vec{\tau} = (\tau_1, \tau_2, \tau_3, \tau_4, \tau_5)$. For $j\in\{1,2,3,4\}$, we also define
\begin{equation}\label{eq:Kj-LMj-def}
	K_j(\vec{n}):=\sum_{i=1}^5\rho_{i,j}n_i^3.
\end{equation}
Then on the hyperplane 
$\{(\vec{n}, \vec{\tau}): \sum_{i=1}^5 n_i=0, \, \sum_{i=1}^5\tau_i=0\}$, 
$K_j(\vec{n})=-\sum_{i=1}^5L_{i,j}$, which implies that 
\begin{equation}\label{eq:LM-Kj}
	L_{M,j} \gtrsim \sum_{i=1}^5\langle L_{i,j}\rangle \gtrsim 
	\langle K_j(\vec{n})\rangle,
	\qquad 1\leq j\leq4.
\end{equation}

For each $j\in\{1,2,3,4\}$, let
\begin{equation}\label{Gamma_5bad}
	\Gamma_{5,j}^{\mathrm{Bad}}
	:=
	\left\{\vec{n}\in\Gamma_5^\lambda:
	(n_1,\ldots,n_{j-1},n_{j5},n_{j+1},\ldots,n_4)
	\in\Omega_*\right\}.
\end{equation}
For example, 
\[\begin{split}
	\Gamma_{5,1}^{\mathrm{Bad}}
	&=
	\left\{\vec{n}\in\Gamma_5^\lambda:
	(n_{15}, n_{2}, n_{3}, n_{4})
	\in\Omega_*\right\}, \\
	\Gamma_{5,2}^{\mathrm{Bad}}
	&=
	\left\{\vec{n}\in\Gamma_5^\lambda:
	(n_1, n_{25}, n_{3}, n_{4})
	\in\Omega_*\right\},
\end{split}\]
$\Gamma_{5,3}^{\mathrm{Bad}}$ and $\Gamma_{5,4}^{\mathrm{Bad}}$ can be understood similarly. 
Define
\begin{equation}\label{eq:Lj}
	\mathcal L_j
	:=\int_{\sum\limits_{i=1}^{5}\tau_i = 0}
	\int_{\Gamma_{5,j}^{\mathrm{Bad}}}
	\frac{
		M^P_{5j}(n_1,\cdots,n_5)
		\prod_{i=1}^{5}\widetilde f_i(n_i,\tau_i)
	}{
		\prod_{i=1}^5
		\langle n_i\rangle m(n_i)
		\langle L_{i,j}\rangle^{1/2}
	},
	\qquad\forall\, j\in\{1,2,3,4\}.
\end{equation}
By an analogous argument as the reduction from (\ref{M41_estp}) to (\ref{1003-2}), we find that the estimates \eqref{eq:M51-est}--\eqref{eq:M54-est} boil down to 
\begin{equation}\label{eq:M5j-target}
	|\mathcal L_j|
	\lesssim
	D_2(\lambda,N,\epsilon)\prod_{i=1}^{5}\|f_i\|_{L_{x,t}^{2,2}}, \quad \forall\, j\in\{1,2,3,4\}.
\end{equation}
Without loss of generality, we may assume that $\widetilde f_i(n_i,\tau_i)$ in (\ref{eq:Lj}) are all nonnegative for $1\leq i\leq5$.

\subsection{Proof of \eqref{eq:M51-est}}
\label{Subsec, quin_est1}

As in the previous discussion, the estimate (\ref{eq:M51-est}) is reduced to (\ref{eq:M5j-target}) with $j=1$, where $\mathcal{L}_{1}$ is given by (\ref{eq:Lj}). 
Recalling 
$M_{51}^P(n_1,\ldots,n_5) = n_{15} \sigma_4^P(n_{15},n_2,n_3,n_4)$ 
in (\ref{eq:M5j-recall}), it then follows from \eqref{eq:inherited-frequency-relations} that 
\begin{equation}\label{FreofRelInM51P}
	N_{15}\sim N_3\sim N_4\sim N_{23}\gtrsim N,
	\qquad \lambda^{-1}\leq|n_2|\le1.
\end{equation}
Let 
\begin{equation}\label{1010-1}
	\mathscr{M}_{51}(\vec{n}) = \frac{ |M_{51}^{P}(\vec{n})|}{\prod_{i=1}^{5} \langle n_i\rangle m(n_i)}, \qquad\forall\, \vec{n}=(n_1,n_2,n_3,n_4,n_5)\in \Gamma_{5,1}^{\text{Bad}}.
\end{equation}
Then (\ref{eq:M5j-target}) with $j=1$ boils down to 
\begin{equation}\label{1010-2}
	\int_{\sum\limits_{i=1}^{5}\tau_i = 0}
	\int_{\Gamma_{5,1}^{\mathrm{Bad}}}
	\mathscr{M}_{51}(\vec{n})
	\prod_{i=1}^{5} \frac{\widetilde f_i(n_i,\tau_i)}{\langle L_{i,1}\rangle^{1/2}}		
	\lesssim D_2(\lambda,N,\epsilon)\prod_{i=1}^{5}\|f_i\|_{L_{x,t}^{2}}.
\end{equation}
Plugging (\ref{eq:sigma4j-est}) into (\ref{eq:M5j-recall}) yields 
\begin{equation}\label{1010-3}
	|M_{51}^{P}(\vec{n})| \lesssim \lambda^{2+2s_{\alpha}+\epsilon}
	N_{15}^{1+2s_{\alpha}+\epsilon}m^2(n_{15})\chi_{\Omega_*}(n_{15},n_2,n_3,n_4).
\end{equation}
Based on the relation (\ref{FreofRelInM51P}), 
\begin{equation}\label{1010-4}
	\langle n_3\rangle\langle n_4\rangle m(n_3)m(n_4) \sim N_{15}^2 m^2(n_{15}) \quad\text{and}\quad \langle n_2\rangle m(n_2) \sim 1.
\end{equation}
Then we combine (\ref{1010-1}), (\ref{1010-3}) and (\ref{1010-4}) to obtain that for any $ \vec{n} \in \Gamma_{5,1}^{\mathrm{Bad}}$,
\begin{equation}\label{eq:L1estTemp1}
	\mathscr{M}_{51}(\vec{n})
	\lesssim 
	\frac{
		\lambda^{2+2s_{\alpha}+\epsilon}N_{15}^{-1+2s_{\alpha}+\epsilon}
	}{
		\langle n_1\rangle\langle n_5\rangle m(n_1)m(n_5)
	}.
\end{equation}
Since the relative sizes of $|n_1|$ and $|n_5|$ are not determined at this point, we will divide the proof into two cases depending on the sizes of $|n_1|$ and $|n_5|$. 

Before that, we perform a change of variable on $\Omega_*$ to take advantage of its sparsity. In fact, the frequencies $(n_1,n_2,n_3,n_4,n_5)$ in $\Gamma_{5,1}^{\mathrm{Bad}}$ satisfy $(n_{15}, n_2, n_3, n_4)\in\Omega_*$, so $|n_4 + c_1 n_{15}| \leq \frac{\kappa_\alpha}{\lambda}$ due to the assumption (\ref{one_compo}).
Setting $\theta = n_4 + c_1 n_{15}$, then
\begin{equation}\label{eq:M51-case1-parametrization}
	n_4=-c_1n_{15}+\theta,
	\qquad
	n_3=-c_2n_{15}-n_2-\theta.
\end{equation}
Meanwhile, based on the definition (\ref{Theta_def}), we have $\theta\in \Theta_{\kappa_\alpha}(n_{15};-c_1)$, where  
\[
	\Theta_{\kappa_\alpha}(n_{15};-c_1) = 	
	\left\{\theta\in\mathbb{R}:
	|\theta|\leq\frac{\kappa_\alpha}{\lambda}
	\ \text{and}\
	-c_1n_{15}+\theta\in\mathbb{Z}_\lambda\right\}.
\]
Since $\kappa_\alpha\leq \frac18$, the set $\Theta_{\kappa_\alpha}(n_{15};-c_1)$ contains at most one element. 
For convenience of notation, we will suppress the subscript and abbreviate $\Theta_{\kappa_\alpha}(n_{15};-c_1)$ as  $\Theta(n_{15};-c_1)$ in the rest of this proof. 

\textbf{Case 1.} $|n_1|\sim |n_5|$.

The condition $|n_1|\sim |n_5|$, together with
\eqref{FreofRelInM51P}, gives $N_1\sim N_5\gtrsim N_{15} \gtrsim N$.
Hence, by \eqref{eq:L1estTemp1} and the assumption $\epsilon\in (0, 2s-2s_\alpha)$, we find
\begin{equation}\label{1011-4}
	\mathscr{M}_{51}(\vec{n}) \lesssim \frac{\lambda^{2+2s_{\alpha}+\epsilon} N_{15}^{-1+2s_{\alpha}+\epsilon}}{N_1^s N_5^s \,N^{2-2s}}
	\lesssim \frac{\lambda^{2+2s_{\alpha}+\epsilon}}{N^{3-2s_\alpha-\epsilon}}.
\end{equation}
As a result, (\ref{1010-2}) is reduced to 
\[
	\frac{\lambda^{2+2s_{\alpha}+\epsilon}}{N^{3-2s_\alpha-\epsilon}} 
	\int_{\sum\limits_{i=1}^{5}\tau_i = 0}
	\int_{\Gamma_{5,1}^{\mathrm{Bad}}}
	\prod_{i=1}^{5} \frac{\widetilde f_i(n_i,\tau_i)}{\langle L_{i,1}\rangle^{1/2}}		
	\lesssim D_2(\lambda,N,\epsilon)\prod_{i=1}^{5}\|f_i\|_{L_{x,t}^{2}}.
\]
Define the functions $\{F_i\}_{i=1}^{5}$ such that their space-time Fourier transforms $\{\widetilde{F}_i\}_{i=1}^{5}$ satisfy
\begin{equation}\label{1011-1}
	\widetilde F_i(n_i,\tau_i)
	=
	\frac{\widetilde f_i(n_i,\tau_i)}{\langle L_{i,1}\rangle^{1/2}},
	\qquad 1\leq i\leq5.
\end{equation}
Then based on the formula (\ref{eq:D2-def}) for $D_2(\lambda,N,\epsilon)$, the above estimate is equivalent to 
\begin{equation}\label{1013-11}
	\int_{\sum\limits_{i=1}^{5}\tau_i = 0}
	\int_{\Gamma_{5,1}^{\mathrm{Bad}}}
	\prod_{i=1}^{5} \widetilde F_i(n_i,\tau_i)
	\lesssim \lambda^{-\frac32} N \prod_{i=1}^{5}\|f_i\|_{L_{x,t}^{2}},
\end{equation}

Taking the inverse Fourier transform in the time variable, (\ref{1013-11}) is converted to
\begin{equation}\label{1010-5}
	\int_{\mathbb{R}}
	\int_{\Gamma_{5,1}^{\mathrm{Bad}}}
	\prod_{i=1}^{5}  \widehat F_i(n_i, t)  \,dt
	\lesssim \lambda^{-\frac32} N \prod_{i=1}^{5}\|f_i\|_{L_{x,t}^{2}},
\end{equation}
where $\widehat{F}_{i}$ represents the Fourier transform of $F_i$ in the spatial variable. 

Using the variable $\theta$ in \eqref{eq:M51-case1-parametrization}, we find
\begin{equation}\label{1011-7}\begin{split}
	\text{LHS of (\ref{1010-5})} = \Big(\frac{1}{2\pi\lambda}\Big)^4 \int_{\mathbb{R}} &\sum_{n_1\in\mathbb{Z}_\lambda} \sum_{\frac{1}{\lambda}\leq |n_2|\leq 1} \sum_{n_5\in\mathbb{Z}_\lambda}  \sum_{\theta\in\Theta(n_{15};-c_1)} \bigg[\bigl| \widehat{F}_1(n_1, t) \bigr| \bigl| \widehat{F}_2(n_2, t) \bigr| \\
	& \bigl| \widehat{F}_5(n_5, t) \bigr| \bigl| \widehat{F}_3(-c_2 n_{15}-n_2-\theta, t) \bigr| \bigl| \widehat{F}_4(-c_1 n_{15}+\theta, t) \bigr|\bigg] \,dt,
\end{split}\end{equation}
By applying another change of variable $n_5\to k := n_5+n_1$, we obtain
\[\begin{split}
	\text{LHS of (\ref{1010-5})} = \Big(\frac{1}{2\pi\lambda}\Big)^4 \int_{\mathbb{R}} & \sum_{n_1\in\mathbb{Z}_\lambda} \sum_{\frac{1}{\lambda}\leq |n_2|\leq 1} \sum_{k\in\mathbb{Z}_\lambda}  \sum_{\theta\in\Theta(k;-c_1)} 
	\bigg[ \bigl| \widehat{F}_1(n_1, t) \bigr| \bigl| \widehat{F}_2(n_2, t) \bigr|\\
	& \bigl| \widehat{F}_5(k-n_1, t) \bigr| \bigl| \widehat{F}_3(-c_2 k - n_2-\theta, t) \bigr| \bigl| \widehat{F}_4(-c_1 k + \theta, t) \bigr|\bigg] \,dt.
\end{split}\]
Rearranging the order of integration yields 
\begin{equation}\label{1010-6}\begin{split}
	\text{LHS of (\ref{1010-5})} \sim \frac{1}{\lambda^4} & \int_{\mathbb{R}} 
	\sum_{\frac{1}{\lambda}\leq |n_2|\leq 1} |\widehat{F}_2(n_2, t)| \bigg(\sum_{k\in\mathbb{Z}_\lambda} \sum_{\theta\in\Theta(k;-c_1)} 
	\bigl| \widehat{F}_3(-c_2 k - n_2-\theta, t) \bigr| \bigl| \widehat{F}_4(-c_1 k + \theta, t) \bigr| \\
	& \Big[\sum_{n_1\in\mathbb{Z}_\lambda} \bigl| \widehat{F}_1(n_1, t) \bigr| \bigl| \widehat{F}_5(k-n_1, t) \bigr|\Big]\bigg) \,dt.
\end{split}\end{equation}

For each fixed $t$ and $k$, the Cauchy-Schwarz inequality gives
\[
	\frac{1}{2\pi\lambda}
	\sum_{n_1\in\mathbb Z_\lambda}
	\big|\widehat F_1(n_1, t)\big|
	\big|\widehat F_5(k-n_1, t)\big|\leq
	\|\widehat F_1(\cdot, t)\|_{\ell^2}
	\|\widehat F_5(\cdot, t)\|_{\ell^2}.
\]
Moreover, since the set $\Theta(n_{15};-c_1)$ contains at most one element, we know 
\[\begin{split}
	&\sum_{\theta\in\Theta(k;-c_1)} 
	\bigl| \widehat{F}_3(-c_2 k - n_2-\theta, t) \bigr| \bigl| \widehat{F}_4(-c_1 k + \theta, t) \bigr| \\
	=&\,\, \bigg( \sum_{\theta\in\Theta(k;-c_1)} 
	\bigl| \widehat{F}_3(-c_2 k - n_2-\theta, t) \bigr| \bigg) 
	\bigg(\sum_{\theta\in\Theta(k;-c_1)} \bigl| \widehat{F}_4(-c_1 k + \theta, t) \bigr| \bigg),
\end{split}\]
which implies that 
\[\begin{split}
	&\bigg| \frac{1}{2\pi\lambda}\sum_{k\in\mathbb{Z}_\lambda} \sum_{\theta\in\Theta(k;-c_1)} 
	\bigl| \widehat{F}_3(-c_2 k - n_2-\theta, t) \bigr| \bigl| \widehat{F}_4(-c_1 k + \theta, t) \bigr|\bigg| \\
	\le&\,\, \bigg\| \sum_{\theta\in\Theta(k;-c_1)} 
	\bigl| \widehat{F}_3(-c_2 k - n_2-\theta, t) \bigr| \bigg\|_{\ell^2_k} 
	\bigg\|\sum_{\theta\in\Theta(k;-c_1)} \bigl| \widehat{F}_4(-c_1 k + \theta, t) \bigr| \bigg\|_{\ell^2_k}.
\end{split}\]
Applying Lemma \ref{lem:admissible-frequency-sum} with
$(A,a,m)=(\kappa_\alpha,-c_1,0)$ yields 
\[
	\bigg\|\sum_{\theta\in\Theta(k;-c_1)} \bigl| \widehat{F}_4(-c_1 k + \theta, t) \bigr| \bigg\|_{\ell^2_k} \lesssim \|\widehat F_4(\cdot, t)\|_{\ell^2}.
\]
Meanwhile, since $c_1+c_2=1$, the change of variable $\theta\mapsto-\theta$ identifies
$\Theta(k;-c_1)$ with $\Theta(k;-c_2)$. Thus, 
\[\begin{split}
	\bigg\| \sum_{\theta\in\Theta(k;-c_1)} 
	\bigl| \widehat{F}_3(-c_2 k - n_2-\theta, t) \bigr| \bigg\|_{\ell^2_k} = 
	\bigg\| \sum_{\theta\in\Theta(k;-c_2)} 
	\bigl| \widehat{F}_3(-c_2 k - n_2+\theta, t) \bigr| \bigg\|_{\ell^2_k} \lesssim \|\widehat F_3(\cdot, t)\|_{\ell^2},
\end{split}\]
where the last inequality is due to  Lemma \ref{lem:admissible-frequency-sum} with
$(A,a,m) = (\kappa_\alpha,-c_2,-n_2)$.
Plugging all the above estimates into (\ref{1010-6}) gives 
\begin{equation}\label{1010-7}
	\text{LHS of (\ref{1010-5})} \lesssim \frac{1}{\lambda^2} \int_{\mathbb{R}} 
	\sum_{\frac{1}{\lambda}\leq |n_2|\leq 1} \bigl| \widehat{F}_2(n_2, t) \bigr| \|\widehat F_3(\cdot, t)\|_{\ell^2} \|\widehat F_4(\cdot, t)\|_{\ell^2} \|\widehat F_1(\cdot, t)\|_{\ell^2} \|\widehat F_5(\cdot, t)\|_{\ell^2} \,dt.
\end{equation}
Now we apply the Cauchy-Schwarz inequality again to find
\begin{equation}\label{eq:M51-low-frequency-sum}
	\frac{1}{2\pi\lambda}
	\sum_{\frac{1}{\lambda}\leq|n_2|\le1}
	\big|\widehat F_2(n_2,t)\big|
	\lesssim
	\|\widehat F_2(\cdot,t)\|_{\ell^2}.
\end{equation}
The combination of (\ref{1010-7}) and (\ref{eq:M51-low-frequency-sum}) yields 
\begin{equation}\label{1010-8}
	\text{LHS of (\ref{1010-5})}
	\lesssim
	\frac{1}{\lambda}
	\int_{\mathbb R}
	\prod_{i=1}^5
	\|\widehat F_i(\cdot,t)\|_{\ell^2(\mathbb{Z}_\lambda)}\,dt
	= \frac{1}{\lambda}\int_{\mathbb R}
	\prod_{i=1}^5 \| F_i(\cdot,t)\|_{L^2(\mathbb{T}_\lambda)}\,dt,
\end{equation}
where the last equality is due to the Plancherel identity.

Based on (\ref{1010-8}), we apply the H\"older inequality in time to obtain 
\begin{equation}\label{1011-2}
	\text{LHS of (\ref{1010-5})}
	\lesssim \frac{1}{\lambda} \|F_1\|_{L_t^2 L_x^2}
	\prod_{i=2}^5\|F_i\|_{L_t^8 L_x^2} \lesssim \frac{1}{\lambda} \prod_{i=1}^5\|f_i\|_{L^{2,2}_{x,t}},
\end{equation}
where the last inequality is due to (\ref{1011-1}) and Lemma \ref{lemma3}. Finally, thanks to the assumption $\lambda \lesssim N^{\frac13}$, we know $\frac{1}{\lambda} \lesssim \lambda^{-\frac32} N$. Therefore, (\ref{1011-2}) implies (\ref{1010-5}), and Case 1 is justified.

\textbf{Case 2.} $|n_1|\gg |n_5|$ or $|n_5|\gg |n_1|$.

Since $M_{51}^P$ is symmetric with respect to $n_1$ and $n_5$, we may
assume without loss of generality that $|n_1|\gg|n_5|$. If
$|n_5|\gtrsim N$, the argument is the same as in Case~1. We therefore
consider only the case $|n_5|\ll N$. By
\eqref{eq:inherited-frequency-relations}, the assumption
$|n_1|\gg|n_5|$ and the identity $n_{15}=n_1+n_5$ give
\[
	N_1\sim N_{15}\gtrsim N\gg N_5.
\]
So it follows from (\ref{eq:L1estTemp1}) that 
\begin{equation}\label{1011-3}
	\mathscr{M}_{51}(\vec{n}) \lesssim \frac{\lambda^{2+2s_{\alpha}+\epsilon} N_{15}^{-1+2s_{\alpha}+\epsilon}}{N_1^s \,N^{1-s} \langle n_5\rangle}
	\lesssim \frac{\lambda^{2+2s_{\alpha}+\epsilon}}{N^{2-2s_\alpha-\epsilon}\langle n_5\rangle},
\end{equation}
where we used the fact that $s>-1+2s_\alpha+\epsilon$. Since $\langle n_5\rangle$ is very small, the upper bound in (\ref{1011-3}) is not as small as (\ref{1011-4}) in Case 1. So we need to take advantage of the lower bound $\langle K_1(\vec{n})\rangle$ for $L_{M,1}$ in \eqref{eq:LM-Kj} to provide additional decay. 
By the definition of $K_1$ in (\ref{eq:Kj-LMj-def}), it admits
the decomposition
\begin{equation*}
	\begin{aligned}
		K_1(n_1,\cdots,n_5) 
		&=
		(n_{15}^3+n_2^3+\alpha n_3^3+\alpha n_4^3)
		+(\alpha n_1^3+\alpha n_5^3-n_{15}^3 )     \\
		&=
		H_\alpha(n_{15},n_2,n_3,n_4)
		+R(-n_{15},n_1,n_5),
	\end{aligned}
\end{equation*}
where $H_\alpha$ and $R$ are as defined in (\ref{eq:quartic-resonance-H}) and (\ref{DefOfRandRI}). According to (\ref{eq:H-bad-upper}), 
$|H_\alpha(n_{15},n_2,n_3,n_4)| \lesssim N_3^2$.
Meanwhile, 
\begin{equation*}
	\begin{aligned}
		|R(-n_{15},n_1,n_5)|
		&=
		|3\alpha n_{15}(n_1-c_1n_{15})(n_1-c_2n_{15})|     \\
		&=
		|3\alpha n_{15}(c_2n_1-c_1n_5)(c_1n_1-c_2n_5)|
		\gtrsim N_{15} N_1^2 \gtrsim N_3^3,
	\end{aligned}
\end{equation*}
where the last inequality is due to $N_1 \sim N_{15} \sim N_3$. Thus, 
\begin{equation}\label{K1est}
	L_{M,1} \gtrsim |K_1(n_1,\cdots,n_5)|\gtrsim N_{3}^3 \gtrsim N^3.
\end{equation}
Based on (\ref{1011-3}) and (\ref{K1est}), (\ref{1010-2}) is reduced to 
\[
	\frac{\lambda^{2+2s_{\alpha}+\epsilon}}{N^{2-2s_\alpha-\epsilon}} \frac{1}{N^{3/2}}
	\int_{\sum\limits_{i=1}^{5}\tau_i = 0}
	\int_{\Gamma_{5,1}^{\mathrm{Bad}}}
	\frac{L_{M,1}^{1/2}}{\langle n_5\rangle}
	\prod_{i=1}^{5} \frac{\widetilde f_i(n_i,\tau_i)}{\langle L_{i,1}\rangle^{1/2}}		
	\lesssim D_2(\lambda,N,\epsilon)\prod_{i=1}^{5}\|f_i\|_{L_{x,t}^{2}}.
\]
According to the formula (\ref{eq:D2-def}) for $D_2(\lambda,N,\epsilon)$, the above estimate is equivalent to 
\begin{equation}\label{1011-5}
	\int_{\sum\limits_{i=1}^{5}\tau_i = 0}
	\int_{\Gamma_{5,1}^{\mathrm{Bad}}}
	\frac{L_{M,1}^{1/2}}{\langle n_5\rangle}
	\prod_{i=1}^{5} \frac{\widetilde f_i(n_i,\tau_i)}{\langle L_{i,1}\rangle^{1/2}}		
	\lesssim \lambda^{-\frac32} N^{\frac32} \prod_{i=1}^{5}\|f_i\|_{L_{x,t}^{2}}.
\end{equation}

Since there exists $i\in\{1,2,3,4,5\}$ such that $L_{M,1} = \langle L_{i,1}\rangle$, we assume, without loss of generality, that $ L_{M,1} = \langle L_{1,1}\rangle$. Other cases can be treated similarly. Define functions $\{F_i\}_{i=1}^{5}$ whose space-time Fourier transforms $\{\widetilde{F}_i\}_{i=1}^{5}$ satisfy
\begin{equation*}
	\widetilde F_1(n_1, \tau_1) = \widetilde f_1(n_1,\tau_1),
	\qquad
	\widetilde F_i(n_i,\tau_i)
	=
	\frac{\widetilde f_i(n_i,\tau_i)}{\langle L_{i,1}\rangle^{1/2}},
	\qquad 2\le i\le 5,
\end{equation*}
(\ref{1011-5}) is converted to
\begin{equation}\label{1011-6}
	\int_{\sum\limits_{i=1}^{5}\tau_i = 0}
	\int_{\Gamma_{5,1}^{\mathrm{Bad}}}
	\frac{1}{\langle n_5\rangle}
	\prod_{i=1}^{5} \widetilde F_i(n_i,\tau_i) 
	\lesssim \lambda^{-\frac32} N^{\frac32} \prod_{i=1}^{5}\|f_i\|_{L_{x,t}^{2}}.
\end{equation}
Then similar to the derivation from (\ref{1013-11}) to (\ref{1011-7}), we have
\begin{equation}\label{1011-8}\begin{split}
	\text{LHS of (\ref{1011-6})} = \Big(\frac{1}{2\pi\lambda}\Big)^4 \int_{\mathbb{R}} &\sum_{n_1\in\mathbb{Z}_\lambda} \sum_{\frac{1}{\lambda}\leq |n_2|\leq 1} \sum_{n_5\in\mathbb{Z}_\lambda}  \sum_{\theta\in\Theta(n_{15};-c_1)} \Big[\bigl| \widehat{F}_1(n_1, t) \bigr| \bigl| \widehat{F}_2(n_2, t) \bigr| \\
	& \langle n_5\rangle^{-1} \bigl| \widehat{F}_5(n_5, t) \bigr| \bigl| \widehat{F}_3(-c_2 n_{15}-n_2-\theta, t) \bigr| \bigl| \widehat{F}_4(-c_1 n_{15}+\theta, t) \bigr|\Big] \,dt.
\end{split}\end{equation}
By rearranging the order of integration, we obtain 
\[\begin{split}
	\text{LHS of (\ref{1011-6})} = \Big(\frac{1}{2\pi\lambda}\Big)^4 \int_{\mathbb{R}} &
	\sum_{n_5\in\mathbb{Z}_\lambda} \langle n_5\rangle^{-1} \bigl| \widehat{F}_5(n_5, t) \bigr| \bigg[\sum_{n_1\in\mathbb{Z}_\lambda} \bigl| \widehat{F}_1(n_1, t) \bigr| \sum_{\theta\in\Theta(n_{15};-c_1)}\bigl| \widehat{F}_4(-c_1 n_{15}+\theta, t) \bigr| \\
	& \Big(\sum_{\frac{1}{\lambda}\leq |n_2|\leq 1} \bigl| \widehat{F}_2(n_2, t) \bigr| \bigl| \widehat{F}_3(-c_2 n_{15}-n_2-\theta, t) \bigr|\Big)\bigg] \,dt.
\end{split}\]
Now we perform a change of variable $n_1\to k:= n_1+n_5$ to find 
\begin{equation}\label{1011-9}\begin{split}
	\text{LHS of (\ref{1011-6})} = \Big(\frac{1}{2\pi\lambda}\Big)^4 \int_{\mathbb{R}} &
	\sum_{n_5\in\mathbb{Z}_\lambda} \langle n_5\rangle^{-1} \bigl| \widehat{F}_5(n_5, t) \bigr| \bigg[\sum_{k\in\mathbb{Z}_\lambda} \bigl| \widehat{F}_1(k-n_5, t) \bigr| \sum_{\theta\in\Theta(k;-c_1)}\bigl| \widehat{F}_4(-c_1 k+\theta, t) \bigr| \\
	& \Big(\sum_{\frac{1}{\lambda}\leq |n_2|\leq 1} \bigl| \widehat{F}_2(n_2, t) \bigr| \bigl| \widehat{F}_3(-c_2 k - n_2-\theta, t) \bigr|\Big)\bigg] \,dt.
\end{split}\end{equation}
Since 
\[\begin{split}
	\bigg| \frac{1}{\lambda} \sum_{\frac{1}{\lambda}\leq |n_2|\leq 1} \bigl| \widehat{F}_2(n_2, t) \bigr| \bigl| \widehat{F}_3(-c_2 k - n_2-\theta, t) \bigr| \bigg|
	&\lesssim \big\| \widehat{F}_2(n_2, t) \big\|_{\ell_{n_2}^2} \big\| \widehat{F}_3(-c_2 k - n_2-\theta, t) \big\|_{\ell_{n_2}^2} \\
	&= \big\| \widehat{F}_2(\cdot, t) \big\|_{\ell^2} \big\| \widehat{F}_3(\cdot, t) \big\|_{\ell^2},
\end{split}\]
and 
\[\begin{split}
	\bigg| \frac{1}{\lambda}\sum_{k\in\mathbb{Z}_\lambda} \bigl| \widehat{F}_1(k-n_5, t) \bigr| \sum_{\theta\in\Theta(k;-c_1)}\bigl| \widehat{F}_4(-c_1 k+\theta, t) \bigr| \bigg|
	&\lesssim \big\| \widehat{F}_1(k-n_5, t) \big\|_{\ell^2_k}
	\Big\| \sum_{\theta\in\Theta(k;-c_1)}\bigl| \widehat{F}_4(-c_1 k+\theta, t) \bigr| \Big\|_{\ell^2_k} \\
	&\lesssim \| \widehat{F}_1(\cdot, t) \|_{\ell^2} \| \widehat{F}_4(\cdot, t) \|_{\ell^2}, 
\end{split}\]
where the last inequality applies Lemma~\ref{lem:admissible-frequency-sum},
with $A=\kappa_\alpha$, $a=-c_1$ and $m=0$. Plugging the above estimates into (\ref{1011-9}) yields 
\begin{equation}\label{1011-10}\begin{split}
	\text{LHS of (\ref{1011-6})} \lesssim \frac{1}{\lambda^2} \int_{\mathbb{R}} &
	\sum_{n_5\in\mathbb{Z}_\lambda} \langle n_5\rangle^{-1} \bigl| \widehat{F}_5(n_5, t) \bigr| 
	\Big( \big\| \widehat{F}_2(\cdot, t) \big\|_{\ell^2} \big\| \widehat{F}_3(\cdot, t) \big\|_{\ell^2}\Big) \Big(\| \widehat{F}_1(\cdot, t) \|_{\ell^2} \| \widehat{F}_4(\cdot, t) \|_{\ell^2}\Big)	\,dt.
\end{split}\end{equation}

By the Cauchy-Schwarz inequality, 
\[
	\bigg|\frac{1}{\lambda} \sum_{n_5\in\mathbb{Z}_\lambda} \langle n_5\rangle^{-1} \bigl| \widehat{F}_5(n_5, t) \bigr| \bigg| \lesssim \| \langle n_5\rangle^{-1} \|_{\ell_{n_5}^2} \|\widehat{F}_5(n_5, t)\|_{\ell_{n_5}^2} \lesssim \|\widehat{F}_5(\cdot, t)\|_{\ell^2}.
\]
Putting this relation into (\ref{1011-10}) gives 
\[
	\text{LHS of (\ref{1011-6})} \lesssim \frac{1}{\lambda} \int_{\mathbb{R}} \prod_{i=1}^{5} \| \widehat{F}_i(\cdot, t) \|_{\ell^2(\mathbb{Z}_\lambda)} \,dt 
	= \frac{1}{\lambda} \int_{\mathbb{R}} \prod_{i=1}^{5} \| F_i(\cdot, t) \|_{L^2(\mathbb{T}_\lambda)} \,dt.
\]
Then similar to the derivation of (\ref{1011-2}), the above estimate implies that 
\[
	\text{LHS of (\ref{1011-6})} \lesssim \frac{1}{\lambda} \prod_{i=1}^{5}\|f_i\|_{L^{2,2}_{x,t}} \lesssim \text{RHS of (\ref{1011-6})},
\]
where we used the relation that 
\begin{equation}\label{para_req}
	\frac{1}{\lambda} \lesssim \lambda^{-\frac32} N^{\frac32}
\end{equation} 
due to the assumption $\lambda \lesssim N^{1/3}$. Hence, Case 2 is proved and therefore (\ref{eq:M51-est}) is established.

\subsection{Proof of \eqref{eq:M52-est}}
As the discussion in Section \ref{Subsec, setup}, the estimate (\ref{eq:M52-est}) is reduced to (\ref{eq:M5j-target}) with $j=2$, where $\mathcal{L}_{2}$ is given by (\ref{eq:Lj}) with $j=2$. 
Recalling 
$M_{52}^P(n_1,\ldots,n_5) = n_{25} \sigma_4^P(n_{1},n_{25},n_3,n_4)$, it then follows from \eqref{eq:inherited-frequency-relations} that 
\begin{equation}\label{1012-2}
	N_{1}\sim N_3\sim N_4\sim N_{235}\gtrsim N,
	\qquad \lambda^{-1}\leq|n_{25}|\le1.
\end{equation}
Let 
\begin{equation}\label{1012-1}
	\mathscr{M}_{52}(\vec{n}) = \frac{ |M_{52}^{P}(\vec{n})|}{\prod_{i=1}^{5} \langle n_i\rangle m(n_i)}, \qquad\forall\, \vec{n}=(n_1,n_2,n_3,n_4,n_5)\in \Gamma_{5,2}^{\text{Bad}}.
\end{equation}
Then (\ref{eq:M5j-target}) with $j=2$ boils down to 
\begin{equation}\label{1012-8}
	\int_{\sum\limits_{i=1}^{5}\tau_i = 0}
	\int_{\Gamma_{5,2}^{\mathrm{Bad}}}
	\mathscr{M}_{52}(\vec{n})
	\prod_{i=1}^{5} \frac{\widetilde f_i(n_i,\tau_i)}{\langle L_{i,2}\rangle^{1/2}}		
	\lesssim D_2(\lambda,N,\epsilon)\prod_{i=1}^{5}\|f_i\|_{L_{x,t}^{2}}.
\end{equation}
Thanks to \eqref{eq:sigma4j-est} and (\ref{1012-2}),
\[
	|M_{52}^{P}(\vec{n})| \lesssim \lambda^{2+2s_\alpha+\epsilon} N_1^{2s_\alpha+\epsilon} m^2(n_1).
\]
Meanwhile, it follows from (\ref{1012-2}) again that $\langle n_2\rangle\langle n_5\rangle m(n_2) m(n_5) \gtrsim 1$ and 
\[
	\langle n_1\rangle\langle n_3\rangle\langle n_4\rangle m(n_1) m(n_3) m(n_4) \sim N_1^3 m^3(n_1)\sim N_1^{2+s} N^{1-s} m^2(n_1).
\]
As a result, 
\begin{equation}\label{1012-3}
	\mathscr{M}_{52}(\vec{n}) \lesssim \frac{\lambda^{2+2s_\alpha+\epsilon}}{N_1^{2+s-2s_\alpha-\epsilon} N^{1-s}} \lesssim 
	\frac{\lambda^{2+2s_\alpha+\epsilon}}{N^{3-2s_\alpha-\epsilon}},
\end{equation}
Noticing that the upper bound in (\ref{1012-3}) matches that in (\ref{1011-4}) for $\mathscr{M}_{51}$ in Case 1 in Section \ref{Subsec, quin_est1}, so the rest of the proof can be carried out in a similar way as that in Case 1 in Section \ref{Subsec, quin_est1}.

\subsection{Proof of \eqref{eq:M53-est}}
\label{Subsec, M53proof}
As the discussion in Section \ref{Subsec, setup}, the estimate (\ref{eq:M53-est}) is reduced to (\ref{eq:M5j-target}) with $j=3$, where $\mathcal{L}_{3}$ is given by (\ref{eq:Lj}) with $j=3$. 
Recalling 
$M_{53}^P(n_1,\ldots,n_5) = n_{35} \sigma_4^P(n_{1},n_{2},n_{35},n_4)$, it then follows from \eqref{eq:inherited-frequency-relations} that 
\begin{equation}\label{1012-4}
	N_{1}\sim N_{35}\sim N_4\sim N_{235}\gtrsim N,
	\qquad \lambda^{-1}\leq|n_{2}|\le1.
\end{equation}
Let 
\begin{equation}\label{1012-5}
	\mathscr{M}_{53}(\vec{n}) = \frac{ |M_{53}^{P}(\vec{n})|}{\prod_{i=1}^{5} \langle n_i\rangle m(n_i)}, \qquad\forall\, \vec{n}=(n_1,n_2,n_3,n_4,n_5)\in \Gamma_{5,3}^{\text{Bad}}.
\end{equation}
Then (\ref{eq:M5j-target}) with $j=3$ boils down to 
\begin{equation}\label{1012-6}
	\int_{\sum\limits_{i=1}^{5}\tau_i = 0}
	\int_{\Gamma_{5,3}^{\mathrm{Bad}}}
	\mathscr{M}_{53}(\vec{n})
	\prod_{i=1}^{5} \frac{\widetilde f_i(n_i,\tau_i)}{\langle L_{i,3}\rangle^{1/2}}		
	\lesssim D_2(\lambda,N,\epsilon)\prod_{i=1}^{5}\|f_i\|_{L_{x,t}^{2}}.
\end{equation}
Based on \eqref{eq:sigma4j-est} and (\ref{1012-4}),
\[
	|M_{53}^{P}(\vec{n})| \lesssim \lambda^{2+2s_\alpha+\epsilon} N_1^{1+2s_\alpha+\epsilon} m^2(n_1).
\]
Meanwhile, it follows from (\ref{1012-4}) again that $\langle n_2\rangle m(n_2) \sim 1$ and 
\[
	\langle n_1\rangle \langle n_4\rangle m(n_1) m(n_4) \sim N_1^2 m^2(n_1).
\]
As a result, 
\begin{equation}\label{1012-7}
	\mathscr{M}_{53}(\vec{n}) \lesssim \frac{\lambda^{2+2s_\alpha+\epsilon} N_1^{-1+2s_\alpha+\epsilon}}{\langle n_3\rangle\langle n_5\rangle m(n_3) m(n_5)}.
\end{equation}
Since the relative sizes of $|n_3|$ and $|n_5|$ are not determined at this point, we will divide the proof into several subcases depending on the sizes of $|n_3|$ and $|n_5|$. 

Similar to the parametrization in (\ref{eq:M51-case1-parametrization}) in Section \ref{Subsec, quin_est1}, the frequencies $(n_1,n_2,n_3,n_4,n_5)$ in $\Gamma_{5,3}^{\text{Bad}}$ satisfy $(n_1, n_2, n_{35}, n_4)\in\Omega_*$, so $|n_4 + c_1 n_1| \leq \frac{\kappa_\alpha}{\lambda}$ due to the assumption (\ref{one_compo}). Setting $\theta = n_4 + c_1 n_1$, then 
\begin{equation}\label{para_M53}
	n_4 = -c_1 n_1 + \theta,
	\qquad
	n_{5} = -c_2 n_1 - n_{23} - \theta.
\end{equation}
Based on (\ref{Theta_def}), we know $\theta \in \Theta_{\kappa_\alpha}(n_1; -c_1)$, where 
\[
	\Theta_{\kappa_\alpha}(n_{1};-c_1) = 	
	\left\{\theta\in\mathbb{R}:
	|\theta|\leq\frac{\kappa_\alpha}{\lambda}
	\ \text{and}\
	-c_1 n_{1} + \theta\in\mathbb{Z}_\lambda\right\}.
\]
In addition, $\Theta_{\kappa_\alpha}(n_1; -c_1)$ contains at most one element and will be abbreviated as $\Theta(n_1; -c_1)$ in the rest of this proof.

\textbf{Case 1.} $|n_3|\sim |n_5|$.

By \eqref{1012-4} and $n_{35}=n_3+n_5$, we have
\[
	N_3\sim N_5\gtrsim N_{35} \sim N_1 \sim N_4\sim N_{235}\gtrsim N,
	\qquad \lambda^{-1}\leq|n_2| \le 1.
\]
It then follows from (\ref{1012-7}) that 
\begin{equation}\label{1012-9}
	\mathscr{M}_{53}(\vec{n}) \lesssim \frac{\lambda^{2+2s_\alpha+\epsilon} N_1^{-1+2s_\alpha+\epsilon}}{N_5^2 m^2(N_5)} \lesssim \frac{\lambda^{2+2s_\alpha+\epsilon}}{N^{3-2s_\alpha-\epsilon}}.
\end{equation}
Since the upper bound in (\ref{1012-9}) matches that in (\ref{1011-4}) for $\mathscr{M}_{51}$ in Case 1 in Section \ref{Subsec, quin_est1}, the rest of the proof can be shown in a similar way.

\textbf{Case 2.} $|n_3|\gg |n_5|$.

By \eqref{1012-4} and $|n_3|\gg|n_5|$, we have
$N_3\sim N_{35} \sim N_1 \sim N_4\sim N_{235}\gtrsim N$ and 
$\lambda^{-1}\leq|n_2|\le1$. Therefore, 
\[\begin{split}
	\langle n_3\rangle m(n_3) \langle n_5\rangle m(n_5) &\gtrsim \langle n_3\rangle m^2(n_3) \langle n_5\rangle \sim N^{2-2s} N_3^{2s-1} \langle n_5\rangle.
\end{split}\]
It then follows from (\ref{1012-7}) and the fact $N_3\sim N_1$ that 
\[
	\mathscr{M}_{53}(\vec{n}) \lesssim \frac{\lambda^{2+2s_\alpha+\epsilon}}{N^{2-2s} N_3^{2s-2s_\alpha-\epsilon}\langle n_5\rangle }. 
\]
Thanks to the assumption $\epsilon\in (0,2s-2s_\alpha)$ in Lemma \ref{lem:M5j-estimates}, the above estimate implies that 
\begin{equation}\label{1013-1}
	\mathscr{M}_{53}(\vec{n}) \lesssim \frac{\lambda^{2+2s_\alpha+\epsilon}}{N^{2-2s_\alpha-\epsilon} \langle n_5\rangle}.
\end{equation}
Since the decay for $\mathscr{M}_{53}$ in (\ref{1013-1}) is not sufficient to justify (\ref{1012-6}), we will take advantage of the lower bound $\langle K_3(\vec{n})\rangle$ for $L_{M,3}$ in (\ref{eq:LM-Kj}) to provide additional decay. 
We decompose $K_3$ as follows:
\begin{equation}\label{1013-3}
	\begin{aligned}
		K_3(n_1,\cdots,n_5)
		&=
		(n_1^3+n_2^3+\alpha n_{35}^3 +\alpha n_4^3)
		+(n_3^3+\alpha n_5^3-\alpha n_{35}^3 )       \\
		&=
		H_\alpha(n_1,n_2,n_{35},n_4)
		+R(n_3,n_5,-n_{35}) .
	\end{aligned}
\end{equation}
When $|n_3|\gg|n_5|$, $|R(n_3,n_5,-n_{35})|\gtrsim N_3^3$,
and by \eqref{eq:H-bad-upper} applied to
$(n_1,n_2,n_{35},n_4)$,
\begin{equation}\label{1013-5}
	|H_\alpha(n_1,n_2,n_{35},n_4)|\lesssim N_{35}^2.
\end{equation}
Hence, it follows from (\ref{1013-3}), (\ref{1013-5}) and the fact $N_{35} \sim N_3$ that
\begin{equation}\label{1013-2}
	L_{M,3} \gtrsim |K_3(n_1,\cdots,n_5)| \gtrsim N_3^3.
\end{equation}
Based on (\ref{1013-1}) and (\ref{1013-2}), which are parallel to the estimates (\ref{1011-3}) and (\ref{K1est}), we can justify (\ref{1012-6}) by following the similar argument as that in Case 2, Section \ref{Subsec, quin_est1}.

\textbf{Case 3.} $|n_5|\gg |n_3|$, and $|n_3|\geq \lambda^{1/2}$.	

By \eqref{1012-4} and $n_{35}=n_3+n_5$, we have
\[
	N_5\sim N_{35} \sim N_1 \sim N_4\sim N_{235}\gtrsim N,
	\qquad \lambda^{-1}\leq|n_2|\le1.
\]
Comparing this case with Case 2, the difference is the estimate for the term $R(n_3, n_5, -n_{35})$ in (\ref{1013-3}). In Case 2, the assumption $|n_3| \gg |n_5|$ and the fact $\alpha\neq 1$ together imply that 
\[
	|R(n_3, n_5, -n_{35})| = |n_3^3 +\alpha n_5^3 - \alpha n_{35}^3| \sim N_3^3.
\]
But in the current case where $|n_5| \gg |n_3|$, there is cancellation between $\alpha n_5^3$ and $\alpha n_{35}^3$. In fact, 
\begin{equation}\label{1013-4}
	|R(n_3, n_5, -n_{35})| = |n_3^3 +\alpha n_5^3 - \alpha (n_3+n_5)^3| \sim N_3 N_5^2. 
\end{equation}
So we impose a lower bound $\lambda^{\frac12}$ for $|n_3|$ in this case to guarantee a large lower bound for $|R(n_3, n_5, -n_{35})| $. The case when $|n_3| \leq \lambda^{\frac12}$ will be treated in Case 4 with a different decomposition of $K_3$. 

Recalling the decomposition (\ref{1013-3}) and the estimate (\ref{1013-5}) for $|H_\alpha(n_1, n_2, n_{35}, n_4)| $, we find 
\[
	|K_3| \geq |R(n_3, n_5, -n_{35})| - |H_\alpha(n_1, n_2, n_{35}, n_4)| \gtrsim N_3 N_5^2 - N_{35}^2 \gtrsim \lambda^{\frac12} N_5^2,
\]
which implies that 
\begin{equation}\label{1013-6}
	L_{M,3} \gtrsim |K_3| \gtrsim \lambda^{\frac12} N_5^2.
\end{equation}
Meanwhile, 
\[
	\langle n_3\rangle \langle n_5\rangle m(n_3) m(n_5) \gtrsim \langle n_3\rangle \langle n_5\rangle m^2(n_5) \sim N^{2-2s} N_5^{2s-1} \langle n_3\rangle.
\]
Plugging this estimate into (\ref{1012-7}) yields 
\begin{equation}\label{1013-7}
	\mathscr{M}_{53}(\vec{n}) 
	\lesssim \frac{\lambda^{2+2s_\alpha+\epsilon}}{N^{2-2s} N_{5}^{2s-2s_\alpha-\epsilon} \langle n_3\rangle}  
	\lesssim \frac{\lambda^{2+2s_\alpha+\epsilon}}{N^{2-2s_\alpha-\epsilon} \langle n_3\rangle}.
\end{equation}
 
Based on (\ref{1013-7}) and (\ref{1013-6}), which are parallel to the estimates (\ref{1011-3}) and (\ref{K1est}), we can justify (\ref{1012-6}) by following the similar argument as that in Case 2, Section \ref{Subsec, quin_est1}. Noting that although the lower bound in (\ref{1013-6}) is not as strong as that in (\ref{K1est}), it is sufficient to justify (\ref{1012-6}) since the relation (\ref{para_req}) has room to relax.

\textbf{Case 4.} $|n_5|\gg |n_3|$, and $|n_3|\le \lambda^{1/2}$.

Similar to Case 3, we have 
\[
	N_5\sim N_{35} \sim N_1 \sim N_4\sim N_{235}\gtrsim N,
	\qquad \lambda^{-1}\leq|n_2|\le1.
\]
As discussed previously, we will estimate $|K_3|$ in a different way by decomposing it as follows:
\begin{equation}\label{1014-3}
	\begin{aligned}
		K_3(n_1,\cdots,n_5)
		&=(n_1^3 +\alpha n_{235}^3 +\alpha n_4^3)
		+ (n_2^3+n_3^3+\alpha n_5^3-\alpha n_{235}^3) \\
		&=
		R(n_1,n_{235},n_4)
		+H_{\alpha}(n_2,n_3,n_5,-n_{235})  \\
		&=R(n_1,n_{235},n_4)
			-n_{23}\bigl(3\alpha n_5^2
			+3\alpha n_{23}n_5
			+3n_2n_3+(\alpha-1)n_{23}^2\bigr).
	\end{aligned}
\end{equation}
Since both the upper and lower bounds for $|R(n_1,n_{235},n_4)|$ have already been established (see \eqref{Rn1n4_UpperBound} and \eqref{Rn1n4_LowerBound}), it remains only to consider $|H_\alpha(n_2,n_3,n_5,-n_{235}) |$ to estimate $|K_3|$. Since $H_\alpha=0$ when $n_{23}=0$, we divide the argument into two cases depending whether $n_{23}$ vanishes.

\textbf{Subcase 4.1.} $n_{23}=0$.

In this case, we don't need additional decay from $L_{M,3}$. From $n_{23} = 0$, we can infer from the parametrization (\ref{para_M53}) that 
\begin{equation}\label{1013-10}
	n_4 = -c_1 n_1 + \theta,
	\qquad
	n_5=-c_2 n_1 - \theta,
	\qquad
	\theta \in \Theta(n_1; -c_1).
\end{equation}
In addition, since $\langle n_3\rangle = \langle n_2\rangle\sim 1$, it then follows from (\ref{1012-7}) that 
\[
		\mathscr{M}_{53}(\vec{n}) 
		\lesssim \frac{\lambda^{2+2s_\alpha+\epsilon} N_1^{-1+2s_\alpha+\epsilon}}{N^{1-s} N_5^s} 
		\lesssim \frac{\lambda^{2+2s_\alpha+\epsilon}}{N^{2-2s_\alpha-\epsilon}}.
\]
As a result, (\ref{1012-6}) is reduced to 
\[
	\frac{\lambda^{2+2s_{\alpha}+\epsilon}}{N^{2-2s_\alpha-\epsilon}} 
	\int_{\sum\limits_{i=1}^{5}\tau_i = 0}
	\int_{\Gamma_{5,3}^{\mathrm{Bad}}}
	\prod_{i=1}^{5} \frac{\widetilde f_i(n_i,\tau_i)}{\langle L_{i,3}\rangle^{1/2}}		
	\lesssim D_2(\lambda,N,\epsilon)\prod_{i=1}^{5}\|f_i\|_{L_{x,t}^{2}}.
\]

Define functions $\{F_i\}_{i=1}^{5}$ whose space-time Fourier transforms $\{\widetilde{F}_i\}_{i=1}^{5}$ satisfy
\begin{equation}\label{1013-9}
	\widetilde F_i(n_i,\tau_i)
	=
	\frac{\widetilde f_i(n_i,\tau_i)}{\langle L_{i,3}\rangle^{1/2}},
	\qquad 1\leq i\leq 5.
\end{equation}
Then based on the formula (\ref{eq:D2-def}) for $D_2(\lambda,N,\epsilon)$, the above estimate is equivalent to 
\begin{equation}\label{1013-8}
	\int_{\sum\limits_{i=1}^{5}\tau_i = 0}
	\int_{\Gamma_{5,3}^{\mathrm{Bad}}}
	\prod_{i=1}^{5} \widetilde F_i(n_i,\tau_i)
	\lesssim \lambda^{-\frac32} \prod_{i=1}^{5}\|f_i\|_{L_{x,t}^{2}}.
\end{equation}
Taking the inverse Fourier transform in the time variable, (\ref{1013-8}) is converted to
\begin{equation}\label{1013-12}
	\int_{\mathbb{R}}
	\int_{\Gamma_{5,3}^{\mathrm{Bad}}}
	\prod_{i=1}^{5} \bigl| \widehat F_i(n_i, t) \bigr| \,dt
	\lesssim \lambda^{-\frac32} \prod_{i=1}^{5}\|f_i\|_{L_{x,t}^{2}},
\end{equation}
where $\widehat{F}_{i}$ represents the Fourier transform of $F_i$ in the spatial variable. 

Using the relation $n_3 = -n_2$ and the variable $\theta$ in \eqref{1013-10}, we find
\[\begin{split}
	\text{LHS of (\ref{1013-12})} = \Big(\frac{1}{2\pi\lambda}\Big)^4 \int_{\mathbb{R}} &\sum_{n_1\in\mathbb{Z}_\lambda} \sum_{\frac{1}{\lambda}\leq |n_2|\leq 1} \sum_{\theta\in\Theta(n_{1};-c_1)} \Big[\bigl| \widehat{F}_1(n_1, t) \bigr| \bigl| \widehat{F}_2(n_2, t) \bigr| \\
	& \bigl| \widehat{F}_3(-n_2, t) \bigr| \bigl| \widehat{F}_4(-c_1 n_{1} + \theta, t) \bigr| \bigl| \widehat{F}_5(-c_2 n_{1} - \theta, t) \bigr|\Big] \,dt.
\end{split}\]
Rearranging the order of integration yields 
\begin{equation}\label{1013-13}\begin{split}
	\text{LHS of (\ref{1013-12})} \lesssim \frac{1}{\lambda^4} \int_{\mathbb{R}} & \|\widehat{F}_5(\cdot, t)\|_{\ell^\infty} \sum_{n_1\in\mathbb{Z}_\lambda} \big| \widehat{F}_1(n_1, t)\big| \sum_{\theta\in\Theta(n_{1};-c_1)} \big|\widehat{F}_4(-c_1 n_{1} + \theta, t)\big| \\
	& \sum_{n_2\in\mathbb{Z}_\lambda} \Big| \widehat{F}_2(n_2, t) \widehat{F}_3(-n_2, t) \Big| \,dt.
\end{split}\end{equation}
Then similar to the derivation from (\ref{1011-9}) to \eqref{1011-10}, we find 
\[
	\text{LHS of (\ref{1013-12})} \lesssim \frac{1}{\lambda^2}\int_{\mathbb{R}} \|\widehat{F}_5(\cdot, t)\|_{\ell^\infty} \prod_{i=1}^{4} \|\widehat F_i(\cdot, t)\|_{\ell^2} \,dt.
\]
Moreover,
\begin{equation*}
	\|\widehat F_5(\cdot, t)\|_{\ell^\infty}
	\le \|F_5(\cdot, t)\|_{L_x^1}\le 
	\lambda^{1/2}
	\| F_5(\cdot, t)\|_{L^2_{x}}.
\end{equation*}
Therefore, 
\[
	\text{LHS of (\ref{1013-12})} \lesssim \lambda^{-3/2} \int_{\mathbb{R}} \| F_5(\cdot, t)\|_{L^2_{x}} \prod_{i=1}^{4} \|\widehat F_i(\cdot, t)\|_{\ell^2} \,dt.
\]
Now applying the H\"older inequality in time and Plancherel identity yields 
\begin{equation}\label{1013-14}
	\text{LHS of (\ref{1013-12})} \lesssim \lambda^{-3/2} \| F_5\|_{L_{x,t}^{2,2}} \prod_{i=1}^{4} \|F_i\|_{L_{x,t}^{2,8}}.
\end{equation}
Finally, according to the definition (\ref{1013-9}) and Lemma \ref{lemma3}, it then follows from (\ref{1013-14}) that 
\[
	\text{LHS of (\ref{1013-12})} \lesssim \lambda^{-\frac32} \prod_{i=1}^{5}\|f_i\|_{L_{x,t}^{2}} = \text{RHS of (\ref{1013-12})}.
\]
Thus, (\ref{1013-12}) is justified and this case is settled.

\textbf{Subcase 4.2.} $n_{23}\neq 0$.

Since $n_2, n_3\in \mathbb{Z}_\lambda$ and $n_{23}\neq 0$, we have
\[
	N_{23}\ge \frac1\lambda.
\]
By condition \eqref{eq:condNandlam} and the bounds
$N_3\le \lambda^{1/2}$ and $N_2\le 1$, we obtain
$N_{23}\le N_2+N_3\le 2 \lambda^{1/2}\ll N^{1/2}$.
Thus with $N_5\gtrsim N$, where $N$ is sufficiently large, then
\begin{equation*}
	\begin{aligned}
		\left|
		3\alpha n_5^2
		+3\alpha n_{23}n_5
		+3n_2n_3+(\alpha-1)n_{23}^2
		\right|
		&\ge
		3\alpha n_5^2
		-3\alpha |n_{23} n_5|
		-3|n_2 n_3| -|\alpha-1| n_{23}^2 
		\geq \alpha |n_5|^2,
	\end{aligned}
\end{equation*}
which implies that 
\begin{equation}\label{1014-1}
	|H_\alpha(n_2, n_3, n_5, -n_{235})| = N_{23} | 3\alpha n_5^2
	+3\alpha n_{23}n_5
	+3n_2n_3+(\alpha-1)n_{23}^2 | \geq \frac{\alpha n_5^2}{\lambda}.
\end{equation}
On the other hand, it follows from \eqref{Rn1n4_UpperBound} that
\[
	|R(n_1,n_{235},n_4)| \le 2\alpha\rho_\alpha \kappa_\alpha \,\frac{|n_1|^2}{\lambda}\le \frac{\alpha |n_1|^2}{8 C_*^2\lambda },
\]
where $C_*=C_*(\alpha)\geq1$ is the constant defined in \eqref{eq:C-alpha}. Meanwhile, $C_*^{-1}|n_{1}|\le |n_{35}|$ 
according to Lemma \ref{lemma:sigma4}, so
\begin{equation}\label{1014-2}
	|R(n_1,n_{235},n_4)| \leq \frac{\alpha n_{35}^2}{8\lambda}.
\end{equation}
The combination of (\ref{1014-3}), (\ref{1014-1}) and \eqref{1014-2} yields 
\begin{equation*}\begin{split}
	|K_3(n_1,n_2,\cdots,n_5)|
	&\geq |H_\alpha(n_2,n_3,n_5,-n_{235})|-|R(n_1,n_{235},n_4)| \\
	&\geq \frac{\alpha n_5^2}{\lambda} - \frac{\alpha n_{35}^2}{8\lambda} \geq \frac{\alpha n_5^2}{2\lambda},
\end{split}\end{equation*}
where the last inequality is due to $|n_{35}| \leq 2|n_5|$. Thus, we obtain
\begin{equation}\label{1014-5}
	L_{M,3} \gtrsim |K_3(n_1,n_2,\cdots,n_5)| \gtrsim \frac{N_5^2}{\lambda} \gtrsim \frac{N^2}{\lambda}.
\end{equation}
On the other hand, since $\langle n_3\rangle m(n_3) \gtrsim 1$ and 
\[
	\langle n_5\rangle m(n_5) \sim N^{1-s} N_5^{s} \sim N^{1-s} N_1^{s}, 
\]
it follows from (\ref{1012-7}) that 
\begin{equation}\label{1014-4}
	\mathscr{M}_{53}(\vec{n}) \lesssim \frac{\lambda^{2+2s_\alpha+\epsilon} N_1^{-1+2s_\alpha+\epsilon}}{N^{1-s} N_1^{s}} \lesssim \frac{\lambda^{2+2s_\alpha+\epsilon}}{N^{2-2s_\alpha-\epsilon}}.
\end{equation}

Based on (\ref{1014-4}) and (\ref{1014-5}), (\ref{1012-6}) is reduced to
\[
	\frac{\lambda^{2+2s_\alpha+\epsilon}}{N^{2-2s_\alpha-\epsilon}}\Big(\frac{\lambda}{N^2}\Big)^{\frac12}
	\int_{\sum\limits_{i=1}^{5}\tau_i = 0}
	\int_{\Gamma_{5,3}^{\mathrm{Bad}}}
	L_{M,3}^{1/2}
	\prod_{i=1}^{5} \frac{\widetilde f_i(n_i,\tau_i)}{\langle L_{i,3}\rangle^{1/2}}		
	\lesssim D_2(\lambda,N,\epsilon)\prod_{i=1}^{5}\|f_i\|_{L_{x,t}^{2}}.
\]
According to the formula (\ref{eq:D2-def}) for $D_2(\lambda,N,\epsilon)$, the above estimate is equivalent to 
\begin{equation}\label{1014-6}
	\int_{\sum\limits_{i=1}^{5}\tau_i = 0}
	\int_{\Gamma_{5,3}^{\mathrm{Bad}}}
	L_{M,3}^{1/2}
	\prod_{i=1}^{5} \frac{\widetilde f_i(n_i,\tau_i)}{\langle L_{i,3}\rangle^{1/2}}		
	\lesssim \lambda^{-2} N \prod_{i=1}^{5}\|f_i\|_{L_{x,t}^{2}}.
\end{equation}
Since there exists $i\in\{1,2,3,4,5\}$ such that $L_{M,3} = \langle L_{i,3}\rangle$, we assume, without loss of generality, that $L_{M,3} = \langle L_{1,3} \rangle$. Other cases can be treated similarly. Define functions $\{F_i\}_{i=1}^{5}$ whose space-time Fourier transforms $\{\widetilde{F}_i\}_{i=1}^{5}$ satisfy
\begin{equation}\label{1014-9}
	\widetilde F_1(n_1,\tau_1) = \widetilde f_1(n_1,\tau_1),
	\qquad
	\widetilde F_i(n_i,\tau_i)
	=
	\frac{\widetilde f_i(n_i,\tau_i)}{\langle L_{i,3}\rangle^{1/2}},
	\qquad 2\le i\le 5,
\end{equation}
then (\ref{1014-6}) is converted to
\[
	\int_{\sum\limits_{i=1}^{5}\tau_i = 0}
	\int_{\Gamma_{5,3}^{\mathrm{Bad}}}
	\prod_{i=1}^{5} \widetilde F_i(n_i,\tau_i) 
	\lesssim \lambda^{-2} N \prod_{i=1}^{5}\|f_i\|_{L_{x,t}^{2}},
\]
which, via taking the inverse Fourier transform in the time variable, is further equivalent to
\begin{equation}\label{1014-7}
	\int_{\mathbb{R}} \int_{\Gamma_{5,3}^{\mathrm{Bad}}}
	\prod_{i=1}^{5} \bigl| \widehat F_i(n_i, t) \bigr| \,dt
	\lesssim \lambda^{-2} N \prod_{i=1}^{5}\|f_i\|_{L_{x,t}^{2}},
\end{equation}
where $\widehat{F}_{i}$ represents the Fourier transform of $F_i$ in the spatial variable. 
Using the variable $\theta$ in (\ref{para_M53}), it follows from (\ref{1014-7}) that 
\[\begin{split}
	\text{LHS of (\ref{1014-7})} = \Big(\frac{1}{2\pi\lambda}\Big)^4 \int_{\mathbb{R}} &\sum_{n_1\in\mathbb{Z}_\lambda} \sum_{\frac{1}{\lambda}\leq |n_2|\leq 1} \sum_{n_3\in\mathbb{Z}_\lambda} \sum_{\theta\in\Theta(n_{1};-c_1)} \Big[\bigl| \widehat{F}_1(n_1, t) \bigr| \bigl| \widehat{F}_2(n_2, t) \bigr| \\
	& \bigl| \widehat{F}_3(n_3, t) \bigr| \bigl| \widehat{F}_4(-c_1 n_{1} + \theta, t) \bigr| \bigl| \widehat{F}_5(-c_2 n_{1} - n_2 - n_3 - \theta, t) \bigr|\Big] \,dt.
\end{split}\]

Rearranging the order of the integration yields 
\[\begin{split}
	\text{LHS of (\ref{1014-7})} \lesssim \frac{1}{\lambda^4} \int_{\mathbb{R}} &
	\sum_{\frac{1}{\lambda}\leq |n_2|\leq 1} |\widehat{F}_2(n_2, t)| \bigg[\sum_{n_1\in\mathbb{Z}_\lambda} |\widehat{F}_1(n_1, t)| \sum_{\theta\in\Theta(n_1;-c_1)} |\widehat{F}_4(-c_1 n_1 + \theta, t)| \\
	& \Big(\sum_{n_3\in\mathbb{Z}_\lambda} |\widehat{F}_3(n_3, t) \widehat{F}_5(-c_2 n_{1} - n_2 - n_3 - \theta, t)| \Big)\bigg] \,dt.
\end{split}\]
Then similar to the derivation from (\ref{1011-9}) to \eqref{1011-10}, we find 
\[
	\text{LHS of (\ref{1014-7})} \lesssim \frac{1}{\lambda^2}\int_{\mathbb{R}} 
	\sum_{\frac{1}{\lambda}\leq |n_2|\leq 1} |\widehat{F}_2(n_2, t)|
	 \Big( \big\| \widehat{F}_1(\cdot, t) \big\|_{\ell^2} \big\| \widehat{F}_4(\cdot, t) \big\|_{\ell^2}\Big) \Big(\| \widehat{F}_3(\cdot, t) \|_{\ell^2} \| \widehat{F}_5(\cdot, t) \|_{\ell^2}\Big) \,dt.
\]
Meanwhile, plugging the relation (\ref{eq:M51-low-frequency-sum}) into the above estimate gives 
\[
	\text{LHS of (\ref{1014-7})} \lesssim \frac{1}{\lambda} \int_{\mathbb{R}} 
	\prod_{i=1}^{5} \|\widehat{F}_i(\cdot, t)\|_{\ell^2(\mathbb{Z}_\lambda)} \,dt.
\]
Now applying the H\"older inequality in time yields 
\begin{equation}\label{1014-8}
	\text{LHS of (\ref{1014-7})} \lesssim \frac{1}{\lambda} \| F_1\|_{L_t^2 L_x^2} \prod_{i=2}^{5} \|F_i\|_{L_t^8 L_x^2} \lesssim \frac{1}{\lambda} \prod_{i=1}^{5} \|f_i\|_{L_{x,t}^{2,2}},
\end{equation}
where the last inequality is due to (\ref{1014-9}) and Lemma \ref{lemma3}.

Finally, we conclude that (\ref{1014-8}) implies (\ref{1014-7}) since $\lambda \lesssim N$. Hence, this case is finished and (\ref{eq:M53-est}) is proved.

\subsection{Proof of \eqref{eq:M54-est}}
\label{Subsec, M54proof}

As the discussion in Section \ref{Subsec, setup}, the estimate (\ref{eq:M54-est}) is reduced to (\ref{eq:M5j-target}) with $j=4$, where $\mathcal{L}_{4}$ is given by (\ref{eq:Lj}) with $j=4$. 
Recalling 
$M_{54}^P(n_1,\ldots,n_5) = n_{45} \sigma_4^P(n_{1},n_{2},n_{3},n_{45})$, it then follows from \eqref{eq:inherited-frequency-relations} that 
\begin{equation}\label{1015-1}
	N_{1}\sim N_{3}\sim N_{45}\sim N_{23}\gtrsim N,
	\qquad \lambda^{-1}\leq|n_{2}|\le1.
\end{equation}
Let 
\begin{equation}\label{1015-2}
	\mathscr{M}_{54}(\vec{n}) = \frac{ |M_{54}^{P}(\vec{n})|}{\prod_{i=1}^{5} \langle n_i\rangle m(n_i)}, \qquad\forall\, \vec{n}=(n_1,n_2,n_3,n_4,n_5)\in \Gamma_{5,4}^{\text{Bad}}.
\end{equation}
Then (\ref{eq:M5j-target}) with $j=4$ boils down to 
\begin{equation}\label{1015-3}
	\int_{\sum\limits_{i=1}^{5}\tau_i = 0}
	\int_{\Gamma_{5,4}^{\mathrm{Bad}}}
	\mathscr{M}_{54}(\vec{n})
	\prod_{i=1}^{5} \frac{\widetilde f_i(n_i,\tau_i)}{\langle L_{i,4}\rangle^{1/2}}		
	\lesssim D_2(\lambda,N,\epsilon)\prod_{i=1}^{5}\|f_i\|_{L_{x,t}^{2}}.
\end{equation}
Based on \eqref{eq:sigma4j-est} and (\ref{1015-1}),
\[
	|M_{54}^{P}(\vec{n})| \lesssim \lambda^{2+2s_\alpha+\epsilon} N_1^{1+2s_\alpha+\epsilon} m^2(n_1).
\]
Meanwhile, it follows from (\ref{1015-1}) again that $\langle n_2\rangle m(n_2) \sim 1$ and 
\[
	\langle n_1\rangle \langle n_3\rangle m(n_1) m(n_3) \sim N_1^2 m^2(n_1).
\]
As a result, 
\begin{equation}\label{1015-4}
	\mathscr{M}_{54}(\vec{n}) \lesssim \frac{\lambda^{2+2s_\alpha+\epsilon} N_1^{-1+2s_\alpha+\epsilon}}{\langle n_4\rangle\langle n_5\rangle m(n_4) m(n_5)}.
\end{equation}
Since the relative sizes of $|n_4|$ and $|n_5|$ are not determined at this point, we will divide the proof into several subcases depending on the sizes of $|n_4|$ and $|n_5|$. 

Similar to the parametrization in (\ref{eq:M51-case1-parametrization}) in Section \ref{Subsec, quin_est1}, the frequencies $(n_1,n_2,n_3,n_4,n_5)$ in $\Gamma_{5,4}^{\text{Bad}}$ satisfy $(n_1, n_2, n_{3}, n_{45})\in\Omega_*$, so $|n_{45} + c_1 n_1| \leq \frac{\kappa_\alpha}{\lambda}$ due to the assumption (\ref{one_compo}). Setting $\theta_1 = n_{45} + c_1 n_1$, then 
\begin{equation}\label{para_M54}
	n_{45} = -c_1 n_1 + \theta_1,
	\qquad
	n_{23} = -c_2 n_1 - \theta_1.
\end{equation}
Based on (\ref{Theta_def}), we know $\theta_1 \in \Theta_{\kappa_\alpha}(n_1; -c_1)$ which contains at most one element and will be abbreviated as $\Theta(n_1; -c_1)$ in the rest of this proof. We point out that the notation $\theta_1$ here is used to distinguish from $\theta_2$ which will be introduced later in (\ref{para2_sparsity}).

\textbf{Case 1.} $|n_5|\sim |n_4|$.

This case can be justified in a similar way as that for Case 1 in Section \ref{Subsec, M53proof}, so we omit the details.

\textbf{Case 2.} $|n_4|\gg |n_5|$.

This case can be proved analogously as that for Case 2 in Section \ref{Subsec, M53proof}, so we skip the argument.

\textbf{Case 3.} $|n_5|\gg |n_4|$ and $|n_4| \geq \lambda^{1/2}$.

This case is parallel to Case 3 in Section \ref{Subsec, M53proof} by switching the role of $n_3$ and $n_4$, so we omit the proof.

\textbf{Case 4.} $|n_5|\gg |n_4|$ and $|n_4| \leq \lambda^{1/2}$.

In this case, the combination of \eqref{1015-1} and $n_{45}=n_4+n_5$ implies
\[
	N_5\sim N_{45} \sim N_1\sim N_3 \sim N_{23}\gtrsim N,
	\qquad \lambda^{-1}\leq|n_2|\le1.
\]
So $\langle n_4\rangle m(n_4) \gtrsim 1$ and 
\[
	\langle n_5\rangle m(n_5) \sim N^{1-s} N_{5}^{s} \sim N^{1-s} N_{1}^{s}.
\]
Plugging this estimate into (\ref{1015-4}) yields 
\begin{equation}\label{1015-5}
	\mathscr{M}_{54}(\vec{n}) 
	\lesssim \frac{\lambda^{2+2s_\alpha+\epsilon}}{N^{1-s} N_{1}^{1+s-2s_\alpha-\epsilon} }  
	\lesssim \frac{\lambda^{2+2s_\alpha+\epsilon}}{N^{2-2s_\alpha-\epsilon}}.
\end{equation}

Since the upper bound in (\ref{1015-5}) is not sufficiently small, we next derive a lower bound for $|K_4|$ to provide additional decay as we did previously. Since both $n_2$ and $n_4$ are small in this case, we rewrite the parametrization (\ref{para_M54}) as 
\begin{equation}\label{para2_M54}
	n_{5} = -c_1 n_1 - n_4 + \theta_1,
	\qquad
	n_{3} = -c_2 n_1 - n_2 - \theta_1.
\end{equation}
Based on (\ref{para2_M54}) and (\ref{eq:Kj-LMj-def}), we can express $K_4$ in terms of $\{n_1, n_2, n_4, \theta_1\}$:
\[\begin{split}
	K_4(n_1,\ldots,n_5) &= n_1^3 + n_2^3 + \alpha n_3^3 + n_4^3 + \alpha n_5^3 \\
	&= n_1^3 + n_2^3 + \alpha (-c_2 n_1 - n_2 - \theta_1)^3 + n_4^3 + \alpha (-c_1 n_1 - n_4 + \theta_1)^3.
\end{split}\]
Noticing that $N_1$ is the dominating term compared to $N_2$, $N_4$ and $|\theta_1|$, so we arrange the terms in $K_4$ according to the powers of $n_1$:
\begin{equation}\label{K_4decop}
	K_4(n_1,\ldots,n_5) = - 3\alpha n_1^2 (c_2^2n_2+c_1^2n_4) + Q,
\end{equation}
where
\begin{equation*}
	\begin{aligned}
		Q:=&
		\underbrace{-3\alpha(c_2^2-c_1^2)\theta_1n_1^2}_{Q_1}
		\underbrace{-3\alpha n_1
		\bigl[c_2(n_2+\theta_1)^2+c_1(n_4-\theta_1)^2\bigr]}_{Q_2} \\
		&+ \underbrace{n_2^3+n_4^3
		-\alpha\bigl[(n_2+\theta_1)^3+(n_4-\theta_1)^3\bigr]}_{Q_3}.
	\end{aligned}
\end{equation*}
Since $|c_2^2-c_1^2|=|c_2-c_1| = \rho_\alpha/3$ and
$|\theta_1|\leq\kappa_\alpha/\lambda$, we have
\[
	|Q_1| \leq\frac{\alpha \rho_\alpha \kappa_\alpha}{\lambda}|n_1|^2
	\leq
	\frac{\alpha}{16C_*^2}\frac{N_1^2}{\lambda},
\]
where the last inequality takes advantage of the choice of $\kappa_\alpha$ in \eqref{eq:kappa-alpha-choice}.
Moreover, using $|n_2|\leq1$, $|n_4|\leq\lambda^{1/2}$, and
$|\theta_1|\leq\kappa_\alpha/\lambda\leq \frac18$, we obtain
\[\begin{split}
	|Q_2| \leq 3\alpha N_1 (|c_1|+|c_2|)(4\lambda) = 12 \alpha (|c_1|+|c_2|) N_1\lambda, \qquad
	|Q_3| \leq 2\lambda^{\frac32} + 16\alpha \lambda ^{\frac32}.
\end{split}\]
The combination of the estimates of $Q_1$, $Q_2$ and $Q_3$ together yields 
\begin{equation*}
	|Q| \leq
	\,\frac{\alpha}{16C_*^2}\frac{N_1^2}{\lambda} + 12 \alpha (|c_1|+|c_2|) N_1\lambda + 2\lambda^{\frac32} + 16\alpha \lambda ^{\frac32} .
\end{equation*}
Thanks to the assumption $\lambda\lesssim N^{\frac13}$ in (\ref{eq:condNandlam}) in Lemma \ref{lem:M5j-estimates}, we know 
\[
	\frac{N_1^2}{\lambda} \gtrsim \frac{N^2}{\lambda} \gg N\lambda \gg \lambda^{\frac32}.
\]
So by taking $N$ sufficiently large depending only on $\alpha$, the upper bound for $|Q_1|$ dominates $|Q_2|+|Q_3|$, which implies that 
\begin{equation}\label{Qest}
	|Q| \leq 2\cdot  \frac{\alpha}{16C_*^2}\frac{N_1^2}{\lambda}=
	\frac{\alpha}{8C_*^2}\frac{N_1^2}{\lambda}.
\end{equation}
In view of \eqref{K_4decop} and \eqref{Qest}, $|K_4|$ possesses a lower bound as follows:
\begin{equation}\label{K4_lbd}
	|K_4(n_1,\dots, n_5)| \gtrsim 3\alpha N_1^2 |c_2^2 n_2 + c_1^2 n_4| - \frac{\alpha}{8C_*^2}\frac{N_1^2}{\lambda} = \alpha N_1^2\Big(3|c_2^2 n_2 + c_1^2 n_4| - \frac{1}{8C_*^2 \lambda}\Big).
\end{equation}

In order to find a lower bound for $|K_4|$, the order of the term $|c_2^2 n_2 + c_1^2 n_4|$ needs to be at least $\frac{1}{\lambda}$, so we divide the rest of the argument into two subcases according to the size of
$|c_2^2 n_2+c_1^2 n_4|$.

\textbf{Subcase 4.1.}
$|c_2^2n_2+c_1^2n_4|>\frac{1}{\lambda}$.

Under this assumption, it then follows from (\ref{K4_lbd}) that 
\begin{equation}\label{eq:K4-lower-nonres}
	|K_4(n_1,\ldots,n_5)| \geq \frac{\alpha N_1^2}{\lambda} \Big( 3 - \frac{1}{8 C_*^2} \Big) \gtrsim \frac{N^2}{\lambda}.
\end{equation}
Noticing that the bounds in \eqref{1015-5} and \eqref{eq:K4-lower-nonres} are the same as the bounds in \eqref{1014-4} and \eqref{1014-5} in the proof for the Subcase 4.2 in Section \ref{Subsec, M53proof}, so we can follow the similar argument after (\ref{1014-4}) in that case to finish the proof.

\textbf{Subcase 4.2.}
$|c_2^2 n_2 + c_1^2 n_4| \leq \lambda^{-1}$.

In this case, the estimate (\ref{K4_lbd}) in the preceding argument does not yield a useful lower bound for
$|K_4|$, but the assumption $|c_2^2n_2+c_1^2n_4|\leq\lambda^{-1}$ provides additional sparsity of the frequencies that we can make use of. More precisely, the condition $|c_2^2n_2+c_1^2n_4|\leq\lambda^{-1}$, i.e.,
\begin{equation}\label{sparsity_cond2}
	\Big| n_2 + \frac{c_1^2}{c_2^2} n_4 \Big| \leq \frac{1}{c_2^2 \lambda},
\end{equation}
implies that, for each fixed $n_4$, $n_2$ is restricted to an interval of length $O(\lambda^{-1})$. Hence, there are only $O(1)$ admissible values
of $n_2$. Let 
\begin{equation}\label{para2_sparsity}
	\theta_2 = n_2 + \frac{c_1^2}{c_2^2} n_4.
\end{equation}
Then $n_2$ can be written as
$n_2 = - \frac{c_1^2}{c_2^2}\,n_4 + \theta_2$.
According to definition (\ref{Theta_def}) in Lemma \ref{lem:admissible-frequency-sum}, we know $\theta_2\in\Theta_{1/{c_2^2}}(n_4;-c_1^2/c_2^2)$,
where
\[
	\Theta_{1/{c_2^2}}(n_4;-c_1^2/c_2^2)
	:=
	\left\{\theta_2\in\mathbb R:
	|\theta_2|\leq\frac{1}{c_2^2\lambda}
	\ \text{and}\
	-\frac{c_1^2}{c_2^2}n_4+\theta_2\in\mathbb Z_\lambda\right\}.
\]
To simplify the notation, and to distinguish it from $\Theta$, we write
\[
	\widetilde{\Theta}(n_4;-c_\#)
	:=\Theta_{1/c_2^2}(n_4;-c_\#), \quad\text{where}\quad c_{\#}:=c_1^2/c_2^2.
\]

As a summary, for each fixed $n_1$, the parameter
$\theta_1$ ranges over $\Theta(n_1;-c_1)$, while for each fixed
$n_4$, the parameter $\theta_2$ ranges over
$\widetilde{\Theta}(n_4;-c_\#)$. Both admissible sets $\Theta(n_1;-c_1)$ and $\widetilde{\Theta}(n_4;-c_\#)$ have finite cardinality. Based on (\ref{para2_M54}) and (\ref{para2_sparsity}), for any frequency vector $(n_1, \dots, n_5)$ in Subcase 4.2, we parameterize it as 
\begin{equation}\label{para_M54freq}
	(n_1, \dots, n_5) = \big(n_1, \, -c_\# n_4+\theta_2, \, -c_2 n_1 + c_\# n_4 - \theta_1 - \theta_2, \, n_4, \, -c_1 n_1 - n_4 + \theta_1\big).
\end{equation}
According to the estimate (\ref{1015-5}) for $\mathscr{M}_{54}(\vec{n})$ and the formula (\ref{eq:D2-def}) for $D_2(\lambda, N, \epsilon)$, the target estimate (\ref{1015-3}) is reduced to 
\begin{equation}\label{1015-6}
	\int_{\sum\limits_{i=1}^{5}\tau_i = 0}
	\int_{\Gamma_{5,4}^{\mathrm{Bad}}}
	\prod_{i=1}^{5} \frac{\widetilde f_i(n_i,\tau_i)}{\langle L_{i,4}\rangle^{1/2}}		
	\lesssim \lambda^{-\frac32} \prod_{i=1}^{5}\|f_i\|_{L_{x,t}^{2}}.
\end{equation}
Define functions $\{F_i\}_{i=1}^{5}$ whose space-time Fourier transforms $\{\widetilde{F}_i\}_{i=1}^{5}$ satisfy
\begin{equation}\label{1015-7}
	\widetilde F_i(n_i, \tau_i)
	=
	\frac{\widetilde f_i(n_i,\tau_i)}{\langle L_{i,4}\rangle^{1/2}},
	\qquad 1\le i\le 5.
\end{equation}
Then (\ref{1015-6}) is converted to
\[
	\int_{\sum\limits_{i=1}^{5}\tau_i = 0}
	\int_{\Gamma_{5,4}^{\mathrm{Bad}}}
	\prod_{i=1}^{5} \widetilde{F}_i(n_i,\tau_i)
	\lesssim \lambda^{-\frac32} \prod_{i=1}^{5}\|f_i\|_{L_{x,t}^{2}},
\]
which, via the inverse Fourier transform in time, is further reduced to 
\begin{equation}\label{1015-8}
	\int_{\mathbb{R}}
	\int_{\Gamma_{5,4}^{\mathrm{Bad}}}
	\prod_{i=1}^{5} \bigl| \widehat{F}_i(n_i, t) \bigr| \,dt
	\lesssim \lambda^{-\frac32} \prod_{i=1}^{5}\|f_i\|_{L_{x,t}^{2}},
\end{equation}
where $\widehat{F}_{i}$ represents the Fourier transform of $F_i$ in the spatial variable. 

Then applying the parametrization (\ref{para_M54freq}) to (\ref{1015-8}) yields 
\[\begin{split}
	\text{LHS of (\ref{1015-8})} \lesssim \frac{1}{\lambda^4} \int_{\mathbb{R}} &\sum_{n_1\in\mathbb{Z}_\lambda} \sum_{n_4\in\mathbb{Z}_\lambda} \sum_{\theta_1 \in \Theta(n_1;-c_1)} \sum_{\theta_2 \in \widetilde{\Theta}(n_4;-c_\#)} |\widehat{F}_1(n_1, t)| |\widehat{F}_4(n_4, t)| |\widehat{F}_2(-c_\# n_4 + \theta_2,t)| \\
	& |\widehat{F}_3(-c_2 n_1 + c_\# n_4 - \theta_1 - \theta_2,t)| |\widehat{F}_5(-c_1 n_1 - n_4 + \theta_1,t)| \,dt.
\end{split}\]
For each fixed $t$, by bounding $|\widehat{F}_3(-c_2 n_1 + c_\# n_4 - \theta_1 - \theta_2, t)| $ via its $\|\widehat{F}_3(\cdot, t)\|_{\ell^\infty}$ norm, and then rearranging the order of integration, we obtain
\begin{equation}\label{1016-1}\begin{split}
	\text{LHS of (\ref{1015-8})} \lesssim \frac{1}{\lambda^4} \int_{\mathbb{R}} & \|\widehat{F}_3(\cdot,t)\|_{\ell^\infty} \bigg[ \sum_{n_4\in\mathbb{Z}_\lambda} |\widehat{F}_4(n_4, t)| \sum_{\theta_2 \in \widetilde{\Theta}(n_4;-c_\#)} |\widehat{F}_2(-c_\# n_4 + \theta_2,t)| \\
	& \Big(\sum_{n_1\in\mathbb{Z}_\lambda} |\widehat{F}_1(n_1, t)| \sum_{\theta_1 \in \Theta(n_1;-c_1)}  |\widehat{F}_5(-c_1 n_1 - n_4 + \theta_1,t)| \Big) \bigg] \,dt.
\end{split}\end{equation}

For fixed $t$ and $n_4$, we apply the
Cauchy-Schwarz inequality in $n_1$ and Lemma~\ref{lem:admissible-frequency-sum} (with $A=\kappa_\alpha$, $a=-c_1$ and $m=-n_4$) to find
\begin{equation}\label{1016-2}
	\begin{aligned}
		&\frac{1}{\lambda}\sum_{n_1\in\mathbb{Z}_\lambda}
		\big|\widehat F_1(n_1, t)\big|
		\sum_{\theta_1\in\Theta(n_1;-c_1)}
		\big|\widehat F_5(-c_1n_1 - n_4 +\theta_1, t)\big|\\
		\lesssim \,\,&
		\|\widehat F_1(\cdot, t)\|_{\ell^2_{n_1}}
		\Big\|
		\sum_{\theta_1\in\Theta(n_1;-c_1)}
		\big|\widehat F_5(-c_1n_1 - n_4+\theta_1,t)\big|
		\Big\|_{\ell^2_{n_1}}\\
		\lesssim & 
		\|\widehat F_1(\cdot, t)\|_{\ell^2}
		\|\widehat F_5(\cdot, t)\|_{\ell^2}.
	\end{aligned}
\end{equation}
Plugging (\ref{1016-2}) into (\ref{1016-1}) yields 
\begin{equation}\label{1016-3}\begin{split}
	\text{LHS of (\ref{1015-8})} \lesssim \frac{1}{\lambda^3} \int_{\mathbb{R}} & \|\widehat{F}_3(\cdot,t)\|_{\ell^\infty} \|\widehat F_1(\cdot, t)\|_{\ell^2}
	\|\widehat F_5(\cdot, t)\|_{\ell^2}\\
	& \Big(  \sum_{n_4\in\mathbb{Z}_\lambda} |\widehat{F}_4(n_4, t)| \sum_{\theta_2 \in \widetilde{\Theta}(n_4;-c_\#)} |\widehat{F}_2(-c_\# n_4 + \theta_2,t)| \Big) \,dt.
\end{split}\end{equation}
Again, we apply the Cauchy-Schwarz inequality in $n_4$ and
Lemma~\ref{lem:admissible-frequency-sum} (with
$A=1/c_2^2$, $a=-c_\#$ and $m=0$) to get
\begin{equation}\label{1016-4}
	\begin{aligned}
		&\frac{1}{\lambda}\sum_{n_4\in\mathbb{Z}_\lambda}
		\big|\widehat F_4(n_4,t)\big|
		\sum_{\theta_2\in\widetilde{\Theta}(n_4;-c_\#)}
		\big|\widehat F_2\left(-c_\# n_4+\theta_2, t\right)\big|\\
		\lesssim\,\, &
		\|\widehat F_4(\cdot, t)\|_{\ell^2_{n_4}}
		\Big\|
		\sum_{\theta_2\in\widetilde{\Theta}(n_4;-c_\#)}
		\big|\widehat F_2\left(-c_\# n_4+\theta_2, t\right)\big|
		\Big\|_{\ell^2_{n_4}}\\
		\lesssim\,\, &
		\|\widehat F_4(\cdot,t)\|_{\ell^2}
		\|\widehat F_2(\cdot,t)\|_{\ell^2}.
	\end{aligned}
\end{equation}
Putting (\ref{1016-4}) into (\ref{1016-3}) gives
\begin{equation}\label{1016-5}\begin{split}
	\text{LHS of (\ref{1015-8})} \lesssim \frac{1}{\lambda^2} \int_{\mathbb{R}} & \|\widehat{F}_3(\cdot,t)\|_{\ell^\infty} 
	\Big(\|\widehat F_1(\cdot, t)\|_{\ell^2}
	\|\widehat F_5(\cdot, t)\|_{\ell^2}\Big) 
	\Big( \|\widehat F_4(\cdot, t)\|_{\ell^2}
	\|\widehat F_2(\cdot, t)\|_{\ell^2} \Big)
	\,dt.
\end{split}\end{equation}

By Plancherel identity, (\ref{1016-5}) implies that 
\begin{equation}\label{1016-6}\begin{split}
	\text{LHS of (\ref{1015-8})} \lesssim \frac{1}{\lambda^2} \int_{\mathbb{R}} & \|\widehat{F}_3(\cdot,t)\|_{\ell^\infty} 
	\Big(\| F_1(\cdot, t)\|_{L^2(\mathbb{T}_\lambda)}
	\| F_5(\cdot, t)\|_{L^2(\mathbb{T}_\lambda)}\Big) \\
	&\Big( \| F_4(\cdot, t)\|_{L^2(\mathbb{T}_\lambda)}
	\| F_2(\cdot, t)\|_{L^2(\mathbb{T}_\lambda)} \Big)
	\,dt.
\end{split}\end{equation}
Noticing that
\begin{equation*}
	\|\widehat F_3(\cdot,t)\|_{\infty}
	\le\|F_3(\cdot,t)\|_{L_x^1(\mathbb{T}_\lambda)}\le 
	\lambda^{1/2}
	\| F_3(\cdot,t)\|_{L^2_{x}(\mathbb{T}_\lambda)},
\end{equation*}
it then follows from (\ref{1016-6}) that 
\[
	\text{LHS of (\ref{1015-8})} \lesssim \lambda^{-\frac32} \int_{\mathbb{R}}  \prod_{i=1}^{5} \|F_i(\cdot,t)\|_{L^2(\mathbb{T}_\lambda)} \,dt.
\]
Applying the H\"older inequality in the time variable yields 
\begin{equation}\label{1016-7}
	\text{LHS of (\ref{1015-8})} \lesssim \lambda^{-\frac32} 
	\|F_1\|_{L^{2,2}_{x,t}} \prod_{i=2}^{5}	\|F_i\|_{L^{2,8}_{x,t}}.
\end{equation}
Finally, according to the definition (\ref{1015-7}) for $F_i$, we can apply Lemma \ref{lemma3} to (\ref{1016-7}) to find that 
\[
	\text{LHS of (\ref{1015-8})} \lesssim \lambda^{-\frac32} \prod_{i=1}^{5}\|f_i\|_{L^{2,2}_{x,t}} = \text{RHS of (\ref{1015-8})},
\]
which justifies (\ref{1015-8}). Hence, the proof for \eqref{eq:M54-est} is finished.

Combining all the results through Section \ref{Subsec, quin_est1} -- Section \ref{Subsec, M54proof} establishes Lemma \ref{lem:M5j-estimates}.

\section*{Acknowledgments}
X.Yang is supported by National Natural Science Foundation of China (No. 12401299), Natural Science Foundation of Jiangsu Province (No. BK20241260), Scientific Research Center of Applied Mathematics of Jiangsu Province (No. BK20233002). 

{\small
	\bibliographystyle{plain}
	\bibliography{reference.bib}
}

\bigskip

\thanks{(K. Wang) School of Mathematics, Southeast University, Nanjing, Jiangsu 211189, China} 

\thanks{Email: ke.wang.math@seu.edu.cn}

\medskip

\thanks{(X. Yang) School of Mathematics, Southeast University, Nanjing, Jiangsu 211189, China} 

\thanks{Email: xinyang@seu.edu.cn}

\end{document}